\numberwithin{equation}{subsection}
\newtheorem{Theorem}{Theorem}[subsection]
\newtheorem*{Theorem*}{Theorem}
\newtheorem{Lemma}[Theorem]{Lemma}
\newtheorem{Corollary}[Theorem]{Corollary}
\newtheorem{Proposition}[Theorem]{Proposition}
\newtheorem{Definition}[Theorem]{Definition}
\newtheorem{Remark}[Theorem]{Remark}
\newtheorem{Example}[Theorem]{Example}
\newtheorem*{Example*}{Example}
\newtheorem{Notation}[Theorem]{Notation}
\newcommand{\ba}{\mathbf{a}}
\newcommand{\Bset}{\mathbb{B}}
\newcommand{\Cset}{\mathbb{C}}
\newcommand{\Nset}{\mathbb{N}}
\newcommand{\Rset}{\mathbb{R}}
\newcommand{\Sset}{\mathbb{S}}
\newcommand{\Zset}{\mathbb{Z}}
\newcommand{\cA}{\mathcal{A}}
\newcommand{\cB}{\mathcal{B}}
\newcommand{\cC}{\mathcal{C}}
\newcommand{\cF}{\mathcal{F}}
\newcommand{\cG}{\mathcal{G}}
\newcommand{\cH}{\mathcal{H}}
\newcommand{\cI}{\mathcal{I}}
\newcommand{\cJ}{\mathcal{J}}
\newcommand{\cK}{\mathcal{K}}
\newcommand{\cL}{\mathcal{L}}
\newcommand{\cM}{\mathcal{M}}
\newcommand{\cN}{\mathcal{N}}
\newcommand{\tiota}{\tilde{\iota}}
\newcommand{\tmu}{\tilde{\mu}}
\newcommand{\tOmega}{\tilde{\Omega}}
\newcommand{\trho}{\tilde{\rho}}
\newcommand{\tpsi}{\tilde{\psi}}
\newcommand{\tSigma}{\tilde{\Sigma}}
\newcommand{\txi}{\tilde{\xi}}
\newcommand{\tg}{\tilde{g}}
\newcommand{\tR}{\tilde{R}}
\newcommand{\tcM}{\tilde{\mathcal{M}}}
\newcommand{\calM}{\mathcal{M}}
\newcommand{\1}{\mathbbm{1}}
\newcommand{\alg}{{\operatorname{alg}}}
\newcommand{\Aut}{\operatorname{Aut}}
\newcommand{\End}{\operatorname{End}}
\newcommand{\trace}{\operatorname{tr}}
\newcommand{\gen}{\operatorname{gen}}
\newcommand{\sh}{\operatorname{sh}}
\newcommand{\Id}{\operatorname{Id}}
\newcommand{\perm}{\operatorname{perm}}
\newcommand{\distr}{\operatorname{distr}}
\title{Markovianity and the Thompson monoid $F^{+}$}
\author[C.~K\"ostler]{Claus K\"ostler}
\address{C.~K\"ostler, School of Mathematical Sciences, University College Cork, Cork, Ireland}
\email{claus@ucc.ie}
\author[A.~Krishnan]{Arundhathi Krishnan}
\address{A.~Krishnan, Department of Pure Mathematics, University of Waterloo, Ontario, Canada}
\email{arundhathi.krishnan@uwaterloo.ca} 
\author[S.~J.~Wills]{Stephen J.~Wills}
\address{S.~J.~Wills, School of Mathematical Sciences, University College Cork, Cork, Ireland}
\email{s.wills@ucc.ie} 
\dedicatory{Dedicated to the memory of Vaughan Jones}
\subjclass[2020]{Primary: 46L53, 60J05; Secondary: 60G09, 20M30}
\keywords{Distributional Invariance Principles; Noncommutative De Finetti Theorems;
Noncommutative Stationary Markov Processes; Representations of Thompson monoid $F^+$}
\begin{document}
%%%%%%%%%%%%%%%%%%%%%%%%%%%%%%%%%%%%%%%%%%%%%%%%%%%%%%%
%%%%%%%%%%%%%%%%%%%%%%%%%%%%%%%%%%%%%%%%%%%%%%%%%%%%%%%
%%%%%%%%%%%%%%%%%%%%%%%%%%%%%%%%%%%%%%%%%%%%%%%%%%%%%%%
\begin{abstract}
%%%%%%%%%%%%%%%%%%%%%%%%%%%%%%%%%%%%%%%%%%%%%%%%%%%%%%%
We introduce a new distributional invariance principle, cal\-led `partial spreadability', which
emerges from the representation theory of the Thompson monoid $F^+$ in noncommutative probability
spaces. We show that a partially spreadable sequence of noncommutative random variables is adapted
to a local Markov filtration. Conversely we show that a large class of noncommutative stationary
Markov sequences provides representations of the Thompson monoid $F^+$. In the particular case 
of a classical probability space, we arrive at a de Finetti theorem for stationary Markov
sequences with values in a standard Borel space. 
%%%%%%%%%%%%%%%%%%%%%%%%%%%%%%%%%%%%%%%%%%%%%%%%%%%%%%%
\end{abstract}
%%%%%%%%%%%%%%%%%%%%%%%%%%%%%%%%%%%%%%%%%%%%%%%%%%%%%%%
%%%%%%%%%%%%%%%%%%%%%%%%%%%%%%%%%%%%%%%%%%%%%%%%%%%%%%%
\maketitle
%%%%%%%%%%%%%%%%%%%%%%%%%%%%%%%%%%%%%%%%%%%%%%%%%%%%%%%

%%%%%%%%%%%%%%%%%%%%%%%%%%%%%%%%%%%%%%%%%%%%%%%%%%%%%%%%%%%%%%%%%%%%%%%
%%%%%%%%%%%%%%%%%%%%%%%%  S E C T I O N  %%%%%%%%%%%%%%%%%%%%%%%%%%%%%%
%\tableofcontents    %   (looks ugly)      Table of Contents
%%%%%%%%%%%%%%%%%%%%%%%%%%%%%%%%%%%%%%%%%%%%%%%%%%%%%%%%%%%%%%%%%%%%%%%
%%%%%%%%%   Handmade Table of Contents follows                 %%%%%%%
%%%%%%%%%%%%%%%%%%%%%%%%%%%%%%%%%%%%%%%%%%%%%%%%%%%%%%%%%%%%%%%%%%%%%%%
{%
\def\widedotfill{\leaders\hbox to 10pt{\hfil.\hfil}\hfill}
\def\pg#1{\widedotfill\rlap{\hbox to 15pt{\hfill{#1}}}\par}
\rightskip=15pt\leftskip=10pt%
\newcommand{\sct}[2]{\noindent\llap{\hbox to%
    10pt{{#1}\hfill}}~\mbox{#2~}}
\newcommand{\separ}{\vspace{.2em}}
%%%%%%%%%%%%%%%%%%%%%%%%%%%%%%%%%%%%%%%%%%%%%%%%%%%%%%%%%%%%%%%%%%%%%%%
\hrule
%%%%%%%%%%%%%%%%%%%%%%%%%%%%%%%%%%%%%%%%%%%%%%%%%%%%%%%%%%%%%%%%%%%%%%%
\section*{Contents}
%%%%%%%%%%%%%%%%%%%%%%%%%%%%%%%%%%%%%%%%%%%%%%%%%%%%%%%%%%%%%%%%%%%%%%%
\sct{\ref{section:introduction}}{Introduction and some main results}%
\pg{\pageref{section:introduction}}%
\separ
\sct{\ref{section:preliminaries}}{Preliminaries}%
\pg{\pageref{section:preliminaries}}%
\separ
\sct{}{\ref{subsection:basics-on-F-F+} The Thompson group $F$ and its monoid $F^+$}%
\pg{\pageref{subsection:basics-on-F-F+}}%
\separ
\sct{}{\ref{subsection:partialshifts} The partial shifts monoid $S^+$}%
\pg{\pageref{subsection:partialshifts}}%
\separ
\sct{}{\ref{subsection:Markov-maps} Noncommutative probability spaces and Markov maps}%
\pg{\pageref{subsection:Markov-maps}}%
\separ
\sct{}{\ref{subsection:Random Variables and Invariance Principles} Noncommutative random variables and distributional} \\
\separ
\sct{}{\phantom{\ref{subsection:Random Variables and Invariance Principles}}  invariance principles}%
\pg{\pageref{subsection:Random Variables and Invariance Principles}}%
\separ
\sct{}{\ref{subsection:Independence and Markovianity} Noncommutative independence and Markovianity}%
\pg{\pageref{subsection:Independence and Markovianity}}%
\separ
\sct{}{\ref{subsection:Noncommutative Stationary Processes} Noncommutative stationary processes and dilations}%
\pg{\pageref{subsection:Noncommutative Stationary Processes}}%
\separ
\sct{\ref{section:MarkovianityandF+Classical}}{A new distributional invariance principle in classical probability}\\%
\sct{}{theory}%
\pg{\pageref{section:MarkovianityandF+Classical}}%
\separ
\sct{}{\ref{subsection:CSMPR} Algebraization of classical probability theory}%
\pg{\pageref{subsection:CSMPR}}%
\separ
\sct{}{\ref{subsection:DIPinCP} Partial spreadability and de Finetti theorems in classical}\\%
\sct{}{\phantom{\ref{subsection:DIPinCP}} probability}%
\pg{\pageref{subsection:DIPinCP}}%
\separ
\sct{}{\ref{subsection:FunctionsOfMarkov} Functions of classical stationary Markov sequences and their}\\%
\separ
\sct{}{\phantom{\ref{subsection:FunctionsOfMarkov}} algebraization}%
\pg{\pageref{subsection:FunctionsOfMarkov}}%
\separ
\sct{}{\ref{subsection:markov-dilations} A result on Markov dilations in classical probability theory}%
\pg{\pageref{subsection:markov-dilations}}%
\separ
\sct{\ref{section:markovianity-from-F+}}{Markovianity from representations of the Thompson monoid $F^+$}%
\pg{\pageref{section:markovianity-from-F+}}%
\separ
\sct{}{\ref{subsection:generating} Representations with a generating property}%
\pg{\pageref{subsection:generating}}%
\separ
\sct{}{\ref{subsection:s-f-p-a} Commuting squares and Markovianity for shifted fixed point}\\%
\sct{}{\phantom{\ref{subsection:s-f-p-a}} algebras}%
\pg{\pageref{subsection:s-f-p-a}}%
\separ
\sct{}{\ref{subsection:Markov-F} Commuting squares and Markovianity for stationary processes}%
\pg{\pageref{subsection:Markov-F}}%
\separ
\sct{}{\ref{subsection:ncdf} A noncommutative version of de Finetti's theorem}%
\pg{\pageref{subsection:ncdf}}%
\separ
\sct{\ref{section:Constructions of Reps of F+}}{Construction of representations of the Thompson monoid $F^+$}%
\pg{\pageref{section:Constructions of Reps of F+}}%
\separ
\sct{}{\ref{subsection:tensor-product} Tensor product constructions}%
\pg{\pageref{subsection:tensor-product}}%
\separ
\sct{}{\ref{subsection:constr-class-prob} Constructions in classical probability}%
\pg{\pageref{subsection:constr-class-prob}}%
\separ
\sct{}{\ref{subsection:op-alg-construction} Constructions in the framework of  operator algebras}%
\pg{\pageref{subsection:op-alg-construction}}%
\separ
\sct{}{Acknowledgements}%
\pg{\pageref{section:acknowledgements}}%
\separ
\sct{}{References}%
\pg{\pageref{section:bibliography}}
}
%%%%%%%%%%%%%%%%%%%%%%%%%%%%%%%%%%%%%%%%%%%%%%%%%%%%%%%%%%%%%%%%%%%%%%%
\normalsize
\vspace{12pt}
\hrule
\vspace{12pt}
%%%%%%%%%%%%%%%%%%%%%%%%%%%%%%%%%%%%%%%%%%%%%%%%%%%%%%%%%%%%%%%%%%%%%%%
%%%%%%%%%   Handmade Table of Contents  end                     %%%%%%%
%%%%%%%%%%%%%%%%%%%%%%%%%%%%%%%%%%%%%%%%%%%%%%%%%%%%%%%%%%%%%%%%%%%%%%%

\sloppy
\allowdisplaybreaks

%%%%%%%%%%%%%%%%%%%%%%%%%%%%%%%%%%%%%%%%%%%%%%%%%%%%%%%
\section{Introduction and some main results}\label{section:introduction}
%%%%%%%%%%%%%%%%%%%%%%%%%%%%%%%%%%%%%%%%%%%%%%%%%%%%%%%
The goal of this paper is to introduce `partial spreadability' as a new distributional
invariance principle as it emerges from the representation theory of the Thompson monoid $F^+$
and to present first results on the following discovery: the monoid $F^+$ algebraically encodes
Markovianity in an operator algebraic framework of noncommutative probability. Here the attribute
`noncommutative' is understood in the sense of `not-necessarily-commutative', and thus applies to
classical probability, free probability and, more generally, quantum probability. We limit our
considerations in this paper to  representations of the Thompson monoid $F^+$ which connect to 
unilateral Markov shifts and just note that, with some more effort, representations of the Thompson
group $F$ can be seen to connect to bilateral Markov shifts \cite{KK22}. Further generalizations of 
our approach to noncommutative random fields are possible and of interest for future investigations.

Notably, our results are also applicable to subfactor theory where independently, Vaughan Jones
constructed and studied representations for the Thompson group $F$, within the ongoing effort to
obtain a conformal field theory for every finite index subfactor \cite{Jo17,Jo18a,Jo18b,BJ19a,
BJ19b,AJ21}. How Vaughan Jones' approach and our approach are related is of high interest to be 
further investigated, but beyond the scope of the present paper. Furthermore, our approach invites
further investigations of the representation theory of the Thompson group $F$, and its relatives, 
from the probabilistic viewpoint suggested by our new approach. 

To increase the accessibility of our approach, let us focus in this introduction first on
classical probability where  distributional symmetries and invariance principles are known
to provide deep structural results on stochastic processes \cite{Ka05}. Its subject of research
emerged around the early 1930s when de Finetti characterized an exchangeable infinite sequence of
$\{0,1\}$-valued random variables as a mixture of independent identically distributed (i.i.d.) random
variables. This foundational result was extended to random variables with values in a compact
Hausdorff space by Hewitt and Savage \cite{HS55}, and to values in a Borel space by Aldous
\cite{Al85}. Furthermore, Ryll-Nardzewski \cite{RN57} established that the apparently weaker
distributional symmetry of spreadability (also known as `contractability' in the literature) is
equivalent to exchangeability for infinite sequences. 

Already de Finetti considered `partial exchangeability (of the Markov type)' or `Markov
exchangeability', suggesting that this distributional symmetry should characterize a mixture of
infinite sequences of Markov chains (see also \cite{Di88}). Initially Freedman \cite{Fr62} established 
such a characterization for stationary sequences, taking values in a countable set. Consecutively, 
Diaconis and Freedman relaxed the stationarity condition to that of recurrence, to arrive in \cite{DF80}
at a de Finetti theorem for infinite Markov chains. In particular, they show that partial
exchangeability can be implemented by `block-switch transformations' on the path space of the 
stochastic process. A different and more sophisticated approach was taken by Kallenberg for sequences of
random variables with values in an arbitrary measurable space \cite{Ka82}. Invoking stopping time 
arguments, he characterized recurrent `locally $\cF$-homogeneous' sequences to be `conditionally
$\cF$-Markovian' (see \cite[Theorem 2.4]{Ka82}). Furthermore, for random variables with values
in finite or countable sets, de Finetti-type characterization results in terms of hidden Markov models 
are also available for certain exchangeable sequences \cite{Dh63a,Dh63b,Dh64} and, more recently, 
for certain partially exchangeable sequences \cite{FP09,FP16}. 

Quite recently, it was realized in \cite{EGK17} that actually not exchangeability, but spreadability 
is the fundamental distributional invariance principle from the viewpoint of algebraic topology 
and homological algebra. This insight is little apparent from the common definition of
spreadability which states that joint distributions of a sequence of (noncommutative) random variables
are invariant upon passing to a subsequence (see Definition \ref{definition:distsym}). Indeed,
\cite[Theorem 1.2]{EGK17} establishes a new equivalent characterization of spreadability which connects
it to the representation theory of the `partial shifts monoid'
\[
S^{+}:=\langle h_0,h_1,h_2,\ldots \mid h_{k}h_{\ell}=h_{\ell+1}h_{k} 
\text{ for } 0\leq k\le \ell <\infty \rangle^+. 
\]
Roughly phrasing, $S^{+}$ algebraically encodes conditional independence in classical
probability. More precisely -- in fact, this can be taken as an alternative definition -- 
a sequence of random variables $\xi \equiv (\xi_n)_{n \ge 0}$ is spreadable if and only if 
there exists a representation $\varrho_*$ of $S^+$ in the measure-preserving measurable maps 
of the underlying probability space such that
\[
\xi_0 = \xi_0 \circ \varrho_*(h_n), \qquad \xi_n = \xi_0 \circ \varrho_*(h_0^n)
\]
for all $n \ge 1$. This adds to the extended de Finetti theorem a new equivalent characterization of
conditionally independent, identically distributed random variables (see \cite[Theorem 1.2]{EGK17}, 
which is partly stated as Theorem \ref{theorem:cdeF} for the convenience of the reader). 

This alternative definition of spreadability deserves some elaboration. Commonly, a probabilistic 
approach implements distributional symmetries and invariance principles in terms of actions on the
index set of a sequence of random variables. For example, exchangeability requires that 
the distribution of the sequence is invariant under permuting the indices of the sequence, and 
spreadability requires that the distribution is invariant under passing to a subsequence. Here we take
the alternative point of view that the distributional invariance principle is implemented through
a group or a monoid acting on the underlying probability space, as illustrated above for spreadability 
by the partial shifts monoid $S^+$. In fact, such a point of view initiated the development of 
`braidability' in \cite{GK09} as a new distributional symmetry which is intermediate to exchangeability 
and  spreadability in noncommutative probability, and this was further investigated for braided 
parafermions in \cite{BJLW20}. Also, the introduction of quantum exchangeability in \cite{KS09} 
was inspired by this point of view when replacing the action of permutations by co-actions of quantum
permutations.

This alternative definition of spreadability through measure-preserving measurable actions of  
$S^+$ on a probability space is the starting point of our investigations on possible
generalizations of this distributional invariance principle. It is an elementary observation
that the monoid $S^+$ is a quotient of the Thompson monoid 
\[
F^{+}=\langle g_0,g_1,g_2,\ldots \mid g_{k}g_{\ell}=g_{\ell+1}g_{k} 
\text{ for } 0\leq k < \ell <\infty \rangle^+, 
\]
as relations of the form $g_k g_{k} = g_{k+1} g_{k}$ are absent in the presentation of
$F^+$.\footnote{This definition of the Thompson monoid $F^+$ differs from the usual one used
in the literature (see also Subsection \ref{subsection:basics-on-F-F+}). Our choice of
presentation is motivated from the elementary observation that the monoid $S^+$ is a quotient 
of $F^+$.} 
Thus it is intriguing to ask if the representation theory of $F^+$ is connected to a novel
distributional invariance principle which characterizes a larger class of random objects than
those characterized by spreadability. In other words:
\begin{center}
    \emph{Can one characterize a `partially spreadable' sequence of random variables $\xi$?} 
\end{center}
Here `partial spreadability' means by definition that the sequence $\xi \equiv (\xi_n)_{n \ge 0}$
satisfies
\[
\xi_0 = \xi_0 \circ \rho_*(g_n), \qquad  \xi_n = \xi_0 \circ \rho_*(g_0^n)
\]
for all $n \ge 1$, where now $\rho_*$ denotes a representation of the Thompson monoid $F^+$ in
the measure-preserving measurable maps on the underlying probability space, as introduced in
Definition \ref{definition:partspread-c}. We give an affirmative answer to this question in the 
following theorem, the proof of which is given in Subsections \ref{subsection:DIPinCP} and
\ref{subsection:constr-class-prob}, where $(\Omega_0, \Sigma_0)$ denotes a standard Borel space. 
%%%%%%%%%%%%%%%%%%%%%%%%%%%%%%%%%%%%%%%%%%%%%%%%%%%%%%%
\begin{Theorem}\label{theorem:definetti-intro}
%%%%%%%%%%%%%%%%%%%%%%%%%%%%%%%%%%%%%%%%%%%%%%%%%%%%%%%
Let $\xi \equiv (\xi_n)_{n\in\Nset_0}$ be a sequence of $(\Omega_0, \Sigma_0)$-valued random
variables on the standard probability space $(\Omega,\Sigma,\mu)$. Then the following are
equivalent:
\begin{enumerate}
     \item[(a)] $\xi$ is maximal partially spreadable;
     \item[(b)] $\xi$ is a stationary Markov sequence.
\end{enumerate}
%%%%%%%%%%%%%%%%%%%%%%%%%%%%%%%%%%%%%%%%%%%%%%%%%%%%%%%
\end{Theorem}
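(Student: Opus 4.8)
The plan is to treat the two implications separately, following the split indicated by the referenced subsections: the construction (b)$\Rightarrow$(a) carried out in \ref{subsection:constr-class-prob}, and the extraction of Markovianity (a)$\Rightarrow$(b) in \ref{subsection:DIPinCP}. For (b)$\Rightarrow$(a) I would work in the path space model $\Omega = \Omega_0^{\Nset_0}$, with $\xi_n$ the coordinate projections and $\mu$ the stationary Markov measure determined by the invariant law $\nu$ on $(\Omega_0,\Sigma_0)$ and the transition kernel $P$. The backward shift $S$, $(S\omega)_n = \omega_{n+1}$, is $\mu$-preserving precisely by stationarity, and setting $\rho_*(g_0)=S$ immediately yields $\xi_n = \xi_0\circ\rho_*(g_0^n)$ since $\xi_0\circ S^n = \xi_n$. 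The real content is to define, for $k\ge 1$, a $\mu$-preserving map $\rho_*(g_k)$ that fixes the first $k$ coordinates (so that $\xi_0 = \xi_0\circ\rho_*(g_k)$) while acting like a shift beyond position $k$. The Markov property is exactly what makes such a map measure-preserving, since the conditional law of the future given $\xi_0,\dots,\xi_{k-1}$ depends on $\xi_{k-1}$ alone; the naive deletion/insertion map fails for non-i.i.d.\ sequences precisely because it replaces a one-step transition by a two-step one. I expect it to be cleanest to build these maps through the amalgamated/tensor construction over the boundary algebra $L^\infty(\Omega_0,\nu)$ (the tensor-product constructions of the paper), with $P$ as the amalgamating Markov map, rather than writing the maps by hand.

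The crux of this direction is verifying the Thompson relations $\rho_*(g_k)\circ\rho_*(g_\ell) = \rho_*(g_{\ell+1})\circ\rho_*(g_k)$ for $k<\ell$. These should hold because the one-step dependence structure makes ``shift at level $\ell$, then at level $k$'' agree with ``shift at level $k$, then at level $\ell+1$''. Crucially, the \emph{absence} of the diagonal relations $g_kg_k = g_{k+1}g_k$ present in $S^+$ is what accommodates genuine correlations, so I would take care to check that these diagonal relations do \emph{not} hold in general, confirming that $F^+$, and not merely its quotient $S^+$, is the relevant monoid (this is the structural reason spreadability is replaced by the weaker partial spreadability). Maximality of the partial spreadability should correspond to a generating/fullness property of the representation (cf.\ the generating property of \ref{subsection:generating}), and I would verify it holds for this canonical construction.

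For (a)$\Rightarrow$(b) I would pass to the algebraic formulation and the commuting-square machinery. From a maximal partially spreadable $\xi$ one obtains endomorphisms $\alpha_k = \rho_*(g_k)$, with $\alpha_0$ the stationary shift carrying $\xi_n$ to $\xi_{n+1}$, giving stationarity at once. I would then use the $F^+$ relations together with $\xi_0 = \xi_0\circ\rho_*(g_n)$ to show that the filtration $\mathcal{A}_{[0,n]} = \vN(\xi_0,\dots,\xi_n)$ forms commuting squares with the corresponding future and past algebras, i.e.\ that the conditional expectation of the past onto the future factors through the single algebra $\vN(\xi_n)$. This commuting-square condition is the operator-algebraic encoding of the Markov property. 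Stationarity together with these one-step commuting squares then give that $\xi$ is a stationary Markov sequence, with maximality upgrading the a priori weaker conclusion (``adapted to a local Markov filtration'') to the sharp statement that $\xi$ is itself a \emph{one-step} stationary Markov sequence.

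The step I expect to be the main obstacle is deriving the sharp one-step Markov property in the (a)$\Rightarrow$(b) direction: the commutation relations of $F^+$ readily produce conditional independence relations, but translating these into the precise commuting-square identities characterizing a one-step stationary Markov sequence, and checking that maximality is exactly the hypothesis ruling out higher-order memory, is where the careful work lies. A secondary obstacle is the bookkeeping of the homomorphism-versus-anti-homomorphism conventions relating $\rho_*$ on maps to its action by precomposition on the random variables, so that the index shifts $\ell\mapsto\ell+1$ in the relations correctly match the coordinate shifts on path space.
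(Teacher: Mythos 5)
Your outline of (a)$\Rightarrow$(b) is consistent with the paper's route: stationarity is immediate from $\xi_n=\xi_0\circ\eta_0^n$, the $F^+$ relations together with the mean ergodic theorem produce the commuting squares that make the fixed-point filtration $\cF^+_\bullet$ Markovian, and the maximality hypothesis $\sigma\{\xi_0\}=\bigcap_{k\ge 1}\cI_{\eta_k}$ is exactly what lets that Markov property descend to the canonical filtration of $\xi$ (this is the content of Theorems \ref{theorem:cMarkovFilt}, \ref{theorem:cl-ps-Markov} and \ref{theorem:cldeFi-with condition}, resting on the intertwining relations and the commuting-square machinery of Section \ref{section:markovianity-from-F+}).

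The gap is in (b)$\Rightarrow$(a). You propose to realize the representation of $F^+$ on the path space $\Omega_0^{\Nset_0}$ with the Daniell--Kolmogorov Markov measure and assert that ``the Markov property is exactly what makes such a map measure-preserving.'' It is not: a map fixing coordinates $0,\dots,k-1$ and acting as a shift beyond $k$ is a deletion map, and the pushforward of the Markov measure under deletion of coordinate $k$ replaces the one-step transition $P$ at stage $k-1$ by $P^2$, so such maps preserve $\mu$ only when $P^2=P$, essentially the conditionally i.i.d.\ case covered by spreadability. You note that the naive map fails but never exhibit a substitute on path space; the paper explicitly records that constructing a representation of $F^+$ on the Daniell--Kolmogorov model is an open problem. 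The actual proof abandons path space: it first invokes K\"ummerer's dilation theorem (Theorem \ref{theorem:kuemmerer-onesided}, whose proof requires the disintegration theory of von Neumann algebras to identify $\cM\otimes\cL$ with $\cA\otimes\cL$) to write the transition operator as the compression of an automorphism of $\cA\otimes\cL$, then builds the representation of $F^+$ on the \emph{enlarged} space $\cA\otimes\cL^{\otimes_{\Nset_0}}$ by the tensor-product construction of Subsection \ref{subsection:tensor-product}, and finally transfers the conclusion back to the given sequence using the fact that commutative stationary Markov sequences with the same transition operator are identically distributed (Proposition \ref{proposition:commutative-Markov-equivalence}) --- which is precisely why Definition \ref{definition:partspread-c} only demands equality in distribution with a partially spreadable sequence. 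Your passing reference to the ``tensor-product constructions'' points in the right direction, but without the dilation theorem, the change of realization, and the distributional-uniqueness step, the construction does not close.
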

%%%%%%%%%%%%%%%%%%%%%%%%%%%%%%%%%%%%%%%%%%%%%%%%%%%%%%%
It is worthwhile to point out for the implication `$(a) \Rightarrow (b)$' that, in general, a partially
spreadable sequence may fail to be Markovian. This motivates the introduction of the strengthened
notion of `maximal partial spreadability' in Definition \ref{definition:partspread-c}. This maximality 
condition requires that the random variable $\xi_0$ generates a $\sigma$-subalgebra of $\Sigma$ which 
equals the intersection of all $\mu$-almost $\rho_*(g_n)$-invariant sets for $n \ge 1$. It takes into 
account the fact that a function of a stationary Markov sequence is still partially spreadable (see
Theorem \ref{theorem:functionMarkovPC}), but may be non-Markovian.

The proof of the converse implication `$(b) \Rightarrow (a)$' requires us to find a representation of
the Thompson monoid $F^+$ which cannot be easily identified when the underlying probability space
is given by the Daniell-Kolmogorov construction for the Markov process. But a natural choice for
such a representation exists when the Markov process is written as an open dynamical system.  Such 
an approach is common for the construction of quantum Markov processes, as pioneered in \cite{AFL82}.
Actually, our setting requires an operator algebraic approach to noncommutative
stationary Markov processes as presented by K\"ummerer \cite{Ku85,Ku86} and 
used by Anantharaman-Delaroche \cite{AD06}, as well as Haagerup and Musat \cite{HM11}.
Furthermore, we refer the interested reader to Attal's expository paper \cite{At10}
for detailed information on the open dynamical system viewpoint for (possibly
non-stationary) classical Markov sequences. 

Having exemplified by Theorem \ref{theorem:definetti-intro} how Markovianity and representations
of the Thompson monoid $F^+$ relate in classical probability, next we turn our attention to developing 
noncommutative versions of this relation. In parts, our approach is stimulated by the 
significant progress made over the past decade in establishing noncommutative de Finetti
type results in the context of free independence 
\cite{Cu09,KS09,Cu10,Cu11,BCS12,DDM14,DK14,FW16,DKW17,DM17}, 
Boolean or monotone independence \cite{Li15,Li18,Li19,CFG20}, fermionic systems
\cite{BJLW20,CF12,CRZ22,Fi22a,Fi22b}, and general operator algebraic settings
\cite{AGCL08,GK09,GK10,Ko10,GK12,CF15,EGK17}.

An immediate idea is to look for a direct transfer of the de Finetti theorem of Diaconis and Freedman
\cite{DF80} or of Kallenberg's characterization in \cite{Ka82} to a noncommutative setting. Such a 
direct route presently does not seem to be available, as the first approach requires the availability 
of certain pathwise transformations and the latter utilizes stopping time arguments. But combining our
approach to distributional invariance principles  and the open dynamical system formulation of 
stationary Markov processes, we are able to suitably transfer Theorem \ref{theorem:definetti-intro} 
to noncommutative probability.
  
We now address our de Finetti type results obtained in the context of `partial spreadability' 
in an operator algebraic setting.  We refer the reader to Section \ref{section:preliminaries} 
for definitions and notation as we will use them below. Here we just remind that the pair
$(\cM,\psi)$ denotes a (noncommutative) probability space which consist of a von Neumann algebra
$\cM$ and a faithful normal state $\psi$ on $\cM$. Such pairs are also known as W*-algebraic
probability spaces in the literature. Furthermore, a noncommutative random variable $\iota_0$ 
from the probability space $(\cA,\varphi)$ into the probability space $(\cM,\psi)$ is given by 
an injective *-homomorphism $\iota_0 \colon \cA \to \cM$ such that $\psi \circ \iota_0 = \varphi$,
and written as $\iota_0 \colon (\cA,\varphi) \to (\cM,\psi)$. (Here we have omitted addressing a
modular condition as done in Subsection \ref{subsection:Random Variables and Invariance Principles}.)
Finally $\End(\cM,\psi)$ denotes certain unital *-homomorphisms on $\cM$, as further detailed in
Definition \ref{definition:endomorphism}. 

Let us comment on our choice of a W*-algebraic setting where von Neumann algebras
are equipped with faithful normal states. Instead, considering unital C*-algebras with states 
as a starting point may seem more appropriate in the context of noncommutative de Finetti theorems.
For example, St{\o}rmer characterized in \cite{St69} symmetric states on the infinite (minimal)
tensor product of a unital C*-algebra with itself. As another more recent example, quantum symmetric
states were characterized in \cite{DKW17} on the infinite universal free product of a unital
C*-algebra with itself. Even though these free or tensor product constructions provide 
noncommutative versions of the Daniell-Kolmogorov construction, they give little
insight on how to associate a representation of the Thompson monoid $F^+$ to a noncommutative
stationary Markov process, for the same reasons as in the classical theory. As the primary goal 
of the present paper is to establish the connection between Markovianity and `probabilistic'
representations of $F^+$, we chose von Neumann algebras with faithful normal states as our 
starting point. This avoids an additional technical overhead, to improve the transparency of our 
new approach, and postpones C*-algebraic approaches to future publications.  

To provide some context to our main results, let us first recall the following definition of
spreadability as a distributional invariance principle which was identified in \cite{Ko10,EGK17}
to be equivalent to its more traditional formulation (as stated in Definition \ref{definition:distsym}(ii)). 
%%%%%%%%%%%%%%%%%%%%%%%%%%%%%%%%%%%%%%%%%%%%%%%%%%%%%%%
\begin{Definition}[\cite{Ko10,EGK17}]\normalfont \label{definition:spreadable}
%%%%%%%%%%%%%%%%%%%%%%%%%%%%%%%%%%%%%%%%%%%%%%%%%%%%%%%
A sequence of random variables $\iota \equiv (\iota_{n})_{n\geq 0}: (\cA,\varphi)\to (\cM,\psi)$ 
is \emph{spreadable} if there exists a representation  $\varrho:S^{+}\to \End(\cM,\psi)$ such
that the following localization and stationarity properties are satisfied:
\begin{align}
&&&&\iota_0 &= \varrho(h_n) \iota_0 
&&\text{for all $n \ge 1$};&&&&\tag{L} \label{eq:L-spread}\\
&&&& \iota_n    & = \varrho(h_0^n) \iota_0&& \text{if $n \ge 0$.}   
&&&&  \tag{S} \label{eq:S-spread}
\end{align}
More generally, $\iota$ is said to be \emph{spreadable} if there exists a spreadable sequence 
$\tiota$ such that $\iota \stackrel{\distr}{=}\tiota$. 
%%%%%%%%%%%%%%%%%%%%%%%%%%%%%%%%%%%%%%%%%%%%%%%%%%%%%%%
\end{Definition}
%%%%%%%%%%%%%%%%%%%%%%%%%%%%%%%%%%%%%%%%%%%%%%%%%%%%%%%
Replacing the role of the partial shift monoid $S^+$ by the Thompson monoid $F^+$, we are now in
the position to introduce a natural generalization of spreadability as a new distributional
invariance principle. 
%%%%%%%%%%%%%%%%%%%%%%%%%%%%%%%%%%%%%%%%%%%%%%%%%%%%%%%
\begin{Definition}\normalfont \label{definition:p-s}
%%%%%%%%%%%%%%%%%%%%%%%%%%%%%%%%%%%%%%%%%%%%%%%%%%%%%%%
A sequence of random variables $\iota \equiv (\iota_{n})_{n\geq 0}: (\cA,\varphi)\to (\cM,\psi)$ 
is \emph{partially spreadable} if there exists a representation  $\rho:F^{+}\to \End(\cM,\psi)$ 
such that the following localization and stationarity properties are satisfied:
\begin{align}
&&&&\iota_0 &= \rho(g_n) \iota_0 
&&\text{for all $n \ge 1$};&&&&\tag{L} \label{eq:L-thompson}\\
&&&& \iota_n    & = \rho(g_0^n) \iota_0&& \text{if $n \ge 0$.}   
&&&&  \tag{S} \label{eq:S-thompson}
\end{align}
More generally, $\iota$ is said to be \emph{partially spreadable} if there exists a partially
spreadable sequence $\tiota$ such that $\iota \stackrel{\distr}{=}\tiota$. 
%%%%%%%%%%%%%%%%%%%%%%%%%%%%%%%%%%%%%%%%%%%%%%%%%%%%%%%
\end{Definition}
%%%%%%%%%%%%%%%%%%%%%%%%%%%%%%%%%%%%%%%%%%%%%%%%%%%%%%%
Clearly spreadability implies partial spreadability. Of course, the crucial question is if
partial spreadability allows to develop similar results of de Finetti type as it is the case
for spreadability, especially in the general framework of noncommutative probability. As detailed
in \cite{Ko10}, conditional independence in classical probability is generalized to a geometric
notion which we call here `conditional CS independence' (see Definition
\ref{definition:order/full-independence}) and which is intimately related to Popa's notion of
commuting squares in subfactor theory. This geometric viewpoint on a very general notion of
noncommutative independence emerged out of the investigations of K\"ummerer on the structure of
noncommutative stationary Markov processes \cite{Ku85} and is further justified by the following
noncommutative extended de Finetti theorem.  
%%%%%%%%%%%%%%%%%%%%%%%%%%%%%%%%%%%%%%%%%%%%%%%%%%%%%%%
\begin{Theorem}[\cite{Ko10}]\label{theorem:ncdf} 
%%%%%%%%%%%%%%%%%%%%%%%%%%%%%%%%%%%%%%%%%%%%%%%%%%%%%%%
Let $\iota \equiv (\iota_n)_{n\ge 0} \colon (\cA,\varphi) \to (\cM,\psi)$ be a sequence of 
random variables and consider the following conditions:
\begin{enumerate}
\item[(a)]  $\iota$ is spreadable;  
\item[(b)]  $\iota$ is stationary and conditionally CS independent;
\item[(c)]  $\iota$ is identically distributed and conditionally CS independent.
\end{enumerate}
Then one has the following implications:
\[
\text{(a)}   \Longrightarrow \text{(b)}   \Longrightarrow \text{(c)}.   
\]
%%%%%%%%%%%%%%%%%%%%%%%%%%%%%%%%%%%%%%%%%%%%%%%%%%%%%%%
\end{Theorem}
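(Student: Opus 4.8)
\emph{The plan} is to treat the two implications separately, the second being essentially a triviality. For $(b) \Rightarrow (c)$, stationarity of $\iota$ restricts in particular to identical distribution of the individual variables $\iota_n$, while conditional CS independence is a hypothesis common to both $(b)$ and $(c)$; so this implication follows by inspection. Accordingly I would devote the work to $(a) \Rightarrow (b)$.

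For $(a) \Rightarrow (b)$, set $\alpha_n := \varrho(h_n) \in \End(\cM,\psi)$. The first step is to read off the action of each $\alpha_n$ on the sequence from the relations $h_k h_\ell = h_{\ell+1} h_k$ (for $0 \le k \le \ell$). A short induction based on $h_n h_0 = h_0 h_{n-1}$ ($n \ge 1$) gives $h_n h_0^k = h_0^{k+1}$ when $1 \le n \le k$, and $h_n h_0^k = h_0^k h_{n-k}$ when $n > k$; substituting into \eqref{eq:S-spread} and invoking \eqref{eq:L-spread} yields
\begin{align*}
\alpha_n \iota_k &= \iota_k &&\text{for } 0 \le k < n, \\
\alpha_n \iota_k &= \iota_{k+1} &&\text{for } k \ge n.
\end{align*}
Thus each $\alpha_n$ with $n \ge 1$ fixes the initial block $\iota_0, \dots, \iota_{n-1}$ and spreads the remaining tail, the noncommutative counterpart of an order-preserving insertion, while $\alpha_0$ is the full shift $\alpha_0 \iota_k = \iota_{k+1}$. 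Stationarity is then immediate: since $\alpha_0^j \iota_k = \iota_{k+j}$ and $\psi \circ \alpha_0 = \psi$, applying $\alpha_0^j$ to any joint $\ast$-monomial in the $\iota_{n_i}$ shows that its $\psi$-moment is invariant under the shift $n_i \mapsto n_i + j$.

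The substantial content is conditional CS independence. I would take the conditioning algebra to be the tail $\cN := \bigcap_{m \ge 0} \vN(\iota_m, \iota_{m+1}, \dots)$ and first identify it with the fixed-point algebra $\cM^{\alpha_0}$ via a mean-ergodic argument for the $\psi$-preserving shift $\alpha_0$; faithfulness of $\psi$ together with $\alpha_0$-invariance then secures the conditional expectation $E_\cN$. The decisive step is to convert the spreading action into the commuting-square relation defining CS independence. Fixing a split point $m$ and writing $\mathcal{P} = \vN(\iota_0, \dots, \iota_{m-1})$, $\mathcal{Q} = \vN(\iota_m, \iota_{m+1}, \dots)$, the identity $\psi \circ \alpha_m = \psi$ together with $\alpha_m|_{\mathcal{P}} = \mathrm{id}$ gives $\psi(ab) = \psi(a\,\alpha_m^{j}(b))$ for all $a \in \mathcal{P}$, $b \in \mathcal{Q}$ and all $j \ge 0$; since $\alpha_m^{j}(b) \in \vN(\iota_{m+j}, \iota_{m+j+1}, \dots)$ and hence approaches $\cN$, a martingale/mean-ergodic argument identifies the common value as $\psi(a\,E_\cN(b))$, forcing $E_\mathcal{P}(b) = E_\cN(b)$. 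Equivalently $\mathcal{P}$ and $\mathcal{Q}$ form a commuting square over $\cN$, and ranging over all split points delivers conditional CS independence of the whole family.

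The main obstacle I anticipate is precisely this passage to the limit. The relations of $S^+$ encode the insertion symmetry only at the level of the maps $\alpha_n$, and converting it into the metric statement $E_\mathcal{P} E_\mathcal{Q} = E_\cN$ requires controlling the conditional expectations onto the growing blocks and, crucially, justifying that the iterated images $\alpha_m^{j}(b)$ converge into $\cN$ in the $L^2(\psi)$ or strong topology. This is where the normality of the $\alpha_n$, the faithfulness of $\psi$, and the identification $\cN = \cM^{\alpha_0}$ must be combined with care, and it constitutes the technical heart on which the de Finetti conclusion rests.
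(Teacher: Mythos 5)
This theorem is stated in the paper without proof; it is imported from \cite{Ko10}, so there is no in-text argument to compare against. Your outline does track the strategy of the cited proof: the computation $\alpha_n\iota_k=\iota_k$ for $k<n$ and $\alpha_n\iota_k=\iota_{k+1}$ for $k\ge n$ is correct and is exactly how the $S^+$-relations are exploited; stationarity from $\psi\circ\alpha_0=\psi$ is fine; and (b)$\Rightarrow$(c) is indeed trivial. Two smaller points you should still make explicit: spreadability in Definition \ref{definition:spreadable} is only a distributional property, so the representation lives a priori on a model sequence $\tiota$ and conditional CS independence over the tail algebra must be transferred back to $\iota$ (it can be, being a moment condition); and the identification of the tail algebra with $\cM^{\alpha_0}$ requires first reducing to a minimal sequence and is itself a nontrivial result of \cite{Ko10}, not a bare consequence of the mean ergodic theorem.

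The genuine gap is at your final step. Writing $\mathcal{P}=\cA_{[0,m-1]}$, $\mathcal{Q}=\cA_{[m,\infty)}$, the identity $\psi(ab)=\psi(a\,\alpha_m^{j}(b))$ plus Ces\`aro averaging gives $\psi(ab)=\psi\big(a\,E_{\cM^{\alpha_m}}(b)\big)$ for $a\in\mathcal{P}$, $b\in\mathcal{Q}$, and one must still argue that $E_{\cM^{\alpha_m}}(b)$ lies in $\cN$. But even granting that, your conclusion ``$E_{\mathcal{P}}(b)=E_{\cN}(b)$'' presupposes $\cN\subset\mathcal{P}$, which fails in general: the paper explicitly warns, right after the definition of conditional CS independence, that the conditioning algebra need not be contained in the two algebras (citing \cite[Example 4.6]{Ko10}). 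The correct target is the commuting square of $\mathcal{P}\vee\cN$ and $\mathcal{Q}\vee\cN$ over $\cN$, i.e.\ $E_{\mathcal{P}\vee\cN}E_{\mathcal{Q}\vee\cN}=E_{\cN}$, and passing from the two-variable identity $\psi(ab)=\psi(aE_{\cN}(b))$ to $E_{\mathcal{P}\vee\cN}(b)=E_{\cN}(b)$ requires controlling alternating words $a_0n_1a_1\cdots n_ka_k$ with $a_i\in\mathcal{P}$, $n_j\in\cN$, via the module property of $E_{\cN}$ and a further factorization argument. That upgrade is the technical core of the proof in \cite{Ko10}; it is not a formal consequence of the single moment identity you derive, so as written the argument does not yet establish conditional CS independence in the sense the theorem asserts.
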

%%%%%%%%%%%%%%%%%%%%%%%%%%%%%%%%%%%%%%%%%%%%%%%%%%%%%%%
We refer the interested reader to the introduction of \cite{Ko10} and to \cite{GK09,GK12} to
learn more on why one should not expect an equivalence of these three statements in the general
framework of noncommutative probability, in contrast to the situation of classical probability or
free probability.  Replacing spreadability by partial spreadability we have succeeded to
establish the following main result of de Finetti type.
%%%%%%%%%%%%%%%%%%%%%%%%%%%%%%%%%%%%%%%%%%%%%%%%%%%%%%%
\begin{Theorem}\label{theorem:extended-de-Finetti}
%%%%%%%%%%%%%%%%%%%%%%%%%%%%%%%%%%%%%%%%%%%%%%%%%%%%%%%
Let $\iota \equiv (\iota_n)_{n\ge 0} \colon (\cA,\varphi) \to (\cM,\psi)$ be a sequence of 
random variables and consider the following conditions:
\begin{enumerate}
\item[(a)] $\iota$ is partially spreadable;  
\item[(b)]  $\iota$ is stationary and adapted to a local Markov filtration;
\item[(c)]  $\iota$ is identically distributed and adapted to a local Markov filtration.
\end{enumerate}
Then one has the following implications:
\[
\text{(a)}   \Longrightarrow \text{(b)}   \Longrightarrow \text{(c)}.   
\]
%%%%%%%%%%%%%%%%%%%%%%%%%%%%%%%%%%%%%%%%%%%%%%%%%%%%%%%
\end{Theorem}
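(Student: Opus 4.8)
The plan is to prove the two implications separately, drawing on the structural theory of representations of $F^+$ in $\End(\cM,\psi)$ and on the theory of commuting squares. The key conceptual input is that each generator $\rho(g_n)$ of the representation is an endomorphism of $(\cM,\psi)$, hence admits a $\psi$-preserving conditional expectation onto its range, and that the relations $g_k g_\ell = g_{\ell+1} g_k$ (for $k<\ell$) force a rich compatibility among the associated fixed-point algebras. For the implication $(a)\Rightarrow(b)$, I would first assume (after passing to a distributionally equivalent sequence) that the localization and stationarity properties \eqref{eq:L-thompson} and \eqref{eq:S-thompson} hold on the nose. I would then define, for each $n\ge 0$, the subalgebra $\cM_{[0,n]}$ generated by the ranges $\iota_0(\cA),\iota_1(\cA),\ldots,\iota_n(\cA)$, and more generally a filtration indexed by intervals. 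The localization property says $\iota_0$ lands in the fixed-point algebra of $\rho(g_n)$ for every $n\ge 1$, and stationarity says $\iota_n = \rho(g_0^n)\iota_0$, so the whole process is generated by iterating the single endomorphism $\rho(g_0)$ on the ``present'' $\iota_0(\cA)$.

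\smallskip

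The heart of the argument is to extract a \emph{local Markov filtration} from the algebraic relations of $F^+$. I would identify, for each $n$, the tail-type algebra $\cN_n$ fixed by $\rho(g_{n+1})$ (or by all $\rho(g_m)$ with $m>n$), and show using the defining relations that the triple consisting of the past $\cM_{[0,n]}$, the present/transition algebra, and the future forms a commuting square with respect to the $\psi$-preserving conditional expectations. Concretely, the relation $g_k g_\ell = g_{\ell+1}g_k$ translates into an intertwining between the endomorphisms that, when combined with the module property of conditional expectations, yields the commuting square identity $E_{\text{past}}\, E_{\text{future}} = E_{\text{core}}$. This is precisely the condition (Definition \ref{definition:order/full-independence}) that encodes conditional CS independence of past and future over the present, i.e.\ the local Markov property. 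Stationarity of $\iota$ follows because $\rho(g_0)$ implements a single shift carrying each interval algebra to the next, so the transition mechanism is time-homogeneous.

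\smallskip

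For the implication $(b)\Rightarrow(c)$, the argument is comparatively soft: stationarity is a strictly stronger requirement than being identically distributed, so the ``identically distributed'' half of (c) is immediate, and the ``adapted to a local Markov filtration'' clause is common to both (b) and (c) and therefore carries over verbatim. Thus this implication amounts to observing that (c) retains only the weaker distributional symmetry together with the same Markov filtration structure; no new construction is needed.

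\smallskip

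I expect the main obstacle to be the first implication, specifically the verification that the commuting square identity genuinely follows from the $F^+$ relations rather than from the stronger $S^+$ relations. In the spreadable case one has the extra relations $h_k h_k = h_{k+1} h_k$, which give full conditional independence; for $F^+$ these are absent, so one obtains only the weaker Markovian commuting square, and care is needed to see that exactly this much—and no more—survives. The delicate point is to pin down the correct ``core'' algebra over which past and future commute: it should be the shifted fixed-point algebra built from $\rho(g_0)$ and the localization data, and matching this against the abstract definition of a local Markov filtration (as developed in Subsection \ref{subsection:s-f-p-a}) is where the real work lies. Establishing that the conditional expectations associated with the endomorphisms $\rho(g_n)$ interact correctly with the filtration—using normality of $\psi$ and faithfulness to pass between the algebraic relations and the Hilbert-space picture—will require the bulk of the technical effort.
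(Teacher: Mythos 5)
Your overall architecture matches the paper's: pass to the distributionally equivalent sequence on which \eqref{eq:L-thompson} and \eqref{eq:S-thompson} hold exactly, build a local Markov filtration out of the fixed-point data of the represented generators, deduce adaptedness, and read off stationarity from $\psi\circ\rho(g_0)=\psi$; your treatment of (b)$\Rightarrow$(c) is exactly the paper's one-line observation. But two points in the middle of your argument need repair. As written, you propose to verify the past/future commuting square for the algebras $\cM_{[0,n]}$ that you define as generated by the ranges $\iota_0(\cA),\ldots,\iota_n(\cA)$, i.e.\ for the canonical local filtration of the sequence itself. That step would fail: the canonical local filtration of a partially spreadable sequence need not be Markovian (this is precisely why the paper introduces \emph{maximal} partial spreadability, and why functions of Markov chains furnish counterexamples; see Remark \ref{remark:failure-mark-filt}). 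Statement (b) only asserts adaptedness to \emph{some} local Markov filtration, and the correct candidate is the larger filtration $\cM_{[m,m+n]}:=\rho(g_0^m)(\cM_n)$ with $\cM_n:=\bigcap_{k\ge n+1}\cM^{\rho(g_k)}$: one proves that \emph{this} filtration is Markovian (Corollary \ref{corollary-markov-filtration-0} via Theorem \ref{theorem:markov-filtration-1}) and, separately, that $\cA_{[0,n]}\subset\cM_n$, which follows from the monoid relations together with the localization property (Proposition \ref{proposition:fixed-point}). Your closing paragraph correctly identifies the shifted fixed-point algebras as the ``core'', but the past and the future of the commuting square must also be taken inside that larger filtration, not inside the canonical one.

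The second issue is the mechanism by which the relations $g_kg_\ell=g_{\ell+1}g_k$ become statements about conditional expectations. This is not the module property: the operative tool is the mean ergodic theorem. Writing $Q_n=\lim_{N}\frac{1}{N}\sum_{i=1}^{N}\rho(g_n)^i$ in the pointwise strong operator topology and applying the monoid relations termwise yields the intertwining $\rho(g_k)\,Q_n=Q_{n+1}\,\rho(g_k)$ for $k<n$ (Proposition \ref{proposition:intertwining-property}); this intertwining, fed into Theorem \ref{theorem:markov-filtration-1}, is what produces the commuting squares and hence the Markov property of $\cM_\bullet$. Without the ergodic-averaging step, your ``intertwining between the endomorphisms combined with the module property'' has no route to an identity among the $\psi$-preserving conditional expectations onto the fixed-point algebras, so the heart of (a)$\Rightarrow$(b) remains unproved.
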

%%%%%%%%%%%%%%%%%%%%%%%%%%%%%%%%%%%%%%%%%%%%%%%%%%%%%%%
Here the notion of `adaptedness to a local Markov filtration' is cast in Definition \ref{definition:markovianity}.
The proof of this main result is the subject of Subsection \ref{subsection:ncdf}. Clearly, the
converse implication from (c) to (b) fails for the obvious reason that an identically distributed
sequence may not be stationary. Also one should not expect that (b) implies (a) in the general
noncommutative setting, for similar reasons of the corresponding failure in Theorem
\ref{theorem:ncdf}. At the time of this writing we do not know if the conditions (a) and (b) are
equivalent in the commutative setting corresponding to classical probability.

Commonly Markovianity is introduced in the 
literature as a property of a stochastic process with respect to a filtration. Our approach is slightly 
more general in two aspects. Similar to random fields, our approach considers `local filtrations' which 
are \emph{partially ordered} families of von Neumann subalgebras, 
or $\sigma$-subalgebras, indexed by `discrete time intervals'. Furthermore these `local filtrations' 
allow us to introduce Markov properties directly, \emph{without} reference to random variables 
(see Definition \ref{definition:markov-filtration} and Definition \ref{definition:markov-filtration-c}, 
respectively).

Let us stress that, as already stated in 
the setting of classical probability (see Subsection \ref{subsection:FunctionsOfMarkov}), a partially 
spreadable sequence may fail to be a Markov sequence. (Loosely phrased, this is connected to the fact 
that commuting square properties of von Neumann subalgebras are not robust under restrictions, compare 
Remark \ref{remark:cs-not-robust}.) But Markovianity can be enforced by strengthening `partial
spreadability' to `maximal partial spreadability' (as done in Definition \ref{definition:max-p-s}).
Normally the random variable $\iota_0$ of a partially spreadable sequence $\iota$ satisfies only
the localization property 
\[
\iota_0(\cA) \subset \bigcap_{n \ge 1} \cM^{\rho(g_n)}, 
\]
where $\cM^{\rho(g_n)} = \{x \in \cM \mid \rho(g_n) (x) = x\}$ denotes the fixed point von
Neumann subalgebra of the represented generator $\rho(g_n)$. This inclusion condition is now
strengthened to the equality
\[
\iota_0(\cA) = \bigcap_{n \ge 1} \cM^{\rho(g_n)}
\]
for the definition of a maximal partially spreadable sequence. Consequently, one arrives at
Theorem \ref{theorem:maxps} which, for the convenience of the reader, we state here slightly
reformulated and aligned with the formulation of the classical de-Finetti-type result in Theorem
\ref{theorem:definetti-intro}:
%%%%%%%%%%%%%%%%%%%%%%%%%%%%%%%%%%%%%%%%%%%%%%%%%%%%%%%
\begin{Theorem}
%%%%%%%%%%%%%%%%%%%%%%%%%%%%%%%%%%%%%%%%%%%%%%%%%%%%%%%
Let $\iota$ be a sequence as in Theorem \ref{theorem:extended-de-Finetti} and consider the following
conditions:
\begin{enumerate}
\item[(a)] $\iota$ is maximal partially spreadable;  
\item[(b)]  $\iota$ is a stationary Markov sequence. 
\end{enumerate}
Then the implication $\text{(a)}   \Longrightarrow \text{(b)}$ is valid.   
%%%%%%%%%%%%%%%%%%%%%%%%%%%%%%%%%%%%%%%%%%%%%%%%%%%%%%%
\end{Theorem}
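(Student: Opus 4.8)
The plan is to start from the representation $\rho\colon F^{+}\to\End(\cM,\psi)$ supplied by maximal partial spreadability, which satisfies the localization and stationarity properties \eqref{eq:L-thompson}, \eqref{eq:S-thompson} together with the maximality equality $\iota_0(\cA)=\bigcap_{n\ge1}\cM^{\rho(g_n)}$. Since maximal partial spreadability is in particular partial spreadability, Theorem \ref{theorem:extended-de-Finetti} already yields that $\iota$ is stationary. Thus the entire content to be established is the Markov property of the local filtration generated by $\iota$; that is, I must promote the mere adaptedness to a local Markov filtration, furnished by Theorem \ref{theorem:extended-de-Finetti}, to the statement that the \emph{generated} local filtration is itself Markovian in the sense of Definition \ref{definition:markov-filtration}.

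Write $\alpha:=\rho(g_0)$ for the shift endomorphism, so that $\iota_n=\alpha^n\iota_0$ by \eqref{eq:S-thompson}. First I would record the algebraic consequences of the Thompson relations $g_0 g_k=g_{k+1}g_0$ for $k\ge1$, namely $\alpha\circ\rho(g_k)=\rho(g_{k+1})\circ\alpha$ as maps on $\cM$. Since $\alpha$ is injective, this gives the clean identity of fixed point algebras $\alpha\bigl(\cM^{\rho(g_k)}\bigr)=\alpha(\cM)\cap\cM^{\rho(g_{k+1})}$, and applying powers of $\alpha$ to the maximality equality yields
\[
\iota_n(\cA)=\alpha^n\Bigl(\bigcap_{k\ge1}\cM^{\rho(g_k)}\Bigr).
\]
This is exactly the setting of the shifted fixed point algebras treated in Subsection \ref{subsection:s-f-p-a}. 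Using those computations I would express each generated interval algebra $\cA_{[0,n]}=\bigvee_{j=0}^{n}\iota_j(\cA)$, and the corresponding present, tail and future algebras, as (intersections of) shifted fixed point algebras of the represented generators.

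With the generated filtration so identified, the next step is to invoke the commuting square results of Subsection \ref{subsection:s-f-p-a}, and their assembly into a Markov filtration in Subsection \ref{subsection:Markov-F}, which show that families of shifted fixed point algebras of a representation of $F^{+}$ form the commuting squares required by the Markov property. Combined with the generating property from Subsection \ref{subsection:generating}, this delivers the $\psi$-preserving conditional expectations onto the interval algebras and the commuting square relating past-and-present, present, and future. Stationarity having been secured above, one then reads off that $\iota$ is a stationary Markov sequence.

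The hard part will be the precise role of the maximality equality. For a merely partially spreadable sequence one has only the inclusion $\iota_0(\cA)\subseteq\bigcap_{n\ge1}\cM^{\rho(g_n)}$, and the commuting squares established in Subsection \ref{subsection:s-f-p-a} live on the larger fixed point algebras; since commuting square properties are not inherited under passage to subalgebras (Remark \ref{remark:cs-not-robust}), they need not descend to the generated filtration. Maximality upgrades this inclusion to an equality, forcing the generated interval algebras to coincide with the shifted fixed point algebras rather than to sit strictly inside them, and it is exactly this coincidence that makes the commuting squares available for $\iota$ itself. Ensuring that this equality propagates correctly through all powers of $\alpha$ and through the joins defining the interval algebras, so that no genuine restriction of a commuting square is required at any stage, is where the main care is needed.
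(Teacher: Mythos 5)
Your proposal is correct and takes essentially the same route as the paper: maximality upgrades the inclusion $\iota_0(\cA)\subset\cM_0$ to the equality $\iota_0(\cA)=\cM_0=\bigcap_{k\ge1}\cM^{\rho(g_k)}$, so the canonical local filtration of $\iota$ coincides with the shifted fixed-point-algebra filtration, whose Markovianity is exactly Corollary \ref{corollary-markov-filtration-0} (built on the commuting squares of Subsections \ref{subsection:s-f-p-a} and \ref{subsection:Markov-F}), while stationarity is immediate from $\iota_n=\rho(g_0^n)\iota_0$. The only minor deviation is your appeal to the generating property: the paper's Corollary \ref{corollary-markov-filtration-0} avoids it by working with $\cM_\infty$ and the commuting square of Corollary \ref{corollary:commsquare}, though restricting to $\cM_\infty$ as you envisage is harmless.
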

%%%%%%%%%%%%%%%%%%%%%%%%%%%%%%%%%%%%%%%%%%%%%%%%%%%%%%%
Again, one cannot expect the converse implication to be true in the full generality of the
noncommutative setting, as reminded above already.   

We are left to outline the content of this paper. Most of our main results are proven in an
operator algebraic framework of noncommutative probability, as introduced in Section
\ref{section:preliminaries}. We provide definitions, notation and some background results on the
Thompson monoid $F^+$ in Subsection \ref{subsection:basics-on-F-F+}, and on the partial shift
monoid $S^+$ in Subsection \ref{subsection:partialshifts}. The basics of noncommutative
probability spaces and Markov maps are provided in Subsection \ref{subsection:Markov-maps}.
Subsection \ref{subsection:Random Variables and Invariance Principles} is devoted to
noncommutative random variables and established distributional invariance principles like
exchangeability, spreadability and stationarity. We present  in Subsection
\ref{subsection:Independence and Markovianity} a geometric notion of noncommutative conditional
independence which essentially resembles a structure known as commuting squares in subfactor
theory. Furthermore we provide the notion of a local Markov filtration which allows to define 
Markovianity on the level of von Neumann subalgebras without any reference to noncommutative random 
variables. Actually these structures underly K\"ummerer's approach to stationary Markov dilations and
can now be seen to emerge from distributional invariance principles in noncommutative
probability. Finally we provide some results on noncommutative stationary processes in Subsection
\ref{subsection:Noncommutative Stationary Processes}. Here we will meet the one-sided version of
noncommutative stationary Markov processes and Markov dilations in the sense of K\"ummerer
\cite{Ku85} and as they are considered within the context of factorizable Markov maps by Musat
and Haagerup \cite{HM11}. 

Section \ref{section:MarkovianityandF+Classical} is devoted to the introduction of `partial
spreadability' as a new distributional invariance principle in classical probability theory. 
Markovianity and representations of the Thompson monoid $F^+$ are addressed in the traditional
language of classical probability. Subsection \ref{subsection:CSMPR} sketches how one
arrives at noncommutative notions of probability spaces and random variables when starting from a
traditional setting. Subsection \ref{subsection:DIPinCP} presents the extended de Finetti
theorem, Theorem \ref{theorem:cdeF}, to create some relevant background for the introduction 
of the new distributional invariance principles of `partial spreadability' and `maximal partial
spreadability' in Definition \ref{definition:partspread-c}. Furthermore we introduce in Definition
\ref{definition:markov-filtration-c} the notion of a local Markov filtration which is slightly 
more general than those usually considered for Markov sequences, but quite familiar in the topic of 
random fields or generalized stochastic processes. This allows us to formulate  several de-Finetti-type 
results for (maximal) partially spreadable sequences, in particular Theorem 
\ref{theorem:cldeFi-with condition} and Theorem \ref{theorem:definetti-intro}, our main results of 
de Finetti type on the characterization of stationary Markov sequences in classical probability. We 
continue in Subsection \ref{subsection:FunctionsOfMarkov}  with providing evidence that the class of 
partially spreadable sequences is much larger than the class of (mixtures of) stationary Markov 
sequences. For this purpose, we discuss functions of stationary Markov chains, in particular in the
context of the algebraization procedure. This yields our main result in Theorem 
\ref{theorem:functionMarkovPC} that functions of partially spreadable sequences remain partially 
spreadable (but may no longer be Markovian, as it is well-known). Finally, we provide in Subsection 
\ref{subsection:markov-dilations} a general result on stationary Markov processes as open dynamical
systems, Theorem \ref{theorem:kuemmerer-onesided}. It builds on the usual Daniell-Kolmogorov
construction for such processes and underpins K\"ummerer's open dynamical systems approach to 
stationary Markov processes \cite{Ku85}. Our presentation follows closely the arguments given by
K\"ummerer in \cite{Ku86}. In particular, we will illustrate this approach by zero-one-valued 
stationary unilateral Markov sequences in Example \ref{example:M-D}. These results will be drawn upon 
in Subsection \ref{subsection:constr-class-prob} for the construction of a representation of the 
Thompson monoid $F^+$. Effectively, the availability of such an alternative construction entails that 
a stationary Markov process is (maximal) partially spreadable. This establishes one direction 
of the claimed equivalence in the de Finetti theorem, Theorem \ref{theorem:definetti-intro}.

Section \ref{section:markovianity-from-F+} investigates representations of the Thompson monoid
$F^+$ in the endomorphisms of noncommutative probability spaces. Subsection
\ref{subsection:generating} introduces the generating property of representations of $F^+$ in
Definition \ref{definition:generating}. This property ensures that the fixed point algebras of the
represented generators of $F^+$ form a tower which generates the noncommutative probability
space, see Proposition \ref{proposition:generating-property}. Effectively this tower of fixed
point algebras equips the noncommutative probability space with a filtration which, using actions
of the represented generators, can be further upgraded to become a local Markov filtration. We show in
Subsection \ref{subsection:s-f-p-a} that `shifted' fixed point algebras of the represented
generators of $F^+$ provide triangular towers of commuting squares. Local Markov filtrations are
obtained from this as a particular property, see Corollary \ref{corollary:markov-3}. 
Subsection \ref{subsection:Markov-F} considers certain noncommutative stationary processes which
are partially spreadable by construction. A main result is Theorem
\ref{theorem:markov-filtration-1} which establishes that such processes are adapted to a local Markov 
filtration. Finally, the proof of the de Finetti theorem for stationary Markov processes, Theorem
\ref{theorem:extended-de-Finetti}, is completed in Subsection \ref{subsection:ncdf}. 

Section \ref{section:Constructions of Reps of F+}  provides some elementary constructions of
representations of the Thompson monoid $F^+$. Here we focus on tensor product constructions in
Subsection \ref{subsection:tensor-product} and meet structures which are familiar from so-called
tensor dilations of Markov operators \cite{Ku85,Ku86}. When specializing these tensor product
constructions to commutative von Neumann algebras, we obtain representations of the Thompson
monoid $F^+$ in the setting of classical probability. This is the subject of Subsection
\ref{subsection:constr-class-prob}, where we will also complete the proof of the de Finetti
theorem for stationary Markov sequences in the classical setting, Theorem
\ref{theorem:definetti-intro}. Finally, we turn our attention in Subsection 
\ref{subsection:op-alg-construction} to constructions in the general framework of operator algebras. 
We introduce certain monoidal extensions of $F^+$ and $S^+$, and investigate their representation 
theory, to adapt and refine K\"ummerer's approach on noncommutative 
Markov processes as perturbations of noncommutative Bernoulli shifts \cite{Ku93}.   
%%%%%%%%%%%%%%%%%%%%%%%%%%%%%%%%%%%%%%%%%%%%%%%%%%%%%%%
%%%%%%%%%%%%%%%%%%%%%%%%%%%%%%%%%%%%%%%%%%%%%%%%%%%%%%%
\section{Preliminaries} 
\label{section:preliminaries}
%\numberwithin{equation}{subsection}
%%%%%%%%%%%%%%%%%%%%%%%%%%%%%%%%%%%%%%%%%%%%%%%%%%%%%%%
%%%%%%%%%%%%%%%%%%%%%%%%%%%%%%%%%%%%%%%%%%%%%%%%%%%%%%%
\subsection{\texorpdfstring{The Thompson group $F$ and its monoid $F^+$}{}}
\label{subsection:basics-on-F-F+}
%%%%%%%%%%%%%%%%%%%%%%%%%%%%%%%%%%%%%%%%%%%%%%%%%%%%%%%
The Thompson group $F$, originally introduced by Richard Thompson in 1965 as a certain 
group of piece-wise linear homeomorphisms on the interval $[0,1]$, is known to have the 
infinite presentation
\begin{equation*}
F=\langle g_0,g_1,g_2,\ldots \mid g_{k}g_{\ell}=g_{\ell+1}g_{k} 
\text{ for } 0\leq k<\ell <\infty \rangle. 
\end{equation*}
We note that these generators $g_k$ of the group $F$ correspond to the inverses of generators
usually used in the literature (e.g. \cite{Be04}). We work throughout with this group
presentation as we are interested in the study of the following monoid. As the defining relations of
this presentation involve no inverse generators, one can associate
to it the monoid  
\begin{equation}\label{eq:F+}
F^{+}=\langle g_0,g_1,g_2,\ldots \mid g_{k}g_{\ell}=g_{\ell+1}g_{k} 
\text{ for } 0\leq k<\ell <\infty \rangle^+, 
\end{equation} 
henceforth referred to as the \emph{Thompson monoid} $F^+$. (We refer the reader to Remark \ref{remark:F^+} on how this monoid relates to the one usually introduced in the literature.)

An element $e \neq g \in F^+$ has the unique normal form
\begin{align} \label{eq:F-monoid-normal-form}
g = g_k^{a_k} g_{k-1}^{a_{k-1}} \cdots   g_1^{a_1} g_0^{a_0}, 
\end{align}
where $a_0, a_1, \ldots, a_{k-1},a_k \in \Nset_0$ with $a_k >0$ for some $k \in \Nset_0$ (see
\cite[Theorem 2.4.4]{Be04}, for example, bearing in mind that we use the relations of the
inverses).   
 
%%%%%%%%%%%%%%%%%%%%%%%%%%%%%%%%%%%%%%%%%%%%%%%%%%%%%%%  
\begin{Definition} \normalfont \label{definition:mn-shift}
%%%%%%%%%%%%%%%%%%%%%%%%%%%%%%%%%%%%%%%%%%%%%%%%%%%%%%%  
Let $m,n \in \Nset_0$ with $m \le n$ be fixed. The \emph{$(m,n)$-partial shift} $\sh_{m,n}$ is
the monoid endomorphism on $F^+$ defined by 
\[
\sh_{m,n}(g_k)  = \begin{cases}
                   g_m &\text{if $k=0$}\\
                   g_{n +k} &\text{if $k \ge 1$}.
                  \end{cases}
\]
%%%%%%%%%%%%%%%%%%%%%%%%%%%%%%%%%%%%%%%%%%%%%%%%%%%%%%%  
\end{Definition}
%%%%%%%%%%%%%%%%%%%%%%%%%%%%%%%%%%%%%%%%%%%%%%%%%%%%%%%  
Each map $\sh_{m,n}$ is well-defined as a monoid endomorphism as it preserves all defining relations of
$F^+$. In particular, the endomorphism $\sh_{1,1}$ satisfies $\sh_{1,1}(g_k) = g_{k+1}$ for all 
$k \ge 0$. This shift is also considered in \cite[Definition 1.6.5]{Be04}.
%%%%%%%%%%%%%%%%%%%%%%%%%%%%%%%%%%%%%%%%%%%%%%%%%%%%%%%  
\begin{Lemma} \normalfont \label{lemma:m-n-shift}
%%%%%%%%%%%%%%%%%%%%%%%%%%%%%%%%%%%%%%%%%%%%%%%%%%%%%%%  
The monoid endomorphisms $\sh_{m,n}$ on $F^+$ are injective for all $m,n \in \Nset_0$.  
%%%%%%%%%%%%%%%%%%%%%%%%%%%%%%%%%%%%%%%%%%%%%%%%%%%%%%%  
\end{Lemma}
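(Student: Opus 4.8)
The plan is to exploit the uniqueness of the normal form \eqref{eq:F-monoid-normal-form} and to check that $\sh_{m,n}$ carries normal forms to normal forms in an index-recoverable way. Since every element of $F^+$ is represented by a \emph{unique} normal form, injectivity of $\sh_{m,n}$ amounts to the statement that distinct normal forms have distinct images; so it suffices to track what $\sh_{m,n}$ does to the exponent data $(a_0,a_1,\ldots,a_k)$. First I would record the convenient variant of \eqref{eq:F-monoid-normal-form}: after deleting the generators carrying zero exponent, every $e \neq g \in F^+$ has a unique reduced expression $g = g_{i_1}^{b_1} g_{i_2}^{b_2}\cdots g_{i_r}^{b_r}$ with $i_1 > i_2 > \cdots > i_r \ge 0$ and all $b_j > 0$. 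A word of this shape — strictly decreasing indices, positive exponents — is automatically in normal form, since no defining relation of $F^+$ is applicable to it.

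Next I would apply the homomorphism $\sh_{m,n}$ to this reduced expression to obtain
\[
\sh_{m,n}(g) = \sh_{m,n}(g_{i_1})^{b_1} \cdots \sh_{m,n}(g_{i_r})^{b_r}.
\]
If $i_r \ge 1$, then $g_0$ does not occur and every factor is $g_{n+i_j}$, so the image reads $g_{n+i_1}^{b_1}\cdots g_{n+i_r}^{b_r}$ with indices $n+i_1 > \cdots > n+i_r \ge n+1$. If $i_r = 0$, the final factor becomes $g_m^{b_r}$ while the others remain $g_{n+i_j}$ with $n+i_j \ge n+1$; because $m \le n < n+1$, the resulting indices $n+i_1 > \cdots > n+i_{r-1} \ge n+1 > m$ are again strictly decreasing. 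In either case $\sh_{m,n}(g)$ is itself a reduced expression, hence \emph{is} the normal form of its value, and in particular $\sh_{m,n}(g) \neq e$.

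The crucial ingredient is the hypothesis $m \le n$: it guarantees that the image $g_m$ of $g_0$ sits strictly below the images $g_{n+i}$ ($i \ge 1$) of the remaining generators, so the two blocks never overlap or reorder. Consequently the normal form of $\sh_{m,n}(g)$ determines $g$: among the indices appearing, those that are $\ge n+1$ come from generators $g_i$ with $i \ge 1$ via $i = (\text{index}) - n$, while an index equal to $m$ (necessarily the smallest, when present) comes from $g_0$; the exponents are copied verbatim. This recovery is precisely the inverse of $g \mapsto \sh_{m,n}(g)$ on reduced forms. Therefore, if $\sh_{m,n}(g) = \sh_{m,n}(g')$, then $g$ and $g'$ have identical normal forms, whence $g = g'$ (the identity being handled separately by $\sh_{m,n}(e) = e$ together with the observation that nontrivial elements have nontrivial image).

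I expect the only point that needs genuine care — rather than a real obstacle — is the verification that the shifted word is already reduced, i.e.\ that $m \le n$ really keeps the $g_0$-image at index $m$ strictly below all the images at indices $\ge n+1$; once this separation is in hand, the remainder is routine bookkeeping on the normal form.
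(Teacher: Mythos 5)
Your argument is correct and follows essentially the same route as the paper: apply $\sh_{m,n}$ to the unique normal form, observe that the hypothesis $m \le n$ keeps the image of $g_0$ at index $m$ strictly below the shifted indices $n+i \ge n+1$ so that the image is again in normal form, and conclude from uniqueness of normal forms. Your explicit index-recovery step is a harmless elaboration of the same uniqueness argument the paper invokes.
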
 \normalfont
%%%%%%%%%%%%%%%%%%%%%%%%%%%%%%%%%%%%%%%%%%%%%%%%%%%%%%%  
\begin{proof}
Let $g\in F^+$ have the (unique) normal form as stated in \eqref{eq:F-monoid-normal-form}. Then 
\[
\sh_{m,n}(g)  =   g_{n+k}^{a_k} g_{n+k-1}^{a_{k-1}} \cdots   g_{n+1}^{a_1} g_{m}^{a_0}
\]
is again in normal form. The injectivity of the map $\sh_{m,n}$ is concluded from the uniqueness
of the normal form. 
\end{proof}
%%%%%%%%%%%%%%%%%%%%%%%%%%%%%%%%%%%%%%%%%%%%%%%%%%%%%%%
\begin{Remark} \label{remark:F^+} \normalfont
%%%%%%%%%%%%%%%%%%%%%%%%%%%%%%%%%%%%%%%%%%%%%%%%%%%%%%%
We emphasize that $F^+$ differs from the monoid
usually considered in the literature, due to our choice of generators for $F$. Nevertheless,
this monoid $F^+$ is left and right cancellative, and any two elements of $F^+$ admit a left common 
multiple. Thus $F^+$ can be identified with a monoid in the Thompson group $F$ such that any element
$g$ of $F$ can be written as $g = s^{-1}t$, with $s, t \in F^+$. We refer the reader to \cite{DT19} for
further details on related constructions. Finally, we remark that, alternatively, the generators of the
monoid $F^+$ can be obtained as morphisms (in the inductive limit) of the category of finite binary
forests, similarly as done in \cite{Be04, Bu04, Jo18a}, for example. 
%%%%%%%%%%%%%%%%%%%%%%%%%%%%%%%%%%%%%%%%%%%%%%%%%%%%%%%
\end{Remark}
%%%%%%%%%%%%%%%%%%%%%%%%%%%%%%%%%%%%%%%%%%%%%%%%%%%%%%%
%%%%%%%%%%%%%%%%%%%%%%%%%%%%%%%%%%%%%%%%%%%%%%%%%%%%%%%  
\subsection{\texorpdfstring{The partial shifts monoid $S^+$.}{}} 
\label{subsection:partialshifts}
%%%%%%%%%%%%%%%%%%%%%%%%%%%%%%%%%%%%%%%%%%%%%%%%%%%%%%%
Stipulating additional relations to those for the generators in \eqref{eq:F+}, one obtains the
monoid
\begin{equation*}
S^{+}=\langle h_0,h_1,h_2,\ldots \mid h_{k}h_{\ell}=h_{\ell+1}h_{k} 
\text{ for } 0\leq k\le \ell <\infty \rangle^+ 
\end{equation*} 
as a quotient of the Thompson monoid $F^+$. The monoid $S^+$ is \emph{not} a submonoid of the
group with infinite presentation $\langle h_0,h_1,h_2,\ldots \mid h_{k}h_{\ell} = h_{\ell+1}
h_{k} \text{ for } 0\leq k\le \ell <\infty \rangle$, as the latter is isomorphic to the additive
group $\Zset$. Actually the generators $h_k$ of the monoid $S^+$ satisfy relations as they are
familiar for coface maps in simplicial cohomology.  As discussed in \cite{EGK17}, these
generators arise as morphisms in the direct limit of the semi-cosimplicial category $\Delta_S$.
This becomes evident when considering the following representation of $S^+$ which in particular
motivates to address $S^+$ as the \emph{partial shifts monoid} and its generators $h_k$ as
\emph{partial shifts}. 

It is also worthwhile to remark that while $S^+$ is a quotient of the monoid $F^+$, it is \emph{not} 
a sub-monoid of $F^+$. In particular, $F^+$ is cancellative (see \cite{DT19}), whereas $S^+$ is not. 
For instance $h_k h_k= h_{k+1} h_k$ does not imply that $h_k = h_{k+1}$. 
%%%%%%%%%%%%%%%%%%%%%%%%%%%%%%%%%%%%%%%%%%%%%%%%%%%%%%% 
\begin{Lemma}[Partial shifts] \label{definition:partial-shift}
%%%%%%%%%%%%%%%%%%%%%%%%%%%%%%%%%%%%%%%%%%%%%%%%%%%%%%%
The maps $(\theta_k)_{k \ge 0} \colon \Nset_0 \to \Nset_0$, defined by
\[
\theta_k(n) = \begin{cases} 
               n+1 & \text{ if $n \ge  k$}, \\
               n & \text{ if $n < k$},
              \end{cases}
\]
satisfy the relations $\theta_k \theta_{\ell} = \theta_{\ell+1} \theta_k$ for 
$0 \le k \le \ell < \infty$.  
%%%%%%%%%%%%%%%%%%%%%%%%%%%%%%%%%%%%%%%%%%%%%%%%%%%%%%%
\end{Lemma}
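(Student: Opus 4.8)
The plan is to prove the identity pointwise: since each $\theta_k$ is a map $\Nset_0 \to \Nset_0$, it suffices to verify that $\theta_k(\theta_\ell(n)) = \theta_{\ell+1}(\theta_k(n))$ holds for every $n \in \Nset_0$, under the standing hypothesis $0 \le k \le \ell$. I would organise the verification as a case distinction on the position of $n$ relative to the two thresholds $k$ and $\ell$, which (using $k \le \ell$) partitions $\Nset_0$ into the three regions $n < k$, $\;k \le n < \ell$, and $n \ge \ell$. In each region I evaluate the inner map first and then the outer map on each side of the claimed equation.

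For $n < k$ both compositions leave $n$ fixed, since $n$ lies strictly below all of the thresholds $k$, $\ell$ and $\ell+1$ that are ever applied. For $k \le n < \ell$ the left side gives $\theta_k(\theta_\ell(n)) = \theta_k(n) = n+1$, since $n < \ell$ leaves $\theta_\ell$ inactive while $n \ge k$ activates $\theta_k$; on the right $\theta_k(n) = n+1$ and then $\theta_{\ell+1}(n+1) = n+1$, because $n+1 \le \ell < \ell+1$ leaves $\theta_{\ell+1}$ inactive, so both sides equal $n+1$. For $n \ge \ell$ every threshold is met: the left side yields $\theta_k(n+1) = n+2$ and the right side yields $\theta_{\ell+1}(n+1) = n+2$.

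The only point that requires care — and the reason the relation takes the asymmetric form $\theta_k\theta_\ell = \theta_{\ell+1}\theta_k$ rather than $\theta_k\theta_\ell = \theta_\ell\theta_k$ — is the bookkeeping of the off-by-one shift in the middle and upper regions. On the right-hand side the map $\theta_k$ is applied first and may already raise the argument by one, so the second map must use the incremented threshold $\ell+1$ in order to reproduce the comparison that $\theta_\ell$ makes against the unshifted argument on the left. Tracking this shift correctly across the boundary values $n = k$ and $n = \ell$ is where an error would most plausibly creep in; beyond that the argument is a routine finite computation. Since $n \in \Nset_0$ was arbitrary, the three cases together establish $\theta_k \theta_\ell = \theta_{\ell+1} \theta_k$ for all $0 \le k \le \ell < \infty$.
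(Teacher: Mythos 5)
Your proof is correct: the paper states this lemma without proof, and your pointwise verification via the three-case split $n<k$, $k\le n<\ell$, $n\ge\ell$ (with the standard composition convention $\theta_k\theta_\ell=\theta_k\circ\theta_\ell$) is exactly the routine computation the authors leave to the reader. The boundary bookkeeping you flag is handled correctly, in particular the step $n<\ell\Rightarrow n+1\le\ell<\ell+1$ in the middle region.
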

%%%%%%%%%%%%%%%%%%%%%%%%%%%%%%%%%%%%%%%%%%%%%%%%%%%%%%%
%%%%%%%%%%%%%%%%%%%%%%%%%%%%%%%%%%%%%%%%%%%%%%%%%%%%%%%
\subsection{Noncommutative probability spaces and Markov maps}
\label{subsection:Markov-maps}
%%%%%%%%%%%%%%%%%%%%%%%%%%%%%%%%%%%%%%%%%%%%%%%%%%%%%%%
Throughout, a \emph{(noncommutative) probability space} $(\cM,\psi)$ consists of a 
von Neumann algebra $\cM$ and a faithful normal state $\psi$ on $\cM$. The identity 
of $\cM$ will be denoted by $\1_{\cM}$, or simply by $\1$ when the context is clear.
Throughout, $\bigvee_{i\in I}\cM_i$ denotes the von Neumann algebra generated by the 
family of von Neumann algebras $\{\cM_i\}_{i\in I} \subset \cM$ for $I\subset \Nset_0$.
If $\cM$ is abelian and acts on a separable Hilbert space, then $(\cM,\psi)$ is
isomorphic to $\big(L^\infty(\Omega, \Sigma, \mu), \int_{\Omega}  \cdot \,\, d\mu\big)$
for some standard probability space $(\Omega, \Sigma, \mu)$.
%%%%%%%%%%%%%%%%%%%%%%%%%%%%%%%%%%%%%%%%%%%%%%%%%%%%%%%
\begin{Definition} \normalfont \label{definition:endomorphism}
%%%%%%%%%%%%%%%%%%%%%%%%%%%%%%%%%%%%%%%%%%%%%%%%%%%%%%%
An \emph{endomorphism} $\alpha$ of a probability space $(\cM,\psi)$ is a $*$- homomorphism on $\cM$ 
satisfying the following additional properties:
\begin{enumerate}
    \item $\alpha(\1_{\cM})=\1_{\cM}$ (unitality);
    \item  $\psi\circ \alpha=\psi$ (stationarity);
    \item $\alpha$ and the modular automorphism group $\sigma_t^{\psi}$ commute 
           for all $t\in \Rset$ (modularity).
\end{enumerate}
The set of endomorphisms of $(\cM,\psi)$ is denoted by $\End(\cM,\psi)$. We note that an
endomorphism of $(\cM,\psi)$ is automatically injective. Similarly, $\Aut(\cM,\psi)$ denotes 
the automorphisms of  $(\cM,\psi)$.
%%%%%%%%%%%%%%%%%%%%%%%%%%%%%%%%%%%%%%%%%%%%%%%%%%%%%%%
\end{Definition}
%%%%%%%%%%%%%%%%%%%%%%%%%%%%%%%%%%%%%%%%%%%%%%%%%%%%%%%
%%%%%%%%%%%%%%%%%%%%%%%%%%%%%%%%%%%%%%%%%%%%%%%%%%%%%%%
\begin{Definition}\label{definition:MarkovMap}\normalfont
%%%%%%%%%%%%%%%%%%%%%%%%%%%%%%%%%%%%%%%%%%%%%%%%%%%%%%%
Let $(\cM,\psi)$ and $(\cN,\varphi)$ be two noncommutative probability spaces. A linear map  
$T \colon \cM \to \cN$ is called a \emph{$(\psi,\varphi)$-Markov map} if the following 
conditions are satisfied:
\begin{enumerate}
\item \label{item:mm-i}
$T$ is completely positive;  
\item \label{item:mm-ii}
$T$ is unital;                  
\item \label{item:mm-iii}
$\varphi \circ T = \psi$;
\item \label{item:mm-iv}
$T \circ \sigma_t^{\psi}  = \sigma_t^{\varphi} \circ T$, for all $t \in \Rset$.
\end{enumerate}
%%%%%%%%%%%%%%%%%%%%%%%%%%%%%%%%%%%%%%%%%%%%%%%%%%%%%%%
\end{Definition}
%%%%%%%%%%%%%%%%%%%%%%%%%%%%%%%%%%%%%%%%%%%%%%%%%%%%%%%
Here $\sigma_{}^{\psi}$ and $\sigma_{}^{\varphi}$ denote the modular automorphism groups of 
$(\cM,\psi)$ and $(\cN,\varphi)$, respectively. If $(\cM,\psi) = (\cN,\varphi)$, we say that 
$T$ is a $\psi$-\emph{Markov map on $\cM$}. Conditions \eqref{item:mm-i} to \eqref{item:mm-iii} 
imply that a Markov map is automatically normal. The condition \eqref{item:mm-iv} is equivalent 
to the condition that a unique Markov map $T^* \colon (\cN,\varphi) \to (\cM,\psi)$ exists such
that
\[
\psi\big(T^*(y)x\big) = \varphi\big(y\, T(x)\big) \qquad (x \in \cM, y \in \cN).  
\]
The Markov map $T^*$ is called the \emph{adjoint} of $T$ and $T$ is called \emph{self-adjoint} if
$T=T^*$. We note that condition \eqref{item:mm-iv} is automatically satisfied whenever $\psi$ and
$\varphi$ are tracial, in particular for abelian von Neumann algebras $\cM$ and $\cN$. 
%%%%%%%%%%%%%%%%%%%%%%%%%%%%%%%%%%%%%%%%%%%%%%%%%%%%%%%
\begin{Remark} \normalfont
%%%%%%%%%%%%%%%%%%%%%%%%%%%%%%%%%%%%%%%%%%%%%%%%%%%%%%%
More generally in some literature, a Markov map $T$ on a von Neumann algebra $\cM$ is understood
to be a unital normal completely positive linear map from $\cM$ to itself. Thus the above notion
of a $\psi$-Markov map $T$ on $\cM$ is more restrictive, as the existence of a \emph{faithful}
normal state $\psi$ with the stationarity condition $\psi \circ T = \psi$ and a modular condition
are stipulated (as also done in \cite[Definition 1.1]{HM11}). In particular, as recurrence for
Markov maps in noncommutative probability is defined via support properties of stationary normal 
states (see \cite{GaKu12}), every non-zero orthogonal projection $p \in \cM$ is positive
recurrent with respect to a Markov map $T$ on $(\cM,\psi)$, as the faithful state $\psi$ has the
support projection $\1_\cM \in \cM$.
%%%%%%%%%%%%%%%%%%%%%%%%%%%%%%%%%%%%%%%%%%%%%%%%%%%%%%%
\end{Remark}
%%%%%%%%%%%%%%%%%%%%%%%%%%%%%%%%%%%%%%%%%%%%%%%%%%%%%%%
We recall for the convenience of the reader the definition of conditional expectations in the
present framework of noncommutative probability spaces.
%%%%%%%%%%%%%%%%%%%%%%%%%%%%%%%%%%%%%%%%%%%%%%%%%%%%%%%
\begin{Definition} \normalfont
%%%%%%%%%%%%%%%%%%%%%%%%%%%%%%%%%%%%%%%%%%%%%%%%%%%%%%%
Let $(\cM,\psi)$ be a noncommutative probability space, and $\cN$ be a von Neumann subalgebra of
$\cM$. A linear map $E: \cM\to \cN$ is called a \emph{conditional expectation} if it satisfies
the following conditions:
\begin{enumerate}
    \item $E(x)=x$ for all $x\in \cN$;
    \item $\|E(x)\|\leq \|x\|$ for all $x\in \cM$;
    \item $\psi\circ E=\psi$.
\end{enumerate}
%%%%%%%%%%%%%%%%%%%%%%%%%%%%%%%%%%%%%%%%%%%%%%%%%%%%%%%
\end{Definition}
%%%%%%%%%%%%%%%%%%%%%%%%%%%%%%%%%%%%%%%%%%%%%%%%%%%%%%%
Such a conditional expectation exists if and only if $\cN$ is globally invariant under the
modular automorphism group of $(\cM,\psi)$ (see \cite{Ta72}, \cite{Ta79} and \cite{Ta03}). The
von Neumann subalgebra $\cN$ is called $\psi$-conditioned if this condition is satisfied. Note
that such a conditional expectation is automatically normal and uniquely determined by $\psi$. In
particular, a conditional expectation is a Markov map and satisfies the module property
$E(axb)=aE(x)b$ for $a,b\in \cN$ and $x\in \cM$.
%%%%%%%%%%%%%%%%%%%%%%%%%%%%%%%%%%%%%%%%%%%%%%%%%%%%%%%
\subsection{Noncommutative random variables and distributional invariance
principles}\label{subsection:Random Variables and Invariance Principles}
%%%%%%%%%%%%%%%%%%%%%%%%%%%%%%%%%%%%%%%%%%%%%%%%%%%%%%%
Let $(\cA,\varphi)$ and $(\cM,\psi)$ be two probability spaces. A \emph{(noncommutative) random
variable} $\iota_0$ is an injective *-homomorphism $\iota_0 \colon \cA \to \cM$ satisfying two
additional properties:
\begin{enumerate}
\item $\varphi = \psi \circ \iota_0$;
\item $\iota_0(\cA)$ is $\psi$-conditioned. 
\end{enumerate}
A random variable will also be addressed as the mapping $\iota_0 \colon (\cA,\varphi) \to
(\cM,\psi)$. If $\tiota_0 \colon (\cA,\varphi) \to (\tcM,\tpsi)$ is another random variable, then
$\iota_0$ and $\tiota_0$ have the same moment sequence and thus are identically distributed.
Given the (identically distributed) sequence of random variables 
\[
\iota \equiv (\iota_{n})_{n \in \Nset_0} \colon (\cA,\varphi) \to (\cM,\psi),
\] 
the family $\cA_\bullet \equiv \{\cA_I\}_{I\subset \Nset_0}$,  with von Neumann subalgebras 
$\cA_I := \bigvee_{i \in I} \iota_i(\cA)$, is called the \emph{canonical local filtration (generated 
by $\iota$)}. The sequence $\iota$ is said to be \emph{minimal} if $\cA_{\Nset_0} = \cM$. 
A sequence $\iota$ can always be turned into a minimal sequence by restriction. If 
\[
\tiota \equiv (\tiota_{n})_{n \in \Nset_0} \colon (\cA,\varphi) \to (\tcM,\tpsi)
\] 
is another sequence of random variables, then $\iota$ and $\tiota$ are said to have the
\emph{same distribution}, in symbols 
$(\iota_0, \iota_1, \iota_2, \ldots) \stackrel{\distr}{=}(\tiota_0, \tiota_1, \tiota_2, \ldots)$
or just $\iota \stackrel{\distr}{=} \tiota$, if 
\[
\psi\big(\iota_{k_1}(a_1) \iota_{k_2}(a_2) \cdots \iota_{k_n}(a_n)\big) 
=  \tpsi\big(\tiota_{k_1}(a_1) \tiota_{k_2}(a_2) \cdots \tiota_{k_n}(a_n)\big)
\]
for all $k_1, k_2, \ldots, k_n \in \Nset_0$, $a_1, a_2, \ldots, a_n \in \cA$ and $n \in \Nset$.  
%%%%%%%%%%%%%%%%%%%%%%%%%%%%%%%%%%%%%%%%%%%%%%%%%%%%%%%
\begin{Definition}\normalfont\label{definition:distsym}
%%%%%%%%%%%%%%%%%%%%%%%%%%%%%%%%%%%%%%%%%%%%%%%%%%%%%%%
The sequence of random variables $\iota \equiv (\iota_{n})_{n \in \Nset_0} \colon (\cA,\varphi)
\to (\cM,\psi)$ is said to be 
\begin{enumerate}
\item 
\emph{stationary} if $(\iota_0, \iota_1, \iota_2, \ldots) \stackrel{\distr}{=}
(\iota_{n} , \iota_{n+1}, \iota_{n+2}, \ldots)$  for all $n \in \Nset$; 
\item 
\emph{spreadable} if $(\iota_0, \iota_1, \iota_2, \ldots) \stackrel{\distr}{=}
(\iota_{n_0} , \iota_{n_1}, \iota_{n_2}, \ldots)$  for any increasing subsequence 
$(n_0, n_1, n_2, \ldots)$ of $(0,1,2, \ldots)$;
\item 
\emph{exchangeable} if $(\iota_0, \iota_1, \iota_2, \ldots) \stackrel{\distr}{=}
(\iota_{\sigma(0)} , \iota_{\sigma(1)}, \iota_{\sigma(2)}, \ldots)$  for all permutations
$\sigma\in \Sset_\infty$.  
\end{enumerate}
Here $\Sset_\infty$ denotes the group of all finite permutations on the set $\Nset_0$ such that
the Coxeter generator $\sigma_k \in \Sset_\infty$ is the transposition $(k-1,k)$.   
%%%%%%%%%%%%%%%%%%%%%%%%%%%%%%%%%%%%%%%%%%%%%%%%%%%%%%%
\end{Definition}
%%%%%%%%%%%%%%%%%%%%%%%%%%%%%%%%%%%%%%%%%%%%%%%%%%%%%%%
It is elementary to verify that one has the following hierarchy of invariance principles:
\[
\text{exchangeability} 
\quad \Longrightarrow \quad \text{spreadability} 
\quad \Longrightarrow \quad \text{stationarity}. 
\]
These three distributional invariance principles can be equivalently reformulated. 
%%%%%%%%%%%%%%%%%%%%%%%%%%%%%%%%%%%%%%%%%%%%%%%%%%%%%%%
\begin{Proposition}\label{proposition:dip}
%%%%%%%%%%%%%%%%%%%%%%%%%%%%%%%%%%%%%%%%%%%%%%%%%%%%%%%
Suppose $\iota \equiv (\iota_n)_{n \in \Nset_0} \colon  (\cA,\varphi) \to (\cM,\psi)$ is a
minimal sequence of random variables.
\begin{enumerate}
\item \label{proposition:dip-item-1}
The sequence $\iota$ is stationary if and only if there exists $\alpha \in \End(\cM,\psi)$ such
that $\iota_n = \alpha^n \iota_0$ for all $n \in \Nset$.
\item \label{proposition:dip-item-2}
The sequence $\iota$ is spreadable if and only if there exists a representation $\varrho \colon
S^+ \to \End(\cM,\psi)$ such that $\iota_0 = \varrho(h_k)\iota_0$ for all $k \ge 1$ and $\iota_n
= \varrho(h_0^n) \iota_0$ for all $n \in \Nset$. 
\item \label{proposition:dip-item-3}
The sequence $\iota$ is exchangeable if and only if there exists exists a representation
$\rho_{\perm}\colon \Sset_\infty \to \Aut(\cM,\psi)$ such that $\iota_0 =
\rho_{\perm}(\sigma_k)\iota_0$ for $k \ge 1$ and $\iota_n = \rho_{\perm}(\sigma_n \sigma_{n-1}
\cdots \sigma_1) \iota_0$ for all $n \in \Nset$.   
\end{enumerate}
%%%%%%%%%%%%%%%%%%%%%%%%%%%%%%%%%%%%%%%%%%%%%%%%%%%%%%%
\end{Proposition}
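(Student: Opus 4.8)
\emph{Overall strategy.} The three equivalences run in parallel, so the plan is to isolate one mechanism and then specialise it. In each case the forward implication (``a representation exists $\Rightarrow$ the invariance holds'') is routine, and the reverse implication (``the invariance holds $\Rightarrow$ a representation exists'') carries the content. Call a map $\pi\colon\Nset_0\to\Nset_0$ \emph{admissible} if the relabelled sequence $(\iota_{\pi(n)})_{n\ge 0}$ has the same distribution as $\iota$. Given an admissible $\pi$, I would define $\hat\pi$ on the unital $*$-algebra $\cA_0$ generated by $\bigcup_k \iota_k(\cA)$ by $\hat\pi\big(\iota_{k_1}(a_1)\cdots\iota_{k_m}(a_m)\big)=\iota_{\pi(k_1)}(a_1)\cdots\iota_{\pi(k_m)}(a_m)$. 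Faithfulness of $\psi$ together with $\iota\stackrel{\distr}{=}(\iota_{\pi(n)})_n$ shows $\hat\pi$ is well defined (a polynomial identity $p=q$ in $\cM$ is detected by $\psi((p-q)^*(p-q))=0$, and this number is unchanged under the relabelling), that it is a unital $*$-homomorphism, and that $\psi\circ\hat\pi=\psi$. Hence $\hat\pi$ induces an isometry on $L^2(\cM,\psi)$ fixing the cyclic vector $\Omega_\psi$, and the GNS extension promotes $\hat\pi$ to a normal $\psi$-preserving $*$-endomorphism of $\cM$; minimality $\cA_{\Nset_0}=\cM$ is exactly what keeps domain and range inside $\cM$. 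When $\pi$ is a bijection, $\widehat{\pi^{-1}}$ inverts $\hat\pi$, so $\hat\pi\in\Aut(\cM,\psi)$. A check on generators gives the functoriality $\widehat{\pi\circ\pi'}=\hat\pi\circ\hat{\pi'}$.

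\emph{Specialisation.} For \eqref{proposition:dip-item-1} take $\pi$ the unit shift $n\mapsto n+1$; stationarity is precisely admissibility of $\pi$, and $\alpha:=\hat\pi$ satisfies $\alpha^n\iota_0=\iota_n$. For \eqref{proposition:dip-item-2} each partial shift $\theta_k$ of Lemma \ref{definition:partial-shift} has strictly increasing range, so $(\iota_{\theta_k(n)})_n$ is a subsequence of $\iota$ and spreadability makes every $\theta_k$ admissible; setting $\varrho(h_k):=\hat\theta_k$ and extending multiplicatively defines a monoid homomorphism $S^+\to\End(\cM,\psi)$ precisely because $\hat\theta_k\hat\theta_\ell=\widehat{\theta_k\theta_\ell}=\widehat{\theta_{\ell+1}\theta_k}=\hat\theta_{\ell+1}\hat\theta_k$ for $k\le\ell$ by Lemma \ref{definition:partial-shift}, i.e. the defining relations of $S^+$ are respected. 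For \eqref{proposition:dip-item-3} each adjacent transposition $\sigma_k=(k-1,k)$ is admissible by exchangeability, and being bijections they yield $\hat\sigma_k\in\Aut(\cM,\psi)$; since the Coxeter relations hold among the $\sigma_k$ as genuine permutations they transfer to the $\hat\sigma_k$, producing $\rho_{\perm}\colon\Sset_\infty\to\Aut(\cM,\psi)$.

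\emph{Localisation, stationarity, and the forward directions.} In all three cases the localisation and stationarity identities reduce to computing the action of the relevant index maps on the basepoint $0$: $\theta_k(0)=0$ for $k\ge 1$ and $\theta_0^n(0)=n$ give \eqref{eq:L-spread}--\eqref{eq:S-spread}, while $(\sigma_n\cdots\sigma_1)(0)=n$ together with $\sigma_k$ fixing $0$ for the appropriate generators give the exchangeable analogue. For the forward implications one uses $\psi\circ\varrho(w)=\psi$ for every word $w$, plus the combinatorial facts that on any finite set of indices every strictly increasing reindexing is realised by a word in the $\theta_k$ (respectively every finite permutation is a word in the $\sigma_k$); applying the corresponding represented map to a monomial $\iota_{k_1}(a_1)\cdots\iota_{k_p}(a_p)$ reproduces the relabelled monomial, and applying the invariant state $\psi$ yields the required equality of moments. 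The stationary case is immediate from $\iota_n=\alpha^n\iota_0$ and $\psi\circ\alpha=\psi$.

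\emph{Main obstacle.} The delicate point is not the relations but verifying genuine membership in $\End(\cM,\psi)$, that is, the modularity condition $\hat\pi\,\sigma_t^\psi=\sigma_t^\psi\,\hat\pi$ (and, en route, normality of the GNS extension). Here I would use that each $\iota_k(\cA)$ is $\psi$-conditioned: by Takesaki's theorem the modular group of $(\cM,\psi)$ restricts on $\iota_k(\cA)$ to the modular group of the restricted state, which under the state-preserving identification $\iota_k\colon(\cA,\varphi)\to(\iota_k(\cA),\psi|)$ is $\sigma_t^\varphi$, so that $\sigma_t^\psi(\iota_k(a))=\iota_k(\sigma_t^\varphi(a))$. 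Comparing $\hat\pi(\sigma_t^\psi(\iota_k(a)))=\iota_{\pi(k)}(\sigma_t^\varphi(a))$ with $\sigma_t^\psi(\hat\pi(\iota_k(a)))=\sigma_t^\psi(\iota_{\pi(k)}(a))=\iota_{\pi(k)}(\sigma_t^\varphi(a))$ shows the two agree on generators, and normality propagates the identity to all of $\cM$. This is the step where the standing hypothesis that the $\iota_k$ are random variables with $\psi$-conditioned ranges is genuinely used, and I expect it to be the most technical part of the argument.
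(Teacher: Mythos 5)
Your argument is correct, and since the paper offers no proof of its own here -- it simply cites \cite{Ko10}, \cite{EGK17} and \cite{GK09} -- the right comparison is with those references, where the mechanism is exactly yours: a moment-preserving relabelling of indices is transferred to a $\psi$-preserving normal endomorphism via the GNS construction (well-definedness from faithfulness of $\psi$, normality from the induced isometry on $L^2(\cM,\psi)$, modularity from the $\psi$-conditionedness of the ranges $\iota_k(\cA)$), so this is essentially the same approach. One remark: the localisation in (iii) cannot hold for $k=1$ as literally stated, since $\sigma_1=(0,1)$ does not fix the index $0$ (it would force $\iota_1=\iota_0$); your hedge ``for the appropriate generators'' correctly reads this as $k\ge 2$, which is the convention of \cite{GK09}.
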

%%%%%%%%%%%%%%%%%%%%%%%%%%%%%%%%%%%%%%%%%%%%%%%%%%%%%%%
\begin{proof}
%%%%%%%%%%%%%%%%%%%%%%%%%%%%%%%%%%%%%%%%%%%%%%%%%%%%%%%
For \eqref{proposition:dip-item-1} see \cite{Ko10}. For \eqref{proposition:dip-item-2} see \cite{Ko10,EGK17}.
For \eqref{proposition:dip-item-3} see \cite{GK09}.
%%%%%%%%%%%%%%%%%%%%%%%%%%%%%%%%%%%%%%%%%%%%%%%%%%%%%%%
\end{proof}
%%%%%%%%%%%%%%%%%%%%%%%%%%%%%%%%%%%%%%%%%%%%%%%%%%%%%%%
The equivalent formulation of spreadability in \eqref{proposition:dip-item-2} and the simple observation
that $S^+$ is a quotient of the Thompson monoid $F^+$ catalyzed our introduction of partial
spreadability in Definition \ref{definition:p-s} as a novel distributional invariance principle. This
implies the extended hierarchy:
\begin{align*}
\text{exchangeability} 
\quad &\Longrightarrow \quad \text{spreadability} 
\quad \Longrightarrow \quad \text{partial spreadability} \\
\quad &\Longrightarrow \quad \text{stationarity}. 
\end{align*}   
%%%%%%%%%%%%%%%%%%%%%%%%%%%%%%%%%%%%%%%%%%%%%%%%%%%%%%%
\subsection{Noncommutative independence  and Markovianity} 
\label{subsection:Independence and Markovianity}
%%%%%%%%%%%%%%%%%%%%%%%%%%%%%%%%%%%%%%%%%%%%%%%%%%%%%%%
Out of K\"um\-merer's investigations on the structure of noncommutative Markov dilations (see for
example, \cite{Ku85} and \cite{Ku86}), it emerged that Popa's geometric notion of commuting
squares provides a rich framework for noncommutative independence (see \cite{Po89}).  This notion also
manifests itself in the noncommutative extended de Finetti theorem, Theorem
\ref{theorem:ncdf}. After having introduced commuting squares of von Neumann algebras and some of
their properties, as they are well-known in subfactor theory, we reinterpret these geometric
objects from the viewpoint of noncommutative probability theory, to define (conditional)
commuting square (CS) independence. More generally, we use commuting square structures to
introduce in Definition \ref{definition:markovianity} the notion of a `local Markov filtration' in the
noncommutative setting and relate it to more traditional notions of Markovanity for
(noncommutative) stochastic processes. Our approach is motivated by K\"ummerer's notion of a
Markov dilation (see \cite[Subsection 2.2]{Ku85} or for example \cite[Section 4]{HM11})
and furthermore supported by our investigations on distributional invariance principles emerging
from the Thompson monoid $F^+$. 

We recall some equivalent properties as they serve to define commuting squares in subfactor
theory (see for example \cite{GHJ89,JS97}) and as they are familiar from conditional independence
in classical probability. 
%%%%%%%%%%%%%%%%%%%%%%%%%%%%%%%%%%%%%%%%%%%%%%%%%%%%%%%
\begin{Proposition}\label{proposition:cs}
%%%%%%%%%%%%%%%%%%%%%%%%%%%%%%%%%%%%%%%%%%%%%%%%%%%%%%%
Let $\cM_0, \cM_1, \cM_2$ be $\psi$-conditioned von Neumann subalgebras of the probability space
$(\cM,\psi)$ such that $\cM_0 \subset (\cM_1 \cap \cM_2)$. Then the following are equivalent:
\begin{enumerate}
\item \label{item:cs-i}
$E_{\cM_0}(xy) = E_{\cM_0}(x) E_{\cM_0}(y)$ for all $x \in \cM_1$ and $y\in \cM_2$;   
\item  \label{item:cs-ii}
$E_{\cM_1} E_{\cM_2} = E_{\cM_0}$;
\item  \label{item:cs-iii}
$E_{\cM_1}(\cM_2) = \cM_0$;
\item  \label{item:cs-iv}
$E_{\cM_1} E_{\cM_2} =   E_{\cM_2} E_{\cM_1}$  and $\cM_1\cap \cM_2 = \cM_0$. 
\end{enumerate}
In particular, it holds that $\cM_0 = \cM_1 \cap \cM_2$ if one and thus all of these four
assertions are satisfied. 
%%%%%%%%%%%%%%%%%%%%%%%%%%%%%%%%%%%%%%%%%%%%%%%%%%%%%%%
\end{Proposition}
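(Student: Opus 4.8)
The plan is to establish the four conditions cyclically as (ii) $\Rightarrow$ (i) $\Rightarrow$ (iii) $\Rightarrow$ (ii), and then separately (ii) $\Leftrightarrow$ (iv), the latter also delivering the ``in particular'' assertion. Writing $E_i := E_{\cM_i}$ for $i = 0,1,2$, I would first record the ingredients that drive every step: each $E_i$ is a unital, $\ast$-preserving, completely positive $\cM_i$-bimodule map with $\psi \circ E_i = \psi$; since $\cM_0 \subseteq \cM_1 \cap \cM_2$ one has the tower relations $E_0 E_i = E_i E_0 = E_0$ for $i = 1,2$; every $\psi$-preserving conditional expectation is $\psi$-symmetric, since $\psi(E_i(x)y) = \psi(E_i(x)E_i(y)) = \psi(x\,E_i(y))$ by the bimodule property together with $\psi \circ E_i = \psi$; and $E_0$ is faithful because $\psi$ is faithful and $\psi \circ E_0 = \psi$.

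For (ii) $\Rightarrow$ (i), the $\psi$-symmetry makes $E_2 E_1$ the $\psi$-adjoint of $E_1 E_2$, while $E_0$ is its own $\psi$-adjoint, so $E_1 E_2 = E_0$ forces $E_2 E_1 = E_0$ as well. Then for $x \in \cM_1$ and $y \in \cM_2$, using the bimodule property and $E_1(y) \in \cM_0 \subseteq \cM_2$, I compute $E_0(xy) = E_2 E_1(xy) = E_2\big(x\,E_1(y)\big) = E_2(x)\,E_1(y)$; since $E_0(x) = E_2 E_1(x) = E_2(x)$ and $E_0(y) = E_1 E_2(y) = E_1(y)$, this equals $E_0(x)E_0(y)$, which is (i). For (iii) $\Rightarrow$ (ii), under (iii) the map $E_1 E_2$ sends $\cM$ into $E_1(\cM_2) = \cM_0$, fixes $\cM_0$, and is a norm-one normal completely positive idempotent onto $\cM_0$, hence a conditional expectation by Tomiyama's theorem, necessarily the unique $\psi$-preserving one, $E_0$. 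Finally, for (ii) $\Leftrightarrow$ (iv): (ii) already gives commutation as above, and any $a \in \cM_1 \cap \cM_2$ satisfies $a = E_1 E_2(a) = E_0(a) \in \cM_0$, so $\cM_1 \cap \cM_2 = \cM_0$ (which is simultaneously the ``in particular'' claim); conversely, if $E_1, E_2$ commute then $E_1 E_2$ is a norm-one idempotent onto $\cM_1 \cap \cM_2$, hence equals $E_{\cM_1 \cap \cM_2}$ (the intersection being again $\psi$-conditioned), and this is $E_0$ precisely when $\cM_1 \cap \cM_2 = \cM_0$.

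The heart of the argument is (i) $\Rightarrow$ (iii). The inclusion $\cM_0 \subseteq E_1(\cM_2)$ is immediate, since $E_1$ fixes $\cM_0 \subseteq \cM_2$. For the reverse inclusion I fix $y \in \cM_2$ and apply (i) with $x = E_1(y)^* \in \cM_1$; using $E_0 E_1 = E_0$, the bimodule property, and $\ast$-preservation, the identity (i) collapses to $E_0(z^* z) = E_0(z)^* E_0(z)$, where $z := E_1(y)$. This is exactly the equality case of the Kadison--Schwarz inequality for $E_0$, that is $E_0\big((z - E_0(z))^*(z - E_0(z))\big) = 0$, so faithfulness of $E_0$ forces $z = E_0(z) \in \cM_0$. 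Hence $E_1(\cM_2) \subseteq \cM_0$, giving (iii).

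I expect the step (i) $\Rightarrow$ (iii) to be the main obstacle, since it is the only place where a multiplicative/factorization identity must be converted into the algebraic membership statement $E_1(\cM_2) \subseteq \cM_0$; the mechanism for this conversion is the equality case of the operator Cauchy--Schwarz (Kadison--Schwarz) inequality combined with the faithfulness of $E_0$ inherited from $\psi$. The supporting facts, namely the automatic $\psi$-symmetry (and hence commutation) of the conditional expectations and the identification of a norm-one idempotent with a conditional expectation through Tomiyama's theorem, are routine but deserve explicit statement, as the entire cyclic scheme rests on them.
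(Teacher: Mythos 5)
Your proof is correct. Note that the paper does not actually write out an argument here: it simply cites \cite[Prop.~4.2.1]{GHJ89} for the tracial case and asserts that the non-tracial case follows ``after some minor modifications''. What you supply is therefore a complete, self-contained proof that works directly in the non-tracial setting, and the way you do it is sound: you correctly isolate the only facts needed, namely the $\cM_i$-bimodule property, $\psi\circ E_i=\psi$ (which together give the $\psi$-symmetry $\psi(E_i(x)y)=\psi(xE_i(y))$ without any traciality), the tower relations coming from $\cM_0\subseteq\cM_1\cap\cM_2$, and the faithfulness of $E_0$ inherited from $\psi$. The cyclic scheme (ii)$\Rightarrow$(i)$\Rightarrow$(iii)$\Rightarrow$(ii) together with (ii)$\Leftrightarrow$(iv) does cover all equivalences, and each step checks out: the adjoint argument turning $E_1E_2=E_0$ into $E_2E_1=E_0$ is legitimate because the $E_i$ extend to self-adjoint projections on the GNS space and the GNS embedding is injective; the reduction of (i) to $E_0(z^*z)=E_0(z)^*E_0(z)$ with $z=E_1(y)$ and the conclusion $z\in\cM_0$ via $E_0\bigl((z-E_0(z))^*(z-E_0(z))\bigr)=0$ and faithfulness is exactly the right mechanism for the one genuinely nontrivial implication; and the Tomiyama-plus-uniqueness argument identifying the $\psi$-preserving norm-one idempotents $E_1E_2$ (in (iii)$\Rightarrow$(ii) and in (iv)$\Rightarrow$(ii)) with $E_0$, respectively with $E_{\cM_1\cap\cM_2}$, is standard and correctly applied (idempotency follows in each case from the range landing in a subalgebra fixed by both expectations, and the intersection of two $\psi$-conditioned subalgebras is again $\psi$-conditioned). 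Compared with the reference the paper leans on, which argues in the tracial $L^2$-picture with Jones projections, your version buys a uniform treatment of the non-tracial case and makes explicit exactly which structural properties of state-preserving conditional expectations replace the trace.
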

%%%%%%%%%%%%%%%%%%%%%%%%%%%%%%%%%%%%%%%%%%%%%%%%%%%%%%%
\begin{proof}
%%%%%%%%%%%%%%%%%%%%%%%%%%%%%%%%%%%%%%%%%%%%%%%%%%%%%%%
The tracial case for $\psi$ is proved in \cite[Prop.~4.2.1.]{GHJ89}. The non-tracial case follows
from this, after some minor modifications of the arguments therein.  
%%%%%%%%%%%%%%%%%%%%%%%%%%%%%%%%%%%%%%%%%%%%%%%%%%%%%%%
\end{proof}
%%%%%%%%%%%%%%%%%%%%%%%%%%%%%%%%%%%%%%%%%%%%%%%%%%%%%%%
\begin{Definition}\normalfont
%%%%%%%%%%%%%%%%%%%%%%%%%%%%%%%%%%%%%%%%%%%%%%%%%%%%%%%
The inclusions 
\[
\begin{matrix}
\cM_2 &\subset &\cM\\
\cup  &        & \cup \\
\cM_0   & \subset  & \cM_1
\end{matrix}
\]
as given in Proposition \ref{proposition:cs} are said to form  a \emph{commuting square (of von
Neumann algebras)}  if one (and thus all) of the equivalent conditions \eqref{item:cs-i} to
\eqref{item:cs-iv} are satisfied in Proposition \ref{proposition:cs}.
%%%%%%%%%%%%%%%%%%%%%%%%%%%%%%%%%%%%%%%%%%%%%%%%%%%%%%%
\end{Definition}
%%%%%%%%%%%%%%%%%%%%%%%%%%%%%%%%%%%%%%%%%%%%%%%%%%%%%%%
%%%%%%%%%%%%%%%%%%%%%%%%%%%%%%%%%%%%%%%%%%%%%%%%%%%%%%%
\begin{Notation} \normalfont \label{notation:indexsets}
%%%%%%%%%%%%%%%%%%%%%%%%%%%%%%%%%%%%%%%%%%%%%%%%%%%%%%% 
We write $I < J$ for two subsets $I, J \subset \Nset_0$ if $i < j$ for all $i \in I$ and $j \in 
J$. The cardinality of $I$ is denoted by $|I|$. For $N \in  \Nset_0$, we denote by $I + N$ the
shifted set $\{i + N \mid i \in  I\}$. Finally, $\cI(\Nset_0)$ denote set of all `intervals' of 
$\Nset_0$, i.e.~sets of the form $[m,n] := \{m, m+1, \ldots, n\}$ or $[m,\infty) :=  \{m, m+1,
\ldots\}$ for $0 \le m \le n < \infty$.  
%%%%%%%%%%%%%%%%%%%%%%%%%%%%%%%%%%%%%%%%%%%%%%%%%%%%%%%
\end{Notation}
%%%%%%%%%%%%%%%%%%%%%%%%%%%%%%%%%%%%%%%%%%%%%%%%%%%%%%%
%%%%%%%%%%%%%%%%%%%%%%%%%%%%%%%%%%%%%%%%%%%%%%%%%%%%%%%
\begin{Definition}\normalfont
%%%%%%%%%%%%%%%%%%%%%%%%%%%%%%%%%%%%%%%%%%%%%%%%%%%%%%%
Let $(\cM,\psi)$ be a probability space with three $\psi$-conditioned von Neumann subalgebras
$\cM_0$, $\cM_1$ and $\cM_2$.  Then $\cM_1$ and $\cM_2$ are said to be \emph{CS independent over
$\cM_0$} or \emph{conditionally CS independent} if the inclusions
\[
\begin{matrix}
\cM_2 \vee \cM_0 &\subset &\cM\\
\cup  &        & \cup \\
\cM_0   & \subset  & \cM_1 \vee \cM_0
\end{matrix}
\]
form a commuting square. 
 %%%%%%%%%%%%%%%%%%%%%%%%%%%%%%%%%%%%%%%%%%%%%%%%%%%%%%%
\end{Definition}
%%%%%%%%%%%%%%%%%%%%%%%%%%%%%%%%%%%%%%%%%%%%%%%%%%%%%%%
The inclusion $\cM_0 \subset (\cM_1 \cap \cM_2)$ is \emph{not} assumed in this definition, and
its failure occurs frequently in the context of distributional invariance principles, see for
example \cite[Example 4.6]{Ko10}.
%%%%%%%%%%%%%%%%%%%%%%%%%%%%%%%%%%%%%%%%%%%%%%%%%%%%%%%
\begin{Definition}\normalfont \label{definition:order/full-independence}
%%%%%%%%%%%%%%%%%%%%%%%%%%%%%%%%%%%%%%%%%%%%%%%%%%%%%%%
Let $\cN$ be a von Neumann subalgebra of $(\cM,\psi)$.  A family of von Neumann subalgebras
$\{\cA_n\}_{n \in \Nset_0}$ of $(\cM,\psi)$ is called  
\begin{enumerate}
\item
\emph{order CS independent over $\cN$} if $\bigvee_{i \in I} \cA_i$ and $\bigvee_{j \in J} \cA_j$
are CS independent over $\cN$ for any $I, J \subset \Nset_0$ with $I < J $ or $J < I$;   
\item
\emph{full CS independent over $\cN$} if $\cA_I$ and $\cA_J$ are CS independent over $\cN$ for
any $I, J \subset \Nset_0$ with $I \cap J  = \emptyset$.   
\end{enumerate}
The sequence of random variables $\iota \equiv (\iota_n)_{n \in \Nset_0} \colon (\cA, \varphi)
\to (\cM,\psi)$ with canonical local filtration $\{\cA_I\}_{I \subset \Nset_0}$ is called
\begin{enumerate}
\item[(i')]
\emph{order CS independent over $\cN$} if $\cA_I$ and $\cA_J$ are CS independent over $\cN$ for
any $I, J \subset \Nset_0$ with $I < J$ or $J < I$;   
\item[(ii')]
\emph{full CS independent over $\cN$} if $\cA_I$ and $\cA_J$ are CS independent over $\cN$ for
any $I, J \subset \Nset_0$ with $I \cap J  = \emptyset$.   
\end{enumerate}
%%%%%%%%%%%%%%%%%%%%%%%%%%%%%%%%%%%%%%%%%%%%%%%%%%%%%%%
\end{Definition}
%%%%%%%%%%%%%%%%%%%%%%%%%%%%%%%%%%%%%%%%%%%%%%%%%%%%%%%
Clearly, full CS independence over $\cN$ implies order CS independence over $\cN$. Occasionally
both of them will be just addressed as conditional CS independence. The noncommutative extended
de Finetti theorem, Theorem \ref{theorem:ncdf}, establishes that a spreadable sequence $\iota$
is full CS independence over the tail algebra $\cN= \bigcap_{n \ge 0} \bigvee_{k \ge n}^{}
\iota_k(\cA)$. 

We address next the basic notions of Markovianity in noncommutative probability. Commonly,
Markovianity is understood as a property of random variables relative to a filtration of the
underlying probability space. Our investigations from the viewpoint of distributional invariance
principles reveal that the phenomenon of `Markovianity' emerges without reference to any
stochastic process already on the level of a family of von Neumann subalgebras, indexed by the
partially ordered set of all `intervals' $\cI(\Nset_0)$. As commonly the index set of a filtration
is understood to be totally ordered \cite{Ve17} and guided by related notions for random Markov 
fields or generalized stochastic processes, we refer to such partially indexed families as `local 
filtrations'. This is motivated by the fact that sequences of random variables are simple 
examples of one-dimensional random fields. 
%%%%%%%%%%%%%%%%%%%%%%%%%%%%%%%%%%%%%%%%%%%%%%%%%%%%%%%
\begin{Definition}\normalfont
%%%%%%%%%%%%%%%%%%%%%%%%%%%%%%%%%%%%%%%%%%%%%%%%%%%%%%%
A family of $\psi$-conditioned von Neumann subalgebras $\cM_\bullet \equiv \{\cM_I\}_{I \in
\cI(\Nset_0)}$ of the probability space $(\cM,\psi)$  is called a \emph{local filtration (of
$(\cM,\psi))$} if 
 \begin{align*}
 I \subset  J  \quad \Longrightarrow \quad  \cM_I  \subset \cM_J.&&&& \qquad \text{(Isotony)}
\end{align*}
A local filtration $\cM_\bullet$ is said to be 
\emph{locally minimal} if $\cM_I \vee \cM_J = \cM_K$
whenever $I,J,K \in \cI(\Nset_0)$ with $I \cup J = K$.
%%%%%%%%%%%%%%%%%%%%%%%%%%%%%%%%%%%%%%%%%%%%%%%%%%%%%%%
\end{Definition}
%%%%%%%%%%%%%%%%%%%%%%%%%%%%%%%%%%%%%%%%%%%%%%%%%%%%%%%
The isotony property ensures that inclusions are valid as they are assumed for commuting squares.
To be more precise, it holds that
\[
\begin{matrix}
\cM_{I} &\subset &\cM\\
\cup  &        & \cup \\
\cM_{K}   & \subset  & \cM_{J}
\end{matrix}
\]
for $I, J, K \in \cI(\Nset_0)$ with $K \subset  (I \cap J)$. Finally, let $\cN_\bullet \equiv
\{\cN_I\}_{I \in \cI(\Nset_0)}$ be another local filtration of $(\cM,\psi)$. Then $\cN_\bullet$ 
is said to be \emph{coarser} than $\cM_\bullet$  if $\cN_I \subset \cM_I$ for all 
$I \in \cI(\Nset_0)$ and we denote this by $\cN_{\bullet}\prec \cM_{\bullet}$. Occasionally we 
will address $\cN_{\bullet}$ also as a \emph{local subfiltration} of $\cM_{\bullet}$.
%%%%%%%%%%%%%%%%%%%%%%%%%%%%%%%%%%%%%%%%%%%%%%%%%%%%%%%
\begin{Definition}\normalfont \label{definition:markov-filtration}
%%%%%%%%%%%%%%%%%%%%%%%%%%%%%%%%%%%%%%%%%%%%%%%%%%%%%%%
Let $\cM_\bullet \equiv \{\cM_I\}_{I \in \cI(\Nset_0) }$ be a local filtration of $(\cM,\psi)$.
\begin{enumerate}
\item 
$\cM_{\bullet}$ is said to be \emph{Markovian} (or a \emph{local Markov filtration}) if $\cM_{[0,n-1]}$
and $\cM_{[n+1,\infty)}$ are CS independent over $\cM_{[n,n]}$ for all $n \ge 1$, i.e.~the 
inclusions 
\begin{eqnarray*}\label{eq:cs-global-mark-filt}
\begin{matrix}
\cM_{[0,n-1]} \vee \cM_{[n,n]} &\subset &\cM\\
\cup  &        & \cup \\
\cM_{[n,n]}   & \subset  & \cM_{[n+1,\infty)} \vee \cM_{[n,n]}
\end{matrix}
\end{eqnarray*}
form a commuting square for any $n \ge 1$.
\item 
$\cM_{\bullet}$ is said to be  \emph{saturated Markovian} (or a \emph{saturated local Markov filtration}), if
$\cM_{[0,n]}$ and  $\cM_{[n,\infty)}$ are  CS independent over $\cM_{[n,n]}$ for all $n \ge 0$, i.e.~ the
inclusions 
\begin{eqnarray*}
\begin{matrix}
\cM_{[0,n]} &\subset &\cM\\
\cup  &        & \cup \\
\cM_{[n,n]}   & \subset  & \cM_{[n,\infty)}
\end{matrix}
\end{eqnarray*}
form a commuting square for each $n \in \Nset_0$. 
\end{enumerate}
%%%%%%%%%%%%%%%%%%%%%%%%%%%%%%%%%%%%%%%%%%%%%%%%%%%%%%%
\end{Definition}
%%%%%%%%%%%%%%%%%%%%%%%%%%%%%%%%%%%%%%%%%%%%%%%%%%%%%%%
Cast as commuting squares, (saturated) Markovianity of the local filtration $\cM_\bullet$ has many equivalent
formulations, see Proposition \ref{proposition:cs}. In particular, corresponding to (i) of Definition \ref{definition:markov-filtration},
\begin{align}
&&&& E_{\cM_{[0,n-1]}\vee\cM_{[n,n]}}  E_{\cM_{[n,n]}\vee \cM_{[n+1,\infty)}} 
  & = E_{\cM_{[n,n]}}&& \text{for all $n \ge 1$.}   
&&&&  \tag{M} \label{eq:filt-markov-I}
\end{align}
In the formulation (ii) of Definition \ref{definition:markov-filtration}, it holds that
\begin{align}
&&&& E_{\cM_{[0,n]}}  E_{\cM_{[n,\infty)}} & = E_{\cM_{[n,n]}}&& \text{for all $n \ge 0$.}   
&&&&  \tag{M'} \label{eq:filt-markov-II}
\end{align}
Here $E_{\cM_I}$ denotes the $\psi$-preserving normal conditional expectation from $\cM$ onto
$\cM_I$. 
%%%%%%%%%%%%%%%%%%%%%%%%%%%%%%%%%%%%%%%%%%%%%%%%%%%%%%%
\begin{Lemma} \label{lemma:markov-I-II}
%%%%%%%%%%%%%%%%%%%%%%%%%%%%%%%%%%%%%%%%%%%%%%%%%%%%%%%
A saturated local Markov filtration $\cM_\bullet$ is a Markovian.  A locally minimal local 
Markov filtration $ \cM_\bullet$ is saturated. 
%%%%%%%%%%%%%%%%%%%%%%%%%%%%%%%%%%%%%%%%%%%%%%%%%%%%%%%
\end{Lemma}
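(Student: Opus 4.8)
The plan is to reduce both assertions to the conditional-expectation reformulations of the commuting square condition recorded in \eqref{eq:filt-markov-I} and \eqref{eq:filt-markov-II}, and then to use only the tower property $E_{\cC} E_{\cN} = E_{\cC} = E_{\cN} E_{\cC}$, valid for any pair of $\psi$-conditioned von Neumann subalgebras $\cC \subseteq \cN$ (the standard compatibility of $\psi$-preserving conditional expectations). I first note that every algebra appearing below, for instance the joins $\cM_{[0,n-1]} \vee \cM_{[n,n]}$, is again $\psi$-conditioned: the modular group $\sigma_t^\psi$ is an automorphism leaving each $\cM_I$ globally invariant, hence leaving the generated von Neumann algebra invariant, so the relevant conditional expectations exist by the modular-invariance criterion for $\psi$-conditioned subalgebras.

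For the first assertion, fix $n \ge 1$ and abbreviate $\cA := \cM_{[0,n-1]} \vee \cM_{[n,n]}$ and $\cB := \cM_{[n,n]} \vee \cM_{[n+1,\infty)}$. By isotony $\cA \subseteq \cM_{[0,n]}$ and $\cB \subseteq \cM_{[n,\infty)}$, while $\cM_{[n,n]} \subseteq \cA \cap \cB$. Factoring $E_{\cA} = E_{\cA} E_{\cM_{[0,n]}}$ and $E_{\cB} = E_{\cM_{[n,\infty)}} E_{\cB}$ by the tower property, I then compute $E_{\cA} E_{\cB} = E_{\cA}\, \big(E_{\cM_{[0,n]}} E_{\cM_{[n,\infty)}}\big)\, E_{\cB}$ and collapse the middle product to $E_{\cM_{[n,n]}}$ using the saturated identity \eqref{eq:filt-markov-II}. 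Since $\cM_{[n,n]}$ lies inside both $\cA$ and $\cB$, two further applications of the tower property absorb the outer expectations and give $E_{\cA} E_{\cB} = E_{\cM_{[n,n]}}$, which is exactly \eqref{eq:filt-markov-I}. Hence $\cM_\bullet$ is Markovian.

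For the second assertion, suppose $\cM_\bullet$ is locally minimal and Markovian. Local minimality upgrades the two relevant inclusions to equalities: from $[0,n-1] \cup [n,n] = [0,n]$ and $[n,n] \cup [n+1,\infty) = [n,\infty)$ one obtains $\cA = \cM_{[0,n]}$ and $\cB = \cM_{[n,\infty)}$ for each $n \ge 1$. Thus the Markov identity \eqref{eq:filt-markov-I} reads $E_{\cM_{[0,n]}} E_{\cM_{[n,\infty)}} = E_{\cM_{[n,n]}}$, which is precisely the saturated identity \eqref{eq:filt-markov-II} for $n \ge 1$. The remaining instance $n = 0$ is automatic, since $\cM_{[0,0]} \subseteq \cM_{[0,\infty)}$ already forces $E_{\cM_{[0,0]}} E_{\cM_{[0,\infty)}} = E_{\cM_{[0,0]}}$ by the tower property. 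Hence $\cM_\bullet$ is saturated.

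I expect no serious obstacle: once the reformulations \eqref{eq:filt-markov-I} and \eqref{eq:filt-markov-II} are available, each direction is a short chase with conditional expectations. The single point deserving a word of justification is the contrast with the general non-robustness of commuting squares under restriction noted in Remark \ref{remark:cs-not-robust}: the restriction step in the first assertion succeeds only because each smaller leg $\cA$, $\cB$ still contains the common base $\cM_{[n,n]}$ while sitting inside the corresponding leg of the larger saturated square, and it is exactly this nested containment that makes the tower-property computation close.
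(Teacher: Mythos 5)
Your proof is correct and follows essentially the same route as the paper's: the first assertion is established by the identical chain $E_{\cA}E_{\cB}=E_{\cA}E_{\cM_{[0,n]}}E_{\cM_{[n,\infty)}}E_{\cB}=E_{\cA}E_{\cM_{[n,n]}}E_{\cB}=E_{\cM_{[n,n]}}$ using isotony and the tower property, and the second by observing that local minimality turns the inclusions into equalities so that \eqref{eq:filt-markov-I} becomes \eqref{eq:filt-markov-II}. Your explicit treatment of the $n=0$ case and of the $\psi$-conditionedness of the joins only fills in details the paper leaves implicit.
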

%%%%%%%%%%%%%%%%%%%%%%%%%%%%%%%%%%%%%%%%%%%%%%%%%%%%%%%
In other words, if $\cM_\bullet$ is locally  minimal, then $\cM_{\bullet}$ has the Markov property \eqref{eq:filt-markov-I} if and only if it satisfies \eqref{eq:filt-markov-II}.
%%%%%%%%%%%%%%%%%%%%%%%%%%%%%%%%%%%%%%%%%%%%%%%%%%%%%%%
\begin{proof}
%%%%%%%%%%%%%%%%%%%%%%%%%%%%%%%%%%%%%%%%%%%%%%%%%%%%%%%
The isotony property of local filtrations ensures the inclusions 
$\cM_{[0,n-1]} \vee \cM_{[n,n]}  \subset \cM_{[0,n]} $ and
$\cM_{[n+1,\infty)} \vee \cM_{[n,n]}  \subset \cM_{[n,\infty)}$. Thus
\begin{align*}
E_{\cM_{[0,n-1]}\vee\cM_{[n,n]}}  &E_{\cM_{[n,n]}\vee \cM_{[n+1,\infty)}} \\
&=  E_{\cM_{[0,n-1]}\vee\cM_{[n,n]}}   E_{\cM_{[0,n]}} 
   E_{\cM_{[n,\infty]}} E_{\cM_{[n,n]}\vee \cM_{[n+1,\infty)}} \\
&= E_{\cM_{[0,n-1]}\vee\cM_{[n,n]}}   E_{\cM_{[n,n]}} E_{\cM_{[n,n]}\vee \cM_{[n+1,\infty)}}\\ 
&=   E_{\cM_{[n,n]}}.
\end{align*}
Consequently \eqref{eq:filt-markov-II} implies \eqref{eq:filt-markov-I}. Local minimality of 
$\cM_\bullet$ ensures $\cM_{[0,n-1]} \vee \cM_{[n,n]} = \cM_{[0,n]}$ and
$\cM_{[n+1,\infty)} \vee \cM_{[n,n]} = \cM_{[n,\infty)}$. That the property \eqref{eq:filt-markov-I} 
implies \eqref{eq:filt-markov-II} is immediate under this minimality assertion. 
\end{proof}
%%%%%%%%%%%%%%%%%%%%%%%%%%%%%%%%%%%%%%%%%%%%%%%%%%%%%%%
In the following we will not distinguish explicitly between the properties \eqref{eq:filt-markov-I} and
\eqref{eq:filt-markov-II} for local Markov filtrations. Also we will address 'saturated Markovianity' 
just as 'Markovianity', as our main results aim anyway at establishing the apparently stronger property
\eqref{eq:filt-markov-II} and, moreover, both these properties are equivalent for locally minimal local
filtrations. In particular, local Markov filtrations arising from noncommutative stationary Markov 
processes as in Subsection \ref{subsection:Noncommutative Stationary Processes} are indeed locally 
minimal.
%%%%%%%%%%%%%%%%%%%%%%%%%%%%%%%%%%%%%%%%%%%%%%%%%%%%%%%
\begin{Remark}\normalfont \label{remark:failure-mark-filt}
%%%%%%%%%%%%%%%%%%%%%%%%%%%%%%%%%%%%%%%%%%%%%%%%%%%%%%%
Markovanity of the local filtration $\cM_\bullet$, as introduced in Definition
\ref{definition:markov-filtration}, may not transfer to a local subfiltration $\cN_\bullet$. To be more
precise, the inclusions
\[
\begin{matrix}
\cN_{[0,n]} &\subset &\cN \\
\cup  &        & \cup  \\
\cN_{[n,n]}   & \subset  & \cN_{[n,\infty)} 
\end{matrix}
\]
may not form a commuting square, even though the corresponding inclusions of the local filtration 
$\cM_{\bullet}$ do. In classical probability, this observation is connected to the fact that a function
of a Markov process may not be Markovian. We address this phenomenon more in detail in Subsection 
\ref{subsection:FunctionsOfMarkov} from the viewpoint of noncommutative probability. 
%%%%%%%%%%%%%%%%%%%%%%%%%%%%%%%%%%%%%%%%%%%%%%%%%%%%%%%
\end{Remark}
%%%%%%%%%%%%%%%%%%%%%%%%%%%%%%%%%%%%%%%%%%%%%%%%%%%%%%%
%%%%%%%%%%%%%%%%%%%%%%%%%%%%%%%%%%%%%%%%%%%%%%%%%%%%%%%
\begin{Definition}\normalfont\label{definition:markovianity}
%%%%%%%%%%%%%%%%%%%%%%%%%%%%%%%%%%%%%%%%%%%%%%%%%%%%%%%
Let $\iota \equiv (\iota_n)_{n \in \Nset_0} \colon (\cA, \varphi) \to (\cM, \psi)$ be a sequence
of random variables with canonical local filtration $\cA_\bullet \equiv \big\{\cA_I := \bigvee_{n \in
I} \iota_n(\cA)\big\}_{I \in \cI(\Nset_0)}$. Furthermore let $\cM_\bullet \equiv \{\cM_I\}_{I \in
\cI(\Nset_0) }$ be another local filtration of $(\cM,\psi)$.
\begin{enumerate}
\item 
$\iota$ is said to be \emph{adapted to (the local filtration) $\cM_\bullet$} if $\cA_I \subset \cM_I$ 
for all $ I \in \cI(\Nset_0)$.  
\item 
$\iota$ is said to be $\cM_\bullet$-\emph{Markovian} (or an $\cM_\bullet$-\emph{Markov sequence})
if $\iota$ is adapted to $\cM_\bullet$ and
\[
E_{\cM_{[0,n]}} \iota_{n+1} = E_{\cA_{[n,n]}} \iota_{n+1} \qquad (n \ge 0).
\]
An $\cM_\bullet$-\emph{Markov} sequence $\iota$ is just said to be \emph{Markovian} (or a
\emph{Markov sequence}) if $\cM_\bullet = \cA_\bullet$.
\end{enumerate}
%%%%%%%%%%%%%%%%%%%%%%%%%%%%%%%%%%%%%%%%%%%%%%%%%%%%%%%
\end{Definition}
%%%%%%%%%%%%%%%%%%%%%%%%%%%%%%%%%%%%%%%%%%%%%%%%%%%%%%%
It is elementary to verify that an $\cM_\bullet$-Markovian sequence $\iota$ is
$\cN_\bullet$-Markovian whenever $\cA_\bullet \prec \cN_\bullet \prec \cM_\bullet$. In
particular, any $\cM_\bullet$-\emph{Markov} sequence is Markovian. We emphasize that there exist
non-Markovian sequences $\iota$ which are adapted to a local Markov filtration $\cM_\bullet$.
`Trivial' examples for such sequences are provided in Remark \ref{remark:warning-mark-filt}, and
`less trivial' classical examples are the topic of Subsection \ref{subsection:FunctionsOfMarkov}.
Let us remark here without further ado, that such non-Markovian sequences are especially of
relevance for distributional invariance principles, as the latter are often about the characterization
of so-called `mixtures of stochastic processes'.  

The next result states a sufficient condition ensuring that the Markovianity of a local filtration is
inherited by a sequence.
%%%%%%%%%%%%%%%%%%%%%%%%%%%%%%%%%%%%%%%%%%%%%%%%%%%%%%%
\begin{Lemma}\label{lemma:Markovfiltseq}
%%%%%%%%%%%%%%%%%%%%%%%%%%%%%%%%%%%%%%%%%%%%%%%%%%%%%%%
Suppose the sequence $\iota$ is adapted to the local Markov filtration $\cM_\bullet$ such that the
inclusions
\begin{eqnarray*}
\begin{matrix}
\cA_{[0,\infty]} &\subset &\cM\\
\cup  &        & \cup \\
\cA_{[n,n]}   & \subset  & \cM_{[n,n]}
\end{matrix}
\end{eqnarray*}
form a commuting square for all $n \ge 0$. Then $\iota$ is ($\cM_\bullet$-)Markovian.  
%%%%%%%%%%%%%%%%%%%%%%%%%%%%%%%%%%%%%%%%%%%%%%%%%%%%%%%
\end{Lemma}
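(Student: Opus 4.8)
The plan is to verify the defining identity of Definition~\ref{definition:markovianity}(ii) directly, namely that
\[
E_{\cM_{[0,n]}} \iota_{n+1} = E_{\cA_{[n,n]}} \iota_{n+1} \qquad (n \ge 0),
\]
since adaptedness of $\iota$ to $\cM_\bullet$ is already assumed. The argument is a short composition of conditional expectations: first collapse the left-hand side using the Markov property of $\cM_\bullet$, then collapse the result using the hypothesised commuting square.

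First I would locate the range of $\iota_{n+1}$. Since $\iota_{n+1}(\cA) = \cA_{[n+1,n+1]}$, adaptedness together with isotony of $\cM_\bullet$ gives $\iota_{n+1}(\cA) \subset \cM_{[n+1,n+1]} \subset \cM_{[n,\infty)}$, while trivially $\iota_{n+1}(\cA) \subset \cA_{[0,\infty]}$. Hence, for each $a \in \cA$, both $E_{\cM_{[n,\infty)}}$ and $E_{\cA_{[0,\infty]}}$ fix $\iota_{n+1}(a)$.

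Reading ``local Markov filtration'' as saturated Markovian (the paper's convention), property~\eqref{eq:filt-markov-II} gives $E_{\cM_{[0,n]}} E_{\cM_{[n,\infty)}} = E_{\cM_{[n,n]}}$, so inserting $E_{\cM_{[n,\infty)}}$ yields
\[
E_{\cM_{[0,n]}} \iota_{n+1}(a) = E_{\cM_{[0,n]}} E_{\cM_{[n,\infty)}} \iota_{n+1}(a) = E_{\cM_{[n,n]}} \iota_{n+1}(a).
\]
Next, reading the hypothesised commuting square through Proposition~\ref{proposition:cs}(ii) with $\cM_2 = \cA_{[0,\infty]}$, $\cM_1 = \cM_{[n,n]}$ and $\cM_0 = \cA_{[n,n]}$ gives $E_{\cM_{[n,n]}} E_{\cA_{[0,\infty]}} = E_{\cA_{[n,n]}}$, so inserting $E_{\cA_{[0,\infty]}}$ yields $E_{\cM_{[n,n]}} \iota_{n+1}(a) = E_{\cA_{[n,n]}} \iota_{n+1}(a)$. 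Chaining the two computations gives the claimed identity for every $a \in \cA$ and every $n \ge 0$, so that $\iota$ is $\cM_\bullet$-Markovian.

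There is no genuine obstacle here; the care needed is purely in the bookkeeping. I would make sure to invoke the saturated form~\eqref{eq:filt-markov-II} rather than~\eqref{eq:filt-markov-I} (legitimate under the paper's convention, and reducible to it via Lemma~\ref{lemma:markov-I-II} when $\cM_\bullet$ is locally minimal), and to orient the two commuting-square identities correctly: the Markov square is used with $\iota_{n+1}(\cA)$ lying in the ``future'' leg $\cM_{[n,\infty)}$, and the hypothesised square with $\iota_{n+1}(\cA)$ lying in the ``global'' leg $\cA_{[0,\infty]}$. One should also note in passing that all the conditional expectations above exist, since the algebras involved are $\psi$-conditioned by the very definition of a local filtration and of the hypothesised commuting square.
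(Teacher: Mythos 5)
Your proposal is correct and follows essentially the same route as the paper's own proof: insert $E_{\cM_{[n,\infty)}}$ (justified by adaptedness and isotony), collapse via the saturated Markov property \eqref{eq:filt-markov-II} to $E_{\cM_{[n,n]}}$, then insert $E_{\cA_{[0,\infty)}}$ and collapse via the hypothesised commuting square to $E_{\cA_{[n,n]}}$. The only cosmetic difference is that you apply the chain of conditional expectations to $\iota_{n+1}(a)$ pointwise, while the paper writes the same identities as operator compositions with $E_{\cA_{[n+1,n+1]}}$; your remarks on the saturated-versus-unsaturated reading and the orientation of the two squares match the paper's conventions.
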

%%%%%%%%%%%%%%%%%%%%%%%%%%%%%%%%%%%%%%%%%%%%%%%%%%%%%%%
In particular, if the canonical (local) filtration of $\iota$ is Markovian as in Definition
\ref{definition:markov-filtration}, then $\iota$ is a Markov sequence  as in Definition
\ref{definition:markovianity} (ii).
%%%%%%%%%%%%%%%%%%%%%%%%%%%%%%%%%%%%%%%%%%%%%%%%%%%%%%%
\begin{proof}
%%%%%%%%%%%%%%%%%%%%%%%%%%%%%%%%%%%%%%%%%%%%%%%%%%%%%%%
This is immediate from the equations
\begin{eqnarray*}
E_{\cM_{[0,n]}} E_{\cA_{[n+1,n+1]}}   
&=& E_{\cM_{[0,n]}} E_{\cM_{[n,\infty)}} E_{\cA_{[n+1,n+1]}}  \\
&=& E_{\cM_{[n,n]}} E_{\cA_{[n+1,n+1]}}  
= E_{\cM_{[n,n]}}  E_{\cA_{[0,\infty)}}  E_{\cA_{[n+1,n+1]}} \\
&=& E_{\cA_{[n,n]}} E_{\cA_{[n+1,n+1]}}.   
\end{eqnarray*}
%%%%%%%%%%%%%%%%%%%%%%%%%%%%%%%%%%%%%%%%%%%%%%%%%%%%%%%
\end{proof}
%%%%%%%%%%%%%%%%%%%%%%%%%%%%%%%%%%%%%%%%%%%%%%%%%%%%%%%
\begin{Remark}\normalfont \label{remark:warning-mark-filt}
%%%%%%%%%%%%%%%%%%%%%%%%%%%%%%%%%%%%%%%%%%%%%%%%%%%%%%%
The Markovianity of a local filtration in Definition \ref{definition:markovianity} (i) should be 
understood with some care, as it permits `trivial' filtrations. For example, given the probability 
space $(\cM,\psi)$, the local filtration $\cM_\bullet \equiv  \{\cM_I = \cM\}_{I \in \cI(\Nset_0)}$ 
is Markovian and any sequence of random variables is adapted to it. Note also that a constant
sequence $\iota$ (with $\iota_n = \iota_0$ for all $n \ge 0$) is adapted to a local Markov filtration
$\cM_\bullet$ whenever $\iota_0(\cA) \subset \cM_{[n,n]}$ for all $n \ge 0$. 
%%%%%%%%%%%%%%%%%%%%%%%%%%%%%%%%%%%%%%%%%%%%%%%%%%%%%%%
\end{Remark}
%%%%%%%%%%%%%%%%%%%%%%%%%%%%%%%%%%%%%%%%%%%%%%%%%%%%%%%
%%%%%%%%%%%%%%%%%%%%%%%%%%%%%%%%%%%%%%%%%%%%%%%%%%%%%%%
\begin{Remark}\normalfont
%%%%%%%%%%%%%%%%%%%%%%%%%%%%%%%%%%%%%%%%%%%%%%%%%%%%%%%
Presently we do not know in the full generality of our noncommutative framework if the 
canonical local filtration of a Markov sequence is Markovian (in the sense of Definition
\ref{definition:markov-filtration}). But this is true under additional algebraic conditions on the 
Markov sequence which fully include the classical case, see Subsection \ref{subsection:DIPinCP}. 
%%%%%%%%%%%%%%%%%%%%%%%%%%%%%%%%%%%%%%%%%%%%%%%%%%%%%%% 
\end{Remark}
%%%%%%%%%%%%%%%%%%%%%%%%%%%%%%%%%%%%%%%%%%%%%%%%%%%%%%%
%%%%%%%%%%%%%%%%%%%%%%%%%%%%%%%%%%%%%%%%%%%%%%%%%%%%%%%
\subsection{Noncommutative stationary processes and dilations}
\label{subsection:Noncommutative Stationary Processes}
%%%%%%%%%%%%%%%%%%%%%%%%%%%%%%%%%%%%%%%%%%%%%%%%%%%%%%%
We introduce unilateral noncommutative stationary processes, as they underly the
approach to distributional invariance principles in \cite{Ko10,GK09}. Furthermore we present
Anantharaman-Delaroche's notion of factorizable Markov maps \cite{AD06} and  unilateral
versions of dilations of Markov maps as they are subject of K\"ummerer's approach to
noncommutative stationary Markov processes \cite{Ku85}. The existence of such dilations is
actually equivalent to the factoralizability of Markov maps \cite{HM11}.
%%%%%%%%%%%%%%%%%%%%%%%%%%%%%%%%%%%%%%%%%%%%%%%%%%%%%%%
\begin{Definition}\normalfont \label{definition:process-sequence}
%%%%%%%%%%%%%%%%%%%%%%%%%%%%%%%%%%%%%%%%%%%%%%%%%%%%%%%
A \emph{(unilateral) stationary process} $(\cM,\psi, \alpha, \cA_0)$ consists of a probability
space $(\cM,\psi)$, a $\psi$-conditioned subalgebra $\cA_0 \subset \cM$, and an endomorphism
$\alpha\in \End(\cM,\psi)$. The sequence
\[
(\iota_n)_{n \ge 0}\colon  (\cA_0, \psi_0) \to (\cM,\psi), 
\qquad \iota_{n} := \alpha^n|_{\cA_0}=\alpha^n\iota_0,
\]
is called the \emph{sequence of random variables associated to}  $(\cM,\psi, \alpha, \cA_0)$.
Here $\psi_0$ denotes the restriction of $\psi$ from $\cM$ to $\cA_0$ and $\iota_0$ denotes the
inclusion map of $\cA_0$ in $\cM$.
%%%%%%%%%%%%%%%%%%%%%%%%%%%%%%%%%%%%%%%%%%%%%%%%%%%%%%%
\end{Definition}
%%%%%%%%%%%%%%%%%%%%%%%%%%%%%%%%%%%%%%%%%%%%%%%%%%%%%%%
%%%%%%%%%%%%%%%%%%%%%%%%%%%%%%%%%%%%%%%%%%%%%%%%%%%%%%%
\begin{Definition} \normalfont \label{definition:property}
%%%%%%%%%%%%%%%%%%%%%%%%%%%%%%%%%%%%%%%%%%%%%%%%%%%%%%%
A stationary process $(\cM, \psi, \alpha, \cA_0)$  is said to have property ‘A’  if its
associated sequence of random variables 
$(\iota_n)_{n \ge 0} \colon (\cA_0,\psi_0) \to (\cM,\psi)$ has property ‘A’. For example,  
$(\cM, \psi, \alpha, \cA_0)$ is  minimal if  $(\iota_n)_{n \ge 0}$  is minimal.  
%%%%%%%%%%%%%%%%%%%%%%%%%%%%%%%%%%%%%%%%%%%%%%%%%%%%%%%
\end{Definition}
%%%%%%%%%%%%%%%%%%%%%%%%%%%%%%%%%%%%%%%%%%%%%%%%%%%%%%%
%%%%%%%%%%%%%%%%%%%%%%%%%%%%%%%%%%%%%%%%%%%%%%%%%%%%%%%
\begin{Definition}\normalfont  \label{definition:ncms}
%%%%%%%%%%%%%%%%%%%%%%%%%%%%%%%%%%%%%%%%%%%%%%%%%%%%%%%
The stationary process $\big( \cM,\psi, \alpha, \cA_0)$ is called a 
\emph{(unilateral noncommutative) stationary Markov process} if its canonical local filtration
\[
\Big\{\cA_I:=  \bigvee_{i \in I} \alpha^i \iota_0(\cA_0)\Big\}_{I \in \cI(\Nset_0)}
\]
is Markovian. If this process is minimal, then the endomorphism $\alpha$ is also called a 
\emph{Markov shift} with generator $\cA_0$.   

The associated $\psi_0$-Markov map $T=\iota_0^*\alpha \iota_0$, where $\iota_0$ is the inclusion
map of $\cA_0$ in $\cM$ and $\psi_0$ the restriction of $\psi$ to $\cA_0$, is often called the
\emph{transition operator} of the given Markov process.
%%%%%%%%%%%%%%%%%%%%%%%%%%%%%%%%%%%%%%%%%%%%%%%%%%%%%%%
\end{Definition}
%%%%%%%%%%%%%%%%%%%%%%%%%%%%%%%%%%%%%%%%%%%%%%%%%%%%%%%
Anantharaman-Delaroche has introduced in \cite[Definition 6.2]{AD06} the notion of factorizable
maps which we recall here in the setting relevant to our approach. 
%%%%%%%%%%%%%%%%%%%%%%%%%%%%%%%%%%%%%%%%%%%%%%%%%%%%%%%
\begin{Definition}[\cite{AD06}]\label{definition:factorizable} \normalfont
%%%%%%%%%%%%%%%%%%%%%%%%%%%%%%%%%%%%%%%%%%%%%%%%%%%%%%%
Let $(\cA,\varphi)$ be a probability space. A $\varphi$-Markov map $T$ on $\cA$ is called
\emph{factorizable} if there exists a probability space $(\cM,\psi)$ and random variables
$\iota_1, \iota_2\colon(\cA,\varphi)\to (\cM,\psi)$ such that $T=\iota_2^*\iota_1$.
%%%%%%%%%%%%%%%%%%%%%%%%%%%%%%%%%%%%%%%%%%%%%%%%%%%%%%%
\end{Definition}
 %%%%%%%%%%%%%%%%%%%%%%%%%%%%%%%%%%%%%%%%%%%%%%%%%%%%%%%
The factorizability of a Markov map is actually equivalent to the existence of certain dilations
of a Markov map as they are studied by K\"ummerer in \cite{Ku85}. Here we are interested in the
following straightforward unilateral modifications of the original bilateral notions of dilation 
and Markov dilation in \cite{Ku85}. 
%%%%%%%%%%%%%%%%%%%%%%%%%%%%%%%%%%%%%%%%%%%%%%%%%%%%%%%
\begin{Definition}[\cite{Ku85}]\normalfont \label{definition:dilation}
%%%%%%%%%%%%%%%%%%%%%%%%%%%%%%%%%%%%%%%%%%%%%%%%%%%%%%%
Let $(\cA,\varphi)$ be a probability space. A $\varphi$-Markov map $T$ on $\cA$ is said to admit
a \emph{(unilateral state-preserving) dilation} if there exists a probability space $(\cM,\psi)$,
an endomorphism $\alpha\in \End(\cM,\psi)$ and a $(\varphi,\psi)$-Markov map $\iota_0:\cA\to \cM$
such that, for all $n \in \Nset_0$, 
\begin{eqnarray*}
T^n=\iota_0^*\alpha^n \iota_0.
\end{eqnarray*}
Such a dilation of $T$ is denoted by the quadruple $(\cM,\psi,\alpha,\iota_0)$ and is said to be
\emph{minimal} if $\cM=\bigvee_{n\in \Nset_0} \alpha^{n}\iota_0(\cA)$. The quadruple 
$(\cM,\psi,\alpha,\iota_0)$ is called a \emph{dilation of first order} if the equality 
$T=\iota_0^* \alpha \iota_0$ alone holds.
%%%%%%%%%%%%%%%%%%%%%%%%%%%%%%%%%%%%%%%%%%%%%%%%%%%%%%%
\end{Definition}
%%%%%%%%%%%%%%%%%%%%%%%%%%%%%%%%%%%%%%%%%%%%%%%%%%%%%%%
Actually it follows from the case $n=0$ that the $(\varphi,\psi)$-Markov map $\iota_0$ is a
random variable from $(\cA,\varphi)$ to $(\cM,\psi)$ such that $\iota_0\iota_0^*$ is the
$\psi$-preserving conditional expectation from $\cM$ onto $\iota(A)$. 
%%%%%%%%%%%%%%%%%%%%%%%%%%%%%%%%%%%%%%%%%%%%%%%%%%%%%%%
\begin{Definition}[\cite{Ku85}]\normalfont \label{definition:markovdilation}
%%%%%%%%%%%%%%%%%%%%%%%%%%%%%%%%%%%%%%%%%%%%%%%%%%%%%%%
The dilation $(\cM,\psi,\alpha,\iota_0)$ of the $\varphi$-Markov map $T$ on $\cA$ (as introduced
in Definition \ref{definition:dilation}) is said to be a \emph{(unilateral state-preserving) Markov
dilation} if the  local filtration $\big\{\cA_I := \bigvee_{n \in I} \alpha^n\iota_0(\cA)\big\}_{I \in 
\cI(\Nset_0)}$ is Markovian. 
%%%%%%%%%%%%%%%%%%%%%%%%%%%%%%%%%%%%%%%%%%%%%%%%%%%%%%%
\end{Definition}
%%%%%%%%%%%%%%%%%%%%%%%%%%%%%%%%%%%%%%%%%%%%%%%%%%%%%%%
%%%%%%%%%%%%%%%%%%%%%%%%%%%%%%%%%%%%%%%%%%%%%%%%%%%%%%%
\begin{Remark} \normalfont
%%%%%%%%%%%%%%%%%%%%%%%%%%%%%%%%%%%%%%%%%%%%%%%%%%%%%%%
A dilation of a $\varphi$-Markov map $T$ on $\cA$ may not be a Markov dilation. 
This is discussed in \cite[Section 3]{KuSchr83} where it is shown that Varilly has
constructed a dilation in \cite{Va81} which is not a Markov dilation. We are grateful 
to B.~K\"ummerer for bringing this to our attention \cite{Ku21}. 
%%%%%%%%%%%%%%%%%%%%%%%%%%%%%%%%%%%%%%%%%%%%%%%%%%%%%%%
\end{Remark}
%%%%%%%%%%%%%%%%%%%%%%%%%%%%%%%%%%%%%%%%%%%%%%%%%%%%%%%

%%%%%%%%%%%%%%%%%%%%%%%%%%%%%%%%%%%%%%%%%%%%%%%%%%%%%%%
\begin{Definition}\normalfont
\label{definition:tensordilation}
%%%%%%%%%%%%%%%%%%%%%%%%%%%%%%%%%%%%%%%%%%%%%%%%%%%%%%%
Let $(\cA, \varphi)$ be a probability space and $T$ be a $\varphi$-Markov map on $\cA$. A dilation (of
first order) $(\cM,\psi,\alpha,\iota_0)$  of $T$ is called a \emph{tensor dilation} if the conditional
expectation $\iota_0\iota_0^* \colon \cM \to \iota_0(\cA)$ is of tensor type, that is, there exists a
von Neumann subalgebra $\cC$ of $\cM$ with faithful normal state $\chi$ such that 
$\cM= \cA \otimes \cC$ and $(\iota_0\iota_0^*)(a \otimes x) = \chi(x)(a \otimes \1_{\cC})$ 
for all $a\in \cA$, $x\in \cC$.
%%%%%%%%%%%%%%%%%%%%%%%%%%%%%%%%%%%%%%%%%%%%%%%%%%%%%%%
 \end{Definition}
%%%%%%%%%%%%%%%%%%%%%%%%%%%%%%%%%%%%%%%%%%%%%%%%%%%%%%%

The following result was obtained by Haagerup and Musat in \cite{HM11}. We reformulate it here,
using unilateral instead of bilateral notions of dilations. 
%%%%%%%%%%%%%%%%%%%%%%%%%%%%%%%%%%%%%%%%%%%%%%%%%%%%%%%
\begin{Theorem}[\cite{HM11}]
%%%%%%%%%%%%%%%%%%%%%%%%%%%%%%%%%%%%%%%%%%%%%%%%%%%%%%%
Let $T$ be a $\varphi$-Markov map on the von Neumann algebra $\cA$ equipped with a  faithful normal 
state $\varphi$. Then the following are equivalent:
\begin{enumerate}
    \item[(a)] $T$ is factorizable (in the sense of Definition \ref{definition:factorizable}).
    \item[(b)] $T$ admits a dilation (in the sense of Definition \ref{definition:dilation}).
    \item[(c)] $T$ admits a Markov dilation (in the sense of Definition \ref{definition:markovdilation}).
\end{enumerate}
%%%%%%%%%%%%%%%%%%%%%%%%%%%%%%%%%%%%%%%%%%%%%%%%%%%%%%%
\end{Theorem}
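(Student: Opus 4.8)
The plan is to run the cycle $(a)\Rightarrow(c)\Rightarrow(b)\Rightarrow(a)$, concentrating effort on the first implication, as the other two are short. The implication $(c)\Rightarrow(b)$ is immediate from Definition \ref{definition:markovdilation}, since a Markov dilation is a dilation carrying the extra requirement that its canonical local filtration be Markovian; forgetting this requirement yields a dilation.

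For $(b)\Rightarrow(a)$, given a dilation $(\cM,\psi,\alpha,\iota_0)$ I would set $\iota_2:=\iota_0$ and $\iota_1:=\alpha\iota_0$. As noted after Definition \ref{definition:dilation}, $\iota_0$ is a random variable; since $\alpha\in\End(\cM,\psi)$ is an injective $*$-homomorphism satisfying $\psi\circ\alpha=\psi$ and commuting with the modular group, $\iota_1$ is again an injective $*$-homomorphism with $\psi\circ\iota_1=\varphi$, and $\iota_1(\cA)=\alpha(\iota_0(\cA))$ is $\psi$-conditioned because $\iota_0(\cA)$ is modular invariant and $\alpha$ commutes with $\sigma^{\psi}$. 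The dilation equation for $n=1$ then gives $\iota_2^*\iota_1=\iota_0^*\alpha\iota_0=T$, so $T$ is factorizable.

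The implication $(a)\Rightarrow(c)$ is the substantial step. Writing the factorization $T=\iota_2^*\iota_1$ through a probability space $(\cN,\chi)$, I would first read it as a single transition step cast as a commuting square: since $\iota_2\iota_2^*=E_{\iota_2(\cA)}$, the identity $\iota_2^*\iota_1=T$ becomes $E_{\iota_2(\cA)}(\iota_1(a))=\iota_2(T(a))$, so the conditional expectation from the ``future'' copy $\iota_1(\cA)$ onto the ``past'' copy $\iota_2(\cA)$ is exactly $T$. I would then build the path space by amalgamating countably many copies of this one-step coupling over their shared time-slices, gluing the future copy of the $n$-th step to the past copy of the $(n+1)$-st, forming the finite algebras $\cM_{[0,n]}$ and passing to the inductive limit $\cM=\big(\bigcup_n \cM_{[0,n]}\big)^{\prime\prime}$ with its limit state $\psi$. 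The shift $\alpha\in\End(\cM,\psi)$ sends the $k$-th time-slice to the $(k+1)$-st and $\iota_0$ is the inclusion of the zeroth copy of $\cA$. One checks that $\alpha$ is an endomorphism (unitality, $\psi$-invariance and modularity verified on generators and extended by normality), that $T^n=\iota_0^*\alpha^n\iota_0$ by telescoping the $n$ intermediate conditional expectations, each contributing one factor $T$, and that the canonical filtration $\cA_I=\bigvee_{i\in I}\alpha^i\iota_0(\cA)$ is Markovian, because amalgamation over single time-slices turns each junction $\cA_{[0,n]}\supset\cA_{[n,n]}\subset\cA_{[n,\infty)}$ into a commuting square in the sense of Proposition \ref{proposition:cs}.

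The main obstacle is the infinite amalgamation in $(a)\Rightarrow(c)$. One must ensure that the glued finite algebras carry compatible faithful normal states admitting the required $\psi$-preserving conditional expectations---this is precisely where condition (iv) of Definition \ref{definition:MarkovMap} (equivalently, the existence of the adjoint $T^*$) is indispensable---and that these data survive the inductive limit, so that $\psi$ stays faithful and normal and every intermediate subalgebra remains $\psi$-conditioned. Verifying that the limit filtration genuinely enjoys the commuting-square (Markov) property at every junction, rather than a mere inclusion, is the delicate point, and is exactly the content carried by K\"ummerer's construction \cite{Ku85} as refined by Haagerup and Musat \cite{HM11}.
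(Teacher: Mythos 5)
Your proposal is correct and follows essentially the same route as the paper: the paper likewise dispatches $(c)\Rightarrow(b)$ as immediate and $(b)\Rightarrow(a)$ by setting $\iota_1:=\alpha\iota_0$, $\iota_2:=\iota_0$, and then defers the substantive implication $(a)\Rightarrow(c)$ to \cite[Theorem 4.4]{HM11} and \cite[Theorem 6.6]{AD06}. Your sketch of the path-space/amalgamation construction for $(a)\Rightarrow(c)$ is consistent with that cited argument, and you correctly identify (and appropriately defer) the delicate point, namely that the amalgamation must be arranged so that the junctions are genuine commuting squares.
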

%%%%%%%%%%%%%%%%%%%%%%%%%%%%%%%%%%%%%%%%%%%%%%%%%%%%%%%
We refer the reader to the proof given in \cite[Theorem 4.4]{HM11}, as its relevant arguments
transfer to our present setting.  Note that clearly (c) implies (b). Furthermore (b) implies (a)
by putting $\iota_1 := \alpha \iota$ and  $\iota_2 := \iota$ such that $T = \iota_2^* \iota_1$. 
So the main task is to prove that (a) implies (c), which makes use of \cite[Theorem 6.6]{AD06}.
%%%%%%%%%%%%%%%%%%%%%%%%%%%%%%%%%%%%%%%%%%%%%%%%%%%%%%%%

Let us next relate the above unilateral notions of dilations and stationary processes.  It is
immediate that a dilation $\big(\cM,\psi,\alpha,\iota_0\big)$ of the $\varphi$-Markov map $T$
on $\cA$ gives rise to the stationary process $\big(\cM,\psi, \alpha, \iota_0(\cA)\big)$.
Furthermore this stationary process is Markovian if and only if the dilation is a Markov
dilation, as evident from the definitions. Conversely, a stationary Markov process
yields a dilation (and thus a Markov dilation) as it was shown by K\"ummerer in the
setting of bilateral stationary Markov processes (see \cite[Proposition 2.2.7]{Ku85}). We
adapt next its proof to the unilateral setting, as we will need this result in Theorem
\ref{theorem:TensorMarkov}.  
%%%%%%%%%%%%%%%%%%%%%%%%%%%%%%%%%%%%%%%%%%%%%%%%%%%%%%%%
 \begin{Proposition}\label{proposition:dilation}
%%%%%%%%%%%%%%%%%%%%%%%%%%%%%%%%%%%%%%%%%%%%%%%%%%%%%%%%
Let $(\cM,\psi, \alpha,\cA_0)$ be a stationary Markov process and $T=\iota_0^*\alpha \iota_0$ be
the corresponding transition operator where $\iota_0$ is the inclusion map of $\cA_0$ into $\cM$.
Then $(\cM,\psi,\alpha,\iota_0)$ is a dilation of $T$. In other words, the following diagram
commutes for all $n\in \Nset_0$:
	\[
	\begin{tikzcd}
	(\cA_0, \psi_0) \arrow[r, "T^n"] \arrow[d, "\iota_0"]
	& (\cA_0, \psi_0) \arrow[d, leftarrow, "\iota_0^*"] \\
	(\cM,\psi) \arrow[r, "\alpha^n"]
	& (\cM,\psi) 
	\end{tikzcd}.
	\]
Here $\psi_0$ denotes the restriction of $\psi$ to $\cA_0$.
%%%%%%%%%%%%%%%%%%%%%%%%%%%%%%%%%%%%%%%%%%%%%%%%%%%%%%%%
\end{Proposition}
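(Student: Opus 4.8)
The plan is to prove the dilation identity $T^n = \iota_0^*\alpha^n\iota_0$ by induction on $n \in \Nset_0$, following K\"ummerer's argument (\cite[Prop.~2.2.7]{Ku85}) adapted to the unilateral setting. The two structural inputs will be a covariance relation between $\alpha$ and conditional expectations, and the (saturated) Markov property of the canonical local filtration $\cA_\bullet = \{\cA_I = \bigvee_{i \in I}\alpha^i(\cA_0)\}_{I \in \cI(\Nset_0)}$. As preliminary identifications I would record that, since $\iota_0 \colon \cA_0 \to \cM$ is the inclusion of a $\psi$-conditioned subalgebra, its adjoint $\iota_0^*$ is the $\psi$-preserving conditional expectation $E_{\cA_0} = E_{\cA_{[0,0]}}$ corestricted to $\cA_0$; in particular $\iota_0^*\iota_0 = \mathrm{id}_{\cA_0}$ and $T = E_{\cA_0}\alpha|_{\cA_0}$.

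Next I would establish the covariance relation $\alpha E_{\cA_{[0,0]}} = E_{\cA_{[1,1]}}\alpha$. Since $\alpha$ commutes with $\sigma_t^\psi$, the image $\alpha(\cA_0) = \cA_{[1,1]}$ is again $\psi$-conditioned, so both expectations exist. For $x \in \cM$ the element $\alpha(E_{\cA_0}(x))$ lies in $\cA_{[1,1]}$, and for every $w = \alpha(y) \in \cA_{[1,1]}$ with $y \in \cA_0$ one checks
\[
\psi\big(\alpha(E_{\cA_0}(x))\,\alpha(y)\big) = \psi\big(E_{\cA_0}(x)\,y\big) = \psi(xy) = \psi\big(\alpha(x)\,\alpha(y)\big),
\]
using stationarity $\psi\circ\alpha=\psi$ together with the module property $\psi(E_{\cA_0}(x)y)=\psi(E_{\cA_0}(xy))=\psi(xy)$ for $y\in\cA_0$. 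As the $\psi$-preserving conditional expectation onto $\cA_{[1,1]}$ is characterized by this adjoint relation, the covariance identity follows. I would also note that $\cA_\bullet$ is locally minimal, since $\cA_I\vee\cA_J=\cA_{I\cup J}$, so by Lemma \ref{lemma:markov-I-II} its Markovianity is equivalent to the saturated form $E_{\cA_{[0,k]}}E_{\cA_{[k,\infty)}}=E_{\cA_{[k,k]}}$.

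The base cases $n=0$ ($\iota_0^*\iota_0=\mathrm{id}_{\cA_0}=T^0$) and $n=1$ (by definition of $T$) are immediate. For the inductive step, assuming $T^n(a)=E_{\cA_0}\alpha^n(a)$ for all $a\in\cA_0$, I would compute, for $a \in \cA_0$,
\[
T^{n+1}(a) = E_{\cA_0}\alpha\big(T^n(a)\big) = E_{\cA_0}\alpha E_{\cA_0}\alpha^n(a) = E_{\cA_0}E_{\cA_{[1,1]}}\alpha^{n+1}(a),
\]
the last equality by covariance. Writing $z=\alpha^{n+1}(a)\in\cA_{[n+1,n+1]}\subseteq\cA_{[1,\infty)}$, the saturated Markov property at $k=1$ gives $E_{\cA_{[0,1]}}z=E_{\cA_{[0,1]}}E_{\cA_{[1,\infty)}}z=E_{\cA_{[1,1]}}z$; applying $E_{\cA_0}$ and using $\cA_{[0,0]}\subset\cA_{[0,1]}$ to collapse the tower yields $E_{\cA_0}z=E_{\cA_0}E_{\cA_{[1,1]}}z$. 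Hence $T^{n+1}(a)=E_{\cA_0}\alpha^{n+1}(a)=\iota_0^*\alpha^{n+1}\iota_0(a)$, completing the induction.

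The main obstacle is the bookkeeping in the inductive step: the intermediate conditional expectation $E_{\cA_0}$ produced by expanding $T^{n+1}=T\circ T^n$ must be removed, and the point is that this is possible \emph{precisely} because of Markovianity. The mechanism is to convert $E_{\cA_0}$ (via covariance) into $E_{\cA_{[1,1]}}$ sitting inside $\cA_{[0,1]}$, where the saturated commuting square applies. The covariance lemma itself is routine but should be stated with care so that $\psi$-conditionedness of $\alpha(\cA_0)$ is genuinely available; the remaining manipulations are standard tower-of-conditional-expectations computations.
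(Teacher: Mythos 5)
Your proof is correct and follows essentially the same route as the paper's: establish the intertwining relation $\alpha^k E_{\cA_I} = E_{\cA_{I+k}}\alpha^k$ via the adjoint/stationarity computation, then induct on $n$, using the (saturated, locally minimal) Markov property to eliminate the intermediate conditional expectation. The only cosmetic difference is that you expand $T^{n+1}=T\circ T^n$ from the inside and invoke the commuting square at level $1$, while the paper inserts $P_{[0,0]}P_{[0,n]}$ from the outside and invokes it at level $n$; both are the same mechanism.
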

%%%%%%%%%%%%%%%%%%%%%%%%%%%%%%%%%%%%%%%%%%%%%%%%%%%%%%%%
%%%%%%%%%%%%%%%%%%%%%%%%%%%%%%%%%%%%%%%%%%%%%%%%%%%%%%%%
\begin{proof}
%%%%%%%%%%%%%%%%%%%%%%%%%%%%%%%%%%%%%%%%%%%%%%%%%%%%%%%%
For $I\subset \Nset_0$, let $\cA_I=\bigvee_{n \in I}\alpha^n(\iota_0)(\cA_0))$ and let $E_{\cA_I}$
be the unique normal $\psi$-preserving conditional expectation onto $\cA_I$, which we write as
$P_{I}$ for brevity. Then the intertwiner relation $P_{I+k}\circ \alpha^k=\alpha^k\circ P_I$,
with $k\in \Nset_0$ and $I\subset \Nset_0$, can be seen using the following argument. Since
$\cA_{I+k} = \alpha^k(\cA_{I})$, it holds for $y \in \cA_{I}$ and $x \in \cM$ that 
\begin{align*}
\psi\big(\alpha^k(y) P_{I+k}\alpha^k(x)\big) 
&=\psi\big(\alpha^k(y)\alpha^k(x)\big)
= \psi\big(\alpha^k(yx)\big)
= \psi\big(yx\big)\\
&= \psi\big(yP_{I}(x)\big)
= \psi\big(\alpha^k(y) \alpha^k P_{I}(x)\big).
\end{align*}
As the functionals $\{\psi\big(\alpha^k(y) \cdot \big) \mid y \in \cA_I\}$, considered as
elements in the predual of $\cA_{I+k}$, are norm dense, we conclude $P_{I+k}\alpha^k(x) =
\alpha^k P_{I}(x)$ for all $x \in \cM$. In particular, we get 
\begin{align}\label{eq:dilation}
P_{[k-1,k-1]}\alpha^k \iota_0
&= P_{[k-1,k-1]}\alpha^{k-1}\alpha \iota_0 \notag\\
&= \alpha^{k-1}P_{[0,0]}\alpha \iota_0
= \alpha^{k-1} \iota_0 T \qquad  (k\in \Nset),
\end{align}
where we have used $P_{[0,0]}=\iota_0 \iota_0^*$ for the last equality. Now we prove the dilation
property by induction. We know that $\iota_0^*\alpha^n \iota_0=T^n$ is true for $n=0,1$. Suppose
$\iota_0^*\alpha^n \iota_0=T^n$ for some $n\in \Nset_0$. Then
\begin{alignat*}{3}
\iota_0^*\alpha^{n+1} \iota_0&= \iota_0^* P_{[0,0]}\alpha^{n+1} \iota_0 
&\qquad & \text{(as $\iota_0^*\iota_0=\text{Id}$ and $\iota_0\iota_0^*=P_{[0,0]})$}\\
&= \iota_0^* P_{[0,0]}P_{[0,n]}\alpha^{n+1} \iota_0 
&\qquad & \text{(as $\cA_{[0,0]}\subset \cA_{[0,n]}$)}\\
&= \iota_0^* P_{[0,0]}P_{[n,n]}\alpha^{n+1}\iota_0 
&\qquad & \text{(by Markovianity)}\\
&= \iota_0^*\alpha^n \iota_0 T 
&\qquad &\text{(by Equation \eqref{eq:dilation})}	\\
&= T^n T=T^{n+1} & \qquad & \text{(by induction hypothesis)}.
\end{alignat*}
\end{proof}
%%%%%%%%%%%%%%%%%%%%%%%%%%%%%%%%%%%%%%%%%%%%%%%%%%%%%%%
The dilation $(\cM,\psi,\alpha,\iota_0)$ in Proposition \ref{proposition:dilation} is of course a 
Markov dilation, as its local filtration is that of the stationary Markov process. 

The following formula for `pyramidally time ordered correlations' is obtained by an application
of the Markov property  and the intertwiner relation (see for example
\cite[Subsection 2.2]{Ku86} or \cite[Subsection 2.1]{Go04}).
%%%%%%%%%%%%%%%%%%%%%%%%%%%%%%%%%%%%%%%%%%%%%%%%%%%%%%%
\begin{Lemma}\label{lemma:qregression}
%%%%%%%%%%%%%%%%%%%%%%%%%%%%%%%%%%%%%%%%%%%%%%%%%%%%%%%
Let $(\iota_n)_{n\in \Nset_0}:(\cA,\varphi)\to (\cM,\psi)$ be a stationary Markov sequence of
random variables with  transition operator $T=\iota_0^* \alpha \iota_0$, where $\alpha\in
\End(\cM,\psi)$ such that $\iota_n=\alpha^n\iota_0, n\in \Nset_0$. Suppose $k_1<k_2<\cdots <k_n$.
Then for all $a_1,\ldots,a_n,b_1,\ldots,b_n\in \cA$,
\begin{align*} 
\psi\big(\iota_{k_1}(a_1^*)\cdots &\iota_{k_n}(a_n^*)\iota_{k_n}(b_n)
                        \cdots \iota_{k_1}(b_1)\big)\\
&=\varphi\big(a_1^*T^{k_2-k_1}(a_2^* T^{k_3-k_2}(a_3\cdots
            T^{k_n-k_{n-1}}(a_n^*b_n)\cdots)b_2)b_1\big).
\end{align*}
%%%%%%%%%%%%%%%%%%%%%%%%%%%%%%%%%%%%%%%%%%%%%%%%%%%%%%%
\end{Lemma}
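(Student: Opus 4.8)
The plan is to prove the formula by induction on the number $n$ of levels, peeling off the outermost (lowest-index) pair $\iota_{k_1}(a_1^*)\,\cdots\,\iota_{k_1}(b_1)$ at each stage. The base case $n=1$ is immediate: since $\iota_{k_1}$ is a $*$-homomorphism with $\psi\circ\iota_{k_1}=\varphi$, one has $\psi(\iota_{k_1}(a_1^*)\iota_{k_1}(b_1)) = \psi(\iota_{k_1}(a_1^*b_1)) = \varphi(a_1^*b_1)$, which is the right-hand side. For the inductive step, write the correlation as $\psi(\iota_{k_1}(a_1^*)\,Q\,\iota_{k_1}(b_1))$ with the inner pyramid $Q = \iota_{k_2}(a_2^*)\cdots\iota_{k_n}(a_n^*)\iota_{k_n}(b_n)\cdots\iota_{k_2}(b_2) \in \cA_{[k_2,k_n]}$. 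Since each $\iota_{k_j} = \alpha^{k_1}\iota_{k_j-k_1}$ and $\alpha^{k_1}$ is a unital $*$-homomorphism with $\psi\circ\alpha^{k_1}=\psi$, the whole expression equals $\psi$ applied to $\alpha^{k_1}$ of the down-shifted pyramid, so I may assume $k_1=0$ and work with levels $0<m_2<\cdots<m_n$, where $m_j := k_j-k_1$.

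The core is the following recursion for $\iota_0^*$ applied to a pyramid supported on strictly positive levels $0<n_1<\cdots<n_p$: writing $Z = \iota_{n_1}(c_1)\,W\,\iota_{n_1}(d_1)$ with $W\in\cA_{[n_2,n_p]}$, I claim $\iota_0^*(Z) = T^{n_1}\!\big(c_1\,\iota_0^*(Z')\,d_1\big)$, where $Z'$ is the pyramid on levels $n_2-n_1<\cdots<n_p-n_1$ obtained by deleting the top pair and shifting down by $n_1$. To prove this I use $\iota_0^* = \iota_0^*E_{\cA_{[0,0]}}$ (from $\iota_0^*\iota_0=\Id$ and $\iota_0\iota_0^*=E_{\cA_{[0,0]}}$), the tower property $E_{\cA_{[0,0]}} = E_{\cA_{[0,0]}}E_{\cA_{[0,n_1]}}$, and the saturated Markov identity $E_{\cA_{[0,n_1]}}E_{\cA_{[n_1,\infty)}} = E_{\cA_{[n_1,n_1]}}$ (the canonical filtration is locally minimal by construction, hence its Markovianity is saturated by Lemma \ref{lemma:markov-I-II}); as $Z\in\cA_{[n_1,\infty)}$ this collapses $E_{\cA_{[0,0]}}(Z)$ to $E_{\cA_{[0,0]}}E_{\cA_{[n_1,n_1]}}(Z)$. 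The module property of $E_{\cA_{[n_1,n_1]}}$ pulls out the factors $\iota_{n_1}(c_1),\iota_{n_1}(d_1)$, and the intertwiner relation $E_{\cA_{[n_1,n_1]}}\alpha^{n_1} = \alpha^{n_1}E_{\cA_{[0,0]}}$ together with $W=\alpha^{n_1}(Z')$ turns $E_{\cA_{[n_1,n_1]}}(W)$ into $\iota_{n_1}(\iota_0^*(Z'))$. Recombining the $\iota_{n_1}$-factors and using $\iota_0^*E_{\cA_{[0,0]}}\alpha^{n_1}\iota_0 = \iota_0^*\alpha^{n_1}\iota_0 = T^{n_1}$ (the dilation identity of Proposition \ref{proposition:dilation}) yields the claimed recursion.

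Assembling the pieces finishes the proof: after reducing to $k_1=0$, the module property of $E_{\cA_{[0,0]}}$ and $\psi\circ E_{\cA_{[0,0]}}=\psi$ give $\psi(\iota_0(a_1^*)Q_0\iota_0(b_1)) = \varphi\big(a_1^*\,\iota_0^*(Q_0)\,b_1\big)$, and iterating the recursion of the previous paragraph unwinds $\iota_0^*(Q_0)$ into the nested expression $T^{m_2}(a_2^*T^{m_3-m_2}(\cdots T^{m_n-m_{n-1}}(a_n^*b_n)\cdots)b_2)$; since $m_{j+1}-m_j = k_{j+1}-k_j$ and $m_2 = k_2-k_1$, this is exactly the right-hand side. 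I expect the main obstacle to be the bookkeeping in the core recursion, specifically verifying that the intertwiner relation $E_{\cA_{[n_1,n_1]}}\alpha^{n_1}=\alpha^{n_1}E_{\cA_{[0,0]}}$ correctly transports the inner conditional expectation through the shift and reproduces $\iota_0^*(Z')$ one level lower; once that step is in place, the remaining inputs (tower property, module property, the saturated Markov identity, and $T^n=\iota_0^*\alpha^n\iota_0$) are routine.
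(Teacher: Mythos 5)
Your overall architecture is sound and genuinely different from the paper's: you peel the pyramid from the outside in (lowest index first), packaging the whole computation as a recursion $\iota_0^*(Z) = T^{n_1}\big(c_1\,\iota_0^*(Z')\,d_1\big)$ for $\iota_0^*$ of a pyramid supported on strictly positive levels, whereas the paper peels from the inside out, repeatedly inserting $P_{[0,k_{n-1}-k_1]}$ into the correlation and collapsing the innermost (highest-index) factor $\iota_{k_n-k_1}(a_n^*b_n)$ into the next level via the module property, the Markov property and the intertwiner relation \eqref{eq:repdilation}. Your individual ingredients --- $\iota_0^* = \iota_0^*E_{\cA_{[0,0]}}$, the tower and module properties, the intertwiner $E_{\cA_{[n_1,n_1]}}\alpha^{n_1} = \alpha^{n_1}E_{\cA_{[0,0]}}$, and the dilation identity $T^n=\iota_0^*\alpha^n\iota_0$ --- all check out, the reduction to $k_1=0$ is fine, and the recursion does unwind to the stated nested expression. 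As a reformulation, your recursion is arguably cleaner than the paper's long display.

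The one step that does not follow from the lemma's hypotheses as stated is the invocation of the saturated identity $E_{\cA_{[0,n_1]}}E_{\cA_{[n_1,\infty)}} = E_{\cA_{[n_1,n_1]}}$ via local minimality and Lemma \ref{lemma:markov-I-II}. That lemma upgrades property \eqref{eq:filt-markov-I} to \eqref{eq:filt-markov-II} for a local filtration that is \emph{already known to be Markovian}; the hypothesis here is only that $\iota$ is a Markov \emph{sequence} in the sense of Definition \ref{definition:markovianity}(ii), i.e.\ the one-step condition $E_{\cA_{[0,m]}}\iota_{m+1} = E_{\cA_{[m,m]}}\iota_{m+1}$, and the paper explicitly remarks (right after this lemma) that it is not known in general whether this implies Markovianity of the canonical local filtration. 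The paper's proof is arranged precisely so that conditional expectations are only ever applied to a \emph{single} top-level random variable $\iota_{k_n-k_1}(a_n^*b_n)$, with consecutive indices assumed WLOG by unitality, so that only the one-step property enters. Your recursion instead needs $E_{\cA_{[0,n_1]}}(W) = E_{\cA_{[n_1,n_1]}}(W)$ for the whole uncollapsed inner pyramid $W$; deriving this from the one-step property requires the inside-out collapse of Proposition \ref{proposition:onestepMarkov}, which is essentially the content being proved. To repair this, either read the hypothesis as ``the canonical local filtration is Markovian'' (which is how Definition \ref{definition:ncms} defines a stationary Markov process, and then your appeal to Lemma \ref{lemma:markov-I-II} is legitimate), or restructure the collapse so that, as in the paper, the Markov property is applied one level at a time to a single random variable.
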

%%%%%%%%%%%%%%%%%%%%%%%%%%%%%%%%%%%%%%%%%%%%%%%%%%%%%%%
\begin{proof}
%%%%%%%%%%%%%%%%%%%%%%%%%%%%%%%%%%%%%%%%%%%%%%%%%%%%%%%
Let $P_{I}$ be the the unique normal $\psi$-preserving conditional expectation onto $\cA_I$,
where  $\cA_I=\bigvee_{n \in I}\alpha^n(\iota_0(\cA))$. Then using that
$(\cM,\psi,\alpha,\iota_0)$ is a dilation of $T$, we get
\begin{align}\label{eq:repdilation} 
P_{[p,p]}\alpha^q \iota_0
&=P_{[p,p]}\alpha^p\alpha^{q-p}\iota_0 \notag\\
&=\alpha^p P_{[0,0]}\alpha^{q-p}\iota_0
=\alpha^p\iota_0 T^{q-p} \qquad (p < q). 
\end{align}
Now we compute 
\begin{alignat*}{3}
   &\psi\Big(\iota_{k_1}(a_1^*)\cdots 
     \iota_{k_n}(a_n^*)\iota_{k_n}(b_n)\cdots \iota_{k_1}(b_1)\Big)\\
   &= \psi\Big(\alpha^{k_1}(\iota_0(a_1^*))
     \cdots \alpha^{k_n}(\iota_0(a_n^*))\alpha^{k_n}(\iota_0(b_n))
     \cdots \alpha^{k_1}(\iota_0(b_1))\Big)\\
   &=\psi\Big(\iota_0(a_1^*)\alpha^{k_2-k_1}(\iota_0(a_2^*))
      \cdots \alpha^{k_n-k_1}(\iota_0(a_n^*b_n)) \cdots (\iota_0(b_1))\Big)\\
\begin{split} 
      &=\psi\Big(P_{[0,k_{n-1}-k_{1}]}(\iota_0(a_1^*)
      \cdots \alpha^{k_{n-1}-k_{1}}(\iota_0(a_{n-1}^*)) \qquad \times \\
      & \qquad \qquad \qquad\qquad \alpha^{k_n-k_{1}} 
      (\iota_0(a_n b_n^*))\alpha^{k_{n-1}-k_1}(\iota_0(b_{n-1}))\cdots (\iota_0(b_1)))\Big) 
\end{split}\\
\begin{split} 
      &=\psi\Big( \iota_0(a_1^*) \cdots \alpha^{k_{n-1}-k_{1}}(\iota_0(a_{n-1}^*)) 
      P_{[0,k_{n-1}-k_{1}]}(\alpha^{k_n-k_{1}}(\iota_0(a_n^*b_n))) \qquad \times\\
      & \qquad \qquad \qquad \qquad  \alpha^{k_{n-1}-k_1}
            (\iota_0(b_{n-1}))\cdots \iota_0(b_1)\Big)
\end{split}\\
\begin{split}
   &=\psi\Big( \iota_0(a_1^*)\cdots \alpha^{k_{n-1}-k_{1}}(\iota_0(a_{n-1}^*)) 
   P_{[k_{n-1}-k_1,k_{n-1}-k_{1}]}
       (\alpha^{k_n-k_{1}}(\iota_0(a_n^*b_n))) \qquad \times\\
    & \qquad \qquad \qquad \qquad \alpha^{k_{n-1}-k_1}(\iota_0(b_{n-1}))
       \cdots \iota_0(b_1)\Big) 
\end{split} \\
\begin{split}
   &=\psi\Big( \iota_0(a_1^*)\cdots
      \alpha^{k_{n-1}-k_{1}}(\iota_0(a_{n-1}^*))
      \alpha^{k_{n-1}-k_{1}}\iota_0
      T^{k_n-k_{n-1}}(a_n^*b_n) \qquad \times\\
      & \qquad \qquad \qquad \qquad \alpha^{k_{n-1}-k_{1}}(\iota_0(b_{n-1}))\cdots 
      \iota_0(b_1)\Big)
\end{split} \\
   &=\psi\Big( \iota_0(a_1^*)\cdots \alpha^{k_{n-1}-k_{1}}
      \iota_0(a_{n-1}^*T^{k_n-k_{n-1}}(a_n^* b_n)b_{n-1})\cdots\iota_0(b_1)\Big)\\
   &\phantom{=}\vdots \\
   &=\psi\Big(\iota_0 (a_1^* T^{k_2-k_1}(a_2^*\cdots
      T^{k_{n-1}-k_{n-2}}(a_{n-1}^*T^{k_n-k_{n-1}}(a_n^*b_n)b_{n-1})\cdots b_2)b_1) \Big)\\
   &=\varphi\Big(a_1^* T^{k_2-k_1}(a_2^*\cdots
     T^{k_{n-1}-k_{n-2}}(a_{n-1}^*T^{k_n-k_{n-1}}(a_n^*b_n)b_{n-1})\cdots b_2)b_1\Big).
\end{alignat*}
Here, the fifth line is obtained from the fourth by the module property of conditional
expectations, the sixth line is obtained from the fifth by the Markov property, and the seventh
from the sixth by \eqref{eq:repdilation} as $k_{n}-k_1>k_{n-1}-k_1$. We note that as our
random variables are all unital, we can assume without loss of generality that each
$k_{l+1}=k_{l}+1$ for each $l\in \{1,\ldots,n-1\}$.
%%%%%%%%%%%%%%%%%%%%%%%%%%%%%%%%%%%%%%%%%%%%%%%%%%%%%%%
\end{proof}
%%%%%%%%%%%%%%%%%%%%%%%%%%%%%%%%%%%%%%%%%%%%%%%%%%%%%%%
Within our general framework of noncommutative probability, we do not presently know if the
canonical local filtration of a stationary Markov sequence is Markovian (in the sense of Definition
\ref{definition:markov-filtration}).  However, this is indeed the case if one considers
commutative von Neumann algebras or, more generally, von Neumann algebras which are `pyramidally
generated' by a sequence of random variables. Let us make this algebraic condition more precise:
%%%%%%%%%%%%%%%%%%%%%%%%%%%%%%%%%%%%%%%%%%%%%%%%%%%%%%%
\begin{Definition}\normalfont
%%%%%%%%%%%%%%%%%%%%%%%%%%%%%%%%%%%%%%%%%%%%%%%%%%%%%%%
Suppose $\iota\equiv (\iota_n)_{n\in \Nset_0}:(\cA,\varphi)\to (\cM,\psi)$ is a sequence of
random variables with canonical local filtration $\cA_\bullet \equiv (\cA_I)_{I \in \cI(\Nset_0)}$. 
The von Neumann algebra $\cA_{\Nset_0}$ is said to be \emph{pyramidally generated} by the random
variables if elements of the form 
\[
\iota_{n}(a_0^*) \iota_{n+1}(a_1^*) \cdots \iota_{n+p-1}(a_{p-1}^*)  
\iota_{n+p}(a_{p}^*b_p) \iota_{n+p-1}(b_{p-1}) \cdots  \iota_{n+1}(b_{1}) \iota_{n}(b_{0})
\]
are weak*-total in $\cA_{[n,n+p]}$ for all $n,p \in \Nset_0$, where $a_1, \ldots a_{p},
b_1,\ldots, b_p \in \cA$. 
%%%%%%%%%%%%%%%%%%%%%%%%%%%%%%%%%%%%%%%%%%%%%%%%%%%%%%%
\end{Definition}
%%%%%%%%%%%%%%%%%%%%%%%%%%%%%%%%%%%%%%%%%%%%%%%%%%%%%%%
%%%%%%%%%%%%%%%%%%%%%%%%%%%%%%%%%%%%%%%%%%%%%%%%%%%%%%%
\begin{Proposition}\label{proposition:onestepMarkov}
%%%%%%%%%%%%%%%%%%%%%%%%%%%%%%%%%%%%%%%%%%%%%%%%%%%%%%%
Suppose $\iota\equiv (\iota_n)_{n\in \Nset_0}:(\cA,\varphi)\to (\cM,\psi)$ is a stationary Markov
sequence with canonical local filtration $\cA_\bullet \equiv (\cA_I)_{I \in \cI(\Nset_0)}$. If
$\cA_{\Nset_0}$ is pyramidally generated then $\cA_\bullet$ is Markovian.
%%%%%%%%%%%%%%%%%%%%%%%%%%%%%%%%%%%%%%%%%%%%%%%%%%%%%%%
\end{Proposition}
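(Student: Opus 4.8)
The plan is to establish the saturated Markov property \eqref{eq:filt-markov-II} for the canonical local filtration $\cA_\bullet$. Since $\cA_I \vee \cA_J = \cA_{I \cup J}$ by construction, $\cA_\bullet$ is locally minimal, so once \eqref{eq:filt-markov-II} is in hand Lemma \ref{lemma:markov-I-II} upgrades it to the Markovianity of Definition \ref{definition:markov-filtration}. Fix $m \ge 0$. By the tower property and $\cA_{[m,m]} \subset \cA_{[m,\infty)}$, the identity $E_{\cA_{[0,m]}}E_{\cA_{[m,\infty)}} = E_{\cA_{[m,m]}}$ follows as soon as one shows
\[
E_{\cA_{[0,m]}}(y) = E_{\cA_{[m,m]}}(y) \qquad (y \in \cA_{[m,\infty)}),
\]
which is exactly the commuting square of Definition \ref{definition:markov-filtration}(ii), verified through Proposition \ref{proposition:cs}.

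Both conditional expectations are normal, and by the pyramidal generation hypothesis the pyramidal elements based at $m$ are weak*-total in $\bigcup_p \cA_{[m,m+p]}$, hence in $\cA_{[m,\infty)}$. Thus it suffices to check the displayed identity on an element
\[
w = \iota_m(a_0^*)\,\iota_{m+1}(a_1^*)\cdots\iota_{m+p}(a_p^* b_p)\cdots\iota_{m+1}(b_1)\,\iota_m(b_0).
\]
Its outermost factors $\iota_m(a_0^*), \iota_m(b_0)$ lie in $\cA_{[m,m]} \subset \cA_{[0,m]}$, so the module property of both conditional expectations reduces the task to showing $E_{\cA_{[0,m]}}(u) = E_{\cA_{[m,m]}}(u)$ for the inner pyramidal element $u = \iota_{m+1}(a_1^*)\cdots\iota_{m+p}(a_p^* b_p)\cdots\iota_{m+1}(b_1)$ based at $m+1$.

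The core is a collapse computation, proved by induction on the depth and uniformly in the base level: for every $\ell \ge 0$ and every pyramidal element $u$ based at $\ell+1$ one has $E_{\cA_{[0,\ell]}}(u) = E_{\cA_{[\ell,\ell]}}(u) = \iota_\ell(T(\Phi))$, where $\Phi \in \cA$ arises by nesting the transition operator $T$ along the arms of $u$. In the induction step one peels the top index: writing $E_{\cA_{[0,\ell]}} = E_{\cA_{[0,\ell]}}E_{\cA_{[0,\ell+1]}}$ (respectively $E_{\cA_{[\ell,\ell]}} = E_{\cA_{[\ell,\ell]}}E_{\cA_{[\ell,\ell+1]}}$), the module property factors out the level-$(\ell+1)$ arms, the induction hypothesis collapses the element based at $\ell+2$ to a single factor $\iota_{\ell+1}(\Phi') \in \cA_{[\ell+1,\ell+1]}$, and the one-step Markov property $E_{\cA_{[0,\ell]}}\iota_{\ell+1} = E_{\cA_{[\ell,\ell]}}\iota_{\ell+1}$ of Definition \ref{definition:markovianity} together with the identity $E_{\cA_{[\ell,\ell]}}\iota_{\ell+1} = \iota_\ell T$ from \eqref{eq:dilation} collapses the result to $\iota_\ell(T(\Phi))$. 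The decisive point is that conditioning onto $\cA_{[0,\ell+1]}$ and onto $\cA_{[\ell,\ell+1]}$ produce the \emph{same} intermediate factor $\iota_{\ell+1}(\Phi')$, since it already lies in $\cA_{[\ell+1,\ell+1]} \subset \cA_{[\ell,\ell+1]}$; the base case $u = \iota_{\ell+1}(e)$ is precisely the one-step Markov property. Applying this with $\ell = m$ completes the reduction above.

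I expect the main obstacle to be the careful bookkeeping of this induction, where one tracks both the conditioning level and the depth and checks at each peeling step that the two towers of conditional expectations collapse identically; the localization $\iota_{\ell+1}(\Phi')\in\cA_{[\ell+1,\ell+1]}$ is exactly what renders the two towers indistinguishable. Conceptually, pyramidal generation is the hypothesis that confines attention to elements whose correlations are dictated by the nested-$T$ structure of \eqref{eq:dilation}, thereby excluding the `functions of a Markov process' on which a commuting square would fail (compare Remark \ref{remark:failure-mark-filt}). Finally, I would stress that no circularity is involved: the collapse identity $E_{\cA_{[\ell,\ell]}}\iota_{\ell+1} = \iota_\ell T$ rests only on stationarity, through the intertwiner relation $E_{\cA_{[\ell,\ell]}}\alpha^\ell = \alpha^\ell E_{\cA_{[0,0]}}$ established in the proof of Proposition \ref{proposition:dilation}, and not on the filtration Markov property being proved.
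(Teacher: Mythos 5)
Your proposal is correct and follows essentially the same route as the paper: both arguments rest on the one-step collapse $E_{\cA_{[0,k]}}\iota_{k+1}=E_{\cA_{[k,k]}}\iota_{k+1}=\iota_k T$ (from the Markov property of the sequence together with \eqref{eq:dilation}), iterate it via the module property to show that $E_{\cA_{[0,n]}}$ and $E_{\cA_{[n,n]}}$ agree on pyramidal elements, and then invoke weak*-totality and normality to obtain \eqref{eq:filt-markov-II}. Your organization of the collapse as an induction on depth (peeling the outer arms and recursing) rather than the paper's explicit top-down iteration is a cosmetic difference, and your appeal to local minimality is unnecessary for the direction of Lemma \ref{lemma:markov-I-II} you actually use, but neither point affects correctness.
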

%%%%%%%%%%%%%%%%%%%%%%%%%%%%%%%%%%%%%%%%%%%%%%%%%%%%%%%
If $\cM$ (and hence $\cA$) is commutative then $\cA_{\Nset_0}$ is pyramidally generated and thus
$\cA_\bullet$ is Markovian.
%%%%%%%%%%%%%%%%%%%%%%%%%%%%%%%%%%%%%%%%%%%%%%%%%%%%%%%
\begin{proof}
%%%%%%%%%%%%%%%%%%%%%%%%%%%%%%%%%%%%%%%%%%%%%%%%%%%%%%%
By \eqref{eq:dilation} we have that $P_{[k-1,k-1]}\iota_{k}=\iota_{k-1} T$ for all 
$k\in \Nset$. We conclude from this that, for $n,p\in \Nset_0$ and $x,y\in \cA_{[n,n+p]}, 
a\in \cA$,
\begin{alignat*}{2}
P_{[0,n]}&\big(x\iota_{n+p+1}(a)y\big)\\
&= P_{[0,n]}P_{[0,n+p]}\big(x\iota_{n+p+1}(a)y\big)\\
&=P_{[0,n]}\big(xP_{[0,n+p]}\iota_{n+p+1}(a)y\big)\\
&=P_{[0,n]}\big(x\iota_{n+p}T(a)y \big).
\end{alignat*}
A repeated application of the above gives for $a_1,\ldots,a_p,b_1,\ldots,b_p\in \cA$,
\begin{align*} 
P_{[0,n]}&(\iota_{n}(a_0^*)\cdots  \iota_{n+p}(a_{p}^*)
         \iota_{n+p+1}(a_{p+1}^*b_{p+1})   \iota_{n+p}(b_{p}) \cdots \iota_{n}(b_0))\\
&=P_{[0,n]}\big(\iota_{n}(a_0^*T(a_1^*\cdots T(a_{p+1}^*b_{p+1})\cdots b_1)b_0)\big)\\
&=P_{[n,n]}\big(\iota_{n}(a_0^*T(a_1^*\cdots T(a_{p+1}^*b_{p+1})\cdots b_1)b_0)\big)\\
&=P_{[n,n]}(\iota_{n}(a_0^*)\cdots  \iota_{n+p}(a_{p}^*) 
           \iota_{n+p+1}(a_{p+1}^*b_{p+1})   \iota_{n+p}(b_{p}) 
            \cdots \iota_{n}(b_{0})).
\end{align*}
So $P_{[0,n]}$ and $P_{[n,n]}$ coincide on a weak$^*$-total subset of $\cA_{[n,n+p+1]}$ for every
$p\geq 0$. Since $\bigcup_{p \ge 0}\cA_{[n,n+p+1]}$ is weak$^*$-dense in $\cA_{[n,\infty)}$ it
follows $P_{[0,n]}P_{[n,\infty)}=P_{[n,n]}$ by another approximation argument. Thus
$\cA_{\bullet}$ is a local Markov filtration.
%%%%%%%%%%%%%%%%%%%%%%%%%%%%%%%%%%%%%%%%%%%%%%%%%%%%%%%
\end{proof}
%%%%%%%%%%%%%%%%%%%%%%%%%%%%%%%%%%%%%%%%%%%%%%%%%%%%%%%
%%%%%%%%%%%%%%%%%%%%%%%%%%%%%%%%%%%%%%%%%%%%%%%%%%%%%%%
\begin{Remark}\label{remark:nonstationaryMarkov}\normalfont
%%%%%%%%%%%%%%%%%%%%%%%%%%%%%%%%%%%%%%%%%%%%%%%%%%%%%%%
The assumption of stationarity of the Markov sequence in Proposition \ref{proposition:onestepMarkov} 
can be dropped so that the result still remains true. To see this, consider a (possibly)
inhomogeneous Markov sequence $\iota\equiv (\iota_n)_{n\in \Nset_0} \colon (\cA,\varphi)\to
(\cM,\psi)$ with $\varphi$-Markov maps $T_{n+1}:=\iota^*_n \iota_{n+1}^{}$ as transition
operators (see \cite[Subsection 2.1]{Go04}) on $\cA$ which satisfy
\[
P_{[n,n]}\iota_{n+1}=\iota_n T_{n+1} 
\]
for all $n\ge 0$. Here $P_I$ denotes the $\psi$-preserving conditional expectation onto $\cA_I$
as in Proposition \ref{proposition:onestepMarkov}. Replacing appropriately the single transition 
operator $T$ from the stationary setting by the transition operators $T_n$, all arguments
of the proof of Proposition \ref{proposition:onestepMarkov} transfer to the setting of inhomogeneous
Markov sequences. Thus the canonical local filtration of a (not necessarily stationary) Markov sequence
is Markovian (in the sense of Definition \ref{definition:markov-filtration}) if $\cA_{\Nset_0}$
is pyramidally-generated.
%%%%%%%%%%%%%%%%%%%%%%%%%%%%%%%%%%%%%%%%%%%%%%%%%%%%%%%
\end{Remark}
%%%%%%%%%%%%%%%%%%%%%%%%%%%%%%%%%%%%%%%%%%%%%%%%%%%%%%%
We close this subsection by providing a noncommutative notion of operator-valued Bernoulli
shifts, as we will meet them when constructing representations of the partial shift monoid $S^+$
in Section \ref{section:Constructions of Reps of F+}. The definition of such shifts stems from
investigations of K\"ummerer on the structure of noncommutative Markov processes in \cite{Ku85},
and such shifts can also be seen to emerge from the noncommutative extended de Finetti theorem in
\cite{Ko10} (see Theorem \ref{theorem:ncdf}). 
%%%%%%%%%%%%%%%%%%%%%%%%%%%%%%%%%%%%%%%%%%%%%%%%%%%%%%%
\begin{Definition}\normalfont \label{definition:ncbs}
%%%%%%%%%%%%%%%%%%%%%%%%%%%%%%%%%%%%%%%%%%%%%%%%%%%%%%%
The minimal stationary process $\big( \cM,\psi, \beta, \cB_0)$ is called a \emph{(unilateral
noncommutative full/ordered) Bernoulli shift} with \emph{generator} $\cB_0$ if $\cM^{\beta}
\subset \cB_0$ and $\{\beta^n(\cB_0)\}_{n \in \Nset_0}$ is  full/order CS independent over
$\cM^\beta$.   
%%%%%%%%%%%%%%%%%%%%%%%%%%%%%%%%%%%%%%%%%%%%%%%%%%%%%%%
\end{Definition}
%%%%%%%%%%%%%%%%%%%%%%%%%%%%%%%%%%%%%%%%%%%%%%%%%%%%%%%
It is easy to see that a Bernoulli shift $( \cM,\psi, \beta, \cB_0)$  is a minimal
stationary Markov process where the corresponding transition operator $\iota_0^*\beta \iota_0$ 
is a conditional expectation (onto $\cM^{\beta}$, the fixed point algebra of $\beta$). 
Here $\iota_0$ denotes the inclusion map of $\cB_0$ into $\cM$.
%%%%%%%%%%%%%%%%%%%%%%%%%%%%%%%%%%%%%%%%%%%%%%%%%%%%%%%
%%%%%%%%%%%%%%%%%%%%%%%%%%%%%%%%%%%%%%%%%%%%%%%%%%%%%%%
\section{A new distributional invariance principle in classical probability theory}
\label{section:MarkovianityandF+Classical}
%%%%%%%%%%%%%%%%%%%%%%%%%%%%%%%%%%%%%%%%%%%%%%%%%%%%%%%
%%%%%%%%%%%%%%%%%%%%%%%%%%%%%%%%%%%%%%%%%%%%%%%%%%%%%%%
Initially the deep connection between Markovianity and certain representations of the Thompson
monoid $F^+$ emerged from  investigations on distributional symmetries and invariance principles
in noncommutative probability. These pioneering results are of  course also available when
restricting our framework to that of classical probability. We review in Subsection 
\ref{subsection:CSMPR} the algebraization procedure of random variables, in particular to
make our approach more accessible to readers from traditional probability theory. Continuing in
Subsection \ref{subsection:DIPinCP}, we discuss some of our main definitions and results on
`partial spreadability', using their reformulations in terms of properties of classical random
variables. Among these reformulated results is the already stated Theorem \ref{theorem:definetti-intro},
a de Finetti theorem for stationary Markov sequences. Subsection
\ref{subsection:FunctionsOfMarkov} presents evidence that functions of stationary Markov
sequences are partially spreadable. Finally, Subsection \ref{subsection:markov-dilations} is
about constructing stationary Markov sequences as Markov dilations in classical probability, as
featured in K\"ummerer's approach to noncommutative stationary Markov processes. Such an
alternative construction turns out to be key for the construction of a representation of the
Thompson monoid $F^+$, as needed for the implementation of partial spreadability. 
%%%%%%%%%%%%%%%%%%%%%%%%%%%%%%%%%%%%%%%%%%%%%%%%%%%%%%%
\subsection{Algebraization of classical probability theory}
\label{subsection:CSMPR}
%%%%%%%%%%%%%%%%%%%%%%%%%%%%%%%%%%%%%%%%%%%%%%%%%%%%%%%
We review how one arrives at noncommutative notions of probability spaces, random variables
and stationary processes when starting from a traditional probabilistic setting. 

Let $(\Omega, \Sigma, \mu)$ be a standard probability space. Then $\cL := L^\infty(\Omega,
\Sigma, \mu)$, the Lebesgue space of essentially bounded $\Cset$-valued measurable functions, is
a commutative von Neumann algebra and, with $f \in \cL$, the Lebesgue integral $\trace_\mu(f) :=
\int_\Omega f d\mu$ is a faithful normal tracial state on $\cL$. In other words, $\trace_\mu(f)$
is the expectation of the essentially bounded $\Cset$-valued random variable $f \in \cL$ such
that $\trace_\mu(f^*f) = \trace_\mu(|f|^2) = 0$  implies $f=0$ (in the Lebesgue sense). The pair
$(\cL,\trace_\mu)$ is the standard example for a noncommutative probability space coming from
classical probability theory. Conversely, a noncommutative probability space $(\cM, \psi)$ with 
a commutative von Neumann algebra $\cM$ can be seen to be isomorphic to this standard example,
provided $\cM$ has a separable predual.    

Now let $(\Omega_0, \Sigma_0)$ be a standard Borel space and consider the 
$(\Omega_0, \Sigma_0)$-valued random variable $\xi_0$ on $(\Omega, \Sigma, \mu)$. Denote by
$\mu_0 := \mu \circ \xi_0^{-1}$ the pushforward measure of $\mu$ and by 
$\trace_{\mu_0} := \int_\Omega \, \cdot \,d\mu_0$ the induced (tracial) state on $\cL_0 :=
L^\infty(\Omega_0,\Sigma_0,\mu_0)$. Then $\iota_0(f) := f \circ \xi_0$ defines an injective
*-homomorphism from $\cL_0 = L^\infty(\Omega_0,\Sigma_0,\mu_0)$ into $\cL =  L^\infty(\Omega,
\Sigma, \mu)$ such that $\trace_\mu \circ \iota_0 = \trace_{\mu_0}$. Altogether we have arrived 
at the algebraization of the random variable $\xi_0$ to the noncommutative random variable 
$\iota_0 \colon (\cL_0, \trace_{\mu_0}) \to (\cL, \trace_{\mu})$. Note that the noncommutative
probability spaces $(\cL_0, \trace_{\mu_0})$ and its image under $\iota_0$ in $(\cL,
\trace_{\mu})$ are isomorphic, and hence we frequently identify $\cL_0$ with
$L^{\infty}(\Omega, \Sigma^{\xi_0},\mu_{\xi_0})$, where $\Sigma^{\xi_0}:=\sigma\{\xi_0^{-1}(A)
\mid A\in \Sigma_0\}$ and $\mu_{\xi_0}$ is the restriction of $\mu$ to $\Sigma^{\xi_0}$.

Conversely, if  $\tiota_0 \colon (\cA,\varphi) \to (\cM,\psi)$ is an injective *-homomorphism, 
where $\cM$ (and thus $\cA$) is a commutative von Neumann algebra with separable predual, then 
there exist two standard probability spaces $(\Omega, \Sigma, \mu)$ and 
$(\Omega_0, \Sigma_0, \mu_0)$, and an $(\Omega_0, \Sigma_0)$-valued random variable $\xi_0$ on
$(\Omega, \Sigma, \mu)$ with $\mu \circ \xi_0^{-1} = \mu_0$ such that the noncommutative random
variables $\tiota_0$ and $\iota_0$ are the same, up to isomorphisms between the involved standard
probability spaces. 

This algebraization procedure, roughly phrasing, puts the emphasis on the $\sigma$-algebra
generated by a random variable and less on the random variable itself. Effectively, an
(unbounded) random variable $\xi_0$ may be replaced by an (essentially bounded) random variable
$f \circ \xi_0 = \iota_0(f)$, as long as $f \in \cL_0$ is chosen such that $\xi_0$ and $f \circ
\xi_0$ generate the same $\sigma$-algebra. Of course, this observation for a single random
variable $f \circ \xi_0$ extends immediately to multivariate settings, as the family of 
functions $\{f_i\}_{i \in I} \subset \cL_0$ yields the family of (bounded) random variables 
$\{\iota_0(f_i)  =  f_i \circ \xi_0\}_{i \in I}$.    

We are finally left to address the algebraization procedure for sequences of random
variables. Here we will constrain our investigations to sequences given by identically
distributed random variables. This simplification improves the transparency of our
approach, since it allows us to realize each noncommutative random variable as an
injective mapping with the same domain. More explicitly, we will consider a sequence of
identically distributed $(\Omega_0, \Sigma_0)$-valued random variables $\xi \equiv
\{\xi_n\}_{n =0}^\infty$  on $(\Omega, \Sigma, \mu)$ and denote its canonical local filtration
by $\Sigma^\xi_\bullet \equiv \{\Sigma^{\xi}_I\}_{I \subset {\Nset_0}}^{}\subset \Sigma$, where 
\[ 
\Sigma^{\xi}_I:=\sigma\{\xi_i^{-1}(A)\mid A\in \Sigma_0, i\in I\}.
\]
The $\sigma$-algebra $\Sigma^{\xi}_{\Nset_0}$ is also denoted simply by $\Sigma^{\xi}$ and the
measure $\mu|_{\Sigma^{\xi}_{\Nset_0}}$ by $\mu_{\xi}$. Note also that  $\Sigma^{\xi}_{[n,n]}$ is
often written as $\Sigma^{\xi_n}$ or $\sigma\{\xi_n\}$.

We remind that in the setting of the standard probability space $(\Omega, \Sigma, \mu)$, a
$\sigma$-subalgebra of $\Sigma$ (generated by random variables) is understood to be completed
with respect to sets of $\mu$-measure 0.   Since all random variables $\xi_n$ are identically
distributed we can identify the pushforward measures $\mu_n = \mu \circ \xi_n^{-1}$ and 
$\mu_0 = \mu \circ \xi_0^{-1}$. Thus the algebraization procedure yields the sequence of
noncommutative random variables
\[
\iota \equiv (\iota_{n})_{n \ge 0} \colon (\cL_0, \trace_{\mu_0}) \to   (\cL, \trace_{\mu}), 
\]
where $\iota_n (h) = h \circ \xi_n$. As before, $\cL_0$ can be canonically identified with
$L^{\infty}(\Omega,\Sigma^{\xi_{0}}, \mu_{\xi_0})$ as a von Neumann subalgebra of $\cL$.
Here $\mu_{\xi_0}$ denotes the restrictions of $\mu$ to $\Sigma^{\xi_{0}}$.
%%%%%%%%%%%%%%%%%%%%%%%%%%%%%%%%%%%%%%%%%%%%%%%%%%%%%%%
\subsection{Partial spreadability and de Finetti theorems in classical probability}
\label{subsection:DIPinCP}
%%%%%%%%%%%%%%%%%%%%%%%%%%%%%%%%%%%%%%%%%%%%%%%%%%%%%%%
Let $\xi \equiv \{\xi_n\}_{n =0}^\infty$ be a sequence of identically distributed
$(\Omega_0, \Sigma_0)$-valued random variables on $(\Omega, \Sigma, \mu)$ and denote by 
$ \iota \equiv (\iota_{n})_{n \ge 0} \colon (\cL_0, \trace_{\mu_0}) \to   (\cL,\trace_{\mu})$
the corresponding sequence of noncommutative random variables, obtained from $\xi$ through
the algebraization procedure. It is elementary to see that $\xi$ is stationary (in the wide
sense) if and only if $\iota$ is stationary (in the sense of Definition \ref{definition:distsym}). 
Furthermore we may assume $\Sigma = \Sigma^{\xi}_{\Nset_0}$ for this stationary sequence.
Consequently, there exists a $\mu$-preserving $\Sigma$-measurable map $\eta$ on $\Omega$ such
that $\xi_n = \xi_0 \circ \eta^n$ for $n \in \Nset$. This map $\eta$ lifts to an endomorphism
$\alpha$ on the von Neumann algebra $\cL$  such that $\alpha(f) = f \circ \eta$ and
$\trace_{\mu} \circ \alpha = \trace_\mu$. Thus the stationary sequence $\xi$, or its
algebraization $\iota$, yields the stationary process $(\cL, \trace_\mu, \alpha, \cL_0)$ 
(in the sense of Definition \ref{definition:process-sequence}). Conversely, if $(\cL, \trace_\mu,
\alpha, \cL_0)$ is a stationary process with a commutative von Neumann algebra $\cL$ acting on a
separable Hilbert space, then the endomorphism $\alpha$ can be implemented by a
measure-preserving measurable map $\eta$ as described before. 

This algebraization procedure also applies to distributional invariance principles other than
stationarity, like exchangeability or spreadability. Taking into account their characterisations
in Proposition \ref{proposition:dip}, the extended de Finetti theorem can be casted as follows, 
adapting in wide parts the formulation in Kallenberg's monograph \cite{Ka05}.
%%%%%%%%%%%%%%%%%%%%%%%%%%%%%%%%%%%%%%%%%%%%%%%%%%%%%%%
\begin{Theorem}\label{theorem:cdeF}
%%%%%%%%%%%%%%%%%%%%%%%%%%%%%%%%%%%%%%%%%%%%%%%%%%%%%%%
Suppose $\xi \equiv (\xi_n)_{n \ge 0}$ is a sequence of $(\Omega_0, \Sigma_0)$-valued random
variables on the standard probability space $(\Omega, \Sigma, \mu)$. Then the following are
equivalent:
\begin{enumerate}
    \item[(a)]
    $\xi$ is exchangeable;
    \item[(b)]
    $\xi$ is spreadable;
    \item[(b')]
    there exist $\Sigma^\xi$-measurable $\mu_{\xi}$-preserving maps $\{\eta_{n}\}_{n \ge 0}$ on $\Omega$ 
    satisfying the relations $\eta_k  \eta_\ell = \eta_{\ell+1} \eta_{k}$ for $0 \le k \le \ell < \infty$ 
    such that $\xi_0 = \xi_0 \circ \eta_k$  for all $k> 0$ (localization) and $\xi_n = \xi_0 \circ \eta_0^n$ 
    for all $n \ge 0$ (stationarity);
\item[(c)]
    $\xi$ is stationary and conditionally independent.     
\item[(d)]
    $\xi$ is identically distributed and conditionally independent. 
\end{enumerate}
Here the conditioning is taken with respect to the tail $\sigma$-algebra of the sequence $\xi$. 
%%%%%%%%%%%%%%%%%%%%%%%%%%%%%%%%%%%%%%%%%%%%%%%%%%%%%%%
\end{Theorem}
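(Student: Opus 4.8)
The plan is to establish the cycle of implications $(a)\Rightarrow(b)\Rightarrow(c)\Rightarrow(d)\Rightarrow(a)$ together with the separate equivalence $(b)\Leftrightarrow(b')$, treating $(b')$ as the only genuinely new ingredient and importing the remaining content from the classical extended de Finetti theorem. First I would dispose of the two trivial steps: $(a)\Rightarrow(b)$ is immediate from the hierarchy recorded after Definition \ref{definition:distsym}, since passing to an increasing subsequence is a special case of the reindexings realized by finite permutations; and $(c)\Rightarrow(d)$ holds because a stationary sequence is in particular identically distributed.

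For $(b)\Leftrightarrow(b')$ I would run the algebraization procedure of Subsection \ref{subsection:CSMPR}, passing from $\xi$ to its algebraization $\iota \equiv (\iota_n)_{n\ge 0}\colon (\cL_0,\trace_{\mu_0})\to(\cL,\trace_\mu)$ with $\cL = L^\infty(\Omega,\Sigma,\mu)$, and then apply the equivalent formulation of spreadability in Proposition~\ref{proposition:dip}\,(\ref{proposition:dip-item-2}). This produces a representation $\varrho\colon S^+\to\End(\cL,\trace_\mu)$ with localization $\iota_0 = \varrho(h_k)\iota_0$ and stationarity $\iota_n = \varrho(h_0^n)\iota_0$. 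The key observation is that, since $\cL$ is commutative with separable predual, each endomorphism $\varrho(h_k)$ is implemented by a $\mu$-preserving $\Sigma$-measurable map $\eta_k$ on $\Omega$ via $\varrho(h_k)(f) = f\circ\eta_k$; the defining relations $h_k h_\ell = h_{\ell+1}h_k$ transcribe to $\eta_k\eta_\ell = \eta_{\ell+1}\eta_k$, and the localization and stationarity relations dualize exactly to $\xi_0 = \xi_0\circ\eta_k$ and $\xi_n = \xi_0\circ\eta_0^n$. Conversely, any family $(\eta_k)_{k\ge 0}$ as in $(b')$ defines such a representation, so $(b)$ and $(b')$ are equivalent; this is the classical shadow of \cite[Theorem~1.2]{EGK17}.

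The forward de Finetti step $(b)\Rightarrow(c)$ I would obtain by invoking the noncommutative extended de Finetti theorem, Theorem~\ref{theorem:ncdf}, in the commutative probability space $(\cL,\trace_\mu)$. Its implication $(a)\Rightarrow(b)$ there shows that the spreadable sequence $\iota$ is stationary and that its canonical local filtration is full CS independent over the tail algebra $\cN = \bigcap_{n\ge 0}\bigvee_{k\ge n}\iota_k(\cL_0)$. Translating back, $\xi$ is stationary and $\cN$ corresponds to the classical tail $\sigma$-algebra $\cT := \bigcap_{n\ge 0}\Sigma^{\xi}_{[n,\infty)}$. Here I would use that in the commutative setting CS independence collapses to ordinary conditional independence: condition~\eqref{item:cs-i} of Proposition~\ref{proposition:cs}, namely $E_{\cM_0}(xy) = E_{\cM_0}(x)E_{\cM_0}(y)$, is precisely the factorization of conditional expectations that defines conditional independence over $\cT$. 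Hence $\xi$ is stationary and conditionally independent, which is $(c)$.

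The remaining implication $(d)\Rightarrow(a)$ is the main obstacle, and it is exactly where the classical depth of de Finetti's theorem enters. The strategy is to argue that conditional independence over the tail forces each conditional law $Q_n(\cdot) = \mu\big(\xi_n\in\cdot \mid \cT\big)$ to be $\cT$-measurable and, together with the identical unconditional distributions, to be a.s.\ independent of $n$; the sequence is then \emph{conditionally i.i.d.}\ given $\cT$. Exchangeability follows because, conditionally on $\cT$, the variables are i.i.d.\ and hence exchangeable, while permuting indices leaves both the common conditional law and the tail $\sigma$-algebra invariant, so the unconditional exchangeability is recovered by integrating out $\cT$. The delicate point will be the a.s.\ identification of the conditional laws $Q_n$, for which I would appeal to the de Finetti--Hewitt--Savage--Aldous theorem for standard Borel spaces as formulated in \cite{Al85,Ka05}; alternatively, one may close the cycle more directly through Ryll-Nardzewski's theorem \cite{RN57}, which upgrades the spreadability of $(b)$ to the exchangeability of $(a)$ for infinite sequences.
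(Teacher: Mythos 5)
The paper itself offers no proof of Theorem \ref{theorem:cdeF}: it is recalled as a known result, with the equivalence of (a) and (d) attributed to the classical de Finetti--Hewitt--Savage--Aldous theorem, (b) to Ryll-Nardzewski, and (b') to \cite{Ko10,EGK17}, so there is no internal argument to compare yours against. Your outline is the natural one and correctly isolates (b)$\Leftrightarrow$(b') as the only genuinely new content; your treatment of it --- algebraization, Proposition \ref{proposition:dip}(\ref{proposition:dip-item-2}), and the fact that a state-preserving unital $*$-endomorphism of a commutative von Neumann algebra with separable predual is induced by a measure-preserving point map --- is exactly the route the paper indicates. Two small points: Proposition \ref{proposition:dip} is stated for \emph{minimal} sequences, so you should first restrict to $(\Omega,\Sigma^\xi,\mu_\xi)$ (consistent with (b') asking for $\Sigma^\xi$-measurable, $\mu_\xi$-preserving maps); and (a)$\Rightarrow$(b) is not literally ``a special case of finite permutations'' --- one compares finite-dimensional distributions, each of which involves only finitely many indices and hence is reachable by a finite permutation.

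There is, however, a genuine gap in your (d)$\Rightarrow$(a) step. You claim that conditional independence over the tail $\sigma$-algebra $\cT$ ``together with the identical unconditional distributions'' forces the conditional laws $Q_n$ to be a.s.\ independent of $n$. This deduction is false if ``identically distributed'' is read unconditionally. Take $U$ a fair coin and, given $U=0$, let $\xi_0\sim\mathrm{Ber}(1/3)$ and $\xi_n\sim\mathrm{Ber}(1/4)$ for $n\ge 1$, all independent; given $U=1$, let $\xi_0\sim\mathrm{Ber}(2/3)$ and $\xi_n\sim\mathrm{Ber}(3/4)$ for $n\ge 1$, all independent. Then every $\xi_n$ is unconditionally $\mathrm{Ber}(1/2)$, the tail $\sigma$-algebra equals $\sigma(U)$ (by the strong law), and the sequence is conditionally independent given the tail; yet $\mu(\xi_0=1,\xi_1=1)=7/24\neq 5/16=\mu(\xi_1=1,\xi_2=1)$, so the sequence is not exchangeable. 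The condition in (d) must therefore be read as \emph{conditionally} i.i.d.\ given $\cT$ (which is Kallenberg's formulation), and with that reading the implication (d)$\Rightarrow$(a) is immediate by integrating out $\cT$, as you say in your last sentence. So either adopt the conditional reading of (d) from the outset, or route the closure of the cycle through (c): under stationarity one \emph{can} identify the conditional laws (the shift preserves $\cT$ and the joint law), which is why (c) does not suffer from the same defect. As it stands, the intermediate claim in your final paragraph is not a valid step.
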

%%%%%%%%%%%%%%%%%%%%%%%%%%%%%%%%%%%%%%%%%%%%%%%%%%%%%%%
The equivalence of conditions (a) and (d) is the content of the traditional de Finetti theorem,
and the equivalent characterization by condition (b) is attributed to Ryll-Nardzewski. We refer
the reader to \cite{Ka05} for further information of this extended version of the de Finetti
theorem and note that, usually, the clearly equivalent condition (c) is omitted in the published
literature. The reformulation of spreadability by condition (b') emerges from \cite{Ko10,EGK17}
where it was discovered that spreadability connects to the representation theory of the partial
shifts monoid $S^+$ (see Subsection \ref{subsection:partialshifts}), and that this monoid $S^+$
appears in the inductive limit of the category of semicosimplicial probability spaces.

As spreadability can be implemented through representations of the partial shifts monoid $S^+$,
it is natural to implement a new distributional invariance principle through representations of
the Thompson monoid $F^+$, as done in our framework of noncommutative probability. From the view
point of classical probability, it is of interest to further investigate the following
reformulations of this new distributional invariance principle in noncommutative probability. 
%%%%%%%%%%%%%%%%%%%%%%%%%%%%%%%%%%%%%%%%%%%%%%%%%%%%%%%
\begin{Definition}\normalfont\label{definition:partspread-c}
%%%%%%%%%%%%%%%%%%%%%%%%%%%%%%%%%%%%%%%%%%%%%%%%%%%%%%%
Let $\xi \equiv (\xi_n)_{n \ge 0}$ be a sequence of $(\Omega_0, \Sigma_0)$-valued random
variables on the standard probability space $(\Omega, \Sigma, \mu)$. 
\begin{enumerate}
\item 
$\xi$ is said to be \emph{partially spreadable} if there exists $\Sigma^{\xi}$-measurable
$\mu_{\xi}$-preserving maps $\{\eta_n\}_{n \ge 0}$ on $\Omega$ satisfying the relations $\eta_k
\eta_{\ell} = \eta_{\ell+1} \eta_{k}$ for $0 \le k < \ell < \infty$ such that $\xi_0 = \xi_0
\circ \eta_k$  for all $k> 0$ (localization) and $\xi_n = \xi_0 \circ \eta_0^n$ for all $n \ge 0$
(stationarity).
\item 
The partially spreadable sequence $\xi$ from (i) is said to be \emph{maximal partially
spreadable} if $\sigma\{\xi_0\} = \bigcap_{k\ge 1}\cI_{\eta_k}$. Here $\cI_{\eta_k}$ denotes the
$\sigma$-subalgebra of $\mu$-almost $\eta_k$-invariant sets in $\Sigma$. 
\end{enumerate}
More generally, a sequence $\txi \equiv (\txi_n)_{n \ge 0}$ of $(\Omega_0, \Sigma_0)$-valued
random variables on a probability space $(\tOmega, \tSigma, \tmu)$ is said to be
\emph{(maximal) partially spreadable} if it has the same distribution as a (maximal) partially
spreadable sequence $\xi$.
%%%%%%%%%%%%%%%%%%%%%%%%%%%%%%%%%%%%%%%%%%%%%%%%%%%%%%%
\end{Definition}
%%%%%%%%%%%%%%%%%%%%%%%%%%%%%%%%%%%%%%%%%%%%%%%%%%%%%%%
Thus partial spreadability is implemented by a representation of the Thompson monoid 
$F^+$ in the measure-preserving measurable maps on the probability space 
$(\Omega, \Sigma, \mu)$. It is worthwhile to point out the subtle fact that this
representation may not restrict to the probability space $(\Omega, \Sigma^\xi, \mu_{\xi})$,
where $\Sigma^\xi$ denotes here the $\sigma$-algebra generated by $\xi$ and $\mu_{\xi}$ the
restriction of $\mu$ to  $\Sigma^\xi$. This is in contrast to exchangeability, spreadability
and stationarity, which are robust under such restrictions to the minimal setting of the 
underlying probability space.  

We provide next the classical reformulation of a local Markov filtration from Definition
\ref{definition:markov-filtration}, as it will be used to generalize the extended de Finetti
theorem, Theorem \ref{theorem:cdeF}.  We stress again that Markovianity of a local filtration, as a
property of a (partially ordered) family of $\sigma$-subalgebras indexed by `time intervals' $I
\subset \Nset_0$, is a prori not in reference to any stochastic process. 
%%%%%%%%%%%%%%%%%%%%%%%%%%%%%%%%%%%%%%%%%%%%%%%%%%%%%%%
\begin{Notation}\normalfont
%%%%%%%%%%%%%%%%%%%%%%%%%%%%%%%%%%%%%%%%%%%%%%%%%%%%%%%
Suppose $\tSigma$ is a $\sigma$-subalgebra of $\Sigma$. Then $\mathbb{E}_{\tSigma}$ denotes the
$\mu$-preserving (normal) conditional expectation from $L^\infty(\Omega, \Sigma,\mu)$ onto
$L^\infty(\Omega,\tSigma,\mu|_{\tSigma})$.  
%%%%%%%%%%%%%%%%%%%%%%%%%%%%%%%%%%%%%%%%%%%%%%%%%%%%%%%
\end{Notation}
%%%%%%%%%%%%%%%%%%%%%%%%%%%%%%%%%%%%%%%%%%%%%%%%%%%%%%%
Commonly a filtration of a probability space is understood to be a non-decreasingly (or non-increasingly) 
totally ordered family of $\sigma$-subalgebras of $\Sigma$ \cite{Ve17}. As we will need to consider 
families of $\sigma$-subalgebras which are indexed by partially ordered sets, we will refer to such 
families as `local filtrations', a well-established notion in the context of random fields. Recall that
$\cI(\Nset_0)$ denotes the set of `intervals' in $\Nset_0$ (see Notation \ref{notation:indexsets}).
%%%%%%%%%%%%%%%%%%%%%%%%%%%%%%%%%%%%%%%%%%%%%%%%%%%%%%%
\begin{Definition}\normalfont \label{definition:markov-filtration-c}
%%%%%%%%%%%%%%%%%%%%%%%%%%%%%%%%%%%%%%%%%%%%%%%%%%%%%%%
A local filtration $\cF_\bullet  \equiv \{\cF_I\}_{I\in \cI(\Nset_0)} \subset \Sigma$
of $(\Omega, \Sigma, \mu)$ is said to be \emph{Markovian} (or a \emph{local Markov filtration}) if
$\cF_{[0,n-1]}$ and $\cF_{[n+1,\infty)}$ are conditionally independent over $\cF_{[n,n]}$ for all 
$n \ge 1$. In other words, it holds the Markov property 
\begin{align}
&&&& \mathbb{E}_{\cF_{[0,n-1]} \vee \cF_{[n,n]}}  \mathbb{E}_{\cF_{[n,n]} \vee \cF_{[n+1,\infty)}} 
 & = \mathbb{E}_{\cF_{[n,n]}}&& \text{for all $n \ge 1$.}   
&&&&  \tag{M$_\text{c}$} \label{eq:filt-markov-I-c} 
\end{align}
Here $\cF_I \vee \cF_J$ denotes the $\sigma$-algebra generated by $\cF_I$ and $\cF_J$. 
%%%%%%%%%%%%%%%%%%%%%%%%%%%%%%%%%%%%%%%%%%%%%%%%%%%%%%%
\end{Definition}
%%%%%%%%%%%%%%%%%%%%%%%%%%%%%%%%%%%%%%%%%%%%%%%%%%%%%%%
An apparently slightly stronger definition of Markovianity, corresponding to saturated
Markovianity in Definition \ref{definition:markov-filtration}, is given by the conditions
\begin{align}
&&&& \mathbb{E}_{\cF_{[0,n]}}  \mathbb{E}_{\cF_{[n,\infty)}} 
 & = \mathbb{E}_{\cF_{[n,n]}}&& \text{for all $n \ge 0$.}   
&&&&  \tag{M$_\text{c}^\prime$} \label{eq:filt-markov-II-c} 
\end{align}
It is easily verified that \eqref{eq:filt-markov-II-c}  implies \eqref{eq:filt-markov-I-c}, 
using the isotony properties of local filtrations, see also Lemma \ref{lemma:markov-I-II}.  
Both Markov conditions \eqref{eq:filt-markov-II-c} and \eqref{eq:filt-markov-I-c} are equivalent 
if the local filtration is \emph{locally minimal}, i.e. it holds $\cF_I \vee \cF_J = \cF_{K}$ 
for any $I,J,K \in \cI(\Nset_0)$ with $I \cup J = K$. 

In the following we will not distinguish explicitly between the Markov properties 
\eqref{eq:filt-markov-I-c} and \eqref{eq:filt-markov-II-c}, as we will throughout establish  
the stronger property \eqref{eq:filt-markov-II-c} which is anyway equivalent to 
\eqref{eq:filt-markov-I-c} for locally minimal local filtrations.  
%%%%%%%%%%%%%%%%%%%%%%%%%%%%%%%%%%%%%%%%%%%%%%%%%%%%%%%
\begin{Remark}\normalfont \label{remark:cs-not-robust}
%%%%%%%%%%%%%%%%%%%%%%%%%%%%%%%%%%%%%%%%%%%%%%%%%%%%%%%
Suppose the local filtration $\cG_{\bullet} \equiv \{\cG_{I}\}_{I \in \cI(\Nset_0)} \subset \Sigma$ is
coarser than the local Markov filtration $\cF_\bullet$. Then  $\cG_{\bullet}$ is not necessarily
Markovian. A sufficient condition so that $\cG_\bullet$ inherits Markovianity from $\cF_\bullet$
is 
\[
\mathbb{E}_{\cG_{[0,\infty)}} \mathbb{E}_{\cF_{[n,n]}}
= \mathbb{E}_{\cG_{[n,n]}}  \qquad (n \in \Nset_0),
\]
as it is easily verified:
\begin{align*}
\mathbb{E}_{\cG_{[0,n]}}  \mathbb{E}_{\cG_{[n,\infty)}}
 & = \mathbb{E}_{\cG_{[0,n]}} \mathbb{E}_{\cF_{[0,n]}}  
      \mathbb{E}_{\cF_{[n,\infty)}} \mathbb{E}_{\cG_{[n,\infty)}} 
= \mathbb{E}_{\cG_{[0,n]}}
   \mathbb{E}_{\cF_{[n,n]}} \mathbb{E}_{\cG_{[n,\infty)}} \\
& = \mathbb{E}_{\cG_{[0,n]}} \mathbb{E}_{\cG_{[0,\infty)}} 
     \mathbb{E}_{\cF_{[n,n]}} \mathbb{E}_{\cG_{[n,\infty)}}
= \mathbb{E}_{\cG_{[0,n]}} \mathbb{E}_{\cG_{[n,n]}} \mathbb{E}_{\cG_{[n,\infty)}} 
= \mathbb{E}_{\cG_{[n,n]}}.
\end{align*} 
%%%%%%%%%%%%%%%%%%%%%%%%%%%%%%%%%%%%%%%%%%%%%%%%%%%%%%%
\end{Remark}
%%%%%%%%%%%%%%%%%%%%%%%%%%%%%%%%%%%%%%%%%%%%%%%%%%%%%%%
%%%%%%%%%%%%%%%%%%%%%%%%%%%%%%%%%%%%%%%%%%%%%%%%%%%%%%%
\begin{Definition}\normalfont \label{definition:markovianity-c}
%%%%%%%%%%%%%%%%%%%%%%%%%%%%%%%%%%%%%%%%%%%%%%%%%%%%%%%
Suppose the standard probability space $(\Omega, \Sigma, \mu)$ is equipped with a local filtration
$\cF_\bullet  \equiv \{\cF_I\}_{I\in \cI(\Nset_0)} \subset \Sigma$. Let $\xi \equiv 
(\xi_n)_{n\in \Nset_0}$ be a sequence of $(\Omega_0,\Sigma_0)$-valued random variables on
$(\Omega, \Sigma, \mu)$ with canonical local filtration $\Sigma^\xi_\bullet \subset \Sigma$. 
\begin{enumerate}
\item 
$\xi$ is said to be \emph{adapted to (the local filtration) $\cF_\bullet$}, if $\Sigma^\xi_I \subset
\cF_{I}$ for all $I \in \cI(\Nset_0)$. 
\item
$\xi$ is said to be \emph{$\cF_\bullet$-Markovian} (or an \emph{$\cF_\bullet$-Markov sequence}), 
if $\xi$ is adapted to $\cF_{\bullet}$ and 
\begin{equation}
\label{eq:classicalMarkov1}
\mathbb{E}_{\cF_{[0,n]}} \mathbb{E}_{\sigma\{\xi_{n+1}\}}
= \mathbb{E}_{\sigma\{\xi_{n}\}}\mathbb{E}_{\sigma\{\xi_{n+1}\}}
\end{equation}
A \emph{$\Sigma^\xi_\bullet$-Markovian} sequence $\xi$ is just said to be \emph{Markovian} (or a
\emph{Markov sequence}). 
\end{enumerate}
%%%%%%%%%%%%%%%%%%%%%%%%%%%%%%%%%%%%%%%%%%%%%%%%%%%%%%%
\end{Definition}
%%%%%%%%%%%%%%%%%%%%%%%%%%%%%%%%%%%%%%%%%%%%%%%%%%%%%%%
$\cF_\bullet$-Markovianity of a sequence makes use only of the totally ordered family 
$\{\cF_{[0,n]}\}_{n \ge 0}$ in $\cF_\bullet$ which is a filtration
in the common sense. This gives the usual notion of an $\{\cF_{[0,n]}\}_{n \ge 0}$-Markovian sequence, 
see for example \cite{Ka97}.
%%%%%%%%%%%%%%%%%%%%%%%%%%%%%%%%%%%%%%%%%%%%%%%%%%%%%%%
\begin{Lemma} \label{lemma:ms-mf-equivalence}
%%%%%%%%%%%%%%%%%%%%%%%%%%%%%%%%%%%%%%%%%%%%%%%%%%%%%%%
Let $\xi \equiv (\xi_n)_{n\in \Nset_0}$ be a sequence of $(\Omega_0,\Sigma_0)$-valued random
variables on  $(\Omega, \Sigma, \mu)$ with canonical local filtration $\Sigma^\xi_\bullet \subset
\Sigma$. Then the following are equivalent:
\begin{enumerate}
    \item[(a)]  $\xi$ is a Markov sequence;
    \item[(b)]  $\Sigma^\xi_\bullet$ is a local Markov filtration. 
\end{enumerate}
%%%%%%%%%%%%%%%%%%%%%%%%%%%%%%%%%%%%%%%%%%%%%%%%%%%%%%%
\end{Lemma}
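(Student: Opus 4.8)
The plan is to prove the two implications separately, after recording two structural facts used throughout. First, the canonical local filtration $\Sigma^\xi_\bullet$ is \emph{locally minimal}, since $\Sigma^\xi_I \vee \Sigma^\xi_J = \sigma\{\xi_i \mid i \in I\cup J\} = \Sigma^\xi_{I\cup J}$ for any index sets $I,J$; hence, by the discussion following Definition \ref{definition:markov-filtration-c}, the two Markov conditions \eqref{eq:filt-markov-I-c} and \eqref{eq:filt-markov-II-c} coincide for $\Sigma^\xi_\bullet$, and I would establish the saturated form \eqref{eq:filt-markov-II-c}. Second, I use the identification $\sigma\{\xi_n\} = \Sigma^\xi_{[n,n]}$.

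For the implication (b) $\Rightarrow$ (a) I would argue directly. Assuming \eqref{eq:filt-markov-II-c}, compose both sides on the right with $\mathbb{E}_{\sigma\{\xi_{n+1}\}}$. Since $\Sigma^\xi_{[n+1,n+1]} \subset \Sigma^\xi_{[n,\infty)}$, the tower property gives $\mathbb{E}_{\Sigma^\xi_{[n,\infty)}} \mathbb{E}_{\sigma\{\xi_{n+1}\}} = \mathbb{E}_{\sigma\{\xi_{n+1}\}}$, so the left-hand side collapses to $\mathbb{E}_{\Sigma^\xi_{[0,n]}}\mathbb{E}_{\sigma\{\xi_{n+1}\}}$ while the right-hand side becomes $\mathbb{E}_{\sigma\{\xi_n\}}\mathbb{E}_{\sigma\{\xi_{n+1}\}}$. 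This is precisely the Markov property \eqref{eq:classicalMarkov1}, and adaptedness of $\xi$ to its own canonical local filtration is automatic, so $\xi$ is a Markov sequence.

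The implication (a) $\Rightarrow$ (b) is the substantial one. My preferred route is to pass through the algebraization of Subsection \ref{subsection:CSMPR}: the sequence $\xi$ produces noncommutative random variables $\iota \equiv (\iota_n) \colon (\cL_0, \trace_{\mu_0}) \to (\cL, \trace_\mu)$ on the commutative algebra $\cL = L^\infty(\Omega,\Sigma,\mu)$, whose canonical local filtration $\cA_\bullet$ corresponds to $\Sigma^\xi_\bullet$ and whose conditional expectations $E_{\cA_I}$ correspond to $\mathbb{E}_{\Sigma^\xi_I}$. Unwinding the ranges (and using $E_{\cA_{[n,n]}} = \iota_n\iota_n^*$), the one-step condition \eqref{eq:classicalMarkov1} is exactly the statement that $\iota$ is a Markov sequence in the sense of Definition \ref{definition:markovianity}(ii), i.e. $E_{\cA_{[0,n]}}\iota_{n+1} = E_{\cA_{[n,n]}}\iota_{n+1}$. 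Because $\cL$ is commutative, $\cA_{\Nset_0}$ is automatically pyramidally generated, so the commutative case of Proposition \ref{proposition:onestepMarkov}, in its non-stationary form recorded in Remark \ref{remark:nonstationaryMarkov}, shows that $\cA_\bullet$ is Markovian, i.e. $E_{\cA_{[0,n]}}E_{\cA_{[n,\infty)}} = E_{\cA_{[n,n]}}$. Translating back gives \eqref{eq:filt-markov-II-c} for $\Sigma^\xi_\bullet$, hence (b).

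The main obstacle is the engine inside (a) $\Rightarrow$ (b): upgrading the one-step property, which only controls $\mathbb{E}_{\Sigma^\xi_{[0,n]}}$ on single-time functions $f(\xi_{n+1})$, to the full past--future conditional independence governing all of $\Sigma^\xi_{[n,\infty)}$. Concretely one must show that on a product $f_n(\xi_n)\cdots f_{n+p}(\xi_{n+p})$ the operators $\mathbb{E}_{\Sigma^\xi_{[0,n]}}$ and $\mathbb{E}_{\sigma\{\xi_n\}}$ agree, by repeatedly collapsing the innermost factor via the transition kernels $\mathbb{E}_{\sigma\{\xi_m\}}\big(g(\xi_{m+1})\big) = (T_{m+1}g)(\xi_m)$, and then extend from such products to all bounded $\Sigma^\xi_{[n,\infty)}$-measurable functions by a weak$^*$-density (pyramidal generation) argument. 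This is exactly what is carried out in the proof of Proposition \ref{proposition:onestepMarkov}, so if one prefers a self-contained classical proof it suffices to transcribe that argument with $\alpha^n \iota_0$ replaced by $f \mapsto f\circ\xi_n$; in either presentation the delicate point is the density step, which relies on commutativity ensuring that the pyramidally ordered products are total in $\Sigma^\xi_{[n,\infty)}$.
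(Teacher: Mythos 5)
Your proposal is correct and follows essentially the same route as the paper: the implication (b) $\Rightarrow$ (a) is obtained by specializing the local Markov filtration condition to $\cF_\bullet \equiv \Sigma^\xi_\bullet$ (your tower-property collapse makes explicit what the paper calls ``clear''), and (a) $\Rightarrow$ (b) is reduced via algebraization to Proposition \ref{proposition:onestepMarkov} together with Remark \ref{remark:nonstationaryMarkov}, using that commutativity guarantees pyramidal generation. The only difference is the level of detail; no new ideas or gaps.
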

%%%%%%%%%%%%%%%%%%%%%%%%%%%%%%%%%%%%%%%%%%%%%%%%%%%%%%%
\begin{proof}
%%%%%%%%%%%%%%%%%%%%%%%%%%%%%%%%%%%%%%%%%%%%%%%%%%%%%%%
The implication `(b) $\Rightarrow$ (a)' is clear by taking $\cF_{\bullet}\equiv
\Sigma^{\xi}_{\bullet}$ in \eqref{eq:classicalMarkov1}. The converse implication `(a)
$\Rightarrow$ (b)' follows immediately from Proposition \ref{proposition:onestepMarkov} and Remark
\ref{remark:nonstationaryMarkov}, as it formulates the same in the language of noncommutative
probability. 
%%%%%%%%%%%%%%%%%%%%%%%%%%%%%%%%%%%%%%%%%%%%%%%%%%%%%%%
\end{proof}
%%%%%%%%%%%%%%%%%%%%%%%%%%%%%%%%%%%%%%%%%%%%%%%%%%%%%%%
Suppose now that the standard probability space $(\Omega, \Sigma, \mu)$ is equipped with a
representation $\rho_*$ of the Thompson monoid $F^+ = \langle g_0, g_1, g_2, \ldots \mid g_k
g_\ell = g_{\ell+1} g_{k} \text{ for }  0\leq k<\ell <\infty \rangle^+$ in the $\mu$-preserving
$\Sigma$-measurable maps on $\Omega$. Let us put $\eta_k := \rho_*(g_k)$ for brevity and recall
that $\cI_{\eta_k}$ denotes the $\sigma$-subalgebra of $\mu$-almost $\eta_k$-invariant sets in 
$\Sigma$. Putting  $\cJ_n := \bigcap_{k \ge n+1} \cI_{\eta_k}$ we obtain the tower
of $\sigma$-subalgebras
\[
\cJ_{-1} \subset \cJ_0 \subset \cJ_1 \subset \cdots \subset
\cJ_\infty := \sigma\{\cJ_n \mid n \ge 0  \} \subset \Sigma.
\]
Note that $\cJ_{-1}$ is the $\sigma$-subalgebra of $\mu$-almost $\rho_*(F^+)$-invariant sets in $\Sigma$. 
This tower is a filtration which can  be further upgraded to the local filtration $\cF_\bullet^+ \equiv
\{\cF_I^{+}\}_{I \in \cI(\Nset_0)}^{}$ by putting
\[
\cF_{[0,n]}^{+} := \cJ_n, \qquad  \cF_{[m, m+n]}^{+} := \eta_0^m(\cJ_n),
\qquad \cF_{[m, \infty)}^{+} := \eta_0^m(\cJ_\infty),
\]
where $\eta_0(\cJ) := \sigma\{\eta_0(J) \mid J \in \cJ\}$ for a 
$\sigma$-subalgebra $\cJ \subset \Sigma$. 

We will assume for the remainder of this subsection that the representation $\rho_*$ of the
Thompson monoid $F^+$, and thus the local filtration $\cF^+_\bullet$, has the so-called generating 
property $\cJ_\infty = \cF^+_{\Nset_0} = \Sigma$. This can always be achieved by restriction of
$\sigma$-algebra and measure of the standard probability space $(\Omega,\Sigma,\mu)$ when
necessary, as shown in Subsection \ref{subsection:generating} in the general noncommutative
setting. This ensures, by an application of the mean ergodic theorem, that
$\cJ_n=\cI_{\eta_{n+1}}$ for every $n\geq -1$. We obtain as a main result that the fixed point
$\sigma$-algebras $\{\cJ_n\}_{n \ge 0}$ of the represented Thompson monoid $F^+$ encode
Markovianity. 
%%%%%%%%%%%%%%%%%%%%%%%%%%%%%%%%%%%%%%%%%%%%%%%%%%%%%%%
\begin{Theorem}\label{theorem:cMarkovFilt}
%%%%%%%%%%%%%%%%%%%%%%%%%%%%%%%%%%%%%%%%%%%%%%%%%%%%%%%
The local filtration $\cF_\bullet^{+}$ is Markovian (in the sense of Definition 
\ref{definition:markov-filtration-c}). 
%%%%%%%%%%%%%%%%%%%%%%%%%%%%%%%%%%%%%%%%%%%%%%%%%%%%%%%
\end{Theorem}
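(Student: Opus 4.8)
The plan is to establish the stronger, ``saturated'' Markov property \eqref{eq:filt-markov-II-c}, namely $\mathbb{E}_{\cF_{[0,n]}^{+}}\,\mathbb{E}_{\cF_{[n,\infty)}^{+}} = \mathbb{E}_{\cF_{[n,n]}^{+}}$ for all $n\ge 0$, which implies \eqref{eq:filt-markov-I-c} and hence Markovianity in the sense of Definition \ref{definition:markov-filtration-c}. I would first pass to the algebraic picture: let $\alpha := \rho(g_0)$ be the endomorphism of $\cL = L^\infty(\Omega,\Sigma,\mu)$ induced by $\eta_0 = \rho_*(g_0)$, so that $\alpha^{*}\alpha = \mathrm{Id}$ and $\alpha\alpha^{*} = \mathbb{E}_{\alpha(\cL)}$. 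Writing $E_n := \mathbb{E}_{\cJ_n}$ and using the generating property, the identification $\cJ_n = \cI_{\eta_{n+1}}$ makes $E_n$ the conditional expectation onto the fixed-point algebra of $\rho(g_{n+1})$. Since $\cF_{[m,m+n]}^{+}$ corresponds to $\alpha^{m}(L^\infty(\cJ_n))$ and $\cF_{[m,\infty)}^{+}$ to $\alpha^{m}(\cL)$ (here using $\cJ_\infty = \Sigma$), the standard intertwining $\mathbb{E}_{\alpha(\cN)} = \alpha\,\mathbb{E}_{\cN}\,\alpha^{*}$ (iterated, exactly as in the intertwiner relation proved within Proposition \ref{proposition:dilation}) gives $\mathbb{E}_{\cF_{[m,m+n]}^{+}} = \alpha^{m}E_n(\alpha^{*})^{m}$ and $\mathbb{E}_{\cF_{[m,\infty)}^{+}} = \alpha^{m}(\alpha^{*})^{m}$.

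The heart of the argument is the single intertwining relation $\alpha E_n = E_{n+1}\alpha$ for all $n\ge 0$. To prove it I would invoke the mean ergodic theorem, which represents $E_n$ as the strong limit of the Cesàro averages $\frac{1}{N}\sum_{j=0}^{N-1}\rho(g_{n+1})^{j}$ (and $E_{n+1}$ as the analogous averages of $\rho(g_{n+2})$), since each $\rho(g_k)$ is an isometry on $L^2$ whose fixed-point algebra is $L^\infty(\cI_{\eta_k})$. The Thompson relation $g_0 g_{n+1} = g_{n+2} g_0$ yields $\alpha\,\rho(g_{n+1})^{j} = \rho(g_{n+2})^{j}\,\alpha$ for every $j$; averaging over $j$ and passing to the limit (legitimate because $\alpha$ is a bounded operator, continuous for the strong topology) then produces $\alpha E_n = E_{n+1}\alpha$. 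One checks in passing that this is precisely what makes the local filtration isotone, i.e.\ $\cF_{[n,n]}^{+}\subset\cF_{[0,n]}^{+}$.

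Finally I would iterate: from $E_n\alpha = \alpha E_{n-1}$ one obtains $E_n\alpha^{n} = \alpha^{n}E_0$ by induction on $n$. Combining this with the conditional-expectation formulas from the first step,
\[
\mathbb{E}_{\cF_{[0,n]}^{+}}\,\mathbb{E}_{\cF_{[n,\infty)}^{+}}
= E_n\,\alpha^{n}(\alpha^{*})^{n}
= \big(\alpha^{n}E_0\big)(\alpha^{*})^{n}
= \mathbb{E}_{\cF_{[n,n]}^{+}},
\]
which is \eqref{eq:filt-markov-II-c}, as required. The main obstacle is the intertwining step $\alpha E_n = E_{n+1}\alpha$: everything else is bookkeeping, but this is the step where the algebraic relations of $F^{+}$ are converted into an analytic statement about conditional expectations, and it rests on the generating property (to identify $\cJ_n$ with the single invariant $\sigma$-algebra $\cI_{\eta_{n+1}}$) together with the mean ergodic theorem and the continuity needed to interchange $\alpha$ with the ergodic averaging limit.
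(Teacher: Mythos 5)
Your proposal is correct and follows essentially the same route as the paper: the paper proves this theorem by reducing it to Corollary \ref{corollary:markov-2}, whose proof rests on exactly the intertwining relation $\alpha_0 Q_{n+1}=Q_{n+2}\alpha_0$ obtained from the mean ergodic theorem and the Thompson relations (Proposition \ref{proposition:intertwining-property}), together with the identification $\cJ_n=\cM^{\alpha_{n+1}}$ coming from the generating property. Your presentation merely replaces the paper's commuting-square bookkeeping (Theorem \ref{theorem:cs-markov-1}) with the equivalent direct operator identity $E_n\alpha^n(\alpha^*)^n=\alpha^n E_0(\alpha^*)^n$.
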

%%%%%%%%%%%%%%%%%%%%%%%%%%%%%%%%%%%%%%%%%%%%%%%%%%%%%%%
Actually this result is the classical reformulation of Corollary \ref{corollary:markov-2} which
emerges from applications of the von Neumann mean ergodic theorem  in Theorem
\ref{theorem:cs-markov-1}.  The former establishes, here reformulated in the language of
probability theory, that the two $\sigma$-subalgebras $\cJ_{m+k}$ and $\eta_0^{k}(\cJ_n)$
are conditionally independent over $\eta_0^{k}(\cJ_m)$ for $0 \le m \le n < \infty$ and $k\ge 0$.
Already some instances of these  conditional independences between shifted fixed point
$\sigma$-algebras suffice to encode the Markovianity of the local  filtration $\cF_\bullet^+$. We 
refer the reader to Subsection \ref{subsection:s-f-p-a} for a more detailed investigation of
these conditional independences in the general setting of noncommutative probability. 

Partial spreadability as a new distributional invariance principle of a sequence $\xi$
(as casted in Definition \ref{definition:partspread-c}) provides a sufficient condition to ensure that
the canonical local filtration of $\xi$ is adapted to the local filtration of fixed point $\sigma$-algebras
$\cF^+_\bullet$. Altogether we arrive at the following result which takes into account the
stationarity of a partially spreadable sequence, as evident from Definition
\ref{definition:partspread-c} of this distributional invariance principle. For clarity of its
formulation, the sequence $\xi$ and the representation $\rho_*$ of the Thompson monoid $F^+$ are
assumed to be realized on the same probability space. 
%%%%%%%%%%%%%%%%%%%%%%%%%%%%%%%%%%%%%%%%%%%%%%%%%%%%%%%
\begin{Theorem}\label{theorem:cl-ps-Markov}
%%%%%%%%%%%%%%%%%%%%%%%%%%%%%%%%%%%%%%%%%%%%%%%%%%%%%%%
A partially spreadable sequence $\xi$ is stationary and adapted to the local Markov filtration
$\cF_{\bullet}^{+}$. 
%%%%%%%%%%%%%%%%%%%%%%%%%%%%%%%%%%%%%%%%%%%%%%%%%%%%%%%
\end{Theorem}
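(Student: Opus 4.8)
The plan is to verify the two asserted properties separately, drawing the Markovianity of $\cF^+_\bullet$ directly from Theorem~\ref{theorem:cMarkovFilt}. Throughout I write $\eta_k := \rho_*(g_k)$ for the $\mu$-preserving maps furnished by partial spreadability (Definition~\ref{definition:partspread-c}), so that the localization relations $\xi_0 = \xi_0 \circ \eta_k$ for $k \ge 1$ and the stationarity relations $\xi_n = \xi_0 \circ \eta_0^n$ for $n \ge 0$ are available, together with the defining relations $\eta_k \eta_\ell = \eta_{\ell+1}\eta_k$ for $0 \le k < \ell$.

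First I would establish stationarity. Iterating the stationarity relation gives $\xi_{n+m} = \xi_0 \circ \eta_0^{n+m} = (\xi_0 \circ \eta_0^n)\circ \eta_0^m = \xi_n \circ \eta_0^m$ for all $n,m \ge 0$, so that the $m$-shifted sequence is obtained from the original by post-composing each coordinate with $\eta_0^m$. Since $\eta_0^m$ is $\mu$-preserving, composition with it leaves every finite-dimensional joint distribution unchanged, whence $(\xi_0, \xi_1, \ldots) \stackrel{\distr}{=} (\xi_m, \xi_{m+1}, \ldots)$ for all $m$. This is exactly stationarity in the sense of Definition~\ref{definition:distsym}(i).

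For adaptedness (Definition~\ref{definition:markovianity-c}(i)) I would reduce the required inclusions $\Sigma^\xi_I \subset \cF^+_I$ to the single-index statement $\sigma\{\xi_n\} \subset \cF^+_{[n,n]}$ for each $n$: since $\Sigma^\xi_I = \bigvee_{i \in I} \sigma\{\xi_i\}$ and isotony of the local filtration $\cF^+_\bullet$ yields $\cF^+_{[i,i]} \subset \cF^+_I$ whenever $i \in I$, the general inclusion then follows by taking joins. For $n=0$ the localization relations show that for every $A \in \Sigma_0$ the set $\xi_0^{-1}(A)$ satisfies $\eta_k^{-1}(\xi_0^{-1}(A)) = (\xi_0 \circ \eta_k)^{-1}(A) = \xi_0^{-1}(A)$, hence is $\eta_k$-invariant and lies in $\cI_{\eta_k}$ for every $k \ge 1$; thus $\sigma\{\xi_0\} \subset \bigcap_{k \ge 1}\cI_{\eta_k} = \cJ_0 = \cF^+_{[0,0]}$. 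For $n \ge 1$ I would transport this inclusion by the endomorphism $\alpha$ with $\alpha(f) = f\circ \eta_0$: from $\xi_n = \xi_0 \circ \eta_0^n$ one has $\iota_n(\cL_0) = \alpha^n \iota_0(\cL_0)$, and applying $\alpha^n$ to the $n=0$ inclusion gives $\iota_n(\cL_0) \subset \alpha^n(\cN_{\cJ_0})$, which is precisely the von Neumann algebra associated with $\eta_0^n(\cJ_0) = \cF^+_{[n,n]}$.

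The step requiring most care is this last transport, where one must match the set-theoretic effect of composing with $\eta_0^n$ against the definition $\cF^+_{[n,n]} = \eta_0^n(\cJ_0)$; phrasing it through the algebraization of Subsection~\ref{subsection:CSMPR}, so that $\alpha = \rho_*(g_0)$ carries the subalgebra generated by $\cJ_0$ onto the one generated by $\cF^+_{[n,n]}$ and carries $\iota_0(\cL_0)$ onto $\iota_n(\cL_0)$, lets the $n=0$ inclusion propagate without ambiguity. Once adaptedness is in hand, the fact that $\cF^+_\bullet$ is a \emph{local Markov} filtration is precisely the content of Theorem~\ref{theorem:cMarkovFilt}, which completes the proof.
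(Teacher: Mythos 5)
Your proposal is correct and follows essentially the same route as the paper's proof: stationarity is read off from $\xi_n = \xi_0\circ\eta_0^n$ together with $\mu$-invariance of $\eta_0$, Markovianity of $\cF^+_\bullet$ is quoted from Theorem~\ref{theorem:cMarkovFilt}, and adaptedness is obtained from the localization relations giving $\sigma\{\xi_0\}\subset\cJ_0$ and then transporting by $\eta_0$. The only cosmetic difference is that you reduce to the singleton inclusions $\sigma\{\xi_n\}\subset\cF^+_{[n,n]}$ and invoke isotony of $\cF^+_\bullet$, whereas the paper first derives $\Sigma^\xi_{[0,n]}\subset\cJ_n$ from the generator relations and then shifts; these are equivalent since isotony itself rests on those relations.
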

%%%%%%%%%%%%%%%%%%%%%%%%%%%%%%%%%%%%%%%%%%%%%%%%%%%%%%%
For the convenience of the reader from classical probability, let us we sketch the proof which
essentially follows the arguments from Remark \ref{remark:adaptedness} in the general setting of
noncommutative probability.
%%%%%%%%%%%%%%%%%%%%%%%%%%%%%%%%%%%%%%%%%%%%%%%%%%%%%%%
\begin{proof}
Since the sequence $ \xi \equiv (\xi_n)_{n \ge 0}$ and the representation $\rho_*$ of the
Thompson monoid $F^+$ are assumed to be realized on the the same standard probability  space
$(\Omega, \Sigma, \mu)$, partial spreadability ensures $\xi_0 \circ \eta_n=\xi_0$ and 
$\xi_n = \xi_0 \circ \rho_*(g_0^n) = \xi_0 \circ \eta_0^n$ for all $n \ge 1$. The latter
condition implies the stationarity of the sequence. As the local  filtration $\cF^+_\bullet$ is
Markovian by Theorem \ref{theorem:cMarkovFilt}, we are left to verify the adaptedness of the 
sequence $\xi$ to $\cF^+_{\bullet}$. Indeed, it follows from the localization properties
$\xi_0 \circ \eta_n= \xi_0$ (for $n\geq 1$) that $\sigma\{\xi_0\}\subset \bigcap_{k \ge 1}
\cI_{\eta_k}=  \cJ_0$. Using the relations of the generators of $F^+$, one concludes further 
that $\Sigma^{\xi}_{[0,n]}\subset \cJ_{n}$ for every $n\geq 0$. This in turn gives the 
inclusions $\Sigma^\xi_{[m,n]}\subset \eta_0^m(\cJ_{n-m})=\cF^+_{[m,n]}$ for all $0
\leq m \leq n $ and, by an approximation argument, $\Sigma^\xi_{[m,\infty]}\subset
\cF^+_{[m,\infty]}$ for all $m\geq 0$. Overall, we arrive at $ \Sigma^{\xi}_{I} \subset \cF^+_I$
for all $I\in \cI(\Nset_0)$, that is, the adaptedness of the sequence $\xi$ to the local filtration
$\cF^+_{\bullet}$.
\end{proof}
%%%%%%%%%%%%%%%%%%%%%%%%%%%%%%%%%%%%%%%%%%%%%%%%%%%%%%%
We stress that a partially spreadable sequence $\xi$ may not be Markovian. The inclusions of
$\sigma$-subalgebras 
\begin{eqnarray} \label{eq:sigma-alg-incl}
\begin{matrix}
\cF^{+}_{[n,n]} &\subset &\cF^{+}_{[0,\infty)}\\
\cup  &        & \cup \\
\sigma\{\xi_n\}   & \subset  & \Sigma^{\xi}
\end{matrix}
\end{eqnarray}
are always valid for all $n \ge 0$, but may not ensure that the sequence $\xi$ is Markovian. A
sufficient condition for the latter is that $\Sigma^\xi$ and $ \cF^{+}_{[n,n]}$ are
conditionally independent over $\sigma\{\xi_n\}$ for all $n \ge 0$, for example. Such conditions,
ensuring Markovianity of a sequence, are studied in greater detail in Subsection
\ref{subsection:Markov-F} in the operator algebraic framework of commuting squares.  

All inclusions \eqref{eq:sigma-alg-incl} are valid of course in the particular situation where 
the canonical local filtration $\Sigma_\bullet^\xi$ of the sequence and the local Markov filtration
$\cF^+_\bullet$ coincide, i.e.~$\Sigma_I^{\xi} = \cF^+_I$ for all `intervals' $I \in
\cI(\Nset_0)$. Combining Lemma \ref{lemma:ms-mf-equivalence} and Theorem \ref{theorem:cl-ps-Markov}
one arrives immediately at the following result.
%%%%%%%%%%%%%%%%%%%%%%%%%%%%%%%%%%%%%%%%%%%%%%%%%%%%%%%
\begin{Corollary}
%%%%%%%%%%%%%%%%%%%%%%%%%%%%%%%%%%%%%%%%%%%%%%%%%%%%%%%
A partially spreadable sequence $\xi$ is Markovian if $\Sigma^\xi_\bullet \equiv \cF^+_\bullet$. 
%%%%%%%%%%%%%%%%%%%%%%%%%%%%%%%%%%%%%%%%%%%%%%%%%%%%%%%
\end{Corollary}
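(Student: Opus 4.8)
The plan is to combine Theorem~\ref{theorem:cl-ps-Markov} with Lemma~\ref{lemma:ms-mf-equivalence}; the argument will be essentially immediate, since the coincidence hypothesis $\Sigma^\xi_\bullet \equiv \cF^+_\bullet$ is precisely what is needed to bridge the two results. First I would invoke Theorem~\ref{theorem:cl-ps-Markov}, which guarantees that the partially spreadable sequence $\xi$ is adapted to the local Markov filtration $\cF^+_\bullet$; in particular $\cF^+_\bullet$ is Markovian in the sense of Definition~\ref{definition:markov-filtration-c}, so it satisfies the Markov property \eqref{eq:filt-markov-I-c} (equivalently \eqref{eq:filt-markov-II-c}).

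Next I would feed in the standing hypothesis $\Sigma^\xi_\bullet \equiv \cF^+_\bullet$, that is, $\Sigma^\xi_I = \cF^+_I$ for every `interval' $I \in \cI(\Nset_0)$. This identifies the canonical local filtration of $\xi$ with the filtration $\cF^+_\bullet$ whose Markovianity has just been established, so $\Sigma^\xi_\bullet$ itself is a local Markov filtration. In the notation of Lemma~\ref{lemma:ms-mf-equivalence}, condition (b) is therefore satisfied. Applying the implication `(b) $\Rightarrow$ (a)' of that lemma then yields that $\xi$ is a Markov sequence, which is the assertion.

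There is no genuine obstacle to overcome here; the statement is a formal consequence of two results already in hand. The only point worth making explicit is that the hypothesis $\Sigma^\xi_\bullet \equiv \cF^+_\bullet$ is a literal equality of the two local filtrations, so that the Markovianity transported through Theorem~\ref{theorem:cl-ps-Markov} lands on the canonical filtration of $\xi$ rather than on some merely coarser or finer object. Consequently Lemma~\ref{lemma:ms-mf-equivalence} applies verbatim, and no further approximation or commuting-square verification is required.
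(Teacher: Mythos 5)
Your proposal is correct and follows exactly the route the paper takes: it derives the corollary by combining Theorem~\ref{theorem:cl-ps-Markov} (Markovianity of $\cF^+_\bullet$, established in Theorem~\ref{theorem:cMarkovFilt}) with the hypothesis $\Sigma^\xi_\bullet \equiv \cF^+_\bullet$ and the implication (b)~$\Rightarrow$~(a) of Lemma~\ref{lemma:ms-mf-equivalence}. Nothing further is needed.
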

%%%%%%%%%%%%%%%%%%%%%%%%%%%%%%%%%%%%%%%%%%%%%%%%%%%%%%%
Actually Theorem \ref{theorem:cl-ps-Markov} can be strengthened by considering the local filtration 
$\cG_\bullet^+ \equiv \big\{  \cG_I^+  
   := \sigma\{\eta_0 ^{i}(\cJ_0)\mid i\in I\}\big\}_{I \in \cI(\Nset_0)}$
which is coarser than $\cF_\bullet^+$. 
%%%%%%%%%%%%%%%%%%%%%%%%%%%%%%%%%%%%%%%%%%%%%%%%%%%%%%%
\begin{Theorem}\label{theorem:cl-ps-Markov-strong}
%%%%%%%%%%%%%%%%%%%%%%%%%%%%%%%%%%%%%%%%%%%%%%%%%%%%%%%
A partially spreadable sequence $\xi$ is stationary and adapted to the local Markov filtration
$\cG_{\bullet}^{+}$. 
%%%%%%%%%%%%%%%%%%%%%%%%%%%%%%%%%%%%%%%%%%%%%%%%%%%%%%%
\end{Theorem}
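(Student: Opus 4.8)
The plan is to obtain Theorem \ref{theorem:cl-ps-Markov-strong} as a refinement of Theorem \ref{theorem:cl-ps-Markov}, transferring Markovianity from $\cF_\bullet^+$ to the coarser local filtration $\cG_\bullet^+$ by means of the inheritance criterion recorded in Remark \ref{remark:cs-not-robust}, and then upgrading the adaptedness statement. Stationarity requires nothing new: the relations $\xi_n = \xi_0 \circ \eta_0^n$ with the $\mu$-preserving map $\eta_0 = \rho_*(g_0)$ already yield it, exactly as in Theorem \ref{theorem:cl-ps-Markov}. Hence three points remain, namely that $\cG_\bullet^+$ is a local filtration coarser than $\cF_\bullet^+$, that $\cG_\bullet^+$ is Markovian, and that $\xi$ is adapted to $\cG_\bullet^+$.

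First I would verify coarseness together with a coincidence on singleton intervals that drives the whole argument. Applying the isotony of $\cF_\bullet^+$ to $[1,1+m] \subset [0,m+1]$ gives $\eta_0(\cJ_m) = \cF_{[1,1+m]}^+ \subset \cF_{[0,m+1]}^+ = \cJ_{m+1}$, and iterating yields $\eta_0^i(\cJ_0) \subset \cJ_i$ for every $i \ge 0$. Since $\eta_0^m$ acts monotonically on $\sigma$-subalgebras and $\eta_0^m \circ \eta_0^j = \eta_0^{m+j}$, it follows that $\cG_{[m,m+n]}^+ = \sigma\{\eta_0^{m+j}(\cJ_0) \mid 0 \le j \le n\} \subset \eta_0^m(\cJ_n) = \cF_{[m,m+n]}^+$, and likewise $\cG_{[m,\infty)}^+ \subset \cF_{[m,\infty)}^+$; thus $\cG_\bullet^+ \prec \cF_\bullet^+$. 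The key observation is that on singleton intervals the two filtrations agree: $\cG_{[n,n]}^+ = \eta_0^n(\cJ_0) = \cF_{[n,n]}^+$.

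This coincidence trivializes the hypothesis of Remark \ref{remark:cs-not-robust}. Indeed, because $\cF_{[n,n]}^+ = \cG_{[n,n]}^+ \subset \cG_{[0,\infty)}^+$, the tower property of conditional expectations gives
\[
\mathbb{E}_{\cG_{[0,\infty)}^+}\, \mathbb{E}_{\cF_{[n,n]}^+}
= \mathbb{E}_{\cG_{[0,\infty)}^+}\, \mathbb{E}_{\cG_{[n,n]}^+}
= \mathbb{E}_{\cG_{[n,n]}^+} \qquad (n \in \Nset_0).
\]
Since $\cF_\bullet^+$ is a local Markov filtration in the saturated sense \eqref{eq:filt-markov-II-c} by Theorem \ref{theorem:cMarkovFilt}, Remark \ref{remark:cs-not-robust} then delivers the Markov property \eqref{eq:filt-markov-II-c} for $\cG_\bullet^+$, i.e.\ $\cG_\bullet^+$ is Markovian in the sense of Definition \ref{definition:markov-filtration-c}.

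It remains to prove adaptedness, for which I would reuse the localization step from the proof of Theorem \ref{theorem:cl-ps-Markov}: the relations $\xi_0 = \xi_0 \circ \eta_k$ for $k \ge 1$ force $\sigma\{\xi_0\} \subset \bigcap_{k \ge 1}\cI_{\eta_k} = \cJ_0 = \cG_{[0,0]}^+$, and applying $\eta_0^n$ together with $\xi_n = \xi_0 \circ \eta_0^n$ gives $\sigma\{\xi_n\} \subset \eta_0^n(\cJ_0) = \cG_{[n,n]}^+$. Taking joins then yields $\Sigma_I^\xi \subset \cG_I^+$ for all $I \in \cI(\Nset_0)$, which is the adaptedness of $\xi$ to $\cG_\bullet^+$. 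The argument is short, and the only steps needing care are the inclusion $\eta_0^i(\cJ_0) \subset \cJ_i$ underpinning coarseness and the use of $\cF_\bullet^+$ in its saturated form; the genuinely new input, which is what makes the strengthening possible, is that $\cF_\bullet^+$ and $\cG_\bullet^+$ coincide on every singleton interval $[n,n]$, collapsing the inheritance criterion of Remark \ref{remark:cs-not-robust} to the tower property.
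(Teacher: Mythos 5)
Your proof is correct, but it takes a different route from the paper's. The paper proves this statement by passing to the noncommutative framework of Subsection \ref{subsection:Markov-F}: there the filtration $\big\{\bigvee_{i\in I}\rho(g_0^i)(\cM_0)\big\}_{I}$ (the operator-algebraic counterpart of $\cG^+_\bullet$) is shown to be Markovian in Corollary \ref{corollary-markov-filtration-0}, by first extracting the commuting square $\cM_0\subset\cM^{\alpha_1}$, $\cM_{[0,\infty)}\subset\cM$ from Corollary \ref{corollary:commsquare} and then running the chain of conditional-expectation identities of Theorem \ref{theorem:markov-filtration-1}, which relies on the intertwining relations $\alpha_0^nQ_1=Q_{n+1}\alpha_0^n$; adaptedness is then Proposition \ref{proposition:definetti-markov}. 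You instead take the Markovianity of the finer filtration $\cF^+_\bullet$ (Theorem \ref{theorem:cMarkovFilt}, in its saturated form \eqref{eq:filt-markov-II-c}, which is what Corollary \ref{corollary:markov-2} actually delivers) as the starting point and push it down to $\cG^+_\bullet$ via the inheritance criterion of Remark \ref{remark:cs-not-robust}; your key observation that $\cG^+_{[n,n]}=\eta_0^n(\cJ_0)=\cF^+_{[n,n]}$ reduces that criterion to the tower property $\mathbb{E}_{\cG^+_{[0,\infty)}}\mathbb{E}_{\cG^+_{[n,n]}}=\mathbb{E}_{\cG^+_{[n,n]}}$, and your coarseness inclusions $\eta_0^i(\cJ_0)\subset\cJ_i$ follow from the isotony of $\cF^+_\bullet$. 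This is a legitimate and economical shortcut: it requires no new computation beyond what Theorem \ref{theorem:cMarkovFilt} already encodes. What it gives up is generality: the paper's Theorem \ref{theorem:markov-filtration-1} handles an arbitrary generator $\cA_0\subset\cM_0$ subject only to a commuting square condition (which is what later produces Theorem \ref{theorem:cldeFi-with condition} and the maximal-spreadability results), whereas your argument is tied to the specific choice $\cJ_0$, precisely because the singleton coincidence fails for a proper subalgebra of $\cJ_0$. For the statement at hand, however, your proof is complete.
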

%%%%%%%%%%%%%%%%%%%%%%%%%%%%%%%%%%%%%%%%%%%%%%%%%%%%%%%
The proof of this strengthened result is given in Subsection \ref{subsection:Markov-F} in the
noncommutative framework. As before, one needs to stipulate additional conditions to ensure that
$\xi$ is Markovian. Sufficient conditions are now obtained by replacing $\cF_\bullet^+$ by
$\cG_\bullet^+$ in \eqref{eq:sigma-alg-incl} and stipulating conditional independence over
$\sigma\{\xi_n\}$ of $\Sigma^\xi$ and $ \cG^{+}_{[n,n]}$ for all $n \ge 0$. Actually these
conditions can be further strengthened so that one arrives at the following result which
essentially features Theorem \ref{theorem:markov-filtration-1} in the traditional language of
probability theory. 
%%%%%%%%%%%%%%%%%%%%%%%%%%%%%%%%%%%%%%%%%%%%%%%%%%%%%%% 
\begin{Theorem}\label{theorem:cldeFi-with condition}
%%%%%%%%%%%%%%%%%%%%%%%%%%%%%%%%%%%%%%%%%%%%%%%%%%%%%%%
Suppose the partially spreadable sequence $\xi$ is such that the
$\sigma$-algebras $\Sigma^{\xi}$ and $\cJ_0$ are conditionally independent
over $\sigma\{\xi_0\}$. Then $\xi$ is a stationary Markov sequence. 
%%%%%%%%%%%%%%%%%%%%%%%%%%%%%%%%%%%%%%%%%%%%%%%%%%%%%%%
\end{Theorem}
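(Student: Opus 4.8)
The plan is to prove that the canonical local filtration $\Sigma^\xi_\bullet$ of $\xi$ is a (saturated) local Markov filtration; by Lemma~\ref{lemma:ms-mf-equivalence} this is equivalent to $\xi$ being a Markov sequence, while stationarity is automatic from partial spreadability by Theorem~\ref{theorem:cl-ps-Markov-strong}. I would work throughout with the $\trace_\mu$-preserving shift endomorphism $\alpha$ determined by $\alpha(f) = f\circ\eta_0$, so that $\iota_n = \alpha^n\iota_0$ and $\alpha^n$ carries the algebra of $\Sigma^\xi$ onto that of $\Sigma^\xi_{[n,\infty)}$, the algebra of $\sigma\{\xi_0\}$ onto that of $\sigma\{\xi_n\}$, and the algebra of $\cJ_0$ onto that of $\cK_n := \cG^+_{[n,n]} = \eta_0^n(\cJ_0)$. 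By Theorem~\ref{theorem:cl-ps-Markov-strong}, $\xi$ is adapted to the local Markov filtration $\cG^+_\bullet$, which furnishes the inclusions $\Sigma^\xi_{[0,n]}\subset\cG^+_{[0,n]}$ and $\Sigma^\xi_{[n,\infty)}\subset\cG^+_{[n,\infty)}$ together with the Markov identity $\mathbb{E}_{\cG^+_{[0,n]}}\mathbb{E}_{\cG^+_{[n,\infty)}} = \mathbb{E}_{\cK_n}$.

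The first key step is to transport the hypothesis, namely the conditional independence $\mathbb{E}_{\Sigma^\xi}\mathbb{E}_{\cJ_0} = \mathbb{E}_{\sigma\{\xi_0\}}$ of $\Sigma^\xi$ and $\cJ_0$ over $\sigma\{\xi_0\}$, to every time $n$ in the shifted form
\begin{align*}
\mathbb{E}_{\Sigma^\xi_{[n,\infty)}}\mathbb{E}_{\cK_n} = \mathbb{E}_{\sigma\{\xi_n\}} \qquad (n\ge 0).
\end{align*}
To see this I would take $x$ in the algebra of $\cK_n$, write $x = \alpha^n(y)$ with $y$ in the algebra of $\cJ_0$, and apply the intertwiner relation $\mathbb{E}_{\Sigma^\xi_{[n,\infty)}}\,\alpha^n = \alpha^n\,\mathbb{E}_{\Sigma^\xi}$ established in the proof of Proposition~\ref{proposition:dilation}. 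Since the hypothesis gives $\mathbb{E}_{\Sigma^\xi}(y) = \mathbb{E}_{\sigma\{\xi_0\}}(y)$, we obtain $\mathbb{E}_{\Sigma^\xi_{[n,\infty)}}(x) = \alpha^n\mathbb{E}_{\sigma\{\xi_0\}}(y)$, which lies in the algebra of $\sigma\{\xi_n\}\subset\Sigma^\xi_{[n,\infty)}$; this yields the displayed identity. Taking adjoints with respect to $\trace_\mu$ (conditional expectations are self-adjoint in the tracial case) also gives $\mathbb{E}_{\cK_n}\mathbb{E}_{\Sigma^\xi_{[n,\infty)}} = \mathbb{E}_{\sigma\{\xi_n\}}$.

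The second step combines this with the Markovianity of $\cG^+_\bullet$ to reach the saturated Markov property of $\Sigma^\xi_\bullet$. Using adaptedness in the form $\mathbb{E}_{\Sigma^\xi_{[0,n]}} = \mathbb{E}_{\Sigma^\xi_{[0,n]}}\mathbb{E}_{\cG^+_{[0,n]}}$ and $\mathbb{E}_{\Sigma^\xi_{[n,\infty)}} = \mathbb{E}_{\cG^+_{[n,\infty)}}\mathbb{E}_{\Sigma^\xi_{[n,\infty)}}$, then the $\cG^+$-Markov identity, and finally the adjoint form of the previous display, I would compute
\begin{align*}
\mathbb{E}_{\Sigma^\xi_{[0,n]}}\mathbb{E}_{\Sigma^\xi_{[n,\infty)}}
 &= \mathbb{E}_{\Sigma^\xi_{[0,n]}}\mathbb{E}_{\cG^+_{[0,n]}}\mathbb{E}_{\cG^+_{[n,\infty)}}\mathbb{E}_{\Sigma^\xi_{[n,\infty)}}
  = \mathbb{E}_{\Sigma^\xi_{[0,n]}}\mathbb{E}_{\cK_n}\mathbb{E}_{\Sigma^\xi_{[n,\infty)}} \\
 &= \mathbb{E}_{\Sigma^\xi_{[0,n]}}\mathbb{E}_{\sigma\{\xi_n\}} = \mathbb{E}_{\sigma\{\xi_n\}},
\end{align*}
the last equality using $\sigma\{\xi_n\}\subset\Sigma^\xi_{[0,n]}$. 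This is exactly property~\eqref{eq:filt-markov-II-c} for $\Sigma^\xi_\bullet$, so $\Sigma^\xi_\bullet$ is a local Markov filtration and $\xi$ is a stationary Markov sequence. Packaged differently, the computation shows that $\Sigma^\xi_\bullet\prec\cG^+_\bullet$ meets the sufficient condition of Remark~\ref{remark:cs-not-robust} for inheriting Markovianity.

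The main obstacle is the transport step: a partially spreadable sequence need not be Markov, and the representation of $F^+$ does not in general restrict to $\Sigma^\xi$, so one cannot simply conjugate the $n=0$ conditional independence by the shift inside $\Sigma^\xi$. The intertwiner only moves $\Sigma^\xi = \Sigma^\xi_{[0,\infty)}$ onto the strictly smaller $\Sigma^\xi_{[n,\infty)}$, and the real content is to show that the `past' $\Sigma^\xi_{[0,n-1]}$ contributes nothing beyond $\sigma\{\xi_n\}$; this is supplied precisely by the Markovianity of $\cG^+_\bullet$ inserted in the middle of the last computation, which is where both the hypothesis and the $F^+$-structure are genuinely used. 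A secondary point to handle with care is the identification of the von Neumann algebra of $\cK_n = \eta_0^n(\cJ_0)$ with $\alpha^n$ applied to that of $\cJ_0$, together with the validity of the intertwiner relation for the infinite `future' algebras $\Sigma^\xi_{[n,\infty)}$, both of which follow from the approximation argument already used in Proposition~\ref{proposition:dilation}.
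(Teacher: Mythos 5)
Your proof is correct, and it reaches the conclusion by a genuinely different decomposition than the paper's. The paper proves this statement by translating it into Theorem \ref{theorem:markov-filtration-1}: the hypothesis becomes the commuting square $Q_1 P_{[0,\infty)} = P_{[0,0]}$ (with $Q_1 = \mathbb{E}_{\cJ_0}$ under the subsection's standing generating assumption), and the saturated Markov identity $P_{[0,n]}P_{[n,\infty)} = P_{[n,n]}$ for $\Sigma^{\xi}_\bullet$ is obtained in one self-contained computation whose engine is the intertwining relation $Q_{n+1}\alpha_0^{n} = \alpha_0^{n} Q_1$ between the \emph{fixed-point} conditional expectations (mean ergodic theorem plus the $F^+$ relations) together with $\cA_{[0,n]}\subset \cM^{\alpha_{n+1}}$; the Markovianity of the larger filtration is never invoked there. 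You instead (i) transport the hypothesis along the shift using the state-preserving intertwiner $\mathbb{E}_{\Sigma^{\xi}_{[n,\infty)}}\alpha^{n} = \alpha^{n}\mathbb{E}_{\Sigma^{\xi}}$ for the \emph{filtration} conditional expectations, obtaining $\mathbb{E}_{\cK_n}\mathbb{E}_{\Sigma^{\xi}_{[n,\infty)}} = \mathbb{E}_{\sigma\{\xi_n\}}$, and (ii) quote the Markovianity of $\cG^{+}_\bullet$ (Theorem \ref{theorem:cl-ps-Markov-strong}, i.e.\ Corollary \ref{corollary-markov-filtration-0}) as an established fact and sandwich it between the two $\Sigma^{\xi}$-expectations. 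Both arguments rest on the same structural ingredients, but yours isolates exactly where the hypothesis enters — closing the gap between $\cG^{+}_{[n,n]}=\cK_n$ and $\sigma\{\xi_n\}$ after the big filtration has already discarded the past — at the price of the extra transport step and of relying on the $\cG^{+}_\bullet$-Markovianity, which the paper's computation bypasses. Two minor points: the "adjoint form" is most cleanly justified by Proposition \ref{proposition:cs}(iv) (or self-adjointness of conditional expectations on $L^2(\mu)$ in this tracial setting); and your closing parenthetical is slightly off, since the sufficient condition in Remark \ref{remark:cs-not-robust} reads $\mathbb{E}_{\Sigma^{\xi}_{[0,\infty)}}\mathbb{E}_{\cK_n} = \mathbb{E}_{\sigma\{\xi_n\}}$, which conditions $\cK_n$ onto \emph{all} of $\Sigma^{\xi}$ and is a priori stronger than the identity $\mathbb{E}_{\cK_n}\mathbb{E}_{\Sigma^{\xi}_{[n,\infty)}} = \mathbb{E}_{\sigma\{\xi_n\}}$ you actually establish and use — your computation is a correct variant of that remark rather than an instance of it.
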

%%%%%%%%%%%%%%%%%%%%%%%%%%%%%%%%%%%%%%%%%%%%%%%%%%%%%%%
Clearly this conditional independence condition in above theorem is satisfied whenever
$\sigma({\xi_0}) = \cJ_0$. This is precisely the additional condition which upgrades `partial
spreadability' to `maximal partial spreadability', as an inspection of Definition
\ref{definition:partspread-c} easily reveals. 

The above results have not so far addressed whether a stationary Markov sequence can be seen to
be (maximal) partially spreadable. An affirmative answer to this question in classical
probability requires the construction of a representation of the Thompson monoid $F^+$ from a
stationary Markov sequence. An intermediate step towards the construction of such a
representation is provided by the construction of a certain Markov dilation, as it is discussed
in Subsection \ref{subsection:markov-dilations} below. Altogether this allows us to establish a
de Finetti theorem for stationary Markov sequences in Theorem \ref{theorem:commdeFinetti} which 
was stated in its classical reformulation as Theorem \ref{theorem:definetti-intro}. We state it 
here again for the convenience of the reader.
%%%%%%%%%%%%%%%%%%%%%%%%%%%%%%%%%%%%%%%%%%%%%%%%%%%%%%%
\begin{Theorem*}
%%%%%%%%%%%%%%%%%%%%%%%%%%%%%%%%%%%%%%%%%%%%%%%%%%%%%%%
Let $\xi \equiv (\xi_n)_{n\in\Nset_0}$ be a sequence of $(\Omega_0, \Sigma_0)$-valued random
variables on the standard probability space $(\Omega,\Sigma,\mu)$. Then the following are
equivalent:
\begin{enumerate}
     \item[(a)] $\xi$ is maximal partially spreadable;
     \item[(b)] $\xi$ is a stationary Markov sequence.
\end{enumerate}
%%%%%%%%%%%%%%%%%%%%%%%%%%%%%%%%%%%%%%%%%%%%%%%%%%%%%%%
\end{Theorem*}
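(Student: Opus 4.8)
The plan is to prove the two implications separately, with the forward implication (a) $\Rightarrow$ (b) being essentially a corollary of the results already assembled in Subsection \ref{subsection:DIPinCP}, and the reverse implication (b) $\Rightarrow$ (a) carrying the genuine content, namely the construction of a representation of the Thompson monoid $F^+$ out of the stationary Markov data.

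For (a) $\Rightarrow$ (b), suppose $\xi$ is maximal partially spreadable, witnessed by $\mu_\xi$-preserving $\Sigma^\xi$-measurable maps $\{\eta_k\}_{k\ge 0}$ satisfying the Thompson relations together with the localization and stationarity properties, and with $\sigma\{\xi_0\} = \bigcap_{k\ge 1}\cI_{\eta_k} = \cJ_0$. Since $\cJ_0 = \sigma\{\xi_0\} \subset \Sigma^\xi$, the $\sigma$-algebras $\Sigma^\xi$ and $\cJ_0$ are trivially conditionally independent over $\sigma\{\xi_0\}$. Hence Theorem \ref{theorem:cldeFi-with condition} applies directly and yields that $\xi$ is a stationary Markov sequence. (If necessary one first restricts to $\Sigma = \cJ_\infty$ so that the generating property used throughout the subsection holds; as $\sigma\{\xi_n\} \subset \cJ_n \subset \cJ_\infty$, this restriction leaves the Markov property of $\xi$ unchanged.)

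For (b) $\Rightarrow$ (a), I would first pass to a convenient model of the stationary Markov sequence. Writing the process as an open dynamical system -- equivalently, realizing its transition kernel by a deterministic map driven by an independent noise sequence -- Theorem \ref{theorem:kuemmerer-onesided} supplies a stationary Markov dilation on a probability space carrying a natural product structure indexed by the time coordinates of the noise. On this model I would then define the $\mu_\xi$-preserving maps $\eta_k$ as partial shifts of the noise coordinates coupled through the dynamics, following the construction of Subsection \ref{subsection:constr-class-prob}: $\eta_0$ advances the whole process one step, so that $\xi_n = \xi_0 \circ \eta_0^n$, while each $\eta_k$ with $k \ge 1$ fixes the coordinates carrying $\xi_0, \ldots, \xi_{k-1}$ and re-inserts a Markov step at position $k$. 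The relations $\eta_k \eta_\ell = \eta_{\ell+1}\eta_k$ for $0 \le k < \ell < \infty$ are exactly the compatibility of these insertions, underpinned by the Markov property of $\xi$ (and, notably, the stronger $S^+$ relation $\eta_k\eta_k = \eta_{k+1}\eta_k$ should fail here, as it would force the i.i.d. case); the localization $\xi_0 \circ \eta_k = \xi_0$ for $k \ge 1$ is immediate from $\eta_k$ fixing the initial coordinate. This shows the modelled sequence, hence $\xi$, is partially spreadable. It then remains to verify maximality, $\sigma\{\xi_0\} = \bigcap_{k\ge 1}\cI_{\eta_k}$: the inclusion $\subset$ is the localization property, and for $\supset$ I would use the generating/minimality structure of the dilation with an ergodic-averaging argument (the mean ergodic theorem, as in Subsection \ref{subsection:DIPinCP}, where $\cJ_n$ is identified with $\cI_{\eta_{n+1}}$) to show that a set almost invariant under every $\eta_k$, $k \ge 1$, cannot depend on any noise coordinate and must be $\sigma\{\xi_0\}$-measurable.

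The main obstacle is precisely the construction in (b) $\Rightarrow$ (a): producing maps $\eta_k$ that are simultaneously $\mu_\xi$-preserving and $\Sigma^\xi$-measurable, satisfy the Thompson (and not the $S^+$) relations, and have invariance $\sigma$-algebras matching the maximality requirement. The Daniell--Kolmogorov model of the Markov process does not make such maps visible; the essential device is to replace it by the open-dynamical-system model of Theorem \ref{theorem:kuemmerer-onesided}, in which the noise coordinates create the room needed to insert partial shifts. Verifying the maximality condition is the subtlest point and is exactly where the ergodic-averaging argument enters.
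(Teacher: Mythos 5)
Your proposal is correct and follows essentially the same route as the paper: the implication (a)\,$\Rightarrow$\,(b) via the invariant-$\sigma$-algebra machinery of Subsection \ref{subsection:DIPinCP} (the classical form of Theorem \ref{theorem:maxps}, with maximality trivializing the conditional-independence hypothesis of Theorem \ref{theorem:cldeFi-with condition}), and (b)\,$\Rightarrow$\,(a) via the open-dynamical-system dilation of Theorem \ref{theorem:kuemmerer-onesided} combined with the tensor-product construction of an $F^+$-representation in Subsection \ref{subsection:constr-class-prob}, with maximality checked by identifying $\bigcap_{k\ge 1}\cI_{\eta_k}$ with $\cI_{\eta_1}$ through the generating property and the mean ergodic theorem. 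The only caveat is descriptive: in the paper's model the generators $\eta_k$, $k\ge 1$, merely insert an independent noise coordinate (the Markov coupling lives entirely in $\eta_0$), and the passage back from the model to $\xi$ itself rests on the fact that a stationary Markov law is determined by its transition operator (Proposition \ref{proposition:commutative-Markov-equivalence}), which you use implicitly.
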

%%%%%%%%%%%%%%%%%%%%%%%%%%%%%%%%%%%%%%%%%%%%%%%%%%%%%%%
A proof of this theorem is obtained from transferring the proof of Theorem \ref{theorem:commdeFinetti}  
to the traditional setting in terms of classical random variables. 
%%%%%%%%%%%%%%%%%%%%%%%%%%%%%%%%%%%%%%%%%%%%%%%%%%%%%%
\begin{Remark} \normalfont
%%%%%%%%%%%%%%%%%%%%%%%%%%%%%%%%%%%%%%%%%%%%%%%%%%%%%%
The extended de Finetti theorem characterizes spreadable sequences of random variables as
mixtures of sequences of conditionally independent, identically distributed random variables. 
A similar complete characterization of partially spreadable sequences of random variables 
is presently an open problem. It is tempting to speculate that any stationary sequence $\xi$ is
partially spreadable. Such a conjecture is supported by the fact that a function of a stationary
Markov sequence may fail to be Markovian. On the other hand, a function of a partially spreadable
sequence yields again a partially spreadable sequence. We will further discuss this phenomenon in
Subsection \ref{subsection:FunctionsOfMarkov}. Finally, let us briefly remark that mixtures of
stationary Markov sequences may also be considered to emerge from certain functions of a Markov
sequence.  
%%%%%%%%%%%%%%%%%%%%%%%%%%%%%%%%%%%%%%%%%%%%%%%%%%%%%%
\end{Remark}
%%%%%%%%%%%%%%%%%%%%%%%%%%%%%%%%%%%%%%%%%%%%%%%%%%%%%%
%%%%%%%%%%%%%%%%%%%%%%%%%%%%%%%%%%%%%%%%%%%%%%%%%%%%%%
\subsection{Functions of classical stationary Markov sequences and their algebraization}
\label{subsection:FunctionsOfMarkov}
%%%%%%%%%%%%%%%%%%%%%%%%%%%%%%%%%%%%%%%%%%%%%%%%%%%%%%
We motivate  why partial spreadability characterizes a much larger class of stationary stochastic
processes than that of stationary Markov sequences. For this purpose,  consider the sequence
$\zeta \equiv \{\zeta_n:=f\circ \xi_n\}_{n=0}^{\infty}$ for some (measurable) function 
$f:(\Omega_0,\Sigma_0)\to (\Omega_0,\Sigma_0)$ of the stationary sequence $\xi$ (as above), 
and denote by $\Sigma^\zeta_{\bullet} \equiv \{\Sigma^{\zeta}_I\}_{I \subset {\Nset_0}}^{}$ the
canonical local filtration of $\zeta$.  Then it is well-known that stationarity and stochastic
independence of $\xi$ transfer to the sequence $\zeta$, but Markovianity may not (for example see
\cite{BR58} and \cite[page 59]{Ro74}). If the function $f$ is injective then the Markovianity of
$\Sigma^{\xi}_{\bullet}$ implies the Markovianity of $\Sigma^{\zeta}_{\bullet}$. However, in
general we only get the inclusions $ \Sigma^{\zeta}_{\bullet}\subset
\Sigma^{\xi}_{\bullet}\subset \Sigma$. Thus the local filtration $\Sigma^{\zeta}_{\bullet}$ may be
coarser than the local Markov filtration $\Sigma^{\xi}_{\bullet}$ and, in general, one can not
expect the sequence $\zeta$ to be Markovian, that is, the canonical local filtration
$\Sigma^{\zeta}_{\bullet}$ may not be Markovian. But partial spreadability is robust when passing
from $\xi$ to $\zeta$, in contrast to Markovanity. 
%%%%%%%%%%%%%%%%%%%%%%%%%%%%%%%%%%%%%%%%%%%%%%%%%%%%%%%
\begin{Proposition}\label{proposition:partial-spread-functions}
%%%%%%%%%%%%%%%%%%%%%%%%%%%%%%%%%%%%%%%%%%%%%%%%%%%%%%%
Suppose $\xi$ is partially spreadable. Then $\zeta$ is partially spreadable.
%%%%%%%%%%%%%%%%%%%%%%%%%%%%%%%%%%%%%%%%%%%%%%%%%%%%%%%
\end{Proposition}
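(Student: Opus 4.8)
The plan is to re-use, \emph{verbatim}, the data that already witnesses the partial spreadability of $\xi$. By Definition \ref{definition:partspread-c} there are $\mu$-preserving maps $\{\eta_n\}_{n\ge 0}$ on $(\Omega,\Sigma,\mu)$ satisfying the Thompson relations $\eta_k\eta_\ell=\eta_{\ell+1}\eta_k$ for $0\le k<\ell<\infty$, together with the localization $\xi_0=\xi_0\circ\eta_k$ for $k\ge 1$ and the stationarity $\xi_n=\xi_0\circ\eta_0^n$ for $n\ge 0$. Since $\zeta_n=f\circ\xi_n$ is defined on the very same probability space, I would simply check that the unchanged family $\{\eta_n\}_{n\ge 0}$ also implements partial spreadability of $\zeta$; no new maps need to be constructed.

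The verification is a one-line precomposition with $f$ in each case. For the localization, for every $k\ge 1$,
\[
\zeta_0\circ\eta_k=(f\circ\xi_0)\circ\eta_k=f\circ(\xi_0\circ\eta_k)=f\circ\xi_0=\zeta_0,
\]
and for the stationarity, for every $n\ge 0$,
\[
\zeta_n=f\circ\xi_n=f\circ(\xi_0\circ\eta_0^n)=(f\circ\xi_0)\circ\eta_0^n=\zeta_0\circ\eta_0^n.
\]
The Thompson relations and the $\mu$-preservation of the $\eta_k$ survive untouched, precisely because the maps themselves are not altered. Thus $\{\eta_n\}_{n\ge 0}$ witnesses the partial spreadability of $\zeta$ with the \emph{same} representation of $F^+$.

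The single point that requires care — and it is exactly the phenomenon underlying the contrast with Markovianity — is measurability with respect to the generated $\sigma$-algebras. In general $\Sigma^{\zeta}\subsetneq\Sigma^{\xi}$ (strict precisely when $f$ collapses information), so the maps $\eta_k$ need not be $\Sigma^{\zeta}$-measurable and the implementing representation need not restrict to the probability space generated by $\zeta$. This is harmless: as the paper stresses, partial spreadability is implemented by a representation on the \emph{ambient} space which is allowed not to restrict to the $\sigma$-algebra of the sequence, and the family $\{\eta_n\}$ on $(\Omega,\Sigma,\mu)$ supplies exactly such data. I expect this to be the main conceptual obstacle: one must resist building a representation intrinsic to $\zeta$ and instead keep the original representation on the larger space.

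To make the last point fully rigorous I would phrase everything through the algebraization of Subsection \ref{subsection:CSMPR}, where no measurability issue arises at all. Writing $\iota\equiv(\iota_n)\colon(\cL_0,\trace_{\mu_0})\to(\cL,\trace_\mu)$ for the algebraization of $\xi$ and $\rho\colon F^+\to\End(\cL,\trace_\mu)$ for the implementing representation, the algebraization $\kappa_0$ of $\zeta_0=f\circ\xi_0$ factors as $\kappa_0=\iota_0\circ C_f$, where $C_f(h)=h\circ f$ is the injective $*$-homomorphism induced by $f$; this records that the range of $\zeta_0$ sits inside that of $\xi_0$. Then $\rho(g_k)\kappa_0=(\rho(g_k)\iota_0)\circ C_f=\iota_0\circ C_f=\kappa_0$ for $k\ge 1$ by the localization for $\iota$, and $\kappa_n=\rho(g_0^n)\kappa_0$ by the stationarity, so the unchanged $\rho$ implements partial spreadability of the sequence $(\kappa_n)$, and hence of $\zeta$, inside $\End(\cL,\trace_\mu)$.
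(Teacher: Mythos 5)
Your proof is correct and is essentially the paper's own argument: the authors likewise keep the original maps $\eta_k$ and observe in one line that precomposition with $f$ preserves the localization and stationarity identities. Your additional remarks on the representation living on the ambient space $(\Omega,\Sigma,\mu)$ rather than on $(\Omega,\Sigma^{\zeta},\mu_{\zeta})$, and the algebraization via $\kappa_0=\iota_0\circ C_f$, are consistent with the paper's surrounding discussion but are not needed beyond what the paper itself records.
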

%%%%%%%%%%%%%%%%%%%%%%%%%%%%%%%%%%%%%%%%%%%%%%%%%%%%%%%
\begin{proof}
%%%%%%%%%%%%%%%%%%%%%%%%%%%%%%%%%%%%%%%%%%%%%%%%%%%%%%%
Since $\zeta_n = f \circ \xi_n$, it follows from the partial spreadability of $\xi$ that  
$f \circ \xi_0 = f \circ \xi_0 \circ \eta_k$ and $f \circ \xi_n = f \circ \xi_0 \circ \eta_0^n$
for all $k, n >0$. 
%%%%%%%%%%%%%%%%%%%%%%%%%%%%%%%%%%%%%%%%%%%%%%%%%%%%%%%
\end{proof}
%%%%%%%%%%%%%%%%%%%%%%%%%%%%%%%%%%%%%%%%%%%%%%%%%%%%%%%
Consequently, the class of partially spreadable sequences of random variables is much larger than
the class of stationary Markov sequences. We recall that $(\Omega_0,\Sigma_0)$ denotes a standard
Borel space and $(\Omega, \Sigma, \mu)$ a standard probability space. 
%%%%%%%%%%%%%%%%%%%%%%%%%%%%%%%%%%%%%%%%%%%%%%%%%%%%%%%
\begin{Theorem}\label{theorem:functionMarkovPC}
%%%%%%%%%%%%%%%%%%%%%%%%%%%%%%%%%%%%%%%%%%%%%%%%%%%%%%%
Let $\xi \equiv (\xi_n)_{n\in\Nset_0}$ be a sequence of $(\Omega_0, \Sigma_0)$-valued random
variables on $(\Omega,\Sigma,\mu)$. If $\xi$ is a stationary Markov sequence then its function 
sequence $\zeta \equiv (\zeta_n:= f\circ \xi_n)_{n=0}^\infty$ is partially spreadable for any
(measurable) function $f \colon (\Omega_0,\Sigma_0) \to  (\Omega_0,\Sigma_0)$. 
%%%%%%%%%%%%%%%%%%%%%%%%%%%%%%%%%%%%%%%%%%%%%%%%%%%%%%%
\end{Theorem}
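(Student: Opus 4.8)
The plan is to derive the statement by combining two results already at hand: the de Finetti theorem for stationary Markov sequences, Theorem~\ref{theorem:definetti-intro}, and the robustness of partial spreadability under a componentwise measurable function, Proposition~\ref{proposition:partial-spread-functions}. The genuine work of the theorem is thereby outsourced to the direction ``$(b)\Rightarrow(a)$'' of the de Finetti theorem, while the passage from $\xi$ to $\zeta = f\circ\xi$ is purely formal.

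First I would invoke the implication $(b)\Rightarrow(a)$ of Theorem~\ref{theorem:definetti-intro}: since $\xi$ is a stationary Markov sequence, it is maximal partially spreadable, and in particular partially spreadable. Unwinding Definition~\ref{definition:partspread-c}, this supplies $\mu_{\xi}$-preserving maps $\{\eta_k\}_{k\ge 0}$ on $\Omega$ obeying the Thompson relations $\eta_k\eta_\ell = \eta_{\ell+1}\eta_k$ for $0\le k<\ell<\infty$, together with the localization $\xi_0 = \xi_0\circ\eta_k$ for $k\ge 1$ and the stationarity $\xi_n = \xi_0\circ\eta_0^n$ for $n\ge 0$.

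Second, I would feed these same maps into the argument of Proposition~\ref{proposition:partial-spread-functions}. Composing the two identities above on the left with $f$ and writing $\zeta_n = f\circ\xi_n$ yields
\[
\zeta_0 = f\circ\xi_0 = f\circ\xi_0\circ\eta_k = \zeta_0\circ\eta_k \quad (k\ge 1),
\qquad
\zeta_n = f\circ\xi_0\circ\eta_0^n = \zeta_0\circ\eta_0^n \quad (n\ge 0),
\]
so that the very same representation $\{\eta_k\}$ of $F^+$ witnesses the partial spreadability of $\zeta$. This completes the argument.

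The step I expect to carry all the difficulty is the first one: producing an honest representation of the Thompson monoid $F^+$ that implements partial spreadability of a stationary Markov sequence $\xi$. As flagged in the excerpt, such a representation is not readable off the Daniell--Kolmogorov model and instead requires realizing the Markov sequence as an open dynamical system (a Markov dilation). The $f$-step carries no comparable difficulty, but it does rest on one subtle point worth recording: the maps $\{\eta_k\}$ act on the space carrying $\xi$, hence on $\Sigma^{\xi}$ rather than on the smaller $\sigma$-algebra $\sigma\{\zeta_n \mid n\ge 0\}$ generated by $\zeta$. This is permissible precisely because partial spreadability is taken in the general sense of Definition~\ref{definition:partspread-c}, where the implementing representation may live on a larger probability space than the one minimally generated by the sequence; indeed $\zeta$ will typically fail to be Markovian, so no such representation can be expected to restrict to $\sigma\{\zeta\}$.
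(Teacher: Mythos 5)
Your proposal is correct and coincides with the paper's own proof, which likewise deduces the result by applying the implication (b)\,$\Rightarrow$\,(a) of Theorem~\ref{theorem:definetti-intro} to conclude that $\xi$ is (maximal) partially spreadable and then invoking Proposition~\ref{proposition:partial-spread-functions}. Your closing remark about the implementing maps living on $\Sigma^{\xi}$ rather than $\sigma\{\zeta\}$ is an accurate reading of Definition~\ref{definition:partspread-c} and consistent with the paper's discussion, though the paper does not spell it out in the proof itself.
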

%%%%%%%%%%%%%%%%%%%%%%%%%%%%%%%%%%%%%%%%%%%%%%%%%%%%%%%
\begin{proof}
%%%%%%%%%%%%%%%%%%%%%%%%%%%%%%%%%%%%%%%%%%%%%%%%%%%%%%%
A stationary Markov sequence is (maximal) partially spreadable by Theorem
\ref{theorem:definetti-intro}. Thus Proposition \ref{proposition:partial-spread-functions}
applies.
%%%%%%%%%%%%%%%%%%%%%%%%%%%%%%%%%%%%%%%%%%%%%%%%%%%%%%%
\end{proof}
%%%%%%%%%%%%%%%%%%%%%%%%%%%%%%%%%%%%%%%%%%%%%%%%%%%%%%%
%%%%%%%%%%%%%%%%%%%%%%%%%%%%%%%%%%%%%%%%%%%%%%%%%%%%%%%
\begin{Remark}\normalfont
%%%%%%%%%%%%%%%%%%%%%%%%%%%%%%%%%%%%%%%%%%%%%%%%%%%%%%%
Maximal partial spreadability of the sequence $\xi$ may not be
inherited by $\zeta$, for the same reason that a function of a
Markov sequence may be non-Markovian. 
%%%%%%%%%%%%%%%%%%%%%%%%%%%%%%%%%%%%%%%%%%%%%%%%%%%%%%% 
\end{Remark}
%%%%%%%%%%%%%%%%%%%%%%%%%%%%%%%%%%%%%%%%%%%%%%%%%%%%%%%
We close this subsection by addressing how passing from the sequence $\xi$ to its function
sequence $\zeta$ reappears as restricting the corresponding noncommutative random variables.
Continuing our discussion from above, upon the algebraization of classic random variables, the
inclusions of local filtrations
\[
\Sigma^{\zeta}_{\bullet}\subset  \Sigma^{\xi}_{\bullet}\subset \Sigma
\]
become inclusions of Lebesgue spaces of essentially bounded functions: 
\[
L^{\infty}(\Omega, \Sigma^{\zeta},\mu_\zeta)\subset L^{\infty}(\Omega, \Sigma^{\xi},\mu_\xi)
\subset L^{\infty}(\Omega, \Sigma,\mu),
\]  where $\mu_{\zeta}$ denotes the restrictions of $\mu$ to $\Sigma^{\zeta}$. Thus taking a 
function of a sequence of random variables becomes restriction of a sequence of noncommutative
random variables. Altogether this entails that a Markovian sequence of noncommutative random
variables 
\[
\iota \colon (\cL_0, \trace_{\mu_0}) \to   (\cL, \trace_{\mu}) 
\]
restricts to a sequence 
\[
\iota^{\zeta} \colon 
(\widetilde{\cL}_{0}, \trace_{\widetilde{\mu}_{0}}) \to   (\cL, \trace_{\mu}), 
\]
which may fail to be Markovian with respect to its canonical local filtration. Here we have put 
$\widetilde{\cL}_{0} := L^{\infty}(\Omega,\Sigma^{{\zeta}_{0}}, \widetilde{\mu}_{0}) \subset
\cL_0= L^{\infty}(\Omega,\Sigma^{{\xi}_{0}},\mu_0)$, where $\widetilde{\mu}_{0}$ denotes the
restriction of $\mu_{0}:=\mu_{\xi_0}$ from $\Sigma^{\xi_{0}}$ to $\Sigma^{\zeta_{0}}$.
Consequently, in reference to Definition \ref{definition:markov-filtration-c}, the Markovianity of the
local filtration $\Sigma^\xi_\bullet$ may not pass to the local filtration $\Sigma^\zeta_\bullet$. 
Of course, this may be reformulated in terms of commuting squares of commutative von Neumann algebras,
see Remark \ref{remark:failure-mark-filt}.
%%%%%%%%%%%%%%%%%%%%%%%%%%%%%%%%%%%%%%%%%%%%%%%%%%%%%%%
%%%%%%%%%%%%%%%%%%%%%%%%%%%%%%%%%%%%%%%%%%%%%%%%%%%%%%%
\subsection{A result on Markov dilations in classical probability theory}
\label{subsection:markov-dilations}
%%%%%%%%%%%%%%%%%%%%%%%%%%%%%%%%%%%%%%%%%%%%%%%%%%%%%%%
%%%%%%%%%%%%%%%%%%%%%%%%%%%%%%%%%%%%%%%%%%%%%%%%%%%%%%%
Presently it is an open problem if a representation of the Thompson monoid $F^+$ can be constructed 
directly based on the usual Daniell-Kolmogorov construction for stationary Markov processes. This issue is 
bypassed by rewriting a Markov process as an open dynamical system, a construction which is well-known within
quantum probability \cite{AFL82}. Here we review this construction in the framework of classical probability
along the presentation given in the habilitation thesis of K\"ummerer \cite{Ku86}.
This construction is underlying the construction of a representation of the Thompson monoid $F^+$ in 
Subsection \ref{subsection:constr-class-prob}. 
%%%%%%%%%%%%%%%%%%%%%%%%%%%%%%%%%%%%%%%%%%%%%%%%%%%%%%%
\begin{Notation} \normalfont
%%%%%%%%%%%%%%%%%%%%%%%%%%%%%%%%%%%%%%%%%%%%%%%%%%%%%%%
For the remainder of this subsection the noncommutative probability space $(\cL, \trace_\lambda)$
is given by the Lebesgue space of essentially bounded functions $\cL := L^\infty([0,1],\lambda)$ 
and $\trace_\lambda := \int_{[0,1]} \cdot\,  d\lambda$ as the faithful normal state on $\cL$. 
Here $\lambda$ denotes the Lebesgue measure on the unit interval $[0,1] \subset \Rset$. 
%%%%%%%%%%%%%%%%%%%%%%%%%%%%%%%%%%%%%%%%%%%%%%%%%%%%%%%
\end{Notation}
%%%%%%%%%%%%%%%%%%%%%%%%%%%%%%%%%%%%%%%%%%%%%%%%%%%%%%%
%%%%%%%%%%%%%%%%%%%%%%%%%%%%%%%%%%%%%%%%%%%%%%%%%%%%%%%
\begin{Theorem}[{\cite[4.4.2]{Ku86}}] \label{theorem:kuemmerer-onesided}
%%%%%%%%%%%%%%%%%%%%%%%%%%%%%%%%%%%%%%%%%%%%%%%%%%%%%%%
Let $T$ be a $\varphi$-Markov map on $\cA$, where $\cA$ is a commutative von 
Neumann algebra with separable predual. Then there exists 
$\alpha \in \End(\cA \otimes \cL, \varphi \otimes \trace_\lambda)$ 
such that $(\cA\otimes \cL, \varphi\otimes \trace_{\lambda}, \alpha, \iota_0)$ is a Markov
dilation of $T$. That is, $(\cA\otimes \cL, \varphi\otimes \trace_{\lambda}, \alpha, \cA\otimes
\1_{\cL})$ is a stationary Markov process, and for all $n \in \Nset_0$, 
\[
T^n   =   \iota_0^* \, \alpha^n \iota_0,
\]
where $\iota_0 \colon (\cA,\varphi) \to (\cA \otimes \cL, \varphi \otimes \trace_\lambda)$
denotes the canonical embedding $\iota_0(a) = a \otimes \1_\cL$ such
that $E_0 := \iota_0 \circ \iota_0^* $ is the $\varphi \otimes \trace_\lambda$-preserving 
normal conditional expectation from $\cA \otimes \cL$ onto $\cA \otimes \1_{\cL}$.
%%%%%%%%%%%%%%%%%%%%%%%%%%%%%%%%%%%%%%%%%%%%%%%%%%%%%%%
\end{Theorem}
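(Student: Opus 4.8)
The plan is to exhibit the dilation concretely by writing the Markov map $T$ as the transition operator of a classical Markov chain realized as a deterministic dynamical system driven by i.i.d.\ noise, the open-dynamical-system construction underlying \cite{AFL82,Ku85,Ku86}.

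First I would pass to measure-theoretic language. Since $\cA$ has separable predual, fix an identification $\cA \cong L^\infty(\Omega_0,\Sigma_0,\mu_0)$ with $(\Omega_0,\Sigma_0)$ standard Borel and $\mu_0$ the probability measure representing $\varphi$. A normal, positive, unital map on such an algebra disintegrates into a transition kernel $P(x,\cdot)$ on $(\Omega_0,\Sigma_0)$ with $Ta(x)=\int a\,dP(x,\cdot)$, and the stationarity $\varphi\circ T=\varphi$ becomes the invariance $\int P(x,\cdot)\,d\mu_0(x)=\mu_0$. By a standard randomization (transfer) lemma on standard Borel spaces there is a measurable $\Phi\colon\Omega_0\times[0,1]\to\Omega_0$ with $\lambda\{s:\Phi(x,s)\in A\}=P(x,A)$ for $\mu_0$-a.e.\ $x$ and all $A\in\Sigma_0$. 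Finally, since $\prod_{k\ge1}[0,1]$ with product Lebesgue measure is a non-atomic standard probability space, a measure isomorphism identifies $\cL=L^\infty([0,1],\lambda)\cong L^\infty\big(\prod_{k\ge1}[0,1],\bigotimes_{k\ge1}\lambda\big)$; writing a generic point as $(s_1,s_2,\dots)$ supplies a countable i.i.d.\ uniform noise.

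Then I would define $\theta(x,s_1,s_2,\dots):=(\Phi(x,s_1),s_2,s_3,\dots)$ on $\Omega_0\times\prod_{k\ge1}[0,1]$ and set $\alpha(F):=F\circ\theta$. Integrating first over $(x,s_1)$ and using the defining property of $\Phi$ together with the invariance of $\mu_0$ shows that $\theta$ preserves $\nu:=\mu_0\otimes\bigotimes_{k\ge1}\lambda$, so $\alpha$ is a unital, state-preserving $*$-endomorphism; the modularity condition is automatic as the product state is tracial, whence $\alpha\in\End(\cA\otimes\cL,\varphi\otimes\trace_\lambda)$. Iterating, the first coordinate of $\theta^n$ is $X_n$ with $X_0=x$ and $X_k=\Phi(X_{k-1},s_k)$, so with $\iota_0(a)=a\otimes\1_\cL$ one has $\alpha^n\iota_0(a)=a(X_n)$; applying $\iota_0^*$, which integrates out the noise and whose induced $E_0=\iota_0\iota_0^*$ is the conditional expectation onto $\cA\otimes\1_\cL$, and using $X_n\mid X_0=x\sim P^n(x,\cdot)$ gives $\iota_0^*\alpha^n\iota_0(a)(x)=\int a\,dP^n(x,\cdot)=T^na(x)$, which is the dilation identity of Definition \ref{definition:dilation}.

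It remains to check that this dilation is a Markov dilation. Because $X_{n+1}=\Phi(X_n,s_{n+1})$ with $s_{n+1}$ independent of $(x,s_1,\dots,s_n)$, hence of $\sigma(X_0,\dots,X_n)$, the sequence $(X_n)_{n\ge0}$ is a Markov chain, and it is stationary since $\theta$ is measure-preserving. As the ambient algebra is commutative, Proposition \ref{proposition:onestepMarkov} (with Lemma \ref{lemma:ms-mf-equivalence}) upgrades this one-step Markov property to Markovianity of the canonical local filtration $\big\{\bigvee_{n\in I}\alpha^n\iota_0(\cA)\big\}_{I\in\cI(\Nset_0)}$, so the dilation is a Markov dilation in the sense of Definition \ref{definition:markovdilation}. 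I expect the main obstacle to be the measure-theoretic inputs of the second step --- the disintegration of $T$ into a genuine kernel and the construction of the noise map $\Phi$ --- and the bookkeeping needed to identify $\alpha^n\iota_0(\cA)$ with $\sigma(X_n)$ so that the commutative Markovianity criterion applies; the algebraic verifications that $\alpha$ is an endomorphism satisfying the dilation identity are then routine.
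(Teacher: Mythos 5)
Your proof is correct, but it takes a genuinely different route from the paper. The paper's argument is an abstract existence proof: it first builds a Markov dilation $(\cM,\psi,\beta,\kappa)$ of $T$ via the Daniell--Kolmogorov/GNS construction, where $\beta$ is just the tensor right shift and all the information about $T$ is packed into the state $\psi$; it then tensors with $(\cL,\trace_\lambda)$ and invokes Takesaki's disintegration theory together with the isomorphism theorem for non-atomic commutative von Neumann algebras with separable predual to produce an isomorphism $\pi\colon(\cM\otimes\cL,\psi\otimes\trace_\lambda)\to(\cA\otimes\cL,\varphi\otimes\trace_\lambda)$ fixing $\cA$, and transports $\beta\otimes\Id$ to get $\alpha$. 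You instead realize $T$ as a transition kernel on a standard Borel space, use the randomization (transfer) lemma to encode the kernel as a deterministic update $\Phi$ driven by i.i.d.\ uniform noise, and write down $\alpha$ explicitly as composition with the skew-product shift $\theta$ on $\Omega_0\times\prod_{k\ge 1}[0,1]$; the dilation identity and the Markov property of the canonical local filtration (via Proposition \ref{proposition:onestepMarkov} in the commutative case) are then direct computations. The trade-off: the paper's route avoids kernel representation and randomization but is non-constructive (as the paper itself remarks after the theorem) and leans on direct-integral machinery from \cite{Ta79}; your route yields an explicit endomorphism and an explicit identification $\alpha^n\iota_0(\cA)=L^\infty(\sigma(X_n))$, at the cost of the measure-theoretic inputs you flag (disintegration of $T$ into a genuine kernel, which needs $\varphi\circ T=\varphi$ to make $\int \tilde a\,dP(x,\cdot)$ independent of the representative $\tilde a$, and the noise representation $\Phi$) --- both of which are indeed standard for standard Borel spaces, so there is no gap. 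Your construction is in fact closer in spirit to the explicit two-point example the paper works out in Example \ref{example:M-D} than to the proof of the general theorem.
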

%%%%%%%%%%%%%%%%%%%%%%%%%%%%%%%%%%%%%%%%%%%%%%%%%%%%%%%
For the convenience of the reader we outline next a proof of this folkore result in
noncommutative probability. Our arguments follow almost verbatim those given by K\"ummerer in his
habilitation thesis \cite{Ku86}, adapting therein arguments from the setting of bilateral to that
of unilateral noncommutative stationary processes. 
%%%%%%%%%%%%%%%%%%%%%%%%%%%%%%%%%%%%%%%%%%%%%%%%%%%%%%%
\begin{proof}
%%%%%%%%%%%%%%%%%%%%%%%%%%%%%%%%%%%%%%%%%%%%%%%%%%%%%%%
The Daniell-Kolmogorov construction gives a dilation $(\cM,\psi,\beta,\kappa)$ of the 
$\varphi$-Markov operator $T$ on $\cA$ in the following way. A state $\psi_{\operatorname{alg}}$
on the infinite algebraic tensor product $\cM_{\operatorname{alg}}:= \odot_{\Nset_0} \cA$ is
defined through 
\[ 
\psi_{\operatorname{alg}}(a_0\otimes a_1\otimes 
      \cdots \otimes a_n \otimes \1_\cA \otimes \cdots)
=\varphi\big(a_0T(a_1T(a_2\cdots T(a_{n-1}T(a_n))\cdots))\big).
\]
Next one applies the GNS construction to the pair $(\cM_{\alg}, \psi_{\alg})$, a so-called
*-algebraic probability space, to obtain the noncommutative probability space $(\cM,\psi)$ (as
introduced in Subsection \ref{subsection:Markov-maps}) such that 
$\cM = \Pi_{\psi}(\cM_\alg)^{\prime\prime}$ and $\psi \big(\Pi_{\psi}(x)\big) 
= \langle 1_\psi , \Pi_\psi(x) 1_\psi \rangle = \psi_\alg(x)$. Here $(\cH_\psi,\Pi_\psi,1_\psi)$
denotes the GNS triple of $(\cM_\alg, \psi_\alg)$ and $\Pi_{\psi}(\cM_\alg)^{\prime\prime}$ the
double commutant of $\Pi_{\psi}(\cM_\alg)$ in $\cB(\cH_\psi)$. We remark that the normal state
$\psi$ on $\cM$ is automatically faithful, as $\psi$ is tracial. The right shift defined on
$\cM_\alg = \odot_{\Nset_0} \cA$ by $a_0\otimes a_1\otimes \dots  \mapsto \1_\cA\otimes
a_0\otimes a_1\otimes \cdots$ extends in the GNS representation to an endomorphism $\beta$ of
$(\cM,\psi)$ such that $\beta\big(\Pi_\psi(a_0 \otimes a_1 \otimes \cdots  )\big) =
\Pi_\psi(\1_\cA \otimes a_0 \otimes a_1 \otimes \cdots  )$. 
Moreover $\kappa (a) := \Pi_{\psi}(a\otimes \1_\cA\otimes \1_\cA\otimes \cdots)$ defines a
noncommutative random variable $\kappa$ from $(\cA,\varphi)$ into $(\cM,\psi)$ such that 
$(\cM,\psi,\beta,\kappa)$ is a minimal Markov dilation of the $\varphi$-Markov map $T$ 
on $\cA$. We stress here that the endomorphism $\beta$ is simply the right shift on tensor
products, and all information about the Markov map $T$ is encoded into the state $\psi$.

As $(\cM,\psi,\beta,\kappa)$ is a dilation of the $\varphi$-Markov map $T$
on $\cA$, so is its amplification  to $(\cM \otimes \cL, \psi \otimes
\trace_{\lambda}, \beta \otimes \Id_\cL, \kappa_\cM \circ \kappa)$,  where
$\kappa_\cM$ is the canonical embedding of $\cM$ into $\cM\otimes \cL$.  
We show next that the two noncommutative probability spaces 
$(\cM \otimes \cL, \psi \otimes \trace_{\lambda})$ and $(\cA \otimes \cL,
\varphi \otimes \trace_{\lambda})$ are isomorphic. More explicitly, we will show
the existence of an isomorphism $\pi \colon \cM \otimes \cL \to \cA \otimes
\cL$ such that $\pi\big(\kappa(a)\otimes \1_\cL)= a\otimes \1_\cL =
\iota_0(a)$ and $\psi \otimes \trace_\lambda = (\varphi \otimes \trace_\lambda)
\circ \, \pi$. Consequently $\alpha := \pi \circ \beta \circ \pi^{-1}$ will
define an endomorphism of $(\cA \otimes \cL, \varphi \otimes \trace_{\lambda})$ 
such that $(\cA \otimes \cL, \varphi \otimes \trace_\lambda, \alpha, \iota_0)$ is a 
Markov dilation of the $\varphi$-Markov map $T$ on $\cA$. 

As $\cA$ is a commutative von Neumann algebra with separable predual, it can
identified with $L^{\infty}(\Gamma,\mu)$ where $\Gamma$ is a standard Borel
space and $\mu$ is a probability measure on $\Gamma$ which induces $\varphi$
(that is, $\varphi=\int_{\Gamma} \cdot \, d\mu$). Moreover $\cM$ also has
separable predual, since $\cM =\bigvee\{\beta^n \kappa(\cA)\mid n\in
\Nset_0\}$ by the minimality of the dilation.  Consequently we can assume that
$\cM$ is faithfully represented on a separable Hilbert space $\cH$ with cyclic
separating vector $\xi$ inducing the state $\psi$.

Applying the theory of desintegration (\cite[Chapter IV]{Ta79}), we obtain a measurable field 
of commutative von Neumann  algebras $\{\{\cM(\gamma), \cH(\gamma)\}  \colon \gamma\in \Gamma\}$
such that 
\[
\{\cM,\cH\}=\int^{\oplus}_{\Gamma} \{\cM(\gamma), \cH(\gamma)\} \, d\mu(\gamma),
\]
with $\kappa(\cA)\cong L^{\infty}(\Gamma,\mu)$ as the diagonal subalgebra. Let 
\[
\xi=\int_{\Gamma}^{\oplus} \xi(\gamma) \, d\mu(\gamma), \qquad 
\psi =\int_{\Gamma}^{\oplus} \psi(\gamma) \, d\mu(\gamma)
\] 
be the corresponding desintegration of the vector $\xi$ and the state $\psi$
(see \cite[Chapter IV, Definitions 8.14, 8.15, 8.33]{Ta79}). It follows from the uniqueness
of desintegration (\cite[Chapter IV, Theorem  8.34]{Ta79} that, for $x\in \cA(\gamma)$, 
\[
\varphi(\gamma)(x)=\langle \xi(\gamma),x \xi(\gamma)\rangle  
\]
for $\mu$-almost all $\gamma \in \Gamma$. In particular, $\xi(\gamma)$ is cyclic and separating
for $\mu$-almost all $\gamma\in \Gamma$. Similarly, with $ \cL = L^\infty([0,1], \lambda)$ and
$\cK = L^2([0,1], \lambda)$,  one has the unique desintegration
\[
\{\cM\otimes \cL, \cH\otimes\cK\}
   =\int_{\Gamma}^{\oplus} \{\cM(\gamma)\otimes \cL, \cH(\gamma)\otimes \cK \}\, d\mu(\gamma),
\]
and 
\[\psi\otimes \trace_{\lambda}
    =\int_{\Gamma}^{\oplus} \psi(\gamma) \otimes \trace_{\lambda}\, d\mu(\gamma).
\]
For almost all $\gamma\in \Gamma$, the noncommutative probability space $\big(\cM(\gamma) 
\otimes \cL, \varphi(\gamma)\otimes \trace_{\lambda}\big)$ has a von Neumann algebra without
atoms acting on a separable Hilbert space. Hence, $\cM(\gamma)\otimes \cL$ is isomorphic to $\cL$
for almost all $\gamma\in \Gamma$ (\cite[Chapter III, Theorem 1.22]{Ta79}). Under this
isomorphism, the state $\psi(\gamma)\otimes \trace_{\lambda}$ is mapped into some faithful normal
state on $\cL$ and we can even find an isomorphism mapping $\psi(\gamma)\otimes \trace_{\lambda}$
into $\trace_{\lambda}$ (see \cite[Theorem 15.3.9]{Ro68}). Altogether we get an isomorphism
$\pi(\gamma)$ between $(\cM(\gamma)\otimes \cL, \psi(\gamma)\otimes \trace_{\lambda})$ and
$(\cL,\trace_{\lambda})$ for almost all $\gamma \in \Gamma$. In particular, $\pi(\gamma)$ is even
a spatial isomorphism between the pairs $\{\cM(\gamma) \otimes \cL, \cH(\gamma) \otimes \cK\}$
and $\{\cL, \cK\}$. Therefore, applying \cite[Section IV, Theorem 8.28]{Ta79}, we obtain an
isomorphism between
\[
\left(\int_{\Gamma}^{\oplus}\{\cM(\gamma)\otimes\cL,\, \cH(\gamma)\otimes\cK\} \,d\mu(\gamma) \,,
      \int_{\Gamma}^{\oplus} \psi(\gamma) \otimes \trace_\lambda\,d\mu(\gamma) \,  \right )
\]
and
\[
\left(\int_{\Gamma}^{\oplus}\{\cL,\,\cK \}\, d\mu(\gamma)\, ,\,  
      \int_{\Gamma}^{\oplus} \trace_\lambda\,d\mu(\gamma) \,\right )
\]
which, by \cite[Chapter IV, Theorem 8.30]{Ta79}, in turn is isomorphic to
\[
(L^{\infty}(\Gamma,\mu)\otimes \cL,\lambda),\varphi\otimes \trace_{\lambda})
   =(\cA\otimes \cL,\varphi\otimes \trace_{\lambda}).
\]
This establishes the existence of an isomorphism $\pi$ between the noncommutative probability
spaces  $(\cM \otimes \cL, \psi \otimes \trace_\lambda)$ and 
$(\cA \otimes \cL, \varphi \otimes \trace_{\lambda})$. 
%%%%%%%%%%%%%%%%%%%%%%%%%%%%%%%%%%%%%%%%%%%%%%%%%%%%%%%
\end{proof}
%%%%%%%%%%%%%%%%%%%%%%%%%%%%%%%%%%%%%%%%%%%%%%%%%%%%%%%
As pointed out by K\"ummerer in \cite{Ku86}, Theorem \ref{theorem:kuemmerer-onesided} ensures the existence
of a Markov dilation, but does not provide an explicit construction.  Also, its proof rests on the 
traditional Daniell-Kolmogorov construction for a stationary process. But a concrete construction is available
for stationary Markov chains with values in a finite set which, moreover, is independent of the 
Daniell-Kolomogorov construction. For the convenience of the reader, we demonstrate this for the algebraic
reformulation of a stationary $\{0,1\}$-valued Markov chain, reproducing the arguments of K\"ummerer in 
\cite[4.4.3]{Ku86}.   
%%%%%%%%%%%%%%%%%%%%%%%%%%%%%%%%%%%%%%%%%%%%%%%%%%%%%%%
\begin{Example}\normalfont \label{example:M-D}
%%%%%%%%%%%%%%%%%%%%%%%%%%%%%%%%%%%%%%%%%%%%%%%%%%%%%%%
An algebraic model for a coin toss is given by the noncommutative probability space 
$(\Cset^2,\varphi_q)$, where $\Cset^2$ represents the two-dimensional unital *-algebra 
of all $\Cset$-valued functions on the set $\{0,1\}$, and for some fixed $q \in (0,1)$, 
\[
\varphi_q\big( \ba \big) = q a_1 + (1-q) a_2,  
\qquad  
\text{with $\ba = \begin{bmatrix} a_1\\a_2 \end{bmatrix} \in \Cset^2$,}
\]
defines a faithful (tracial) state $\varphi_q$ on $\Cset^2$. Now consider the transition matrix 
$T \colon \Cset^2 \to \Cset^2$ given by
\[
T = \begin{bmatrix} 1-p_1 & p_1 \\ p_2 & 1-p_2 \end{bmatrix}
\]
for some fixed $p_1, p_2 \in [0,1]$. For some fixed $0 <q < 1$, this map $T$ is a $\varphi_q$-Markov map (in the sense of 
Definition \ref{definition:MarkovMap}) if and only if the pair $(p_1, p_2)$ satisfies the following two conditions:
\begin{enumerate}
    \item 
    $p_2 (1-q) = p_1 q$  \qquad (stationarity of the map);
    \item
    either $p_1= p_2 = 0$ or
    $p_1, p_2 >0$ (faithfulness of the state).
\end{enumerate}
Note that $p_1= p_2=0$ corresponds to $T$ being the identity matrix, and $p_1=p_2=1$ to $T$ being the flip, both of which are automorphisms on $\Cset^2$. Thus these two cases yield trivial stationary Markov dilations. For notational convenience we will exclude these two cases in the following discussion.  

To start the construction of a stationary Markov dilation (in the sense of K\"ummerer), 
let the noncommutative
probability space $(\cL,\trace_{\lambda})$ be given by 
 \[
\cL  = L^\infty([0,1], \lambda)  
\quad \text{and}  \quad 
\trace_{\lambda} = \int_{[0,1]} \hspace{-12pt} \cdot  \,\,d\lambda,
\]
where $\lambda$ denotes the Lebesgue measure on $[0,1]$. The goal is to construct an automorphism 
$C\in \Aut(\Cset^2 \otimes \cL,  \varphi_q \otimes \trace_{\lambda})$ such that 
$\iota^* \circ  C  \circ \iota(\ba) = T(\ba)$, where the embedding 
$\iota \colon  \Cset^2 \to \Cset^2 \otimes \cL$ is given by $\iota (\ba) = \ba \otimes \1_\cL$. 
For this purpose, let 
\begin{align*}
(\Omega_1, \lambda_1)&:= \big([0,1], q \lambda\big)\\    
(\Omega_2, \lambda_2)&:= \big([0,1], (1-q) \lambda\big)
\end{align*}
and define on the disjoint union $\Omega_1 \sqcup \Omega_2$ the probability measure $\mu$ such that 
\[
\mu(A) = \lambda_1(A \cap \Omega_1)
+ \lambda_2(A \cap \Omega_2).
\]
In the following we canonically identify the two (isomorphic) noncommutative probability spaces 
$(\Cset^2 \otimes \cL,  \varphi_q \otimes \trace_{\lambda})$ and
$\big(L^\infty(\Omega_1 \sqcup \Omega_2, \mu),  \int_{\Omega_1 \sqcup \,\Omega_2} \, \cdot\, d\mu \big)$ 
such that
\[
 \varphi_q \otimes \trace_{\lambda} (\ba \otimes f)
 = a_1 \int_{\Omega_1} f d\lambda_1 + 
   a_2 \int_{\Omega_2} f d\lambda_2. 
\]
Thus, under this identification, the embedding $\iota$ reads as 
\[
\iota(\ba) = a_1 \chi_{\Omega_1}^{} + a_2 \chi_{\Omega_2}^{},  
\]
where $\chi_A$ denotes the characteristic function of the measurable set $A \in \Omega_1 \sqcup \Omega_2$,
and its adjoint $\iota^*$ becomes
\[
\iota^*(f)
= \begin{bmatrix}
\int_{\Omega_1} f d\lambda_1\\
\int_{\Omega_2} f d\lambda_2
\end{bmatrix}.
\]
For the construction of the automorphism $C$, we further partition
\begin{align*}
\Omega_1 & = \Omega_{1,1} \sqcup \Omega_{1,2} :=  [0,1-p_1] \sqcup (1-p_1,1],\\
\Omega_2 &=  \Omega_{2,1} \sqcup \Omega_{2,2} :=[0,p_2) \sqcup [p_2,1],
\end{align*}
to obtain
\begin{alignat*}{6}
\mu(\Omega_{1,1}) & = (1-p_1) q, 
&\qquad& 
\mu( \Omega_{1,2}) & = & p_1 q,&\\
\mu( \Omega_{2,1}) & =p_2(1-q), 
&\qquad&
\mu(\Omega_{2,2}) &=& (1-p_2) (1-q).&
\end{alignat*}
This way of partitioning is motivated by
\[
\mu( \Omega_{1,2})  =  \mu( \Omega_{2,1}) 
\quad \Longleftrightarrow \quad 
p_1 q= p_2 (1-q),
\]
where the righthand side of this equivalence is the concrete form of the stationarity condition 
$\varphi_q \circ T = \varphi_q$. Thus there exists a bijective $\mu$-preserving measurable map 
$\tau$ on $\Omega_1 \sqcup \Omega_2$ such that 
\[
\tau(\Omega_{1,1})  =  \Omega_{1,1}, \qquad
\tau(\Omega_{2,2})  =  \Omega_{2,2},\qquad 
\tau(\Omega_{1,2})  =  \Omega_{2,1}.
\]
We choose $\tau$ to be the identity map when restricted to $\Omega_{1,1} \cup \Omega_{2,2}$. Let $C$ 
denote the automorphism on $L^\infty(\Omega_1 \sqcup \Omega_2, \mu)$ induced by $\tau$, such that
$C(\chi_\cA^{}) = \chi_{\tau(A)}^{}$. We claim that  $\iota^* \circ  C  \circ \iota(\ba) = T(\ba)$. Indeed,
\begin{align*}
\iota^* C \iota(\mathbf{a}) 
& =  \iota^* C \big(a_1 \chi_{\Omega_1}^{} + a_2 \chi_{\Omega_2}^{}\big)\\
& =  \iota^* C \big(a_1\chi_{\Omega_{1,1}}^{} + a_1\chi_{\Omega_{1,2}}^{}
                    + a_2 \chi_{\Omega_{2,1}}^{}+ a_2 \chi_{\Omega_{2,2}}^{}\big)\\
& =  \iota^* \big(a_1\chi_{\Omega_{1,1}}^{} + a_1\chi_{\Omega_{2,1}}^{}
                    + a_2 \chi_{\Omega_{1,2}}^{}+ a_2 \chi_{\Omega_{2,2}}^{}\big)\\
&= \begin{bmatrix}
a_1 (1-p_1) + a_2 p_1  \\
a_1  p_2 + a_2 (1-p_2) 
\end{bmatrix} 
 = T (\ba).
\end{align*}
As the invariance property $\int_{\Omega_1 \sqcup \Omega_2}C (f) d\mu = \int_{\Omega_1 \sqcup \Omega_2}f d\mu$
is evident from the definition of $C$, we have verified that $C\in \Aut(\Cset^2 \otimes \cL,  
\varphi_q \otimes \trace_{\lambda})$ (after the canonical identification made above).  

We are left to provide a probability space $(\cM, \psi)$ and an endomorphism 
$\alpha \in \End(\cM, \psi)$ such
that $(\cM, \psi, \alpha, \iota)$ is a Markov dilation of $T$.
Let $\cM := \Cset^2 \otimes \cL^{\otimes_{\Nset_0}}$ and 
$\psi := \varphi_q \otimes \trace_{\lambda}^{\otimes_{\Nset_0}}$. Then the endomorphism 
$\alpha$ on $\Cset^2 \otimes \cL^{\otimes_{\Nset_0}}$
is uniquely defined by the $\Cset$-linear extension of the map
\[
\ba \otimes f_0 \otimes f_1 \otimes \cdots  
\mapsto 
C(\ba \otimes \1_{\cL}) \otimes f_0 \otimes f_1 \otimes \cdots 
\]
such that $\iota^* \alpha^n \iota = T^n$ for all $n \in \Nset_0$.
We will draw on this approach for the concrete construction of
noncommutative stationary Markov processes in Section
\ref{section:Constructions of Reps of F+}. 
%%%%%%%%%%%%%%%%%%%%%%%%%%%%%%%%%%%%%%%%%%%%%%%%%%%%%%%
\end{Example}
%%%%%%%%%%%%%%%%%%%%%%%%%%%%%%%%%%%%%%%%%%%%%%%%%%%%%%%

%%%%%%%%%%%%%%%%%%%%%%%%%%%%%%%%%%%%%%%%%%%%%%%%%%%%%%%
\begin{Remark}\normalfont
%%%%%%%%%%%%%%%%%%%%%%%%%%%%%%%%%%%%%%%%%%%%%%%%%%%%%%%
Using the terminology of K\"ummerer \cite{Ku86}, the construction of the
automorphism $C$ in Example \ref{example:M-D} provides us with a `tensor
dilation of first order'. Such a `dilation of first order' can always be further
upgraded to a Markov dilation, as illustrated in Example \ref{example:M-D}.
Note that Theorem \ref{theorem:kuemmerer-onesided}, as a result
on the existence of a Markov dilation, includes of course
the existence of a `Markov dilation of first order' which can be further
amplified as done in Example \ref{example:M-D}. We will take this
viewpoint in Theorem \ref{theorem:F+-gen-compression}.
%%%%%%%%%%%%%%%%%%%%%%%%%%%%%%%%%%%%%%%%%%%%%%%%%%%%%%%
\end{Remark}
%%%%%%%%%%%%%%%%%%%%%%%%%%%%%%%%%%%%%%%%%%%%%%%%%%%%%%%

%%%%%%%%%%%%%%%%%%%%%%%%%%%%%%%%%%%%%%%%%%%%%%%%%%%%%%%
%%%%%%%%%%%%%%%%%%%%%%%%%%%%%%%%%%%%%%%%%%%%%%%%%%%%%%%
%%%%%%%%%%%%%%%%%%%%%%%%%%%%%%%%%%%%%%%%%%%%%%%%%%%%%%%
\section{\texorpdfstring{Markovianity from representations of the Thompson monoid $F^+$}{}}
\label{section:markovianity-from-F+}
%%%%%%%%%%%%%%%%%%%%%%%%%%%%%%%%%%%%%%%%%%%%%%%%%%%%%%%
%%%%%%%%%%%%%%%%%%%%%%%%%%%%%%%%%%%%%%%%%%%%%%%%%%%%%%%
%%%%%%%%%%%%%%%%%%%%%%%%%%%%%%%%%%%%%%%%%%%%%%%%%%%%%%%
The noncommutative de Finetti theorem, Theorem \ref{theorem:ncdf}, rests on the result that
representations of the partial shift monoid $S^+$ on noncommutative probability spaces provide
rich structures of commuting squares, in particular as they are underlying the notion of
noncommutative Bernoulli shifts in Definition \ref{definition:ncbs}. Here we investigate
commuting square structures as they emerge from representations of the Thompson monoid $F^+$ on
noncommutative probability spaces. Our investigations reveal that certain commuting squares, as
available in triangular towers of inclusions, already encode Markovianity. As the partial shift
monoid $S^+$ is a quotient of the Thompson monoid $F^+$, our approach yields de-Finetti-type
results for noncommutative stationary Markov processes. 

Let us fix some notation, as it will be used throughout this section. We assume that the
probability space $(\cM,\psi)$ is equipped with the representation $\rho \colon F^+ \to
\End(\cM,\psi)$. For brevity of notion, especially in proofs, the represented generators of $F^+$
are also denoted by
\[
\alpha_n := \rho(g_n)  \in \End(\cM,\psi),   
\]
with fixed point algebras given by  $\cM^{\alpha_n} := \{x \in \cM \mid \alpha_n (x) = x\}$, 
for  $0 \le n < \infty$.  Furthermore the intersections of fixed point algebras
\[
\cM_n := \bigcap_{k \ge n +1} \cM^{\alpha_k}
\] 
give the tower of von Neumann subalgebras 
\[
\cM_{-1} \subset \cM_0 \subset \cM_1 \subset  \cM_2 \subset   \ldots  
\subset \cM_\infty  := \bigvee_{n \ge 0} \cM_n  \subset \cM. 
\]
From the viewpoint of noncommutative probability theory, this tower provides a filtration of the 
noncommutative probability space $(\cM,\psi)$. In particular, we will see in Subsection 
\ref{subsection:s-f-p-a} that the inclusions 
 \[
\begin{matrix}
\cM_{m} &\subset &\cM \\
\cup  &        & \cup \\
\alpha_0^m(\cM_0)   & \subset  & \alpha_0^m(\cM_\infty)
\end{matrix}
\]
form commuting squares which encode Markovianity. Consequently the canonical local filtration of 
a stationary process $(\cM, \psi, \alpha_0, \cA_0)$ will be seen to be a local subfiltration of a local
Markov filtration whenever the $\psi$-conditioned von Neumann subalgebra $\cA_0$ is
well-localized, to be more precise: contained in the intersection of fixed point algebras
$\cM_0$. It is worthwhile to emphasize that, depending on the choice of the generator $\cA_0$,
the canonical local filtration of this stationary process may not be Markovian. Subsection
\ref{subsection:Markov-F} investigates in detail conditions under which the canonical local filtration
of a stationary process $(\cM, \psi, \alpha_0, \cA_0)$ is Markovian. Finally, Subsection 
\ref{subsection:ncdf} provides the proof of Theorem \ref{theorem:extended-de-Finetti}, a
noncommutative de Finetti theorem as appropriate for noncommutative stationary Markov processes.  %%%%%%%%%%%%%%%%%%%%%%%%%%%%%%%%%%%%%%%%%%%%%%%%%%%%%%%
\subsection{Representations with a generating property}
\label{subsection:generating}
%%%%%%%%%%%%%%%%%%%%%%%%%%%%%%%%%%%%%%%%%%%%%%%%%%%%%%%
An immediate consequence of the relations between generators of the Thompson monoid $F^+$ is 
the adaptedness of the endomorphism $\alpha_0$ to the tower of (intersected) fixed point
algebras:
\[
\alpha_0(\cM_{n}) \subset \cM_{n+1} \qquad \text{for all $n \in \Nset_0$}.
\]
Thus, generalizing terminology from classical probability, the random variables
\begin{alignat*}{2}
\iota_0 &:= \Id|_{\cM_0} &\colon \cM_0 \to \cM_0 \subset \cM\\
\iota_1 &:= \alpha_0|_{\cM_0} &\colon \cM_0 \to \cM_1 \subset \cM\\
\iota_2 &:= \alpha^2_0|_{\cM_0} &\colon \cM_0 \to \cM_2 \subset \cM\\
 & \qquad \vdots\\
\iota_n &:= \alpha^n_0|_{\cM_0} &\colon \cM_0 \to \cM_n \subset \cM
\end{alignat*}
are adapted to the filtration $\cM_0 \subset \cM_1 \subset \cM_2 \subset \ldots$ and $\alpha_0$
is the time evolution of the stationary process $(\cM,\psi, \alpha_0, \cM_0)$. We refer the
reader to \cite[Chapter 3]{Go04} for more information on the general philosophy of adapted
endomorphims and to \cite[Appendix A]{GK09} or \cite{EGK17} on how adaptedness 
is of relevance within the context of distributional symmetries and invariance principles. 

Clearly, at most, the von Neumann subalgebra $\cM_\infty$ can be generated by this sequence of
random variables $(\iota_n)_{n \ge 0}$. An immediate question is whether a representation of the 
Thompson monoid $F^+$ restricts to the von Neumann subalgebra $\cM_\infty$. 
%%%%%%%%%%%%%%%%%%%%%%%%%%%%%%%%%%%%%%%%%%%%%%%%%%%%%%%
\begin{Definition}\label{definition:generating} \normalfont
%%%%%%%%%%%%%%%%%%%%%%%%%%%%%%%%%%%%%%%%%%%%%%%%%%%%%%%
The representation  $\rho \colon F^+ \to \End(\cM,\psi)$ is said to have the \emph{generating
property} if $\cM_\infty = \cM$.  
%%%%%%%%%%%%%%%%%%%%%%%%%%%%%%%%%%%%%%%%%%%%%%%%%%%%%%%
\end{Definition}
%%%%%%%%%%%%%%%%%%%%%%%%%%%%%%%%%%%%%%%%%%%%%%%%%%%%%%%
As shown in Proposition \ref{proposition:generating-property} below, this generating property
entails that each intersected fixed point algebra $\cM_n  = \bigcap_{k \ge n+1} \cM^{\alpha_k}$ 
equals the single fixed point algebra $\cM^{\alpha_{n+1}}$. Thus the generating property 
tremendously simplifies the form of the tower $\cM_0 \subset \cM_1 \subset \ldots$, and our next 
result shows that this can always be achieved by restriction.     
%%%%%%%%%%%%%%%%%%%%%%%%%%%%%%%%%%%%%%%%%%%%%%%%%%%%%%%
\begin{Proposition}\label{proposition:generatingrestriction}
%%%%%%%%%%%%%%%%%%%%%%%%%%%%%%%%%%%%%%%%%%%%%%%%%%%%%%%
The representation $\rho:F^{+}\to \End(\cM,\psi)$ restricts to the generating representation 
$\rho_{\gen}:F^{+}\to \End(\cM_{\infty},\psi_\infty)$ such that $\alpha_n (\cM_\infty) \subset
\cM_\infty$ and $E_{\cM_\infty} E_{\cM^{\alpha_n}} = E_{\cM^{\alpha_n}} E_{\cM_\infty}$ for all
$n \in \Nset_0$.  Here $\psi_\infty$ denotes the restriction of the state $\psi$ to $\cM_\infty$.
$E_{\cM^{\alpha_n}}$ and  $E_{\cM_{\infty}}$ denote the unique $\psi$-preserving normal
conditional expectations onto $\cM^{\alpha_n}$ and $\cM_{\infty}$ respectively.
%%%%%%%%%%%%%%%%%%%%%%%%%%%%%%%%%%%%%%%%%%%%%%%%%%%%%%%
\end{Proposition}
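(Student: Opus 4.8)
The plan is to extract everything from a single invariance computation driven by the defining relations $\alpha_k\alpha_\ell = \alpha_{\ell+1}\alpha_k$ (for $k<\ell$) of the represented generators. First I would note that for $x\in\cM^{\alpha_k}$ and $n<k$ one has $\alpha_{k+1}(\alpha_n(x)) = \alpha_n\alpha_k(x) = \alpha_n(x)$, so $\alpha_n(\cM^{\alpha_k})\subset\cM^{\alpha_{k+1}}$. Applying this to $x\in\cM_m=\bigcap_{k\ge m+1}\cM^{\alpha_k}$, and ranging over all $k\ge\max(m+1,n+1)$ (where simultaneously $\alpha_k(x)=x$ and $n<k$), yields $\alpha_n(\cM_m)\subset\cM_{\max(m+1,n+1)}\subset\cM_\infty$. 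Since each $\alpha_n$ is a normal $*$-homomorphism and $\cM_\infty=\bigvee_m\cM_m$, this gives the global invariance $\alpha_n(\cM_\infty)\subset\cM_\infty$ for every $n\in\Nset_0$, which is exactly what is needed for $\rho$ to restrict.

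To see that the restrictions genuinely land in $\End(\cM_\infty,\psi_\infty)$, I would first record that each $\cM^{\alpha_n}$ is $\psi$-conditioned: the modularity of $\alpha_n$ gives $\alpha_n\sigma_t^\psi = \sigma_t^\psi\alpha_n$, so $\sigma_t^\psi$ preserves $\cM^{\alpha_n}$; intersections and generated algebras inherit invariance under $\sigma_t^\psi$, so $\cM_n$ and $\cM_\infty$ are $\psi$-conditioned and the conditional expectations $E_{\cM^{\alpha_n}}$ and $E_{\cM_\infty}$ exist and are unique. Since $\cM_\infty$ is $\psi$-conditioned, $\sigma_t^{\psi_\infty}=\sigma_t^\psi|_{\cM_\infty}$, so each restricted $\alpha_n|_{\cM_\infty}$ is unital, $\psi_\infty$-preserving and modular, hence an endomorphism of $(\cM_\infty,\psi_\infty)$; the relations persist under restriction, defining $\rho_{\gen}$. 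For the \emph{generating property} I would compute the fixed point algebras inside $\cM_\infty$: one has $(\cM_\infty)^{\alpha_n|_{\cM_\infty}}=\cM^{\alpha_n}\cap\cM_\infty$, hence $\bigcap_{k\ge n+1}(\cM^{\alpha_k}\cap\cM_\infty)=\cM_n\cap\cM_\infty=\cM_n$, and taking the join over $n$ recovers $\cM_\infty$, so $\rho_{\gen}$ indeed generates.

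The commutation identity I would obtain from the mean ergodic theorem. Since $\alpha_n$ is a $\psi$-preserving endomorphism, the Cesàro averages $\frac1N\sum_{j=0}^{N-1}\alpha_n^j$ converge pointwise $\sigma$-weakly (via the $L^2$ mean ergodic theorem applied to the implementing isometry on $L^2(\cM,\psi)$) to $E_{\cM^{\alpha_n}}$. As $\cM_\infty$ is $\alpha_n$-invariant and $\sigma$-weakly closed, each average maps $\cM_\infty$ into itself and hence so does the limit, giving $E_{\cM^{\alpha_n}}(\cM_\infty)\subset\cM_\infty$. Passing to $L^2(\cM,\psi)$ with the Jones projections $e:=e_{\cM^{\alpha_n}}$ and $f:=e_{\cM_\infty}$, this inclusion says that $e$ maps the range of $f$ into itself, i.e. $fef = ef$; taking adjoints gives $fef = fe$, whence $ef = fe$. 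Translating back to conditional expectations, this is precisely $E_{\cM^{\alpha_n}}E_{\cM_\infty}=E_{\cM_\infty}E_{\cM^{\alpha_n}}$.

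The only genuinely delicate point, and the main obstacle, is the passage $E_{\cM^{\alpha_n}}(\cM_\infty)\subset\cM_\infty$: one must verify that the ergodic limit computed on the Hilbert space agrees with the normal conditional expectation onto the fixed point algebra, and that the $\sigma$-weak closedness of $\cM_\infty$ can be used to keep the uniformly bounded averages inside $\cM_\infty$ in the limit. Away from the tracial case one should also carry out the $\psi$-conditioned bookkeeping carefully so that all the conditional expectations involved are well defined and the modular restriction identity applies. Everything else follows directly from the $F^+$-relations.
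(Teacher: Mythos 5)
Your proof is correct and follows essentially the same route as the paper's: invariance of $\cM_\infty$ deduced from the $F^+$-relations applied to the intersected fixed point algebras, restriction of $\rho$ using modularity, and the mean ergodic theorem to obtain the commuting conditional expectations. The only cosmetic difference is that you phrase the final commutation step via Jones projections on $L^2(\cM,\psi)$, whereas the paper derives it from the identity $E_{\cM_\infty}\,\alpha_n\, E_{\cM_\infty}=\alpha_n E_{\cM_\infty}$ together with the ergodic averages; both amount to the same adjoint argument.
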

%%%%%%%%%%%%%%%%%%%%%%%%%%%%%%%%%%%%%%%%%%%%%%%%%%%%%%%
\begin{proof}
We show that $\alpha_i (\cM_n ) \subset \cM_{n+1}$ for all $i,n \ge 0$. Let $x \in \cM_n$. If 
$i \ge n+1$ then $\alpha_i (x) = x$  is immediate from the definition of $\cM_n$.  If $i < n+1$
then, using the relations for the generators of the Thompson monoid, 
$\alpha_i(x)= \alpha_i\alpha_{k+1} (x) =   \alpha_{k+2}\alpha_i(x)$
for any $k \ge n$, thus $\alpha_i(x) \in \cM_{n+1}$.  Consequently $\alpha_i$ maps  
$\bigcup_{n \ge 0} \cM_n$ into itself for any $i \in \Nset_0$. Now a standard approximation
argument shows that $\cM_\infty$ is invariant under $\alpha_i$ for any $i \in \Nset_0$. 
Consequently the representation $\rho$ restricts to $\cM_\infty$ and, of course, this restriction
$\rho_{\gen}$ has the generating property. 

Since $\cM_\infty$ is globally invariant under the modular automorphism group of $(\cM,\psi)$,
there exists the (unique) $\psi$-preserving normal conditional expectation $E_{\cM_\infty}$ from
$\cM$ onto $\cM_\infty$. In particular, $\rho_{\gen}(g_n) = \alpha_n|_{\cM_\infty}$ commutes
with the modular automorphism group of $(\cM_\infty, \psi_\infty)$ which ensures
$\rho_{\gen}(g_n)\in  \End(\cM_{\infty},\psi_\infty)$. Finally that $E_{\cM_\infty}$ and
$E_{\cM^{\alpha_n}}$ commute is concluded from 
\[
E_{\cM_\infty}  \alpha_n E_{\cM_\infty} =   \alpha_n E_{\cM_\infty},
\]
which implies $  E_{\cM^{\alpha_n}}  E_{\cM_\infty} =    E_{\cM_\infty}E_{\cM^{\alpha_n}}$ by
routine arguments, and an application of the mean ergodic theorem (see for example 
\cite[Theorem 8.3]{Ko10}), 
\[
 E_{\cM^{\alpha_n}} = \lim_{N \to \infty} \frac{1}{N} \sum_{i=1}^{N} \alpha_n^i, 
\]    
where the limit is taken in the pointwise strong operator topology. 
\end{proof}
%%%%%%%%%%%%%%%%%%%%%%%%%%%%%%%%%%%%%%%%%%%%%%%%%%%%%%%
%%%%%%%%%%%%%%%%%%%%%%%%%%%%%%%%%%%%%%%%%%%%%%%%%%%%%%%
\begin{Lemma}\label{lemma:generating}
%%%%%%%%%%%%%%%%%%%%%%%%%%%%%%%%%%%%%%%%%%%%%%%%%%%%%%%
With the notations as above, $\cM_k=\cM^{\alpha_{k+1}}\cap \cM_{\infty}$ for all $k\in \Nset_0$.
%%%%%%%%%%%%%%%%%%%%%%%%%%%%%%%%%%%%%%%%%%%%%%%%%%%%%%%
\end{Lemma}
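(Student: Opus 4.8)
The inclusion $\cM_k\subseteq\cM^{\alpha_{k+1}}\cap\cM_\infty$ is immediate: by definition $\cM_k=\bigcap_{j\ge k+1}\cM^{\alpha_j}\subseteq\cM^{\alpha_{k+1}}$, while $\cM_k\subseteq\bigvee_{n\ge 0}\cM_n=\cM_\infty$. So the entire content is the reverse inclusion $\cM^{\alpha_{k+1}}\cap\cM_\infty\subseteq\cM_k$; concretely, I must show that every $v\in\cM_\infty$ fixed by $\alpha_{k+1}$ is in fact fixed by $\alpha_{k+1+j}$ for every $j\ge 1$.

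The plan is to use the $F^+$-relations to convert the action of the higher generators on such a $v$ into a single $\alpha_{k+1}$-orbit, and then to play the resulting isometry invariance against the fact that far-out generators act trivially on $\cM_\infty$. Fix $v\in\cM^{\alpha_{k+1}}\cap\cM_\infty$ and set $u_j:=\alpha_{k+1+j}(v)-v$ for $j\ge 1$. Using $\alpha_{k+1}(v)=v$ together with the relation $\alpha_{k+1}\alpha_{k+1+j}=\alpha_{k+2+j}\alpha_{k+1}$ (valid since $k+1<k+1+j$), one computes $\alpha_{k+1}(u_j)=\alpha_{k+2+j}(v)-v=u_{j+1}$, so that $u_{j+1}=\alpha_{k+1}^{\,j}(u_1)$ for all $j$.

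Since $\alpha_{k+1}\in\End(\cM,\psi)$ preserves $\psi$, it is an isometry for the $L^2$-norm $\|\cdot\|_2$ induced by $\psi$, whence $\|u_j\|_2=\|u_1\|_2$ is independent of $j$. On the other hand I claim $\|u_j\|_2\to 0$: for $v$ in the weak$^*$-dense $*$-subalgebra $\bigcup_n\cM_n$, say $v\in\cM_{n_0}$, one has $\cM_{n_0}\subseteq\cM^{\alpha_\ell}$ for all $\ell\ge n_0+1$, so $\alpha_{k+1+j}(v)=v$ and $u_j=0$ as soon as $j\ge n_0-k$; a routine approximation (using the $\|\cdot\|_2$-density of $\bigcup_n\cM_n$ in $\cM_\infty$ and the uniform $L^2$-contractivity of the $\alpha_\ell$) upgrades this to $\lim_j\|\alpha_{k+1+j}(v)-v\|_2=0$ for arbitrary $v\in\cM_\infty$. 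Comparing the two computations forces $\|u_1\|_2=0$, and faithfulness of $\psi$ gives $u_j=0$ for every $j$, i.e.\ $\alpha_{k+1+j}(v)=v$ for all $j\ge 1$. Combined with $\alpha_{k+1}(v)=v$ this yields $v\in\bigcap_{\ell\ge k+1}\cM^{\alpha_\ell}=\cM_k$, as required.

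The one genuinely delicate point, and the step I would write out most carefully, is the last claim: the passage from ``$\alpha_\ell(v)=v$ eventually'' on the dense union to ``$\alpha_\ell(v)\to v$ in $\|\cdot\|_2$'' for a general $v\in\cM_\infty$, which rests on the $L^2$-density of $\bigcup_n\cM_n$ in $\cM_\infty$ (via Kaplansky density) and on the fact that the $\alpha_\ell$ are $L^2$-contractions. Everything else is the bookkeeping of the relation in the orbit identity for the $u_j$. Pleasantly, this argument needs neither the mean ergodic theorem nor the commutation of conditional expectations from Proposition \ref{proposition:generatingrestriction}; it uses only the defining relations of $F^+$, the isometry property of the represented generators, and the generating structure $\cM_\infty=\bigvee_n\cM_n$.
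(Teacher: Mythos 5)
Your proof is correct, and it takes a genuinely different, more elementary route than the paper's. The paper reduces the reverse inclusion to the identity $Q_nQ_k|_{\cM_\infty}=Q_k|_{\cM_\infty}$ between conditional expectations, converts this by an $L^2$-computation to $Q_kQ_nQ_k|_{\cM_\infty}=Q_k|_{\cM_\infty}$, and then proves the latter by writing $Q_k$ and $Q_n$ as mean ergodic Ces\`aro limits of powers of $\alpha_k$ and $\alpha_n$, shuffling $\alpha_k^i\alpha_n^j=\alpha_{n+i}^j\alpha_k^i$ via the $F^+$-relations, and invoking $\lim_M\frac1M\sum_{i=1}^MQ_{n+i}=\operatorname{Id}$ on $\cM_\infty$. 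You replace all of this by the orbit identity $u_{j+1}=\alpha_{k+1}(u_j)$ for $u_j=\alpha_{k+1+j}(v)-v$, the $L^2(\psi)$-isometry of $\alpha_{k+1}$, and the fact that $\|\alpha_\ell(v)-v\|_2\to0$ as $\ell\to\infty$ for $v\in\cM_\infty$. That last fact is exactly the density-plus-approximation point that also underlies the paper's final step (eventual fixedness on the weak*-dense union $\bigcup_n\cM_n$, upgraded by Kaplansky density and $L^2$-contractivity), so the two arguments share the same engine: the relations push indices upward, and far-out generators act asymptotically trivially on $\cM_\infty$. But your version dispenses with the mean ergodic theorem and with conditional expectations altogether; in particular it does not need the commutation $E_{\cM_\infty}Q_n=Q_nE_{\cM_\infty}$ from Proposition \ref{proposition:generatingrestriction}, which the paper's proof implicitly uses to know that $Q_k$ maps $\cM_\infty$ into itself. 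What the paper's formulation buys is an intermediate statement phrased in the commuting-square language used throughout Section \ref{section:markovianity-from-F+}; what yours buys is a shorter, self-contained argument whose only delicate step --- the one you correctly flag --- is the same approximation the paper also has to make.
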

%%%%%%%%%%%%%%%%%%%%%%%%%%%%%%%%%%%%%%%%%%%%%%%%%%%%%%%
\begin{proof}
For the sake of brevity of notation, let $Q_n=E_{\cM^{\alpha_n}}$ denote the $\psi$-preserving
normal conditional expectation from $\cM$ onto $\cM^{\alpha_n}$. By the definition of $\cM_k$ 
and $\cM_\infty$, it is clear that $\cM_k\subset\cM^{\alpha_{k+1}}\cap \cM_{\infty}$. In order to
show the reverse inclusion, it suffices to show that $Q_n Q_k |_{\cM_\infty}=Q_k |_{\cM_\infty},
0\leq k<n<\infty$. We claim that, for $0\leq k < n$, 
\[
	Q_n Q_k |_{\cM_\infty} = Q_k |_{\cM_\infty}  \quad \Longleftrightarrow \quad Q_k Q_n Q_k |_{\cM_\infty} = Q_k |_{\cM_\infty}. 
\]
Indeed this equivalence is immediate from 
\begin{align*}
	\psi\big( (Q_nQ_k - Q_k)(y^*) (Q_nQ_k - Q_k)(x)\big) 
	& = \psi\big( y^* (Q_kQ_n - Q_k) (Q_nQ_k - Q_k)(x)\big) \\
	& =  \psi\big( y^* (Q_k - Q_kQ_n Q_k)(x)\big)
\end{align*}
for all $x,y \in \cM_\infty$.  We are left to prove $Q_k Q_n Q_k |_{\cM_\infty}= Q_k
|_{\cM_\infty}$ for $k < n$. For this purpose we express the conditional 
expectations $Q_k$ and $Q_n$ as mean ergodic limits in the pointwise strong operator topology 
and calculate
\begin{align*}
Q_{k}Q_{n}Q_{k}|_{\cM_\infty}
&=\lim_{M\to \infty}\lim_{N\to \infty}
   \frac{1}{MN}\sum_{i=1}^{M}\sum_{j=1}^{N} \alpha_{k}^{i}\alpha_{n}^{j}Q_{k}|_{\cM_\infty}\\
&=\lim_{M\to \infty}\lim_{N\to\infty}
   \frac{1}{MN} \sum_{i=1}^{M}\sum_{j=1}^{N}\alpha_{n+i}^{j}\alpha_{k}^{i}Q_{k}|_{\cM_\infty}\\
&=\lim_{M\to \infty}\lim_{N\to\infty}\frac{1}{MN}
   \sum_{i=1}^{M}\sum_{j=1}^{N}\alpha_{n+i}^{j}Q_{k}|_{\cM_\infty}\\
&=\lim_{M\to \infty}\frac{1}{M}
   \sum_{i=1}^{M}Q_{n+i}Q_{k}|_{\cM_\infty} = Q_{k}|_{\cM_{\infty}}.
\end{align*}
Here the last equality follows because for $x\in  \cM_\infty$, also $Q_k x\in  \cM_\infty$ and 
so it holds that $Q_{n+i} Q_k(x)=Q_k (x)$ for $i$ sufficiently large, thus 
\[
	\lim_{M\to\infty}\frac{1}{M}\sum_{i=1}^{M}Q_{n+i}=\operatorname{Id}
\]
in the pointwise strong operator topology.
\end{proof}
%%%%%%%%%%%%%%%%%%%%%%%%%%%%%%%%%%%%%%%%%%%%%%%%%%%%%%%
%%%%%%%%%%%%%%%%%%%%%%%%%%%%%%%%%%%%%%%%%%%%%%%%%%%%%%%
\begin{Corollary}\label{corollary:commsquare}
%%%%%%%%%%%%%%%%%%%%%%%%%%%%%%%%%%%%%%%%%%%%%%%%%%%%%%%
The following set of inclusions forms a commuting square for every $n\in \Nset_0$:
\[
	\begin{matrix}
	\cM^{\alpha_{n+1}} &\subset &\cM\\
	\cup  &        & \cup \\
	\cM_{n}   & \subset  & \cM_{\infty}
	\end{matrix}
\]
%%%%%%%%%%%%%%%%%%%%%%%%%%%%%%%%%%%%%%%%%%%%%%%%%%%%%%%
\end{Corollary}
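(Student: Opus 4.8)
The plan is to apply Proposition \ref{proposition:cs} directly, matching the square in the statement to the template there via the identification $\cM_0 = \cM_n$, $\cM_1 = \cM_\infty$ and $\cM_2 = \cM^{\alpha_{n+1}}$. The two ingredients needed for the criterion \eqref{item:cs-iv} -- the commutation of the relevant conditional expectations and the identification of the intersection of the two upper algebras -- have both already been established in the preceding results, so the argument reduces to checking the standing hypotheses of Proposition \ref{proposition:cs} and then invoking its condition \eqref{item:cs-iv}.

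First I would confirm that all three subalgebras are $\psi$-conditioned, so that the conditional expectations in Proposition \ref{proposition:cs} are defined. The algebra $\cM^{\alpha_{n+1}}$ is globally invariant under the modular automorphism group, since $\alpha_{n+1}$ commutes with it by the modularity condition in Definition \ref{definition:endomorphism}; the same reasoning applies to each $\cM^{\alpha_k}$ and hence to their intersection $\cM_n = \bigcap_{k \ge n+1} \cM^{\alpha_k}$, while $\cM_\infty$ is $\psi$-conditioned by Proposition \ref{proposition:generatingrestriction}. Moreover the standing inclusion hypothesis $\cM_n \subset \cM^{\alpha_{n+1}} \cap \cM_\infty$ of Proposition \ref{proposition:cs} holds, because $\cM_n \subset \cM^{\alpha_{n+1}}$ directly from the definition of $\cM_n$ as an intersection over $k \ge n+1$, and $\cM_n \subset \cM_\infty$ by definition of the join $\cM_\infty = \bigvee_{m \ge 0} \cM_m$.

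Next I would verify condition \eqref{item:cs-iv} of Proposition \ref{proposition:cs} for this configuration, namely that $E_{\cM_\infty}$ and $E_{\cM^{\alpha_{n+1}}}$ commute and that $\cM^{\alpha_{n+1}} \cap \cM_\infty = \cM_n$. The commutation is precisely the relation $E_{\cM_\infty} E_{\cM^{\alpha_{n+1}}} = E_{\cM^{\alpha_{n+1}}} E_{\cM_\infty}$ proved in Proposition \ref{proposition:generatingrestriction}, while the intersection identity is exactly Lemma \ref{lemma:generating}. With both halves of \eqref{item:cs-iv} in hand, Proposition \ref{proposition:cs} yields that the displayed inclusions form a commuting square, which completes the proof. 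There is essentially no serious obstacle here: the corollary is a clean repackaging of Lemma \ref{lemma:generating} together with the commutation relation of Proposition \ref{proposition:generatingrestriction} into the commuting-square criterion of Proposition \ref{proposition:cs}, and the only point requiring a moment of care is the $\psi$-conditioning of the three subalgebras, so that Proposition \ref{proposition:cs} genuinely applies.
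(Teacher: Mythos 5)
Your proposal is correct and follows essentially the same route as the paper: the authors likewise invoke condition (iv) of Proposition \ref{proposition:cs}, citing the commutation $Q_{n+1}E_{\cM_\infty}=E_{\cM_\infty}Q_{n+1}$ from Proposition \ref{proposition:generatingrestriction} and the identity $\cM_n=\cM^{\alpha_{n+1}}\cap\cM_\infty$ from Lemma \ref{lemma:generating}. Your additional verification of the $\psi$-conditioning and of the inclusion $\cM_n\subset\cM^{\alpha_{n+1}}\cap\cM_\infty$ is a harmless (and accurate) elaboration of hypotheses the paper leaves implicit.
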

%%%%%%%%%%%%%%%%%%%%%%%%%%%%%%%%%%%%%%%%%%%%%%%%%%%%%%%
\begin{proof}
%%%%%%%%%%%%%%%%%%%%%%%%%%%%%%%%%%%%%%%%%%%%%%%%%%%%%%%
Let $Q_n$ and $E_{\cM_{\infty}}$ be the $\psi$-preserving normal conditional expectation from
$\cM$ onto $\cM^{\alpha_n}$ and $\cM_{\infty}$ respectively for $n\in \Nset_0$. For 
$n\in \Nset_0$, by Proposition \ref{proposition:generatingrestriction},
$Q_{n+1}E_{\cM_{\infty}}=E_{\cM_{\infty}}Q_{n+1}$ and by Lemma \ref{lemma:generating},
$\cM_n=\cM^{\alpha_{n+1}}\cap \cM_{\infty}$. By (iv) of Proposition \ref{proposition:cs}, 
we get a commuting square.
%%%%%%%%%%%%%%%%%%%%%%%%%%%%%%%%%%%%%%%%%%%%%%%%%%%%%%%
\end{proof}
%%%%%%%%%%%%%%%%%%%%%%%%%%%%%%%%%%%%%%%%%%%%%%%%%%%%%%%
%%%%%%%%%%%%%%%%%%%%%%%%%%%%%%%%%%%%%%%%%%%%%%%%%%%%%%%
\begin{Proposition}\label{proposition:generating-property}
%%%%%%%%%%%%%%%%%%%%%%%%%%%%%%%%%%%%%%%%%%%%%%%%%%%%%%%
If the representation $\rho:F^{+}\to \End(\cM,\psi)$ has the generating property then the
following equality holds for all $n \in \Nset_0$:
\[
\cM_n = \cM^{\alpha_{n+1}}.
\]
In other words, one has the tower of fixed point algebras
\[
\cM^{\rho(F^+)} \subset \cM^{\rho(g_{0})} \subset \cM^{\rho(g_{1})} \subset \cM^{\rho(g_{2})}
\subset \ldots \subset \cM =  \bigvee_{n \ge 0} \cM^{\rho(g_n)}. 
\]
%%%%%%%%%%%%%%%%%%%%%%%%%%%%%%%%%%%%%%%%%%%%%%%%%%%%%%%
\end{Proposition}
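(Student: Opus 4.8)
The plan is to read off both claims directly from Lemma~\ref{lemma:generating}, which already performs the substantive mean ergodic computation. First I would dispatch the displayed equality: for $n \in \Nset_0$, Lemma~\ref{lemma:generating} gives $\cM_n = \cM^{\alpha_{n+1}} \cap \cM_\infty$, and the generating property is precisely the statement $\cM_\infty = \cM$, so that $\cM_n = \cM^{\alpha_{n+1}} \cap \cM = \cM^{\alpha_{n+1}}$. No further work is needed for this part beyond substituting the hypothesis.

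Next I would assemble the ``in other words'' tower. The bottom term is immediate from the definitions: since $\rho$ is a homomorphism and every $g \in F^+$ is a product of generators, an element of $\cM$ is fixed by all of $\rho(F^+)$ if and only if it is fixed by each $\alpha_k = \rho(g_k)$, whence $\cM^{\rho(F^+)} = \bigcap_{k\ge 0}\cM^{\alpha_k} = \cM_{-1}$. The only term requiring more than substitution is $\cM^{\rho(g_0)} = \cM^{\alpha_0}$, which the displayed chain places between $\cM^{\rho(F^+)}$ and $\cM^{\rho(g_1)} = \cM_0$; here I would show that the generating property forces $\cM^{\alpha_0} = \cM_{-1}$. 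For this I reuse the estimate in the proof of Lemma~\ref{lemma:generating}: writing $Q_k = E_{\cM^{\alpha_k}}$, that argument yields $Q_k Q_0|_{\cM_\infty} = Q_0|_{\cM_\infty}$ for all $k \ge 1$. Thus if $x \in \cM_\infty$ satisfies $\alpha_0(x) = x$, then $Q_k x = Q_k Q_0 x = Q_0 x = x$ for every $k \ge 1$, so $x \in \bigcap_{k\ge 0}\cM^{\alpha_k} = \cM_{-1}$; as $\cM_\infty = \cM$ this gives $\cM^{\alpha_0} \subseteq \cM_{-1}$, and the reverse inclusion is obvious. Hence $\cM^{\rho(g_0)} = \cM^{\rho(F^+)} = \cM_{-1}$, so the first inclusion of the tower is in fact an equality and $\cM^{\alpha_0} = \cM_{-1} \subseteq \cM_0 = \cM^{\alpha_1}$, making the chain a genuine tower. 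Finally, the top identity follows by combining $\cM = \cM_\infty = \bigvee_{n \ge 0}\cM_n$ with the main equality to get $\cM = \bigvee_{n\ge 1}\cM^{\alpha_n}$, which coincides with $\bigvee_{n\ge 0}\cM^{\alpha_n} = \bigvee_{n \ge 0}\cM^{\rho(g_n)}$ thanks to the inclusion $\cM^{\alpha_0} \subseteq \cM^{\alpha_1}$ just noted.

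I expect no real obstacle: the heavy lifting is the ergodic identity $Q_k Q_n Q_k|_{\cM_\infty} = Q_k|_{\cM_\infty}$ carried out in Lemma~\ref{lemma:generating}, and this Proposition is essentially its clean restatement once intersecting with $\cM_\infty$ becomes trivial. The only mild care is that Lemma~\ref{lemma:generating} is stated for indices in $\Nset_0$, so the boundary term $\cM^{\alpha_0}$ (the ``$n=-1$'' case) must be handled separately — but the same mean ergodic estimate covers it, as indicated above.
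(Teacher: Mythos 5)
Your proof is correct and, for the displayed equality $\cM_n = \cM^{\alpha_{n+1}}$, is exactly the paper's argument: the published proof consists of the single observation that the generating property means $\cM_\infty = \cM$, whence Lemma \ref{lemma:generating} gives the claim. Your additional treatment of the tower --- in particular the boundary inclusion $\cM^{\rho(g_0)} \subset \cM^{\rho(g_1)}$, which is the ``$n=-1$'' case not covered by the stated equality and which you correctly extract from the ergodic identity $Q_kQ_0|_{\cM_\infty} = Q_0|_{\cM_\infty}$ in the proof of Lemma \ref{lemma:generating} --- goes beyond what the paper writes down, since the published proof leaves the ``in other words'' chain entirely implicit; your version is, if anything, more complete.
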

%%%%%%%%%%%%%%%%%%%%%%%%%%%%%%%%%%%%%%%%%%%%%%%%%%%%%%%
\begin{proof}
If the representation $\rho$ is generating, then $\cM_\infty=\cM$. Hence
$\cM_n=\cM^{\alpha_{n+1}}$ for all $n\in \Nset_0$ as a consequence of Lemma
\ref{lemma:generating}.
\end{proof}
%%%%%%%%%%%%%%%%%%%%%%%%%%%%%%%%%%%%%%%%%%%%%%%%%%%%%%%
%%%%%%%%%%%%%%%%%%%%%%%%%%%%%%%%%%%%%%%%%%%%%%%%%%%%%%%
\begin{Remark} \normalfont
%%%%%%%%%%%%%%%%%%%%%%%%%%%%%%%%%%%%%%%%%%%%%%%%%%%%%%%
Suppose that the representation  $\rho:F^{+}\to \End(\cM,\psi)$ satisfies the additional
relations $\rho(g_n) \rho(g_n) = \rho(g_{n+1}) \rho(g_n)$ for all $n \in \Nset_0$, as it is the
case for representations of the partial shifts monoid $S^+$. Then the inclusions 
$\cM^{\rho(g_{n})} \subset \cM^{\rho(g_{n+1})}$, and consequently $\cM_n = \cM^{\rho(g_{n+1})}$,
are immediate without stipulating the generating property of the representation, since $x = 
\rho(g_n)(x)$ implies $x = \rho(g_n^2)(x) = \rho(g_{n+1})\rho(g_n)(x) =  \rho(g_{n+1})(x)$ for
all $x \in \cM$ and $n\in \Nset_0$.     
%%%%%%%%%%%%%%%%%%%%%%%%%%%%%%%%%%%%%%%%%%%%%%%%%%%%%%%
\end{Remark}
%%%%%%%%%%%%%%%%%%%%%%%%%%%%%%%%%%%%%%%%%%%%%%%%%%%%%%%
%%%%%%%%%%%%%%%%%%%%%%%%%%%%%%%%%%%%%%%%%%%%%%%%%%%%%%%
\subsection{Commuting squares and Markovianity for shifted fixed point algebras}
\label{subsection:s-f-p-a}
%%%%%%%%%%%%%%%%%%%%%%%%%%%%%%%%%%%%%%%%%%%%%%%%%%%%%%%
The following intertwining properties will be crucial for obtaining local Markov filtrations from
representations of the Thompson monoid $F^+$.
%%%%%%%%%%%%%%%%%%%%%%%%%%%%%%%%%%%%%%%%%%%%%%%%%%%%%%%
\begin{Proposition}\label{proposition:intertwining-property}
%%%%%%%%%%%%%%%%%%%%%%%%%%%%%%%%%%%%%%%%%%%%%%%%%%%%%%%
Suppose $\rho \colon F^+ \to \End(\cM,\psi)$ is a  (not necessarily generating) representation 
of $F^+$.   Then with $\alpha_n=\rho(g_n)$, the following equality holds:
\[
\alpha_k Q_n  = Q_{n+1} \alpha_k  
\]
for all $0 \le k < n < \infty$. Here $Q_n$ denotes the $\psi$-preserving normal conditional
expectation from $\cM$ onto the fixed point algebra $\cM^{\alpha_n}$  of the represented 
generator $\alpha_n \in \End(\cM,\psi)$. 
%%%%%%%%%%%%%%%%%%%%%%%%%%%%%%%%%%%%%%%%%%%%%%%%%%%%%%%
\end{Proposition}
%%%%%%%%%%%%%%%%%%%%%%%%%%%%%%%%%%%%%%%%%%%%%%%%%%%%%%%
\begin{proof}
An application of the mean ergodic theorem and the relations between the generators of the
Thompson monoid $F^+$ yield that, for $k < n$,
\[
\alpha_k Q_n 
= \lim_{N \to \infty} \frac{1}{N} \sum_{i=1}^{N-1} \alpha_k \alpha_n^i 
=  \lim_{N \to \infty} \frac{1}{N} \sum_{i=0}^{N-1} \alpha_{n+1}^i \alpha_k 
= Q_{n+1} \alpha_k. 
\]
Here the limits are taken in the pointwise strong operator topology. 
\end{proof}
%%%%%%%%%%%%%%%%%%%%%%%%%%%%%%%%%%%%%%%%%%%%%%%%%%%%%%%
%%%%%%%%%%%%%%%%%%%%%%%%%%%%%%%%%%%%%%%%%%%%%%%%%%%%%%%
\begin{Theorem}\label{theorem:cs-markov-1}
%%%%%%%%%%%%%%%%%%%%%%%%%%%%%%%%%%%%%%%%%%%%%%%%%%%%%%%
Suppose $\rho \colon F^+ \to \End(\cM,\psi)$ is a generating representation with $\alpha_k :=
\rho(g_k)$ for all $k \in \Nset_0$. Then  each cell in the following  triangular tower is a
commuting square:
\begin{eqnarray*}
\setcounter{MaxMatrixCols}{20}
\begin{matrix}
\cM_{0}  &\subset&   \cM_{1} & \subset & \cM_{2} & \subset &\cM_{3} &  \subset  & \cdots & \subset & \cM_{\infty} = \cM\\
        &&          \cup  &         & \cup  &         & \cup &            &        & & \cup  \\
        &&   \alpha_0\big(\cM_{0}\big)&\subset&\alpha_0 \big(\cM_{1}\big)&\subset&\alpha_0\big(\cM_{2}\big)&\subset& \cdots & \subset & \alpha_0\big(\cM_{\infty}\big)\\
         &&               &         & \cup  &         & \cup   &               & & & \cup \\
              &&&&     \alpha_0^2\big(\cM_{0}\big)& \subset &  \alpha_0^2 \big(\cM_{1}\big) & \subset  & \cdots & \subset &   \alpha_0^2\big(\cM_{\infty}\big)\\
       &&&&&&   \cup           &         \       &&  & \cup \\
        &&&&&&   \vdots           &     \vdots    &   &   \vdots    &   \vdots 
\end{matrix}
\setcounter{MaxMatrixCols}{10}
\end{eqnarray*}
In particular, $\cM_{n+1} \cap \alpha_0\big(\cM_{n+1}\big)  = \alpha_0\big(\cM_n\big)$  
for all $n \ge 0$.  
%%%%%%%%%%%%%%%%%%%%%%%%%%%%%%%%%%%%%%%%%%%%%%%%%%%%%%%
\end{Theorem}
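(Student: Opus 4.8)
The plan is to reduce the entire triangular tower to its top two rows, exploiting that each power $\alpha_0^k$ transports commuting squares to commuting squares, and then to settle the top two rows by a single intertwining computation. Throughout I use the generating hypothesis via Proposition \ref{proposition:generating-property}, which gives $\cM_n = \cM^{\alpha_{n+1}}$, so that $E_{\cM_n} = Q_{n+1}$ for every $n \ge 0$. First I record the inclusions underlying every cell. From the relation $g_0 g_{j+1} = g_{j+2} g_0$ one obtains $\alpha_{j+2}\alpha_0 = \alpha_0\alpha_{j+1}$, whence $\alpha_0\big(\cM^{\alpha_{j+1}}\big) \subset \cM^{\alpha_{j+2}}$; combined with the obvious $\alpha_0(\cM_j) \subset \alpha_0(\cM_{j+1})$ this yields
\[
\alpha_0(\cM_j) \subset \cM_{j+1} \cap \alpha_0(\cM_{j+1}) \qquad (j \ge 0).
\]
Since all four corners are $\psi$-conditioned (those of the form $\alpha_0^k(\cdot)$ by modularity of $\alpha_0$), the hypothesis $\cM_0 \subset \cM_1 \cap \cM_2$ of Proposition \ref{proposition:cs} holds for each base cell, and it remains to verify one of its equivalent conditions. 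I will verify condition (iii).

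For the base cell I compute $E_{\cM_{j+1}}\big(\alpha_0(\cM_{j+1})\big)$. As $E_{\cM_{j+1}} = Q_{j+2}$, Proposition \ref{proposition:intertwining-property} (with $k = 0 < j+1 = n$) gives $Q_{j+2}\alpha_0 = \alpha_0 Q_{j+1}$, so that
\[
Q_{j+2}\big(\alpha_0(x)\big) = \alpha_0\big(Q_{j+1}(x)\big) \qquad (x \in \cM).
\]
Because $\cM_j = \cM^{\alpha_{j+1}} \subset \cM^{\alpha_{j+2}} = \cM_{j+1}$, the map $Q_{j+1} = E_{\cM_j}$ restricts to the conditional expectation of $\cM_{j+1}$ onto $\cM_j$, so that $Q_{j+1}(\cM_{j+1}) = \cM_j$. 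Feeding $x \in \cM_{j+1}$ into the displayed identity therefore gives $E_{\cM_{j+1}}\big(\alpha_0(\cM_{j+1})\big) = \alpha_0(\cM_j)$, which is condition (iii) of Proposition \ref{proposition:cs} for the base cell. Hence the base cell is a commuting square, and the final clause of Proposition \ref{proposition:cs} yields $\cM_{j+1} \cap \alpha_0(\cM_{j+1}) = \alpha_0(\cM_j)$, i.e.\ the asserted intersection identity (with $n = j$).

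Finally I propagate this downward. A general cell of the tower is the image under $\alpha_0^k$ of the base cell with the same $j$, namely
\[
\begin{matrix}
\alpha_0^k(\cM_{j+1}) & \subset & \alpha_0^k(\cM_{j+2})\\
\cup  &        & \cup \\
\alpha_0^{k+1}(\cM_j)   & \subset  & \alpha_0^{k+1}(\cM_{j+1}).
\end{matrix}
\]
Applying $\alpha_0^k$ to the inclusion of the first paragraph shows $\alpha_0^{k+1}(\cM_j) \subset \alpha_0^k(\cM_{j+1}) \cap \alpha_0^{k+1}(\cM_{j+1})$, so Proposition \ref{proposition:cs} again applies. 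For its condition (iii) I use the intertwining relation $E_{\alpha_0^k(\cN)}\,\alpha_0^k = \alpha_0^k\, E_{\cN}$, valid for any $\psi$-conditioned $\cN$ and $\alpha_0^k \in \End(\cM,\psi)$ by the argument given in the proof of Proposition \ref{proposition:dilation}; taking $\cN = \cM_{j+1}$ it gives
\[
E_{\alpha_0^k(\cM_{j+1})}\big(\alpha_0^{k+1}(\cM_{j+1})\big)
 = \alpha_0^k\Big(E_{\cM_{j+1}}\big(\alpha_0(\cM_{j+1})\big)\Big)
 = \alpha_0^{k+1}(\cM_j),
\]
the last step by the base-cell identity. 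Thus every cell is a commuting square. The one substantial step is the base-cell identity of the second paragraph; the reduction from general cells to base cells is merely transport of commuting squares along the endomorphism $\alpha_0^k$, and the inclusion bookkeeping follows from the defining relations of $F^+$.
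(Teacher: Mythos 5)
Your proof is correct and follows essentially the same route as the paper's: both verify the required inclusions from the relations of $F^+$, establish condition (iii) of Proposition \ref{proposition:cs} via the intertwining identity $Q_{n+1+k}\,\alpha_0^k = \alpha_0^k\, Q_{n+1}$ of Proposition \ref{proposition:intertwining-property} together with $E_{\cM_\ell} = Q_{\ell+1}$ supplied by the generating property, and then transport commuting squares down the tower by applying powers of $\alpha_0$ to all four corners. The only (minor) difference is that the paper handles the general rectangle with corners $\cM_{m+k}$, $\cM_{n+k}$, $\alpha_0^k(\cM_m)$, $\alpha_0^k(\cM_n)$ in a single computation and explicitly notes that the cells touching the $\cM_\infty$-column follow by a routine weak*-approximation, a case your argument leaves implicit.
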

%%%%%%%%%%%%%%%%%%%%%%%%%%%%%%%%%%%%%%%%%%%%%%%%%%%%%%%
\begin{proof}
%%%%%%%%%%%%%%%%%%%%%%%%%%%%%%%%%%%%%%%%%%%%%%%%%%%%%%%
Let $0 \le m < n < \infty$ and  $k\ge 1$.  We verify first all inclusions as they appear in the
diagram
\begin{align} \label{eq:cs-markov-1}
\begin{matrix}
\cM_{m+k} &\subset &\cM_{n+k} \\
\cup  &        & \cup \\
\alpha_0^k(\cM_m)   & \subset  & \alpha_0^k(\cM_n)
\end{matrix}.
\end{align}
Indeed, the definition of $\cM_n$ ensures the claimed horizontal inclusions in this diagram. 
The vertical inclusions in the diagram follow from the  intertwining properties 
$\alpha_0^kQ_{n+1} = Q_{n+1+k} \alpha_0^k$  (see Proposition
\ref{proposition:intertwining-property}). For $n= \infty$, all inclusions are easily concluded 
by routine approximation arguments.\\
We show next that the above diagram is a commuting square. Indeed, as $\rho$ is generating,
$\cM_\ell=\cM^{\alpha_{\ell+1}}$, for all $\ell \in \Nset_0$, and $E_{\cM_\ell}=Q_{\ell+1}$,
where $E_{\cM_\ell}$ is the conditional expectation onto $\cM_\ell$. Hence for any $x \in \cM_n$,
\[
E_{\cM_{m+k}} \alpha_0^k(x) = Q_{m+k+1} \alpha_0^k(x) 
           = \alpha_0^k  Q_{m +1} (x)  = \alpha_0^k E_{\cM_{m}} (x).
\]
This ensures that $E_{\cM_{m+k}} (\alpha_0^k(\cM_n)) = \alpha_0^k(\cM_m)$.  Thus the above 
inclusions form a commuting square by Proposition \ref{proposition:cs} and, in particular,  
it holds that $\alpha_0^k(\cM_m) = \cM_{m+k}\cap \alpha_0^k(\cM_n)$.

Finally, the commuting square properties of more general cells in the triangular tower of
inclusions are deduced from those in \eqref{eq:cs-markov-1}, since commuting square properties
are preserved when acting with the endomorphism $\alpha_0$  on all four corners of the diagram.  
%%%%%%%%%%%%%%%%%%%%%%%%%%%%%%%%%%%%%%%%%%%%%%%%%%%%%%%
\end{proof}
%%%%%%%%%%%%%%%%%%%%%%%%%%%%%%%%%%%%%%%%%%%%%%%%%%%%%%%%
%%%%%%%%%%%%%%%%%%%%%%%%%%%%%%%%%%%%%%%%%%%%%%%%%%%%%%%
\begin{Corollary} \label{corollary:markov-1}
%%%%%%%%%%%%%%%%%%%%%%%%%%%%%%%%%%%%%%%%%%%%%%%%%%%%%%%
Suppose $\rho \colon F^+ \to \End(\cM,\psi)$ is a  generating representation with $\alpha_k :=
\rho(g_k)$ for all $k \in \Nset_0$. Let $0 \le m \le n < \infty $ be fixed.  Then each cell in
the following  triangular tower is a commuting square:
\begin{eqnarray*}
\setcounter{MaxMatrixCols}{20}
\begin{matrix}
\cM_{m}  &\subset&   \cM_{n+1} & \subset & \cM_{n+2}  & \subset & \cdots & \subset & \cM_{\infty} = \cM\\
        &&          \cup  &         & \cup  &       &  &       &  \cup  \\
        &&   \alpha_m\big(\cM_{m}\big)&\subset&\alpha_m \big(\cM_{n+1}\big)&   \subset& \cdots & \subset & \alpha_m\big(\cM_{\infty}\big)\\
         &&               &         & \cup  &        &  &       &  \cup \\
              &&&&     \alpha_m^2\big(\cM_{m}\big)& \subset  & \cdots & \subset &   \alpha_m^2\big(\cM_{\infty}\big)\\
       &&&&&&              &           &          \cup \\
        &&&&   & &             &     \vdots                & \vdots 
\end{matrix}
\setcounter{MaxMatrixCols}{10}
\end{eqnarray*}
In particular, $\cM_{n+1} \cap \alpha_m\big(\cM_{n+1}\big)  = \alpha_m\big(\cM_{m}\big)$ and   
$\cM_{n+k+1} \cap \alpha_m\big(\cM_{n+k+1}\big)  = \alpha_m\big(\cM_{n+k}\big)$  
for all $k \ge 1$.  
%%%%%%%%%%%%%%%%%%%%%%%%%%%%%%%%%%%%%%%%%%%%%%%%%%%%%%%
\end{Corollary}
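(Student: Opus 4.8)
The plan is to obtain this as a genuine corollary of Theorem \ref{theorem:cs-markov-1}, by precomposing $\rho$ with a partial shift so that the role played there by $\alpha_0$ is taken over by $\alpha_m$. Concretely, I would set $\rho' := \rho \circ \sh_{m,n}$, where $\sh_{m,n}$ is the $(m,n)$-partial shift of Definition \ref{definition:mn-shift}. Since $\sh_{m,n}$ is a monoid endomorphism of $F^+$ (injective by Lemma \ref{lemma:m-n-shift}) and $\rho$ is a representation, $\rho'$ is again a representation $F^+ \to \End(\cM,\psi)$; by construction $\rho'(g_0) = \rho(g_m) = \alpha_m$ and $\rho'(g_k) = \rho(g_{n+k}) = \alpha_{n+k}$ for $k \geq 1$. (The hypothesis $m \leq n$ is exactly what is needed for $\sh_{m,n}$ to be defined.)

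The first substantive step is to identify the tower of intersected fixed-point algebras of $\rho'$. Writing $\cM'_\ell := \bigcap_{k \geq \ell+1} \cM^{\rho'(g_k)}$, one reads off $\cM'_\ell = \bigcap_{j \geq n+\ell+1} \cM^{\alpha_j} = \cM_{n+\ell}$ for every $\ell \geq 0$, and hence $\cM'_\infty = \bigvee_{\ell \geq 0} \cM_{n+\ell} = \cM_\infty = \cM$. Thus $\rho'$ inherits the generating property, Proposition \ref{proposition:generating-property} applies to it, and $\cM'_\ell = \cM^{\rho'(g_{\ell+1})}$.

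I would then apply Theorem \ref{theorem:cs-markov-1} to the generating representation $\rho'$, whose distinguished endomorphism is $\rho'(g_0) = \alpha_m$: each cell of the triangular tower with top row $\cM'_0 \subset \cM'_1 \subset \cM'_2 \subset \cdots$ and lower diagonals generated by $\alpha_m$ is a commuting square. Re-substituting $\cM'_\ell = \cM_{n+\ell}$ turns this tower into the one asserted in the corollary, and the theorem's corner identity $\cM'_{\ell+1} \cap \alpha_m(\cM'_{\ell+1}) = \alpha_m(\cM'_\ell)$ becomes the displayed family $\cM_{n+\ell+1} \cap \alpha_m(\cM_{n+\ell+1}) = \alpha_m(\cM_{n+\ell})$, with $\ell = 0$ feeding the leftmost column and $\ell = k \geq 1$ producing the remaining intersections.

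The step I expect to require the most care is the index bookkeeping in the identification $\cM'_\ell = \cM_{n+\ell}$ together with the correct reading-off of each corner from Proposition \ref{proposition:cs}, in particular at the base cell: the intertwining $\alpha_m E_{\cM_p} = E_{\cM_{p+1}} \alpha_m$ that powers Theorem \ref{theorem:cs-markov-1} is only available for $p \geq m$ (Proposition \ref{proposition:intertwining-property} requires the relevant generator index to exceed $m$), and it is precisely this range constraint that governs which fixed-point algebra appears as the corner of the leftmost cell. Should a self-contained argument be preferred over the reduction, the same conclusion follows by transcribing the proof of Theorem \ref{theorem:cs-markov-1} line by line, replacing $\alpha_0$ by $\alpha_m$ and invoking Proposition \ref{proposition:intertwining-property} only for indices exceeding $m$, with the general cells again produced from the fundamental ones by applying $\alpha_m$ to all four corners and an approximation argument covering the column indexed by $\cM_\infty = \cM$.
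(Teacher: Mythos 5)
Your reduction via $\rho' := \rho\circ\sh_{m,n}$ is exactly the route the paper takes, and the identification $\cM'_\ell = \bigcap_{k\ge\ell+1}\cM^{\rho'(g_k)} = \cM_{n+\ell}$ for $\ell\ge 0$, together with the inheritance of the generating property, is correct. The problem is the step you yourself flag as delicate and then wave through: Theorem \ref{theorem:cs-markov-1} applied to $\rho'$ produces a triangular tower whose top-left entry is $\cM'_0 = \cM_n$, \emph{not} $\cM_m$, and its first corner identity ($\ell=0$) reads $\cM_{n+1}\cap\alpha_m(\cM_{n+1}) = \alpha_m(\cM'_0) = \alpha_m(\cM_n)$. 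This is not the identity $\cM_{n+1}\cap\alpha_m(\cM_{n+1}) = \alpha_m(\cM_m)$ asserted in the statement, and no re-indexing closes the gap: for $x\in\cM_{n+1}=\cM^{\alpha_{n+2}}$, the relation $\alpha_{n+2}\alpha_m = \alpha_m\alpha_{n+1}$ and the injectivity of $\alpha_m$ give $\alpha_m(x)\in\cM_{n+1}$ if and only if $x\in\cM^{\alpha_{n+1}}=\cM_n$, so $\cM_{n+1}\cap\alpha_m(\cM_{n+1}) = \alpha_m(\cM_n)$ always holds, and this strictly contains $\alpha_m(\cM_m)$ whenever $\cM_m\subsetneq\cM_n$ (e.g.\ in the tensor-shift representation of Subsection \ref{subsection:tensor-product} with $m=0$, $n=1$). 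So your argument proves the tower that begins at $\cM_n$, with lower rows $\alpha_m^k(\cM_n)\subset\alpha_m^k(\cM_{n+1})\subset\cdots$ --- which is precisely what Corollary \ref{corollary:markov-3} consumes --- but it does not, and cannot, deliver the leftmost column as printed.

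To be fair, the printed statement and the paper's own one-line proof share this defect: the paper invokes $\rho_{m+1,n+1}$ and identifies $\cM_m$ with $\cM^{\rho_{m+1,n+1}(g_0)}$, but the fixed-point algebra of the \emph{zeroth} generator never enters the tower of Theorem \ref{theorem:cs-markov-1} (which starts at the fixed points of the represented $g_1$), and the shift that representation would supply is $\alpha_{m+1}$, not $\alpha_m$. If you want a correct commuting square whose lower-left corner is $\alpha_m(\cM_m)$, note that Proposition \ref{proposition:intertwining-property} gives $E_{\cM_{m+1}}\alpha_m = Q_{m+2}\alpha_m = \alpha_m Q_{m+1} = \alpha_m E_{\cM_m}$ on all of $\cM$, whence $E_{\cM_{m+1}}\big(\alpha_m(\cM_{n+1})\big) = \alpha_m(\cM_m)$ and therefore $\cM_{m+1}\cap\alpha_m(\cM_{n+1}) = \alpha_m(\cM_m)$; the correct upper-left corner over $\alpha_m(\cM_m)$ is $\cM_{m+1}$, not $\cM_{n+1}$. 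You should either prove the corollary with first row $\cM_n\subset\cM_{n+1}\subset\cdots$ (which your reduction does verbatim), or record explicitly that the displayed identity $\cM_{n+1}\cap\alpha_m(\cM_{n+1}) = \alpha_m(\cM_m)$ forces $\cM_m=\cM_n$.
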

%%%%%%%%%%%%%%%%%%%%%%%%%%%%%%%%%%%%%%%%%%%%%%%%%%%%%%%
\begin{proof}
%%%%%%%%%%%%%%%%%%%%%%%%%%%%%%%%%%%%%%%%%%%%%%%%%%%%%%%
Consider the representation $\rho_{m,n} := \rho \circ \sh_{m,n} \colon F^+ \to \End(\cM,\psi)$ 
where  $\sh_{m,n}$ denotes the $(m,n)$-partial shift as introduced in Definition
\ref{definition:mn-shift}.  We observe that $\rho_{m,n}(g_0) = \rho(g_m)$ and
$\rho_{m,n}(g_k) = \rho(g_{n+k})$ for all $k \ge 1$.  In particular this ensures that
$\rho_{m,n}$ inherits the generating property from the representation $\rho$. Thus Theorem 
\ref {theorem:cs-markov-1} applies to $\rho_{m+1,n+1}$  and all claimed properties are immediate
since $\cM_{m} = \cM^{\rho(g_{m+1})}= \cM^{\rho_{m+1,n+1}(g_0)}$ and 
$\cM_{n+k} = \cM^{\rho(g_{n+k+1})} = \cM^{\rho_{m+1,n+1}(g_{k})}$ for $k \ge 1$. 
\end{proof}
%%%%%%%%%%%%%%%%%%%%%%%%%%%%%%%%%%%%%%%%%%%%%%%%%%%%%%%
The triangular tower of $\alpha_0$-shifted fixed point algebras (as given in Theorem
\ref{theorem:cs-markov-1}) can also be addressed through a local filtration indexed by `intervals'.
This reveals that Markovianity (as introduced in Definition \ref{definition:markov-filtration})
corresponds to specific commuting squares in the triangular tower. 
%%%%%%%%%%%%%%%%%%%%%%%%%%%%%%%%%%%%%%%%%%%%%%%%%%%%%%%
\begin{Corollary} \label{corollary:markov-2}
%%%%%%%%%%%%%%%%%%%%%%%%%%%%%%%%%%%%%%%%%%%%%%%%%%%%%%%
Suppose $\rho \colon F^+ \to \End(\cM,\psi)$ is a  generating representation. The family of von
Neumann subalgebras $\cM_\bullet^\rho \equiv \{\cM_I^\rho\}_{I \in \cI(\Nset_0)}^{}$ of
$(\cM,\psi)$, with
\[
\cM^\rho_{[0,n]}:= \cM_n, \qquad \cM^{\rho}_{[m,m+n]} 
:= \rho(g_0^m)(\cM_n),  \qquad     \cM^{\rho}_{[m, \infty)} := \rho(g_0^m)(\cM_\infty),
\]
defines a local Markov filtration.    
%%%%%%%%%%%%%%%%%%%%%%%%%%%%%%%%%%%%%%%%%%%%%%%%%%%%%%%
\end{Corollary}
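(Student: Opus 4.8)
The plan is to recognise the family $\cM_\bullet^\rho$ as assembled directly from the commuting squares already produced in Theorem~\ref{theorem:cs-markov-1}, so that only two things remain to be checked: that $\cM_\bullet^\rho$ is a genuine local filtration, and that the saturated Markov commuting squares of Definition~\ref{definition:markov-filtration}(ii) occur among the cells of the triangular tower.

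First I would verify that $\cM_\bullet^\rho$ is well defined, $\psi$-conditioned and isotone. Writing $\cM^\rho_{[a,b]} = \alpha_0^a(\cM_{b-a})$ and $\cM^\rho_{[a,\infty)} = \alpha_0^a(\cM_\infty)$ exhibits the assignment as depending only on the interval. Each $\cM_n = \cM^{\alpha_{n+1}}$ is globally invariant under the modular group by Definition~\ref{definition:endomorphism}(3), and since $\alpha_0$ commutes with $\sigma_t^\psi$ the image $\alpha_0^a(\cM_n)$ is again $\sigma_t^\psi$-invariant, hence $\psi$-conditioned. For isotony, suppose $[a,b]\subset[a',b']$, so $a'\le a$ and $b\le b'$; writing $a=a'+s$ and iterating the adaptedness $\alpha_0(\cM_j)\subset\cM_{j+1}$ (cf.\ Proposition~\ref{proposition:generatingrestriction}) gives
\[
\alpha_0^s(\cM_{b-a}) \subset \cM_{b-a+s} = \cM_{b-a'} \subset \cM_{b'-a'},
\]
and applying $\alpha_0^{a'}$ yields $\cM^\rho_{[a,b]}\subset\cM^\rho_{[a',b']}$. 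The half-infinite intervals are handled identically, replacing $\cM_{b-a}$ by $\cM_\infty$ and using $\alpha_0^s(\cM_\infty)\subset\cM_\infty$.

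Next I would identify the saturated Markov condition. For fixed $n\ge 0$ the inclusions required in Definition~\ref{definition:markov-filtration}(ii) read
\[
\begin{matrix}
\cM^\rho_{[0,n]} = \cM_n &\subset &\cM\\
\cup  &        & \cup \\
\cM^\rho_{[n,n]} = \alpha_0^n(\cM_0)   & \subset  & \cM^\rho_{[n,\infty)} = \alpha_0^n(\cM_\infty),
\end{matrix}
\]
and this is exactly the cell of Theorem~\ref{theorem:cs-markov-1} obtained by taking $m=0$, the right-hand column $\cM_\infty$, and shift exponent $k=n$ (for $n=0$ the square is trivial). Alternatively, the commuting square property may be read off directly from Proposition~\ref{proposition:cs}(iii): the intertwining relation $\alpha_0^n Q_1 = Q_{n+1}\alpha_0^n$ of Proposition~\ref{proposition:intertwining-property}, together with $Q_1 = E_{\cM_0}$ and $Q_{n+1} = E_{\cM_n}$ coming from the generating property, yields $E_{\cM_n}\alpha_0^n = \alpha_0^n E_{\cM_0}$ and hence $E_{\cM_n}(\alpha_0^n(\cM_\infty)) = \alpha_0^n(\cM_0)$. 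Either way, $\cM^\rho_{[0,n]}$ and $\cM^\rho_{[n,\infty)}$ are CS independent over $\cM^\rho_{[n,n]}$ for all $n\ge 0$, which is property~\eqref{eq:filt-markov-II}; so $\cM_\bullet^\rho$ is saturated Markovian, and by Lemma~\ref{lemma:markov-I-II} a local Markov filtration.

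The substantive content is already contained in Theorem~\ref{theorem:cs-markov-1}, so I expect the only obstacle to be mild bookkeeping: matching the index conventions of the triangular tower to the interval indexing of $\cM_\bullet^\rho$, and confirming that the cells realising saturated Markovianity are precisely those in the $\cM_\infty$-column, whose commuting square property is justified in that theorem through the approximation argument for the case $n=\infty$.
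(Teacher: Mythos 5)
Your proposal is correct and follows essentially the same route as the paper's proof: isotony is checked from the relation $\alpha_0(\cM_j)\subset\cM_{j+1}$ (the paper unwinds this directly from the defining relations of $F^+$), and the Markov property is read off from the commuting squares of Theorem~\ref{theorem:cs-markov-1}. The only cosmetic difference is that you verify the saturated condition \eqref{eq:filt-markov-II} using the $\cM_\infty$-column cells and invoke Lemma~\ref{lemma:markov-I-II}, while the paper cites the finite cell $\cM_m \supset \alpha_0^m(\cM_0) \subset \alpha_0^m(\cM_n)$; both are covered by that theorem.
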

%%%%%%%%%%%%%%%%%%%%%%%%%%%%%%%%%%%%%%%%%%%%%%%%%%%%%%%
\begin{proof}
%%%%%%%%%%%%%%%%%%%%%%%%%%%%%%%%%%%%%%%%%%%%%%%%%%%%%%%
First we check the isotony property to verify that this family of subalgebras forms a local filtration.
Let $\alpha_n=\rho(g_n), n\in \Nset_0$, as before. Suppose $[m,m+n]\subset [k,k+\ell]$, 
we will show that $\cM^{\rho}_{[m,m+n]}\subset \cM^{\rho}_{[k,k+\ell]}$, that is,
$\alpha_0^m(\cM_{n})\subset \alpha_0^k(\cM_\ell)$. As $[m,m+n]\subset [k,k+\ell]$, we must have
$m\geq k$ and $n\leq\ell$. Hence for $x\in \cM_n$, we can write 
$\alpha_0^m(x)=\alpha_0^k\alpha_0^{m-k}(x)$, so it suffices to show that 
$\alpha_0^{m-k}(x)\in \cM_\ell$. Let 
$p\geq \ell+1\geq (m-k)+n+1$, then $\alpha_p\alpha_0^{m-k}(x) = \alpha_0^{m-k}\alpha_{p-(m-k)}(x)
= \alpha_0^{m-k}(x)$ as $p-(m-k)\geq n+1$.

Let $P^\rho_I$ denote the $\psi$-preserving normal conditional expectation from $\cM$ onto
$\cM^\rho_I$. This local filtration is Markovian if $P^\rho_{[0,m]}P^\rho_{[m,n]}=P^\rho_{[m,m]}$ for
$0\leq m\leq n$, which is implied by the definition of $\cM^\rho_I$ and the following cell of
inclusions that is a  commuting square as a consequence of Theorem \ref{theorem:cs-markov-1}:
\[
\begin{matrix}
\cM_{m} &\subset &\cM \\
\cup  &        & \cup \\
\alpha_0^m(\cM_0)   & \subset  & \alpha_0^m(\cM_n)
\end{matrix}.
\]
%%%%%%%%%%%%%%%%%%%%%%%%%%%%%%%%%%%%%%%%%%%%%%%%%%%%%%%
\end{proof}
%%%%%%%%%%%%%%%%%%%%%%%%%%%%%%%%%%%%%%%%%%%%%%%%%%%%%%% 
%%%%%%%%%%%%%%%%%%%%%%%%%%%%%%%%%%%%%%%%%%%%%%%%%%%%%%%
\begin{Corollary} 
\label{corollary:markov-3}
%%%%%%%%%%%%%%%%%%%%%%%%%%%%%%%%%%%%%%%%%%%%%%%%%%%%%%%
Suppose $\rho \colon F^+ \to \End(\cM,\psi)$ is a  generating representation and consider the
$(m,n)$-shifted representation $\rho_{m,n} := \rho \circ \sh_{m,n}$ for some fixed 
$0 \le m \le n < \infty$. Then the family of von Neumann subalgebras 
$\cM_\bullet^{\rho_{m,n}} \equiv \{\cM_I^{\rho_{m,n}^{}}\}_{I \in \cI(\Nset_0)}^{}$ of
$(\cM,\psi)$, with
\[
\cM^{\rho_{m,n}^{}}_{[0,\ell]}:= \cM_{\ell+n}, \qquad \cM^{\rho_{m,n}^{}}_{[k,k+\ell]} 
                              := \rho(g_m^k)(\cM_{\ell+n}),  \qquad   
\cM^{\rho_{m,n}^{}}_{[k, \infty)} := \rho(g_m^k)(\cM_\infty),
\]
defines a local Markov filtration.    
%%%%%%%%%%%%%%%%%%%%%%%%%%%%%%%%%%%%%%%%%%%%%%%%%%%%%%%
\end{Corollary}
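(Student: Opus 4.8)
The plan is to reduce the claim to Corollary \ref{corollary:markov-2} by showing that the shifted representation $\rho_{m,n}$ is itself a generating representation of $F^+$ whose intersected fixed point algebras form a tail of the tower $\{\cM_j\}$, and then quoting Corollary \ref{corollary:markov-2} verbatim for $\rho_{m,n}$.

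First I would note that the hypothesis $0 \le m \le n$ guarantees, via Definition \ref{definition:mn-shift} and the remark following it, that $\sh_{m,n}$ is a well-defined monoid endomorphism of $F^+$; hence $\rho_{m,n} = \rho \circ \sh_{m,n}$ is a bona fide representation of $F^+$ in $\End(\cM,\psi)$. Unwinding the definition of $\sh_{m,n}$ gives $\rho_{m,n}(g_0) = \rho(g_m) = \alpha_m$ and $\rho_{m,n}(g_k) = \rho(g_{n+k}) = \alpha_{n+k}$ for all $k \ge 1$.

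Next I would compute the intersected fixed point algebras of $\rho_{m,n}$. Writing $\cM_\ell^{\rho_{m,n}} := \bigcap_{k \ge \ell+1} \cM^{\rho_{m,n}(g_k)}$, the identity $\rho_{m,n}(g_k) = \alpha_{n+k}$ for $k \ge 1$ together with the reindexing $j = n+k$ yields
\[
\cM_\ell^{\rho_{m,n}} = \bigcap_{k \ge \ell+1} \cM^{\alpha_{n+k}} = \bigcap_{j \ge n+\ell+1} \cM^{\alpha_j} = \cM_{n+\ell} \qquad (\ell \ge 0).
\]
Taking joins and using that $\{\cM_j\}_{j \ge 0}$ is an increasing tower, so that discarding the first $n$ terms leaves the join unchanged, I get $\cM_\infty^{\rho_{m,n}} = \bigvee_{j \ge n} \cM_j = \bigvee_{j \ge 0} \cM_j = \cM_\infty = \cM$, the last equality because $\rho$ is generating. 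Hence $\rho_{m,n}$ inherits the generating property.

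Finally I would apply Corollary \ref{corollary:markov-2} to the generating representation $\rho_{m,n}$. Its conclusion is a local Markov filtration with $\cM^{\rho_{m,n}}_{[0,\ell]} = \cM_\ell^{\rho_{m,n}}$, $\cM^{\rho_{m,n}}_{[k,k+\ell]} = \rho_{m,n}(g_0^k)(\cM_\ell^{\rho_{m,n}})$ and $\cM^{\rho_{m,n}}_{[k,\infty)} = \rho_{m,n}(g_0^k)(\cM_\infty^{\rho_{m,n}})$. Substituting $\rho_{m,n}(g_0^k) = \alpha_m^k = \rho(g_m^k)$ together with the two identities $\cM_\ell^{\rho_{m,n}} = \cM_{n+\ell}$ and $\cM_\infty^{\rho_{m,n}} = \cM_\infty$ from the previous step recovers precisely the three formulas in the statement, so these subalgebras indeed form a local Markov filtration. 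The only point requiring care is the reindexing that identifies the fixed point tower of $\rho_{m,n}$ with the tail $\{\cM_{n+\ell}\}$ of the original tower, and the verification that the generating property survives the shift; once these are in place I do not expect a genuine obstacle, since $\rho_{m,n}$ was defined exactly to make this reduction work.
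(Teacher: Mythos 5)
Your proposal is correct and follows essentially the same route as the paper, which also reduces the statement to Corollary \ref{corollary:markov-2} via the $(m,n)$-shifted representation (the paper merely says the proof ``directly transfers after relabeling''). Your version usefully makes the relabeling explicit by computing $\cM_\ell^{\rho_{m,n}} = \cM_{n+\ell}$ and verifying that $\rho_{m,n}$ inherits the generating property, both of which check out.
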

%%%%%%%%%%%%%%%%%%%%%%%%%%%%%%%%%%%%%%%%%%%%%%%%%%%%%%%
\begin{proof}
The case $m=n=0$ corresponds to Corollary \ref{corollary:markov-2}. Its proof directly transfers
to the general case $0 \le m \le n$, after relabeling the involved objects and morphisms
according to the $(m,n)$-shifted representation. 
\end{proof}
%%%%%%%%%%%%%%%%%%%%%%%%%%%%%%%%%%%%%%%%%%%%%%%%%%%%%%%
%%%%%%%%%%%%%%%%%%%%%%%%%%%%%%%%%%%%%%%%%%%%%%%%%%%%%%%
\subsection{Commuting squares and Markovianity for stationary processes.}
\label{subsection:Markov-F}
%%%%%%%%%%%%%%%%%%%%%%%%%%%%%%%%%%%%%%%%%%%%%%%%%%%%%%%
Given the  representation $\rho \colon F^+ \to \End(\cM,\psi)$, with represented generators
$\alpha_n := \rho(g_n)$, for $n \in \Nset_0$, and intersected fixed point algebras 
\[
\cM_n := \bigcap_{k \ge n+1} \cM^{\alpha_k},
\]
let $\cA_0 \subset \cM_0$ be a von Neumann subalgebra of $(\cM,\psi)$.  Then 
$(\cM,\psi, \alpha_0, \cA_0)$ is a (unilateral noncommutative) stationary process with 
generating algebra $\cA_0$. Its canonical local filtration is denoted by 
$\cA_\bullet \equiv \{\cA_{I}\}_{I\in \cI(\Nset_0)}$, where 
\[ 
\cA_I := \bigvee_{i \in I}\alpha_0^i(\cA_0),  
\]
and an `interval' $I \in \cI(\Nset_0)$ is written as 
$[m,n] := \{i  \in \Nset_0 \mid m \le i \le n\}$  or
$[m,\infty) := \{i \in \Nset_0 \mid m \le i \}$.

Furthermore $P_I$ will denote the $\psi$-preserving normal conditional expectation 
from $\cM$ onto $\cA_I$. Note that the endomorphism $\alpha_0$ acts covariantly on the local 
filtration, i.e.~ $\alpha_0(\cA_I) = \cA_{I+1}$ for all $I \in \cI(\Nset_0)$, where 
$I+1 := \{i+1\mid i \in I\}$. 

We record a simple, but important, observation obtained from the relations of $F^+$ on stationary
processes to which we will frequently appeal. 
%%%%%%%%%%%%%%%%%%%%%%%%%%%%%%%%%%%%%%%%%%%%%%%%%%%%%%%
\begin{Proposition}\label{proposition:fixed-point}
%%%%%%%%%%%%%%%%%%%%%%%%%%%%%%%%%%%%%%%%%%%%%%%%%%%%%%%
Let $(\cM,\psi,\alpha_0,\cA_0)$ be the (unilateral noncommutative) stationary process with
$\cA_0\subset \cM_0$ as above. Then it holds that $\cA_{[0,n]}\subset \cM_n$ for all 
$n \in \Nset_0$.
%%%%%%%%%%%%%%%%%%%%%%%%%%%%%%%%%%%%%%%%%%%%%%%%%%%%%%%
\end{Proposition}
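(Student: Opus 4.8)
The plan is to unwind the definitions and reduce the claimed inclusion to a purely algebraic consequence of the Thompson relations. Since $\cA_{[0,n]} = \bigvee_{i=0}^{n}\alpha_0^i(\cA_0)$ and $\cM_n$ is a von Neumann algebra, it suffices to show that $\alpha_0^i(\cA_0) \subset \cM_n$ for each $0 \le i \le n$; the join of subalgebras contained in $\cM_n$ remains inside $\cM_n$. Recalling $\cM_n = \bigcap_{k \ge n+1}\cM^{\alpha_k}$, this in turn amounts to verifying, for every $x \in \cA_0$, every $0 \le i \le n$, and every $m \ge n+1$, the fixed-point identity $\alpha_m\bigl(\alpha_0^i(x)\bigr) = \alpha_0^i(x)$.

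The key step is an iterated intertwining relation for the represented generators. First I would establish, by induction on $i$, that
\[
\alpha_0^i \,\alpha_k = \alpha_{k+i}\,\alpha_0^i \qquad (k \ge 1,\ i \ge 0).
\]
The base case $i = 0$ is trivial, and the single-step relation $\alpha_0\alpha_\ell = \alpha_{\ell+1}\alpha_0$ (valid for all $\ell \ge 1$, being the image under $\rho$ of $g_0 g_\ell = g_{\ell+1} g_0$) propagates the identity: assuming it for $i$, one computes $\alpha_0^{i+1}\alpha_k = \alpha_0(\alpha_0^i\alpha_k) = \alpha_0\alpha_{k+i}\alpha_0^i = \alpha_{k+i+1}\alpha_0^{i+1}$, where the last equality again uses the single-step relation with $\ell = k+i \ge 1$.

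With this in hand the verification is immediate. Given $m \ge n+1$ and $0 \le i \le n$, set $k := m - i$; then $k \ge (n+1) - n = 1$, so the intertwining relation (read from right to left) gives $\alpha_m \alpha_0^i = \alpha_{k+i}\alpha_0^i = \alpha_0^i \alpha_k$. Since $x \in \cA_0 \subset \cM_0 = \bigcap_{j \ge 1}\cM^{\alpha_j}$ and $k \ge 1$, we have $\alpha_k(x) = x$, whence $\alpha_m\bigl(\alpha_0^i(x)\bigr) = \alpha_0^i\bigl(\alpha_k(x)\bigr) = \alpha_0^i(x)$, as required.

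The only mildly delicate point is the index bookkeeping: one must check that $k = m - i$ stays $\ge 1$ throughout (which is exactly where the constraints $i \le n$ and $m \ge n+1$ are used), so that the hypothesis $\cA_0 \subset \cM_0$ can be invoked to fix $x$. No analytic input is needed beyond the observation that $\cM_n$ is weak*-closed, so everything reduces to the combinatorics of the relations $g_k g_\ell = g_{\ell+1} g_k$.
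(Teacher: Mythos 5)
Your proof is correct and follows essentially the same route as the paper: both arguments reduce the claim to the intertwining identity $\alpha_m\,\alpha_0^i = \alpha_0^i\,\alpha_{m-i}$ (the paper writes this as $\alpha_l\alpha_0^k(x) = \alpha_0^k\alpha_{l-k}(x)$, obtained from the defining relations of $F^+$), and then use $\cA_0\subset\cM_0$ to absorb $\alpha_{m-i}$ with $m-i\ge 1$. Your version merely makes the induction behind the iterated relation and the weak*-closure step explicit, which the paper leaves implicit.
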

%%%%%%%%%%%%%%%%%%%%%%%%%%%%%%%%%%%%%%%%%%%%%%%%%%%%%%%
\begin{proof}
As $\cA_0\subset \cM_0$, it holds that $\alpha_{n}(x) = x$ for any $x \in \cA_0$ and 
$n \in \Nset$. Thus using the defining relations of $F^+$ we get for $0\le k \le n$ and 
$n+1 \le  l$,
	\[
	\alpha_{l} \alpha_0^k(x) =  \alpha_0^k \alpha_{l-k}(x) = \alpha_0^k(x).
	\]
Hence $\cA_{[0,n]} \subset \cM_{n} $ for all $n\in \Nset_0$. 
\end{proof}
%%%%%%%%%%%%%%%%%%%%%%%%%%%%%%%%%%%%%%%%%%%%%%%%%%%%%%%
%%%%%%%%%%%%%%%%%%%%%%%%%%%%%%%%%%%%%%%%%%%%%%%%%%%%%%%
\begin{Remark} \label{remark:adaptedness} \normalfont
%%%%%%%%%%%%%%%%%%%%%%%%%%%%%%%%%%%%%%%%%%%%%%%%%%%%%%%
The canonical local filtration  $\cA_\bullet$ is coarser than the local filtration $\cM_\bullet^\rho 
\equiv \{\cM_{I}^\rho\}_{I \in \cI(\Nset_0)}^{}$ which is Markovian whenever $\rho$ is 
generating by Corollary \ref{corollary:markov-2}. Indeed this follows from the endomorphism
$\alpha_0$ acting covariantly on the local filtration. More explicitly, we conclude that
$\cA_{[m,n]}=\alpha_0^m(\cA_{[0,n-m]}) \subset \alpha_0^m(\cM_{n-m}) = \cM_{[m,n]}^\rho$ 
for $0 \le m \le n$. Furthermore it holds that
\[
\cA_{[m,\infty)} = \bigvee_{n\ge m}\cA_{[m,n]} \subset \bigvee_{n\ge m}\cM^{\rho}_{[m,n]} 
                 = \cM^{\rho}_{[m,\infty)}.
\] 
%%%%%%%%%%%%%%%%%%%%%%%%%%%%%%%%%%%%%%%%%%%%%%%%%%%%%%%
\end{Remark}
%%%%%%%%%%%%%%%%%%%%%%%%%%%%%%%%%%%%%%%%%%%%%%%%%%%%%%%
We next observe that the generating property of the representation $\rho$ can be concluded 
from the minimality of a stationary process. 
%%%%%%%%%%%%%%%%%%%%%%%%%%%%%%%%%%%%%%%%%%%%%%%%%%%%%%%
\begin{Proposition}\label{proposition:minimality-generating}
%%%%%%%%%%%%%%%%%%%%%%%%%%%%%%%%%%%%%%%%%%%%%%%%%%%%%%%
Suppose the representation $\rho:F^+\to \End(\cM,\psi)$ and $\cA_0\subset \cM_0$ are given. If
the stationary process $(\cM,\psi,\alpha_0,\cA_0)$ is minimal, then $\rho$ is generating.
%%%%%%%%%%%%%%%%%%%%%%%%%%%%%%%%%%%%%%%%%%%%%%%%%%%%%%%
\end{Proposition}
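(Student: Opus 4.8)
The plan is to verify the single equality $\cM_\infty = \cM$ demanded by Definition \ref{definition:generating}. Since $\cM_\infty := \bigvee_{n \ge 0} \cM_n$ is by its very construction a von Neumann subalgebra of $\cM$, only the inclusion $\cM \subset \cM_\infty$ requires an argument. First I would unwind what minimality of the stationary process $(\cM,\psi,\alpha_0,\cA_0)$ means: by Definition \ref{definition:property} this is minimality of the associated sequence $\iota_n = \alpha_0^n|_{\cA_0}$, which in turn says that the canonical local filtration exhausts $\cM$, that is, $\cA_{\Nset_0} = \bigvee_{n \ge 0} \alpha_0^n(\cA_0) = \cM$.

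The crux of the proof is then to locate each generating piece $\alpha_0^n(\cA_0)$ inside the tower of intersected fixed point algebras. This is precisely the content of Proposition \ref{proposition:fixed-point}: using the defining relations of $F^+$ in the form $\alpha_\ell \alpha_0^k = \alpha_0^k \alpha_{\ell - k}$ (valid whenever $\ell \ge k+1$) together with the hypothesis $\cA_0 \subset \cM_0$, one obtains $\cA_{[0,n]} \subset \cM_n$ for all $n \in \Nset_0$. In particular $\alpha_0^n(\cA_0) \subset \cA_{[0,n]} \subset \cM_n \subset \cM_\infty$ for every $n \ge 0$.

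Finally I would assemble these inclusions. Passing to the von Neumann algebra generated by all of the $\alpha_0^n(\cA_0)$ gives
\[
\cM = \cA_{\Nset_0} = \bigvee_{n \ge 0} \alpha_0^n(\cA_0) \subset \cM_\infty,
\]
so $\cM_\infty = \cM$ and hence $\rho$ is generating in the sense of Definition \ref{definition:generating}.

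As for obstacles: there is no serious difficulty here, since the argument reduces to combining Proposition \ref{proposition:fixed-point} with the definition of minimality. The only point that warrants care is the bookkeeping with the relations of $F^+$ showing that $\alpha_\ell$ fixes $\alpha_0^n(\cA_0)$ for all $\ell \ge n+1$ — but this has already been discharged in Proposition \ref{proposition:fixed-point} — and checking that minimality is exactly the statement $\cA_{\Nset_0} = \cM$ rather than any stronger requirement, which it is.
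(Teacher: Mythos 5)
Your proof is correct and follows essentially the same route as the paper: both arguments combine minimality in the form $\cM = \bigvee_{n\ge 0}\alpha_0^n(\cA_0)$ with the inclusion $\cA_{[0,n]} \subset \cM_n$ from Proposition \ref{proposition:fixed-point} to conclude $\cM \subset \cM_\infty$.
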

%%%%%%%%%%%%%%%%%%%%%%%%%%%%%%%%%%%%%%%%%%%%%%%%%%%%%%%
\begin{proof}
%%%%%%%%%%%%%%%%%%%%%%%%%%%%%%%%%%%%%%%%%%%%%%%%%%%%%%%
The minimality of the stationary process
$(\cM,\psi,\alpha_0,\cA_0)$ ensures $\cA_{[0,\infty]}
=\bigvee_{i \in \Nset_0}\alpha_0^i(\cA_0)=\cM$.
By Proposition \ref{proposition:fixed-point}, $\cA_{[0,n]}\subset \cM_n$ for all $n \in \Nset_0$.
Thus $\cM = \bigvee_{n \ge 0}\cA_{[0,n]}\subset \bigvee_{n \ge 0} \cM_n = \cM_\infty$. We
conclude from this that the representation $\rho$ has the generating property, 
i.e.~$\cM_\infty = \cM$.
%%%%%%%%%%%%%%%%%%%%%%%%%%%%%%%%%%%%%%%%%%%%%%%%%%%%%%%
\end{proof}
%%%%%%%%%%%%%%%%%%%%%%%%%%%%%%%%%%%%%%%%%%%%%%%%%%%%%%%
In the following results, it is not assumed that the stationary process is minimal or that the
representation $\rho$ is generating unless explicitly mentioned.
%%%%%%%%%%%%%%%%%%%%%%%%%%%%%%%%%%%%%%%%%%%%%%%%%%%%%%%
\begin{Theorem}\label{theorem:markov-filtration-1}
%%%%%%%%%%%%%%%%%%%%%%%%%%%%%%%%%%%%%%%%%%%%%%%%%%%%%%%
Suppose $\rho \colon F^+ \to \End(\cM,\psi)$ is a representation. Let $\alpha_n:=\rho(g_n)$ as
before, and let $\cA_0 \subset \cM_{0}$ and $\cA_{[0,\infty)} 
:= \bigvee_{n \in \Nset_0}\alpha_0^n(\cA_0) $ be von Neumann subalgebras of $(\cM,\psi)$ such
that the inclusions
\[
\begin{matrix}
\cM^{\alpha_1} &\subset &\cM \\
\cup  &        & \cup \\
\cA_0    & \subset  & \cA_{[0,\infty)}
\end{matrix}
\]
form a commuting square. Then the family of von Neumann subalgebras 
$\cA_\bullet \equiv \{\cA_{I}^{}\}_{I\in \cI(\Nset_0)}^{}$, with  
\[ 
\cA_I := \bigvee_{i \in I}\alpha_0^i(\cA_0) 
\]
is a local Markov filtration and $\big(\cM,\psi, \alpha_0, \cA_0  \big)$ is a stationary Markov
process.  
%%%%%%%%%%%%%%%%%%%%%%%%%%%%%%%%%%%%%%%%%%%%%%%%%%%%%%%
\end{Theorem}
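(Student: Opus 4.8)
The conclusion that $(\cM,\psi,\alpha_0,\cA_0)$ is a stationary Markov process means, by Definition \ref{definition:ncms}, exactly that its canonical local filtration $\cA_\bullet$ is Markovian, so the two assertions coincide and it suffices to treat $\cA_\bullet$. Isotony of $\cA_\bullet$ is immediate and each $\cA_I$ is $\psi$-conditioned since $\alpha_0$ commutes with the modular group, so the only point is the (saturated) Markov property $E_{\cA_{[0,n]}}E_{\cA_{[n,\infty)}}=E_{\cA_{[n,n]}}$ for all $n\ge0$. The plan is to pass to a generating representation, shift the hypothesis commuting square by $\alpha_0^n$, paste it onto the Markov square of the fixed-point tower, and then use the well-localization $\cA_{[0,n]}\subset\cM_n$ to shrink the relevant corner. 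First I would restrict $\rho$ to $\cM_\infty$, obtaining the generating representation $\rho_{\gen}$ of Proposition \ref{proposition:generatingrestriction}; since $\cA_0\subset\cM_0\subset\cM_\infty$ and $\alpha_0(\cM_\infty)\subset\cM_\infty$, all $\cA_I$ lie in $\cM_\infty$ and the conditional expectations onto them are unchanged, so the Markov identities may be verified inside $\cM_\infty$. Using the commuting square of Corollary \ref{corollary:commsquare} for $n=0$, i.e. $E_{\cM_\infty}E_{\cM^{\alpha_1}}=E_{\cM_0}=E_{\cM^{\alpha_1}}E_{\cM_\infty}$, together with $\cA_{[0,\infty)}\subset\cM_\infty$, the hypothesis $E_{\cA_{[0,\infty)}}E_{\cM^{\alpha_1}}=E_{\cA_0}$ turns into the generating reformulation
\[
E_{\cA_{[0,\infty)}}\,E_{\cM_0}=E_{\cA_0},
\]
i.e. $\cM_0$ and $\cA_{[0,\infty)}$ are CS independent over $\cA_0$.

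Next I would propagate this single square. Applying the endomorphism $\alpha_0^n$, which carries commuting squares to commuting squares (as in the proof of Theorem \ref{theorem:cs-markov-1}) and satisfies $\alpha_0^n(\cM_0)=\cM^\rho_{[n,n]}$, $\alpha_0^n(\cA_0)=\cA_{[n,n]}$, $\alpha_0^n(\cA_{[0,\infty)})=\cA_{[n,\infty)}$, produces the identity
\[
E_{\cA_{[n,\infty)}}\,E_{\cM^\rho_{[n,n]}}=E_{\cA_{[n,n]}}.
\]
On the other hand, since $\rho_{\gen}$ is generating, Theorem \ref{theorem:cs-markov-1} (equivalently Corollary \ref{corollary:markov-2}) provides the Markov commuting square of the fixed-point tower expressing that $\cM_n=\cM^\rho_{[0,n]}$ and $\cM^\rho_{[n,\infty)}$ are CS independent over $\cM^\rho_{[n,n]}$, that is $E_{\cM^\rho_{[n,\infty)}}E_{\cM_n}=E_{\cM^\rho_{[n,n]}}$. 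Pasting the latter into the former, and using $\cA_{[n,\infty)}\subset\cM^\rho_{[n,\infty)}$ so that $E_{\cA_{[n,\infty)}}E_{\cM^\rho_{[n,\infty)}}=E_{\cA_{[n,\infty)}}$, gives
\[
\begin{aligned}
E_{\cA_{[n,\infty)}}\,E_{\cM_n}
&=E_{\cA_{[n,\infty)}}\,E_{\cM^\rho_{[n,\infty)}}\,E_{\cM_n}\\
&=E_{\cA_{[n,\infty)}}\,E_{\cM^\rho_{[n,n]}}
=E_{\cA_{[n,n]}},
\end{aligned}
\]
which by Proposition \ref{proposition:cs} says that $\cM_n$ and $\cA_{[n,\infty)}$ form a commuting square over $\cA_{[n,n]}$ (the inclusions $\cA_{[n,n]}\subset\cM_n$ and $\cA_{[n,n]}\subset\cA_{[n,\infty)}$ holding by construction).

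Finally I would descend from $\cM_n$ to $\cA_{[0,n]}$. By Proposition \ref{proposition:fixed-point} the process is well-localized, $\cA_{[0,n]}\subset\cM_n$, whence $E_{\cA_{[0,n]}}=E_{\cA_{[0,n]}}E_{\cM_n}$; combining this with the square just obtained and with $\cA_{[n,n]}\subset\cA_{[0,n]}$ yields
\[
\begin{aligned}
E_{\cA_{[n,\infty)}}\,E_{\cA_{[0,n]}}
&=E_{\cA_{[n,\infty)}}\,E_{\cM_n}\,E_{\cA_{[0,n]}}\\
&=E_{\cA_{[n,n]}}\,E_{\cA_{[0,n]}}=E_{\cA_{[n,n]}}.
\end{aligned}
\]
By Proposition \ref{proposition:cs} this is precisely the statement that $\cA_{[0,n]}$ and $\cA_{[n,\infty)}$ are CS independent over $\cA_{[n,n]}$, i.e. the saturated Markov property at level $n$; transferring this operator identity back from $\cM_\infty$ to $\cM$ (precomposing with $E_{\cM_\infty}$, which is harmless as all algebras lie in $\cM_\infty$) shows that $\cA_\bullet$ is a local Markov filtration in the sense of Definition \ref{definition:markov-filtration}, and hence $(\cM,\psi,\alpha_0,\cA_0)$ is a stationary Markov process. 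The step requiring the most care is the generating reduction together with the bookkeeping that lets one shift the lone hypothesis square by $\alpha_0^n$ and paste it onto the tower; once the two squares are aligned, the localization $\cA_{[0,n]}\subset\cM_n$ closes the argument at once, and, notably, there is no need to verify Markovianity of $\cA_\bullet$ directly by controlling how the coarse past $\cA_{[0,n-1]}$ conditions the fine present.
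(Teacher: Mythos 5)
Your proof is correct, but it takes a genuinely different route from the paper's. The paper argues by a single direct computation: writing $P_I$, $Q_n$ for the conditional expectations onto $\cA_I$ and $\cM^{\alpha_n}$, it inserts $Q_{n+1}$ in front of $\alpha_0^n P_{[0,\infty)}$ (legitimate since $\cA_{[0,n]}\subset\cM^{\alpha_{n+1}}$ by Proposition \ref{proposition:fixed-point}), pulls it through $\alpha_0^n$ to $Q_1$ via the intertwining relation $\alpha_0^n Q_1=Q_{n+1}\alpha_0^n$ of Proposition \ref{proposition:intertwining-property}, applies the hypothesis in the form $Q_1P_{[0,\infty)}=P_{[0,0]}$, and then runs the same chain backwards with $P_{[n,n]}$ in place of $P_{[0,n]}$; since $\alpha_0^nP_{[0,\infty)}$ has weak*-dense range in $\cA_{[n,\infty)}$ this yields $P_{[0,n]}P_{[n,\infty)}=P_{[n,n]}$ without ever passing to $\cM_\infty$ or invoking the generating property. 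You instead reformulate the hypothesis as the commuting square $E_{\cA_{[0,\infty)}}E_{\cM_0}=E_{\cA_0}$ (via Corollary \ref{corollary:commsquare}), push it forward by $\alpha_0^n$, paste it onto the Markov square of the fixed-point filtration from Corollary \ref{corollary:markov-2} (which is where you need the generating restriction of Proposition \ref{proposition:generatingrestriction}), and then descend using $\cA_{[0,n]}\subset\cM_n$. Each step checks out: the shift of a commuting square by an endomorphism is justified exactly as in the proof of Theorem \ref{theorem:cs-markov-1}, the order reversals you use are covered by the symmetry of condition (iv) in Proposition \ref{proposition:cs}, and the identification of the two conclusions of the theorem via Definition \ref{definition:ncms} is accurate. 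What your approach buys is modularity --- the Markov property of $\cA_\bullet$ is exhibited as a corollary of the already-established Markovianity of $\cM_\bullet^\rho$ plus one pasting --- at the price of invoking the generating reduction and Corollary \ref{corollary:markov-2}; the paper's computation is leaner, needing only the intertwining relation, the localization $\cA_{[0,n]}\subset\cM^{\alpha_{n+1}}$, and the single hypothesis identity.
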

%%%%%%%%%%%%%%%%%%%%%%%%%%%%%%%%%%%%%%%%%%%%%%%%%%%%%%%
\begin{proof}
%%%%%%%%%%%%%%%%%%%%%%%%%%%%%%%%%%%%%%%%%%%%%%%%%%%%%%%
Let $Q_n$ and $P_I$ denote the $\psi$-preserving normal conditional expectations from $\cM$ onto
$\calM^{\alpha_n}$ and $\cA_I$ respectively as before. Note that the commuting square condition
implies $Q_1 P_{[0,\infty)} = P_{[0,0]}$. From Proposition \ref{proposition:fixed-point},
$\cA_{[0,n]} \subset \cM_n \subset \cM^{\alpha_{n+1}}$ for all $n\in \Nset_0$. Hence we get 
\begin{alignat*}{3}
P_{[0,n]} \alpha_0^n P_{[0,\infty)} 
&=  P_{[0,n]} Q_{n+1} \alpha_0^n P_{[0,\infty)}   
    &\qquad & \text{(since $\cA_{[0,n]} \subset \cM^{\alpha_{n+1}}$)}\\ 
&=  P_{[0,n]} \alpha_0^n Q_1 P_{[0,\infty)}   &\qquad &\text{(by intertwining property)}\\
&=  P_{[0,n]} \alpha_0^n P_{[0,0]} P_{[0,\infty)}   
    & \qquad & \text{(by commuting square condition)}\\
&=  \alpha_0^n P_{[0,0]} P_{[0,\infty)}   
    & \qquad & \text{(as $\cA_{[n,n]} \subset \cA_{[0,n]}$)}\\
&=  P_{[n,n]}\alpha_0^n P_{[0,0]} P_{[0,\infty)}   
    & \qquad & \text{(since $\cA_{[n,n]}= \alpha_0^n(\cA_0)$)}\\
&= P_{[n,n]} \alpha_0^n Q_1 P_{[0,\infty)}   
    &\qquad & \text{(by commuting square condition)}\\
&=  P_{[n,n]} Q_{n+1} \alpha_0^n P_{[0,\infty)}   
    &\qquad &\text{(by intertwining property)}\\
&= P_{[n,n]} \alpha_0^n P_{[0,\infty)} 
    &\qquad & \text{(since $\cA_{[n,n]} \subset \cM^{\alpha_{n+1}}$)}.
\end{alignat*}
Altogether we have shown that $P_{[0,n]}P_{[n,\infty)}=P_{[n,n]}$, which is the required
Markovianity for the local filtration $\{\cA_{I}\}_{I\in  \cI(\Nset_0)}$.
\end{proof}
%%%%%%%%%%%%%%%%%%%%%%%%%%%%%%%%%%%%%%%%%%%%%%%%%%%%%%%
%%%%%%%%%%%%%%%%%%%%%%%%%%%%%%%%%%%%%%%%%%%%%%%%%%%%%%%
\begin{Corollary}\label{corollary-markov-filtration-0}
%%%%%%%%%%%%%%%%%%%%%%%%%%%%%%%%%%%%%%%%%%%%%%%%%%%%%%%
Suppose $\rho\colon F^+ \to \End(\cM,\psi)$ is a representation with $\alpha_0 = \rho(g_0)$.  
Then the quadruple $\big(\cM,\psi, \alpha_0, \cM_{0}\big)$ is a stationary Markov process. 
%%%%%%%%%%%%%%%%%%%%%%%%%%%%%%%%%%%%%%%%%%%%%%%%%%%%%%%
\end{Corollary}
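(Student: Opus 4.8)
The plan is to deduce the corollary directly from Theorem~\ref{theorem:markov-filtration-1} by taking $\cA_0 = \cM_0$. With this choice the canonical local filtration of the stationary process $(\cM,\psi,\alpha_0,\cM_0)$ is $\cA_I = \bigvee_{i\in I}\alpha_0^i(\cM_0)$, so the one hypothesis of Theorem~\ref{theorem:markov-filtration-1} that remains to be checked is that the inclusions
\[
\begin{matrix}
\cM^{\alpha_1} &\subset &\cM \\
\cup  &        & \cup \\
\cM_0    & \subset  & \cA_{[0,\infty)}
\end{matrix}
\]
form a commuting square. The required inclusions are immediate: $\cM_0 = \bigcap_{k\ge 1}\cM^{\alpha_k}\subset \cM^{\alpha_1}$, and $\cM_0 = \alpha_0^0(\cM_0)\subset \cA_{[0,\infty)}$. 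In particular $\cM_0 \subset \cM^{\alpha_1}\cap \cA_{[0,\infty)}$, so Proposition~\ref{proposition:cs} applies, and it suffices to verify its condition~(ii), namely $Q_1 P_{[0,\infty)} = E_{\cM_0}$, where $Q_1 = E_{\cM^{\alpha_1}}$ and $P_{[0,\infty)} = E_{\cA_{[0,\infty)}}$.

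To verify this identity I would first observe that $\cA_{[0,\infty)}\subset \cM_\infty$: since $\alpha_0(\cM_n)\subset \cM_{n+1}$, each generator $\alpha_0^n(\cM_0)$ lies in $\cM_n\subset\cM_\infty$, hence so does the von Neumann algebra they generate. Next I would pin down the behaviour of $Q_1$ on $\cM_\infty$. By Proposition~\ref{proposition:generatingrestriction} the conditional expectations $Q_1$ and $E_{\cM_\infty}$ commute, so $Q_1$ maps $\cM_\infty$ into $\cM^{\alpha_1}\cap \cM_\infty$, which equals $\cM_0$ by Lemma~\ref{lemma:generating}. Combining this with the $\cM_0$-module property of $Q_1$ and its $\psi$-preservation shows that $Q_1|_{\cM_\infty} = E_{\cM_0}|_{\cM_\infty}$: for $x\in \cM_\infty$ one has $Q_1(x)\in\cM_0$ and $\psi(y\,Q_1(x)) = \psi(Q_1(yx)) = \psi(yx)$ for every $y\in\cM_0$, which characterises $E_{\cM_0}(x)$.

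Finally, since $P_{[0,\infty)}$ takes values in $\cA_{[0,\infty)}\subset\cM_\infty$, the previous step gives $Q_1 P_{[0,\infty)} = E_{\cM_0}P_{[0,\infty)}$, and the inclusion $\cM_0\subset\cA_{[0,\infty)}$ collapses this to $E_{\cM_0}P_{[0,\infty)} = E_{\cM_0}$. Thus $Q_1 P_{[0,\infty)} = E_{\cM_0}$, the commuting square is established, and Theorem~\ref{theorem:markov-filtration-1} yields that $\cA_\bullet$ is a local Markov filtration and that $(\cM,\psi,\alpha_0,\cM_0)$ is a stationary Markov process. The only real subtlety, and the step I would treat most carefully, is that $\rho$ is not assumed to be generating, so Theorem~\ref{theorem:cs-markov-1} cannot be invoked directly; the work is instead carried out inside $\cM_\infty$, where Lemma~\ref{lemma:generating} supplies the crucial identity $\cM^{\alpha_1}\cap\cM_\infty = \cM_0$.
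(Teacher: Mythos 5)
Your proof is correct and takes essentially the same route as the paper: both reduce the corollary to the commuting-square hypothesis of Theorem~\ref{theorem:markov-filtration-1} with $\cA_0=\cM_0$ and verify that hypothesis by working inside $\cM_\infty$, where Lemma~\ref{lemma:generating} gives $\cM^{\alpha_1}\cap\cM_\infty=\cM_0$ and Proposition~\ref{proposition:generatingrestriction} gives the commutation of $Q_1$ with $E_{\cM_\infty}$. The only cosmetic difference is that the paper cites Corollary~\ref{corollary:commsquare} for the intermediate square $(\cM_0,\cM^{\alpha_1},\cM_\infty,\cM)$ and computes $P_{[0,\infty)}Q_1=P_0$, whereas you re-derive that square inline and check the product $Q_1P_{[0,\infty)}=E_{\cM_0}$ in the other order; both are equivalent formulations of the same commuting square.
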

%%%%%%%%%%%%%%%%%%%%%%%%%%%%%%%%%%%%%%%%%%%%%%%%%%%%%%%
\begin{proof}
%%%%%%%%%%%%%%%%%%%%%%%%%%%%%%%%%%%%%%%%%%%%%%%%%%%%%%%
We know from Corollary \ref{corollary:commsquare} that the following is a commuting square:
\[
\begin{matrix}
\cM^{\alpha_1} &\subset &\cM \\
\cup  &        & \cup \\
\cM_0    & \subset  & \cM_{\infty}
\end{matrix}.
\]
Let $\{\cM_I\}_{I\in \cI(\Nset_0)}$ denote the local filtration given by  $\cM_I=\bigvee_{i\in I}
\alpha_0^{i}(\cM_0)$ and $P_I$ be the corresponding conditional expectations. As
$\cM_{[0,n]}\subset \cM_{n}$ for all $n\in \Nset_0$, it is easily verified that
$\cM_{[0,\infty)}\subset \cM_\infty$. Let $P_0:=P_{[0,0]}$ be the $\psi$-preserving conditional
expectation from $\cM$ onto $\cM_0$. Then from the commuting square above, we have 
$E_{\cM_{\infty}}Q_1=P_{0}$, where $E_{\cM_\infty}$ is of course the conditional expectation 
onto $\cM_\infty$. This in turn gives $P_{[0,\infty)}Q_1=P_{[0,\infty)} E_{\cM_{\infty}}Q_1 =
P_{[0,\infty)} P_0=P_0$. Hence we get that $\cM_0$ is a von Neumann subalgebra of $\cM$ such that
\[
	\begin{matrix}
	\cM^{\alpha_1} &\subset &\cM \\
	\cup  &        & \cup \\
	\cM_0    & \subset  & \cM_{[0,\infty)}
	\end{matrix}
\]
forms a commuting square. By Theorem \ref{theorem:markov-filtration-1}, 
$\big(\cM,\psi, \alpha_0,\cM_{0}\big)$ is a stationary Markov process.
%%%%%%%%%%%%%%%%%%%%%%%%%%%%%%%%%%%%%%%%%%%%%%%%%%%%%%% 
\end{proof}
%%%%%%%%%%%%%%%%%%%%%%%%%%%%%%%%%%%%%%%%%%%%%%%%%%%%%%%
%%%%%%%%%%%%%%%%%%%%%%%%%%%%%%%%%%%%%%%%%%%%%%%%%%%%%%%
\begin{Corollary}\label{corollary:markov-filtration-0-MN}
%%%%%%%%%%%%%%%%%%%%%%%%%%%%%%%%%%%%%%%%%%%%%%%%%%%%%%%
Suppose $\rho\colon F^+ \to \End(\cM,\psi)$ is a representation with $\alpha_m = \rho(g_m)$, 
for $m\in \Nset_0$. Then the quadruple $\big(\cM,\psi, \alpha_m, \cM_{n}\big)$ is a stationary
Markov process  for any $0 \le m \le n < \infty$. 
%%%%%%%%%%%%%%%%%%%%%%%%%%%%%%%%%%%%%%%%%%%%%%%%%%%%%%%
\end{Corollary}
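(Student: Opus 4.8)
The plan is to deduce this directly from Corollary~\ref{corollary-markov-filtration-0} by replacing $\rho$ with a suitably shifted representation, so that the endomorphism playing the role of $\alpha_0$ becomes $\alpha_m$ and the intersected fixed point algebra playing the role of $\cM_0$ becomes $\cM_n$. Since Corollary~\ref{corollary-markov-filtration-0} is valid for an \emph{arbitrary} representation of $F^+$ (it does not presuppose the generating property), it suffices to exhibit one representation whose degree-zero data match $(\alpha_m, \cM_n)$.

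First I would introduce the $(m,n)$-shifted representation $\sigma := \rho \circ \sh_{m,n}$, which is a genuine representation of $F^+$ because $\sh_{m,n}$ is a monoid endomorphism of $F^+$ by Definition~\ref{definition:mn-shift}. Writing $\beta_k := \sigma(g_k)$ and reading off the definition of $\sh_{m,n}$, one has $\beta_0 = \rho(g_m) = \alpha_m$ and $\beta_k = \rho(g_{n+k}) = \alpha_{n+k}$ for all $k \ge 1$. The next and only substantive step is to compute the intersected fixed point algebra attached to $\sigma$: as $k$ ranges over $\{1, 2, 3, \ldots\}$, the index $n+k$ ranges over $\{n+1, n+2, \ldots\}$, so
\[
\bigcap_{k \ge 1} \cM^{\beta_k} = \bigcap_{k \ge 1} \cM^{\alpha_{n+k}} = \bigcap_{\ell \ge n+1} \cM^{\alpha_\ell} = \cM_n,
\]
which is exactly the defining formula for $\cM_n$.

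Finally I would apply Corollary~\ref{corollary-markov-filtration-0} to the representation $\sigma$. Its conclusion, transcribed for $\sigma$, is that the quadruple $\big(\cM, \psi, \sigma(g_0), \bigcap_{k \ge 1}\cM^{\sigma(g_k)}\big) = \big(\cM, \psi, \alpha_m, \cM_n\big)$ is a stationary Markov process, which is precisely the assertion. The only point requiring care --- and thus the main (if modest) obstacle --- is the index bookkeeping in the displayed computation: one must confirm that $\sh_{m,n}$ sends the ``tail'' generators $g_1, g_2, \ldots$ onto $g_{n+1}, g_{n+2}, \ldots$ (and not, say, onto $g_n, g_{n+1}, \ldots$), so that the intersected fixed point algebra of $\sigma$ lands on $\cM_n$ rather than on $\cM_{n-1}$. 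The hypothesis $m \le n$ is exactly what makes $\sh_{m,n}$ well-defined in Definition~\ref{definition:mn-shift}, so no further constraints arise.
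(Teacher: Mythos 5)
Your proposal is correct and follows essentially the same route as the paper: the authors also form the $(m,n)$-shifted representation $\rho_{m,n} := \rho \circ \sh_{m,n}$, observe that $\rho_{m,n}(g_0) = \rho(g_m)$ and $\rho_{m,n}(g_k) = \rho(g_{n+k})$ for $k \ge 1$, compute $\bigcap_{k \ge 1}\cM^{\rho_{m,n}(g_k)} = \cM_n$, and then invoke Corollary \ref{corollary-markov-filtration-0}. Your index bookkeeping and your remark that the generating property is not needed are both accurate.
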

%%%%%%%%%%%%%%%%%%%%%%%%%%%%%%%%%%%%%%%%%%%%%%%%%%%%%%%
\begin{proof}
Consider the representation $\rho_{m,n} := \rho \circ \sh_{m,n} \colon F^+ \to \End(\cM,\psi)$ 
where  $\sh_{m,n}$ denotes the $(m,n)$-partial shift as introduced in Definition
\ref{definition:mn-shift}.  We observe that $\rho_{m,n}(g_0) = \rho(g_m)$ and
$\rho_{m,n}(g_k) = \rho(g_{n+k})$ for all $k \ge 1$. In particular we get 
$\bigcap_{k \ge 1}\cM^{\rho_{m,n}(g_k)}
=\bigcap_{k \ge 1} \cM^{\rho(g_{k+n})}
=\bigcap_{k \ge n+1} \cM^{\rho(g_k)}=\cM_n.$ 
Thus Corollary \ref{corollary-markov-filtration-0} applies for the $(m,n)$-shifted representation
$\rho_{m,n}$ and its application completes the proof.  
\end{proof}
%%%%%%%%%%%%%%%%%%%%%%%%%%%%%%%%%%%%%%%%%%%%%%%%%%%%%%%
%%%%%%%%%%%%%%%%%%%%%%%%%%%%%%%%%%%%%%%%%%%%%%%%%%%%%%%
\begin{Corollary}\label{corollary:markov-filtration-MN}
%%%%%%%%%%%%%%%%%%%%%%%%%%%%%%%%%%%%%%%%%%%%%%%%%%%%%%%
Suppose  $\rho \colon F^+ \to \End(\cM,\psi)$ is a generating representation. Then the quadruple 
$\big(\cM,\psi, \alpha_m, \cM^{\alpha_{n+1}}\big)$ is a stationary Markov process for any 
$0 \le m \le n < \infty$. 
%%%%%%%%%%%%%%%%%%%%%%%%%%%%%%%%%%%%%%%%%%%%%%%%%%%%%%%
\end{Corollary}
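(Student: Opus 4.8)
The plan is to reduce this statement directly to Corollary \ref{corollary:markov-filtration-0-MN}, exploiting the extra structure supplied by the generating hypothesis. The only ingredient needed on top of that corollary is the identification of the single fixed point algebra $\cM^{\alpha_{n+1}}$ with the intersected fixed point algebra $\cM_n$.

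First I would invoke Proposition \ref{proposition:generating-property}. Since $\rho$ is generating, that result tells us the tower of intersected fixed point algebras collapses to single fixed point algebras, namely
\[
\cM_n = \bigcap_{k \ge n+1} \cM^{\alpha_k} = \cM^{\alpha_{n+1}} \qquad \text{for all } n \in \Nset_0.
\]
In particular, the generating algebra $\cM^{\alpha_{n+1}}$ appearing in the statement coincides with the subalgebra $\cM_n$ used throughout Subsection \ref{subsection:Markov-F}. With this identification, the quadruple $\big(\cM,\psi, \alpha_m, \cM^{\alpha_{n+1}}\big)$ is literally the quadruple $\big(\cM,\psi, \alpha_m, \cM_n\big)$.

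It then remains only to apply Corollary \ref{corollary:markov-filtration-0-MN}, which asserts \emph{without} any generating assumption that $\big(\cM,\psi, \alpha_m, \cM_n\big)$ is a stationary Markov process for every $0 \le m \le n < \infty$. This completes the argument.

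There is essentially no obstacle here, as all the substantive work has already been carried out upstream: the intertwining property (Proposition \ref{proposition:intertwining-property}), the commuting-square cells of the triangular tower (Theorem \ref{theorem:cs-markov-1} and Corollary \ref{corollary:markov-1}), and the reduction via the $(m,n)$-partial shift $\sh_{m,n}$ that yields Corollary \ref{corollary:markov-filtration-0-MN}. The one point genuinely worth flagging is that the generating hypothesis is indispensable: it is exactly what licenses the replacement of $\cM^{\alpha_{n+1}}$ by $\cM_n$ through Proposition \ref{proposition:generating-property}. In its absence one has only the inclusion $\cM_n \subset \cM^{\alpha_{n+1}}$, and the commuting-square arguments underlying Corollary \ref{corollary:markov-filtration-0-MN}, which rely on $E_{\cM_n} = Q_{n+1}$, would no longer apply to the generator $\cM^{\alpha_{n+1}}$ directly.
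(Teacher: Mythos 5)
Your proposal is correct and coincides with the paper's own proof: both identify $\cM^{\alpha_{n+1}}$ with $\cM_n$ via the generating property (Proposition \ref{proposition:generating-property}) and then invoke Corollary \ref{corollary:markov-filtration-0-MN}. Your added remark on why the generating hypothesis is indispensable is accurate but not part of the paper's argument.
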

%%%%%%%%%%%%%%%%%%%%%%%%%%%%%%%%%%%%%%%%%%%%%%%%%%%%%%%
\begin{proof}
If the representation $\rho$ is generating, then $\cM^{\alpha_{n+1}}=\cM_n$. Hence the result follows by Corollary \ref{corollary:markov-filtration-0-MN}.
\end{proof}
%%%%%%%%%%%%%%%%%%%%%%%%%%%%%%%%%%%%%%%%%%%%%%%%%%%%%%%
%%%%%%%%%%%%%%%%%%%%%%%%%%%%%%%%%%%%%%%%%%%%%%%%%%%%%%%
\begin{Remark}\normalfont \label{remark:markov-filtration-1}
%%%%%%%%%%%%%%%%%%%%%%%%%%%%%%%%%%%%%%%%%%%%%%%%%%%%%%%
The commuting square assumption in Theorem \ref{theorem:markov-filtration-1} may not be 
satisfied for a noncommutative stationary process $(\cM,\psi, \alpha_0, \cA_0)$ if one only
demands that the generator $\cA_0$ is a $\psi$-conditioned von Neumann subalgebra of the fixed
point algebra $\cM^{\alpha_1}$. Consequently the canonical local filtration of the resulting
noncommutative stationary processes may not be Markovian.
%%%%%%%%%%%%%%%%%%%%%%%%%%%%%%%%%%%%%%%%%%%%%%%%%%%%%%%
\end{Remark}
%%%%%%%%%%%%%%%%%%%%%%%%%%%%%%%%%%%%%%%%%%%%%%%%%%%%%%%
%%%%%%%%%%%%%%%%%%%%%%%%%%%%%%%%%%%%%%%%%%%%%%%%%%%%%%%
\begin{Theorem}\label{theorem:markov-filtration-2}
%%%%%%%%%%%%%%%%%%%%%%%%%%%%%%%%%%%%%%%%%%%%%%%%%%%%%%%
Let the probability space $(\cM,\psi)$ be equipped with the representation 
$\rho \colon F^+ \to \End(\cM,\psi)$ and the local filtration 
$\cA_\bullet \equiv \{\cA_I\}_{I \in \cI(\Nset_0)}$, where 
$\cA_I := \bigvee_{i \in I}\rho(g_0^i)(\cA_0)$ for some von Neumann subalgebra $\cA_0$ of
$\cM_0$.  Further suppose the inclusions 
\[
\begin{matrix}
\cM^{\rho(g_{m+1})} &\subset &\cM \\
\cup  &        & \cup \\
\cA_{[0,m]}   & \subset  & \cA_{[0,\infty)}
\end{matrix}
\]
form a commuting square for all $m \ge 0$.  Then each cell in the following  triangular tower of
inclusions is a commuting square:
\begin{eqnarray*}
\setcounter{MaxMatrixCols}{20}
\begin{matrix}
\cA_{[0,0]}  &\subset&   \cA_{[0,1]} & \subset & \cA_{[0,2]} & \subset &\cA_{[0,3]} & \subset &\cA_{[0,4]} & \subset  & \cdots & \subset & \cA_{[0,\infty)}\\
        &&          \cup  &         & \cup  &         & \cup &         & \cup  &       & & & \cup  \\
        &&   \cA_{[1,1]}&\subset&\cA_{[1,2]}&\subset&\cA_{[1,3]}&\subset&\cA_{[1,4]}&\subset& \cdots & \subset & \cA_{[1,\infty)}\\
         &&               &         & \cup  &         & \cup   &         & \cup  &       & & & \cup \\
              &&&&   \cA_{[2,2]}& \subset &  \cA_{[2,3]}& \subset &   \cA_{[2,4]}
              & \subset  & \cdots & \subset &   \cA_{[2,\infty)}\\
       &&&&&&   \cup           &         & \cup  &       &&  & \cup \\
        &&&&&&   \vdots           &         & \vdots  &       &&  & \vdots 
\end{matrix}
\setcounter{MaxMatrixCols}{10}
\end{eqnarray*}
In particular,  $\cA_\bullet$ is a local Markov filtration.
%%%%%%%%%%%%%%%%%%%%%%%%%%%%%%%%%%%%%%%%%%%%%%%%%%%%%%%
\end{Theorem}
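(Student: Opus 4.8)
The plan is to reduce the assertion to a single family of elementary commuting squares and then to verify those by combining the triangular tower of fixed-point algebras from Theorem \ref{theorem:cs-markov-1} with the hypothesis. Throughout, write $Q_n = E_{\cM^{\alpha_n}}$, $P_I = E_{\cA_I}$, and $\alpha_0 = \rho(g_0)$.

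First I would reduce to the case that $\rho$ is generating. Since $\cA_0 \subset \cM_0 \subset \cM_\infty$, the whole filtration $\cA_\bullet$ lies in $\cM_\infty$, and each $P_I$ factors as $P_I = P_I E_{\cM_\infty}$; hence the commuting-square conditions of Proposition \ref{proposition:cs} for the cells of $\cA_\bullet$ may be checked inside $(\cM_\infty,\psi_\infty)$. To see that the hypothesis survives this restriction I would paste it onto the commuting square of Corollary \ref{corollary:commsquare}: that corollary gives $E_{\cM_m} = E_{\cM_\infty}Q_{m+1}$, while Proposition \ref{proposition:cs} turns the hypothesis into $Q_{m+1}P_{[0,\infty)} = P_{[0,\infty)}Q_{m+1} = P_{[0,m]}$, so $E_{\cM_m}P_{[0,\infty)} = E_{\cM_\infty}Q_{m+1}P_{[0,\infty)} = E_{\cM_\infty}P_{[0,m]} = P_{[0,m]}$. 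By Proposition \ref{proposition:cs} this means the square with corners $\cM_m, \cM_\infty, \cA_{[0,m]}, \cA_{[0,\infty)}$ is again a commuting square, and by Lemma \ref{lemma:generating} the algebra $\cM_m$ is precisely the fixed-point algebra of $\rho_{\gen}(g_{m+1})$ in $\cM_\infty$. Thus after restriction we are in the generating situation with the same hypothesis, and from now on I assume $\rho$ generating and $\cM = \cM_\infty$, so that $\cM_m = \cM^{\alpha_{m+1}}$ and the algebras $\cM^\rho_I$ of Corollary \ref{corollary:markov-2} are available.

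Next I would reduce to the top row. Applying $\alpha_0 \in \End(\cM,\psi)$ to all four corners of a commuting square again yields a commuting square (a $\psi$-preserving endomorphism restricts to a state-preserving isomorphism onto its image and intertwines the relevant conditional expectations, exactly as in the proof of Theorem \ref{theorem:cs-markov-1}). As $\alpha_0(\cA_{[k,n]}) = \cA_{[k+1,n+1]}$, the map $\alpha_0^k$ carries a cell with top row $\cA_{[0,\cdot]}$ to a cell with top row $\cA_{[k,\cdot]}$, and every cell with $k\ge 1$ arises this way; so it suffices to show that, for $1 \le n \le p \le \infty$, the square with corners $\cA_{[0,n]}, \cA_{[0,p]}, \cA_{[1,n]}, \cA_{[1,p]}$ is a commuting square. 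Since $P_{[1,p]}P_{[1,\infty)} = P_{[1,p]}$ and $P_{[1,p]}P_{[1,n]} = P_{[1,n]}$, the general $p$ follows from $p=\infty$, and the whole theorem comes down to the identity $P_{[1,\infty)}\,P_{[0,n]} = P_{[1,n]}$ for $n \ge 1$, which I call $(\ast)$. For $(\ast)$ I would use that $\cA_{[0,n]} \subset \cM_n = \cM^\rho_{[0,n]}$ (Proposition \ref{proposition:fixed-point}) and $\cA_{[1,\cdot]} \subset \cM^\rho_{[1,\cdot]}$ (Remark \ref{remark:adaptedness}), and run the chain
\begin{align*}
P_{[1,\infty)}P_{[0,n]}
&= P_{[1,\infty)}E_{\cM^\rho_{[1,\infty)}}P_{[0,n]}
 = P_{[1,\infty)}E_{\cM^\rho_{[1,n]}}P_{[0,n]} \\
&= P_{[1,n]}E_{\cM^\rho_{[1,n]}}P_{[0,n]}
 = P_{[1,n]}P_{[0,n]} = P_{[1,n]}.
\end{align*}
The second equality uses the cell
\[
\begin{matrix}
\cM^\rho_{[0,n]} & \subset & \cM^\rho_{[0,\infty)} \\
\cup & & \cup \\
\cM^\rho_{[1,n]} & \subset & \cM^\rho_{[1,\infty)}
\end{matrix}
\]
of the tower in Theorem \ref{theorem:cs-markov-1} (taken with its infinite column), giving $E_{\cM^\rho_{[1,\infty)}}E_{\cM^\rho_{[0,n]}} = E_{\cM^\rho_{[1,n]}}$, so the two expectations agree on $\cM^\rho_{[0,n]}$, the range of $P_{[0,n]}$. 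The third equality uses the $\alpha_0$-image of the hypothesis square at level $m=n-1$, namely
\[
\begin{matrix}
\cM^\rho_{[1,n]} & \subset & \alpha_0(\cM) \\
\cup & & \cup \\
\cA_{[1,n]} & \subset & \cA_{[1,\infty)}
\end{matrix},
\]
giving $P_{[1,\infty)}E_{\cM^\rho_{[1,n]}} = P_{[1,n]}$ on $\alpha_0(\cM)$, so the two expectations agree on $\cM^\rho_{[1,n]}$, the range of $E_{\cM^\rho_{[1,n]}}P_{[0,n]}$. The remaining steps are the tower relations $P_{[1,\infty)}E_{\cM^\rho_{[1,\infty)}} = P_{[1,\infty)}$, $P_{[1,n]}E_{\cM^\rho_{[1,n]}} = P_{[1,n]}$ and $P_{[1,n]}P_{[0,n]} = P_{[1,n]}$ (the last because $\cA_{[1,n]} \subset \cA_{[0,n]}$).

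Once $(\ast)$ is established, every elementary cell, and hence every cell of the triangular tower, is a commuting square; pasting them vertically (or simply invoking Theorem \ref{theorem:markov-filtration-1}, whose hypothesis is the $m=0$ instance of ours) shows $\cA_\bullet$ is a local Markov filtration. I expect the main obstacle to lie in the bookkeeping of the reduction to the generating case: commuting squares are \emph{not} robust under arbitrary restriction (cf. Remark \ref{remark:cs-not-robust}), so the explicit pasting with Corollary \ref{corollary:commsquare} together with the commutation $E_{\cM_\infty}Q_{m+1} = Q_{m+1}E_{\cM_\infty}$ of Proposition \ref{proposition:generatingrestriction} is essential rather than cosmetic. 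The other delicate point is that the shifted hypothesis square is only a commuting square within $\alpha_0(\cM)$, so one must keep track that the operators in the chain are applied to elements already lying in $\alpha_0(\cM)$.
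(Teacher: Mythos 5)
Your proof is correct, but it takes a genuinely different route from the paper's. The paper's argument never leaves the non-generating setting: it verifies the elementary cells directly via the chain
$P_{[k,m+k]}\alpha_0^{k+1}P_{[0,m]} = P_{[k,m+k]}Q_{m+k+1}\alpha_0^{k+1}P_{[0,m]} = P_{[k,m+k]}\alpha_0^{k+1}Q_m P_{[0,\infty)}P_{[0,m]} = P_{[k,m+k]}\alpha_0^{k+1}P_{[0,m-1]} = \alpha_0^{k+1}P_{[0,m-1]}$,
using only the adaptedness $\cA_{[k,m+k]}\subset\cM^{\alpha_{m+k+1}}$, the intertwining relation of Proposition \ref{proposition:intertwining-property} (which holds for arbitrary, not necessarily generating, representations), and the hypothesis in the form $Q_m P_{[0,\infty)} = P_{[0,m-1]}$. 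You instead first reduce to the generating case --- an extra step your approach genuinely needs, since you invoke Theorem \ref{theorem:cs-markov-1} and the identification $\cM_m = \cM^{\alpha_{m+1}}$ --- and then establish the single identity $P_{[1,\infty)}P_{[0,n]} = P_{[1,n]}$ by sandwiching the $\cA$-filtration between the commuting squares of the fixed-point tower and the $\alpha_0$-shifted hypothesis square. Your reduction is sound (the descent of the hypothesis via Corollary \ref{corollary:commsquare} and the commutation $E_{\cM_\infty}Q_{m+1}=Q_{m+1}E_{\cM_\infty}$ is exactly the right pasting, and the restriction of a conditional expectation to an intermediate subalgebra containing its range is again the conditional expectation there), and your explicit treatment of the infinite column and of vertical pasting is actually more careful than the paper's, which only verifies adjacent finite cells and asserts the rest. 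What the paper's route buys is brevity and independence from the generating property; what yours buys is a more structural picture of $\cA_\bullet$ squeezed between two towers of commuting squares, at the cost of the bookkeeping you correctly identify as the delicate part.
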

%%%%%%%%%%%%%%%%%%%%%%%%%%%%%%%%%%%%%%%%%%%%%%%%%%%%%%%
\begin{proof}
All claimed inclusions in the triangular tower are clear from the definition of $\cA_{[m,n]}$. 
We recall from Proposition \ref{proposition:fixed-point} that
$\alpha_0^{k}(\cA_0)\subset \cM^{\alpha_{n+1}}$ for $0\leq k \leq n$.
Hence $\cA_{[m,n]} \subset \cM^{\alpha_{n+1}}$ for all $0 \le m \le n$.  Next we show that, for
$0 \le k$ and $1 \le m$,  the cell of inclusions
\[
\begin{matrix}
\alpha_0^k(\cA_{[0,m]}) &\subset &\alpha_0^k(\cA_{[0,m+1]}) \\
\cup  &        & \cup \\
\alpha_0^{k+1}(\cA_{[0,m-1]})   & \subset  & \alpha_0^{k+1} (\cA_{[0,m]})
\end{matrix}
\]
forms a commuting square. So, as $P_I$ denotes the normal $\psi$-preserving conditional
expectation from $\cM$ onto $\cA_I$, we need to show 
\[
P_{[k,m+k]} P_{[k+1,m+k+1]} = P_{[k+1,m+k]} 
\]
or, equivalently,
\[
P_{[k,m+k]} \alpha_0^{k+1} P_{[0,m]} =  \alpha_0^{k+1} P_{[0,m-1]}. 
\]
Indeed, we calculate 
\begin{align*}
P_{[k,m+k]} \alpha_0^{k+1}  P_{[0,m]} 
& = P_{[k,m+k]} Q_{m+k+1}\alpha_0^{k+1}  P_{[0,m]} \\
& = P_{[k,m+k]} \alpha_0^{k+1} Q_{m}  P_{[0,m]}\\
& = P_{[k,m+k]} \alpha_0^{k+1} Q_{m}  P_{[0,\infty)}P_{[0,m]} \\
& = P_{[k,m+k]} \alpha_0^{k+1}  P_{[0,m-1]} P_{[0,m]} \\
& = P_{[k,m+k]} \alpha_0^{k+1}  P_{[0,m-1]} \\
&=\alpha_0^{k+1}  P_{[0,m-1]}. 
\end{align*}
Here we have used that $ P_{[k,m+k]} =  P_{[k,m+k]} Q_{m+k+1}$, the intertwining properties of
$\alpha_0$ and the commuting square assumption $  Q_{m}  P_{[0,\infty)}=  P_{[0,m-1]}$.

Since $\alpha_0^k(\cA_{[m,n]}) = \cA_{[m+k,n+k]}$ is evident from the definition of the local
filtration, we have verified that each cell of inclusions in the triangular tower forms a
commuting square.
%%%%%%%%%%%%%%%%%%%%%%%%%%%%%%%%%%%%%%%%%%%%%%%%%%%%%%% 
\end{proof}
%%%%%%%%%%%%%%%%%%%%%%%%%%%%%%%%%%%%%%%%%%%%%%%%%%%%%%%
More generally, we may consider a probability space which is equipped both with a local filtration and
a representation of the Thompson monoid, and formulate compatiblity conditions between the local
filtration and the representation such that one obtains rich commuting square structures. 
%%%%%%%%%%%%%%%%%%%%%%%%%%%%%%%%%%%%%%%%%%%%%%%%%%%%%%%
\begin{Corollary}
%%%%%%%%%%%%%%%%%%%%%%%%%%%%%%%%%%%%%%%%%%%%%%%%%%%%%%%
Suppose the probability space $(\cM,\psi)$ is equipped with a local filtration 
$\cN_\bullet \equiv \{\cN_{I}^{}\}_{I \in \cI(\Nset_0)}^{}$
and a representation $\rho \colon F^+ \to \End(\cM,\psi)$  such that
\begin{enumerate}
\item \label{item:markov1} 
$\rho(g_0) \big(\cN_I\big)=  \cN_{I+1}$ for all $I \in \cI(\Nset_0)$ (compatibility),   
\item \label{item:markov2} 
$\cN_{[0,m]} \subset \cM^{\rho(g_{m+1})}$ for all $m \in \Nset_0$ (adaptedness),
\item \label{item:markov3} 
the inclusions
\[
\begin{matrix}
\cM^{\rho(g_{m+1})} &\subset &\cM \\
\cup  &        & \cup \\
\cN_{[0,m]}   & \subset  & \cN_{[0,\infty)}
\end{matrix}
\]
form a commuting square for all $m \in \Nset_0$.
\end{enumerate}
Then each cell in the following triangular tower of inclusions is a commuting square:
\begin{eqnarray*}
\setcounter{MaxMatrixCols}{20}
\begin{matrix}
\cN_{[0,0]}  &\subset&   \cN_{[0,1]} & \subset & \cN_{[0,2]} &  \subset  & \cdots & \subset & \cN_{[0,\infty)}\\
        &&          \cup  &         & \cup  &                  & & & \cup  \\
        &&   \rho(g_0)\big(\cN_{[0,0]}\big)&\subset&\rho(g_0)\big(\cN_{[0,1]}\big)&\subset& \cdots & \subset & \rho(g_0)\big(\cN_{[0,\infty)}\big)\\
         &&               &         & \cup  &                       & & & \cup \\
              &&&&    \rho(g_0^2)\big(\cN_{[0,0]}\big)& \subset  & \cdots & \subset &  \rho(g_0^2)\big(\cN_{[0,\infty)}\big)\\
       &&&&&&              &             &   \cup \\
        &&&&&                    &   &  \vdots  &   \vdots 
\end{matrix}
\setcounter{MaxMatrixCols}{10}
\end{eqnarray*}
In particular,  $\cN_\bullet$ is a local Markov filtration.
%%%%%%%%%%%%%%%%%%%%%%%%%%%%%%%%%%%%%%%%%%%%%%%%%%%%%%%
\end{Corollary}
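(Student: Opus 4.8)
The plan is to reduce this corollary to Theorem \ref{theorem:markov-filtration-2}, observing that its three hypotheses are precisely an abstraction of the three properties actually used in the proof of that theorem. First I would record that the compatibility condition (\ref{item:markov1}), iterated, yields $\rho(g_0^k)(\cN_I) = \cN_{I+k}$ for all $k \in \Nset_0$ and $I \in \cI(\Nset_0)$; in particular $\rho(g_0^k)(\cN_{[0,\ell]}) = \cN_{[k,k+\ell]}$. This identifies the entries of the triangular tower in the statement with the values $\cN_{[m,n]}$ of the given local filtration, so the tower here is structurally the same as the one appearing in Theorem \ref{theorem:markov-filtration-2}.

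Next I would match the hypotheses to the ingredients of that proof. Writing $\alpha_0 := \rho(g_0)$, letting $Q_n$ denote the $\psi$-preserving conditional expectation onto $\cM^{\rho(g_n)}$ and $P_I := E_{\cN_I}$ the one onto $\cN_I$, the proof of Theorem \ref{theorem:markov-filtration-2} used only: the covariance $\alpha_0^k(\cA_{[m,n]}) = \cA_{[m+k,n+k]}$, which is now condition (\ref{item:markov1}); the adaptedness $\cA_{[0,m]} \subset \cM^{\alpha_{m+1}}$, which there was deduced from Proposition \ref{proposition:fixed-point} but is now stipulated directly in (\ref{item:markov2}); and the base commuting square $Q_{m}P_{[0,\infty)} = P_{[0,m-1]}$, which is now (\ref{item:markov3}). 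Crucially, the intertwining relation $\alpha_0^{k+1} Q_m = Q_{m+k+1}\alpha_0^{k+1}$ of Proposition \ref{proposition:intertwining-property} depends only on the representation $\rho$ and not on the filtration, so it remains available unchanged.

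Consequently the computation in the proof of Theorem \ref{theorem:markov-filtration-2} transfers verbatim. To show that a typical cell with corners $\alpha_0^k(\cN_{[0,m]})$, $\alpha_0^k(\cN_{[0,m+1]})$, $\alpha_0^{k+1}(\cN_{[0,m-1]})$ and $\alpha_0^{k+1}(\cN_{[0,m]})$ is a commuting square, I would verify
\begin{align*}
P_{[k,m+k]} \alpha_0^{k+1} P_{[0,m]}
&= P_{[k,m+k]} Q_{m+k+1} \alpha_0^{k+1} P_{[0,m]}
 = P_{[k,m+k]} \alpha_0^{k+1} Q_m P_{[0,m]} \\
&= P_{[k,m+k]} \alpha_0^{k+1} Q_m P_{[0,\infty)} P_{[0,m]}
 = P_{[k,m+k]} \alpha_0^{k+1} P_{[0,m-1]} P_{[0,m]} \\
&= P_{[k,m+k]} \alpha_0^{k+1} P_{[0,m-1]}
 = \alpha_0^{k+1} P_{[0,m-1]},
\end{align*}
using (\ref{item:markov2}) to insert $Q_{m+k+1}$, the intertwining property, the commuting square hypothesis (\ref{item:markov3}), and finally the covariance (\ref{item:markov1}). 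The Markov property $P_{[0,n]}P_{[n,\infty)} = P_{[n,n]}$ then follows from the base cell exactly as in Theorem \ref{theorem:markov-filtration-1}, so $\cN_\bullet$ is a local Markov filtration.

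The main point to watch is bookkeeping rather than mathematics: one must confirm that every appeal to the concrete filtration $\cA_I = \bigvee_{i \in I}\alpha_0^i(\cA_0)$ in the original argument was in fact only an appeal to covariance, adaptedness, or the base commuting square, and never to the fact that $\cA_\bullet$ is generated by a single subalgebra $\cA_0 \subset \cM_0$. Since the original proof manipulates the conditional expectations $P_I$ abstractly and never decomposes $\cN_{[0,m]}$ further, no genuine obstacle arises, and the generalization to an arbitrary compatible local filtration $\cN_\bullet$ is immediate.
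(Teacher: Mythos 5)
Your proposal is correct and follows essentially the same route as the paper: the paper's proof likewise notes that adaptedness gives $\cN_{[m,n]}\subset\cN_{[0,n]}\subset\cM^{\rho(g_{n+1})}$, hence $P_{[k,m+k]}=P_{[k,m+k]}Q_{m+k+1}$, and then declares that the rest follows exactly as in Theorem \ref{theorem:markov-filtration-2}. Your explicit bookkeeping check that the earlier argument only ever invokes covariance, adaptedness, and the base commuting square (never the generation of $\cN_I$ by a single subalgebra) is precisely the content the paper leaves implicit.
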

%%%%%%%%%%%%%%%%%%%%%%%%%%%%%%%%%%%%%%%%%%%%%%%%%%%%%%%
\begin{proof}
%%%%%%%%%%%%%%%%%%%%%%%%%%%%%%%%%%%%%%%%%%%%%%%%%%%%%%% 
Let $P_I$ be the normal $\psi$-preserving conditional expectation onto $\cN_I$. Let
$\alpha_n=\rho(g_n)$ and $Q_n$ be the normal $\psi$-preserving conditional expectation onto
$\cM^{\alpha_{n}}$ as before. We observe that $\cN=\cN_{[0,0]}\subset \cM^{\alpha_1}$  by the
given adaptedness. Adaptedness also gives us $\cN_{[m,n]}\subset\cN_{[0,n]}\subset
\cM^{\alpha_{n+1}}$ for $ 0\leq m\leq n$. Thus $P_{[k,m+k]}=P_{[k,m+k]}Q_{m+k+1}$ as before. 
The rest of the proof follows just as in Theorem \ref{theorem:markov-filtration-2}.
%%%%%%%%%%%%%%%%%%%%%%%%%%%%%%%%%%%%%%%%%%%%%%%%%%%%%%%
\end{proof}
%%%%%%%%%%%%%%%%%%%%%%%%%%%%%%%%%%%%%%%%%%%%%%%%%%%%%%%
%%%%%%%%%%%%%%%%%%%%%%%%%%%%%%%%%%%%%%%%%%%%%%%%%%%%%%%
\subsection{A noncommutative version of de Finetti's theorem}
\label{subsection:ncdf}
%%%%%%%%%%%%%%%%%%%%%%%%%%%%%%%%%%%%%%%%%%%%%%%%%%%%%%%
Most results of the previous two subsections can be reformulated in terms of sequences of 
random variables associated to stationary processes (see Definition \ref{definition:process-sequence}). 
%%%%%%%%%%%%%%%%%%%%%%%%%%%%%%%%%%%%%%%%%%%%%%%%%%%%%%%
\begin{Proposition}\label{proposition:definetti-markov}
%%%%%%%%%%%%%%%%%%%%%%%%%%%%%%%%%%%%%%%%%%%%%%%%%%%%%%%
Given the representation $\rho \colon F^+ \to \End(\cM,\psi)$, let $\cA_0$ be some fixed
$\psi$-conditioned von Neumann subalgebra of $\cM_0 = \bigcap_{k\geq 1} \cM^{\rho(g_k)}$ 
and $\varphi_0 := \psi|_{\cA_0}$.  Then the sequence of random variables 
\[
(\iota_n)_{n \ge 0}\colon  (\cA_0, \varphi_0) \to (\cM,\psi), \qquad \iota_{n} 
                              := \rho(g_0)^n|_{\cA_0}
\]
(associated to the stationary process $(\cM, \psi, \rho(g_0), \cA_0)$) is partially spreadable.
Furthermore this stationary process and its associated sequence of random variables have the same
canonical local filtration 
\[
\cA_\bullet \equiv \big\{\cA_I := \bigvee_{i \in I} \rho(g_0^i)(\cA_0)\big\}_{I \in \cI(\Nset_0)}
\]
which is coarser than the local Markov filtration
\[\cM_\bullet \equiv \big\{\cM_I := \bigvee_{i \in I} \rho(g_0^i)(\cM_0)\big\}_{I \in \cI(\Nset_0)}.
\]  
%%%%%%%%%%%%%%%%%%%%%%%%%%%%%%%%%%%%%%%%%%%%%%%%%%%%%%%
\end{Proposition}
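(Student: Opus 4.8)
# Proof Proposal for Proposition \ref{proposition:definetti-markov}

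**The plan is to** verify the three assertions in order: first the partial spreadability of the associated sequence, then the agreement of the two canonical local filtrations, and finally the coarsening relation between $\cA_\bullet$ and $\cM_\bullet$, invoking that the latter is a local Markov filtration. The core observations needed are already available in the excerpt, so the task is mostly one of assembling them correctly.

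**First I would** establish partial spreadability. By Definition \ref{definition:p-s}, I must exhibit a representation $\rho \colon F^+ \to \End(\cM,\psi)$ satisfying the localization property \eqref{eq:L-thompson} and the stationarity property \eqref{eq:S-thompson}. The representation $\rho$ is given. For stationarity, $\iota_n = \rho(g_0)^n|_{\cA_0} = \rho(g_0^n)\iota_0$ holds by definition of the sequence, which is exactly \eqref{eq:S-thompson}. For localization, I need $\iota_0 = \rho(g_n)\iota_0$ for all $n \ge 1$, i.e.~$\rho(g_n)(a) = a$ for all $a \in \cA_0$ and $n \ge 1$. This is immediate from the hypothesis $\cA_0 \subset \cM_0 = \bigcap_{k \ge 1}\cM^{\rho(g_k)}$, since every element of $\cM_0$ is fixed by $\alpha_k = \rho(g_k)$ for $k \ge 1$. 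Thus both properties hold and the sequence is partially spreadable.

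**Next I would** address the canonical local filtrations. By Definition \ref{definition:process-sequence}, the sequence of random variables associated to the stationary process $(\cM,\psi,\rho(g_0),\cA_0)$ is precisely $(\iota_n)_{n \ge 0}$ with $\iota_n = \rho(g_0)^n|_{\cA_0}$, so the process and the sequence are the \emph{same} object and share the canonical local filtration $\cA_I := \bigvee_{i \in I}\rho(g_0^i)(\cA_0)$ by construction — this requires no argument beyond unwinding definitions. For the coarsening relation, I would invoke the reasoning of Remark \ref{remark:adaptedness}: since $\alpha_0 = \rho(g_0)$ acts covariantly on both filtrations and $\cA_0 \subset \cM_0$, applying $\rho(g_0^i)$ to this inclusion and taking joins gives $\cA_I = \bigvee_{i \in I}\rho(g_0^i)(\cA_0) \subset \bigvee_{i \in I}\rho(g_0^i)(\cM_0) = \cM_I$ for every $I \in \cI(\Nset_0)$, with the infinite-interval case handled by the standard approximation argument $\cA_{[m,\infty)} = \bigvee_{n \ge m}\cA_{[m,n]}$. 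Hence $\cA_\bullet \prec \cM_\bullet$.

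**The main obstacle** is the assertion that $\cM_\bullet$ is a local Markov filtration, which I would obtain directly from Corollary \ref{corollary:markov-2}. That corollary, however, is stated under the \emph{generating} hypothesis on $\rho$, whereas the present proposition assumes only $\cA_0 \subset \cM_0$ with no minimality. The cleanest route is therefore to pass to the generating restriction $\rho_{\gen} \colon F^+ \to \End(\cM_\infty,\psi_\infty)$ from Proposition \ref{proposition:generatingrestriction}, observe that the intersected fixed point algebras $\cM_n$ computed inside $\cM_\infty$ coincide with those computed in $\cM$ (by Lemma \ref{lemma:generating}, $\cM_n = \cM^{\alpha_{n+1}} \cap \cM_\infty$, and indeed $\cM_n \subset \cM_\infty$ already), apply Corollary \ref{corollary:markov-2} to $\rho_{\gen}$ to conclude $\{\cM_I\}$ is Markovian as a local filtration of $(\cM_\infty,\psi_\infty)$, and finally note that the relevant conditional expectations $E_{\cM_I}$ restrict compatibly since $\cM_\infty$ is $\psi$-conditioned in $\cM$ (Proposition \ref{proposition:generatingrestriction} gives $E_{\cM_\infty}E_{\cM^{\alpha_n}} = E_{\cM^{\alpha_n}}E_{\cM_\infty}$). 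Verifying that the commuting-square/Markov property of $\cM_\bullet$ inside $\cM_\infty$ transfers to $\cM_\bullet$ inside $\cM$ — because all the algebras $\cM_I$ lie within $\cM_\infty$ — is the one point requiring genuine care rather than pure bookkeeping.
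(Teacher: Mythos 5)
The first three steps of your argument (partial spreadability from $\cA_0 \subset \cM_0$, the coincidence of the two canonical local filtrations, and the inclusion $\cA_I \subset \cM_I$) are correct and match the paper's proof, which disposes of them in essentially the same few lines.

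The gap is in your treatment of the Markovianity of $\cM_\bullet$. Corollary \ref{corollary:markov-2} concerns the local filtration $\cM_\bullet^{\rho}$ defined by $\cM^{\rho}_{[0,n]} := \cM_n = \bigcap_{k \ge n+1}\cM^{\rho(g_k)}$ and $\cM^{\rho}_{[m,m+n]} := \rho(g_0^m)(\cM_n)$. This is \emph{not} the filtration $\cM_\bullet$ in the statement, which is the one generated by joins, $\cM_I = \bigvee_{i \in I}\rho(g_0^i)(\cM_0)$; in general one only has $\cM_{[0,n]} = \bigvee_{i=0}^{n}\rho(g_0^i)(\cM_0) \subset \cM_n = \cM^{\rho}_{[0,n]}$, and the inclusion can be strict (this already happens in the tensor-product models of Section \ref{subsection:tensor-product}). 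So your route, even if carried out, proves Markovianity of the finer filtration $\cM^{\rho}_\bullet$, and Markovianity does not descend to coarser local filtrations — that is exactly the failure highlighted in Remark \ref{remark:failure-mark-filt} and Remark \ref{remark:cs-not-robust}. An extra commuting-square argument is needed to transfer it.

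The paper instead cites Corollary \ref{corollary-markov-filtration-0}, which states that $(\cM,\psi,\rho(g_0),\cM_0)$ is a stationary Markov process for an \emph{arbitrary} representation $\rho$; by Definition \ref{definition:ncms} this says precisely that the join-generated filtration $\cM_\bullet$ is Markovian. That corollary rests on the commuting square
\[
\begin{matrix}
\cM^{\alpha_1} &\subset &\cM \\
\cup  &        & \cup \\
\cM_0    & \subset  & \cM_{[0,\infty)}
\end{matrix}
\]
obtained from Corollary \ref{corollary:commsquare}, fed into Theorem \ref{theorem:markov-filtration-1} — and none of these require the generating property, so your detour through $\rho_{\gen}$ and $\cM_\infty$ is unnecessary. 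To repair your proof, replace the appeal to Corollary \ref{corollary:markov-2} by an appeal to Corollary \ref{corollary-markov-filtration-0} (or reproduce the commuting-square argument of Theorem \ref{theorem:markov-filtration-1} with generator $\cM_0$).
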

%%%%%%%%%%%%%%%%%%%%%%%%%%%%%%%%%%%%%%%%%%%%%%%%%%%%%%%
\begin{proof}
This is immediate from Definition \ref{definition:p-s}, where we introduced partial spreadability as a
distributional symmetry. Clearly the canonical filtration of the stationary process and its
associated sequence of random variables coincide. The inclusion $\cA_0 \subset \cM_0$ ensures
that $\cA_\bullet$ is coarser than $\cM_\bullet$. The Markovianity of $\cM_\bullet$ is inferred
from Corollary \ref{corollary-markov-filtration-0}. 
\end{proof}
%%%%%%%%%%%%%%%%%%%%%%%%%%%%%%%%%%%%%%%%%%%%%%%%%%%%%%%
We are ready for the proof of a noncommutative version of de Finetti's theorem, as formulated 
in Theorem \ref{theorem:extended-de-Finetti}, and repeat its formulation for the convenience 
of the reader.
%%%%%%%%%%%%%%%%%%%%%%%%%%%%%%%%%%%%%%%%%%%%%%%%%%%%%%%
\begin{Theorem*}
%%%%%%%%%%%%%%%%%%%%%%%%%%%%%%%%%%%%%%%%%%%%%%%%%%%%%%%
Let $\iota \equiv (\iota_n)_{n\ge 0} \colon (\cA,\varphi) \to (\cM,\psi)$ be a sequence of
(identically distributed) random variables and consider the following conditions:
\begin{enumerate}
\item[(a)] 
$\iota$ is partially spreadable;  
\item[(b)] 
$\iota$ is stationary and adapted to a local Markov filtration;
\item[(c)] 
$\iota$ is identically distributed and adapted to a local Markov filtration.
\end{enumerate}
Then one has the following implications:
\[
\text{(a)}   \Longrightarrow \text{(b)}   \Longrightarrow \text{(c)}.   
\]
%%%%%%%%%%%%%%%%%%%%%%%%%%%%%%%%%%%%%%%%%%%%%%%%%%%%%%%
\end{Theorem*}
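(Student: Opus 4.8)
The plan is to prove the two implications separately; nearly all of the analytic content has already been established in the commuting-square results of Subsections \ref{subsection:s-f-p-a} and \ref{subsection:Markov-F}, so the task reduces to assembling them correctly. The implication $(b) \Rightarrow (c)$ is immediate: stationarity (Definition \ref{definition:distsym}(i)) forces the sequence to be identically distributed, and the hypothesis of being adapted to a local Markov filtration is carried over verbatim. So nothing beyond unwinding definitions is needed there.

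For $(a) \Rightarrow (b)$ I would first reduce to the case in which $\iota$ itself carries a representation. By Definition \ref{definition:p-s}, partial spreadability provides a sequence $\tiota \stackrel{\distr}{=} \iota$ together with a representation $\rho \colon F^+ \to \End(\tcM,\tpsi)$ satisfying the localization $\iota_0 = \rho(g_n)\iota_0$ for $n \ge 1$ and the stationarity $\iota_n = \rho(g_0)^n\iota_0$ (I suppress the tildes once the reduction is made). Writing $\alpha_0 := \rho(g_0) \in \End(\cM,\psi)$, one has $\iota_n = \alpha_0^n\iota_0$, and stationarity follows directly since $\alpha_0$ is a $\psi$-preserving $*$-homomorphism: $\psi\big(\iota_{k_1+1}(a_1)\cdots\iota_{k_m+1}(a_m)\big) = \psi\big(\alpha_0(\iota_{k_1}(a_1)\cdots\iota_{k_m}(a_m))\big) = \psi\big(\iota_{k_1}(a_1)\cdots\iota_{k_m}(a_m)\big)$ (alternatively one cites Proposition \ref{proposition:dip}(i) after passing to the minimal realization). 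As stationarity is a distributional property, it passes back to the original $\iota$.

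Next I would extract adaptedness to a local Markov filtration, which is the substantive point. The localization property says precisely that $\iota_0(\cA) \subset \cM^{\rho(g_n)}$ for every $n \ge 1$, hence $\iota_0(\cA) \subset \cM_0 = \bigcap_{k \ge 1}\cM^{\rho(g_k)}$. Thus $(\cM,\psi,\alpha_0,\iota_0(\cA))$ is a stationary process with generator inside $\cM_0$, and Corollary \ref{corollary-markov-filtration-0} shows that $(\cM,\psi,\alpha_0,\cM_0)$ is a stationary Markov process; equivalently, its canonical local filtration $\cM_\bullet \equiv \big\{\cM_I := \bigvee_{i \in I}\alpha_0^i(\cM_0)\big\}_{I \in \cI(\Nset_0)}$ is a local Markov filtration in the sense of Definition \ref{definition:markov-filtration}. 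Adaptedness of $\iota$ to $\cM_\bullet$ is then the content of Proposition \ref{proposition:definetti-markov}: from $\iota_i(\cA) = \alpha_0^i(\iota_0(\cA)) \subset \alpha_0^i(\cM_0)$ one obtains $\cA_I = \bigvee_{i\in I}\iota_i(\cA) \subset \cM_I$ for all $I \in \cI(\Nset_0)$, which completes $(a) \Rightarrow (b)$.

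The point deserving emphasis (and the only genuine subtlety at this level) is that the local Markov filtration witnessing $(b)$ is \emph{not} the canonical filtration $\cA_\bullet$ generated by $\iota$ — which in general fails to be Markovian, precisely the phenomenon that a function of a Markov sequence need not be Markovian — but the strictly larger filtration $\cM_\bullet$ built from the intersected fixed-point algebras $\cM_0 = \bigcap_{k\ge1}\cM^{\rho(g_k)}$. Identifying this correct filtration and knowing that it is Markovian is where the real work resides, and that work is supplied by the earlier commuting-square machinery — the intertwining relation of Proposition \ref{proposition:intertwining-property}, Theorem \ref{theorem:markov-filtration-1}, and Corollary \ref{corollary-markov-filtration-0} — all resting on the defining relations of $F^+$. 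A secondary care point is the distributional reduction at the start: since the Markov filtration is furnished on the realization $\tiota$ carrying $\rho$, the conclusion $(b)$ for $\iota$ is to be read through this distribution-preserving realization, consistently with how partial spreadability itself is defined.
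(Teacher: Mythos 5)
Your proposal is correct and follows essentially the same route as the paper: the paper's proof of $(a)\Rightarrow(b)$ likewise derives stationarity from $\psi\circ\rho(g_0)=\psi$, and obtains adaptedness by invoking Proposition \ref{proposition:definetti-markov} (which rests on Corollary \ref{corollary-markov-filtration-0}) to see that the canonical local filtration $\cA_\bullet$ is coarser than the local Markov filtration $\cM_\bullet$ built from $\cM_0=\bigcap_{k\ge1}\cM^{\rho(g_k)}$, while $(b)\Rightarrow(c)$ is the same one-line observation. Your explicit handling of the distributional reduction to $\tiota$ and your emphasis that the witnessing filtration is $\cM_\bullet$ rather than $\cA_\bullet$ are both consistent with, and slightly more careful than, the paper's presentation.
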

%%%%%%%%%%%%%%%%%%%%%%%%%%%%%%%%%%%%%%%%%%%%%%%%%%%%%%%
\begin{proof}[Proof (of Theorem \ref{theorem:extended-de-Finetti})]
(a) $\Longrightarrow$ (b): The stationarity of $\iota$ follows from $\psi \circ \rho(g_0) = \psi$
and $\iota_n=\rho(g_0)^n\iota_0, n\geq 0$. Proposition \ref{proposition:definetti-markov} ensures that
the canonical local filtration $\cA_\bullet$ is coarser than the local Markov filtration $\cM_\bullet$.
Thus the sequence $\iota$ is adapted to the local Markov filtration $\cM_\bullet$, according to
Definition \ref{definition:markovianity}.\\
(b) $\Longrightarrow$ (c): Stationary sequences are identically distributed. 
\end{proof}
%%%%%%%%%%%%%%%%%%%%%%%%%%%%%%%%%%%%%%%%%%%%%%%%%%%%%%%
Let $(\cM,\psi)$ be a noncommutative probability space, $\rho:F^+\to \End(\cM,\psi)$ be a
representation and $\cA_0$ be a $\psi$-conditioned subalgebra of 
$\cM_0=\bigcap_{k\geq 1}\cM^{\rho(g_k)}$. By Corollary \ref{corollary-markov-filtration-0}, 
we know that if $\cA_0=\cM_0$, then the canonical local filtration 
$\cA_\bullet=\{\cA_I\}_{I\in \cI(\Nset_o)}$ is Markovian. The definition of a partially spreadable
sequence $\{\iota_n\}\colon (\cA,\varphi)\to (\cM,\psi)$ entails the existence of a
representation $\rho \colon F^+\to \End(\cM,\psi)$ such that $\iota_0(\cA)\subset \cM_0=\bigcap_{k\geq 1}\cM^{\rho(g_k})$ (see Definition \ref{definition:p-s}). This motivates to
strengthen the property of partial spreadability as follows. 
%%%%%%%%%%%%%%%%%%%%%%%%%%%%%%%%%%%%%%%%%%%%%%%%%%%%%%%
\begin{Definition}\label{definition:max-p-s} \normalfont
%%%%%%%%%%%%%%%%%%%%%%%%%%%%%%%%%%%%%%%%%%%%%%%%%%%%%%%
A partially spreadable sequence of random variables 
$\iota \equiv (\iota_{n})_{n\geq 0}: (\cA,\varphi)\to (\cM,\psi)$ 
is said to be \emph{maximal partially spreadable} if
$\iota_0(\cA)=\cM_0=\cap_{k\geq 1}\cM^{\rho(g_k})$.
Here $\rho$ denotes a representation of $F^+$ as in Definition \ref{definition:p-s}, and $\cM_0$ 
the intersection of the fixed point algebras $\cM^{\rho(g_k)}$ for $k \ge 1$.  
%%%%%%%%%%%%%%%%%%%%%%%%%%%%%%%%%%%%%%%%%%%%%%%%%%%%%%%
\end{Definition}
%%%%%%%%%%%%%%%%%%%%%%%%%%%%%%%%%%%%%%%%%%%%%%%%%%%%%%%
This refined notion of partial spreadability allows us to tightly connect the representation
theory of the Thompson monoid $F^+$ and Markovianity. 
%%%%%%%%%%%%%%%%%%%%%%%%%%%%%%%%%%%%%%%%%%%%%%%%%%%%%%%
\begin{Theorem}\label{theorem:maxps} 
%%%%%%%%%%%%%%%%%%%%%%%%%%%%%%%%%%%%%%%%%%%%%%%%%%%%%%%
A maximal partially spreadable sequence of random variables $\iota \equiv (\iota_n)_{n\ge 0}
\colon (\cA,\varphi) \to (\cM,\psi)$ is stationary and Markovian.
%%%%%%%%%%%%%%%%%%%%%%%%%%%%%%%%%%%%%%%%%%%%%%%%%%%%%%%
\end{Theorem}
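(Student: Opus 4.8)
The plan is to treat the two conclusions separately: stationarity comes immediately from the localization/stationarity relations defining partial spreadability, while Markovianity is obtained by using the maximality hypothesis to reduce the claim to the commuting-square results of Subsection \ref{subsection:s-f-p-a}.

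First I would unpack the hypothesis. Being maximal partially spreadable, the sequence is in particular partially spreadable (Definition \ref{definition:p-s}), so there is a representation $\rho \colon F^+ \to \End(\cM,\psi)$ with $\iota_n = \rho(g_0)^n \iota_0$ for all $n \ge 0$. Writing $\alpha_0 := \rho(g_0)$, the endomorphism axioms give $\psi \circ \alpha_0 = \psi$; since $\alpha_0$ is moreover a unital $*$-homomorphism, the relation $\iota_n = \alpha_0^n \iota_0$ forces equality of the moment functionals of $(\iota_0, \iota_1, \ldots)$ and $(\iota_1, \iota_2, \ldots)$, whence $\iota$ is stationary (this is the implication ``partial spreadability $\Rightarrow$ stationarity'' of the hierarchy recorded after Proposition \ref{proposition:dip}).

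The heart of the matter is Markovianity, and this is where maximality enters. By Definition \ref{definition:max-p-s} we have the equality $\iota_0(\cA) = \cM_0$, where $\cM_0 = \bigcap_{k \ge 1}\cM^{\rho(g_k)}$. Setting $\cA_0 := \iota_0(\cA)$, this says precisely that the generator of the associated stationary process equals $\cM_0$. Consequently the canonical local filtration of $\iota$,
\[
\cA_I = \bigvee_{i \in I} \iota_i(\cA) = \bigvee_{i \in I} \alpha_0^i(\cA_0) = \bigvee_{i \in I} \alpha_0^i(\cM_0),
\]
is exactly the local filtration attached to the stationary process $(\cM,\psi,\alpha_0,\cM_0)$. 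Corollary \ref{corollary-markov-filtration-0} --- whose proof rests on the commuting square of Corollary \ref{corollary:commsquare} and does not require $\rho$ to be generating --- then asserts that $(\cM,\psi,\alpha_0,\cM_0)$ is a stationary Markov process, i.e. that $\cA_\bullet$ is a local Markov filtration in the sense of Definition \ref{definition:markov-filtration}. Since this Markovian filtration is the canonical local filtration of $\iota$ itself, Lemma \ref{lemma:Markovfiltseq} upgrades it to the statement that $\iota$ is a Markov sequence in the sense of Definition \ref{definition:markovianity}(ii) (the case $\cM_\bullet = \cA_\bullet$). Together with the stationarity already shown, this yields the theorem.

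The step I expect to carry all the weight --- and the reason the maximality hypothesis cannot be dropped --- is the identification $\cA_0 = \cM_0$ and the resulting applicability of Corollary \ref{corollary-markov-filtration-0}. For a merely partially spreadable sequence one only knows $\iota_0(\cA) \subset \cM_0$, and then the commuting square required in Theorem \ref{theorem:markov-filtration-1} can fail (compare Remark \ref{remark:markov-filtration-1} and the discussion of functions of Markov sequences in Subsection \ref{subsection:FunctionsOfMarkov}); maximality is exactly the condition that guarantees this square, and hence Markovianity. The remaining manipulations --- stationarity and the passage from a Markovian filtration to a Markov sequence --- are routine given the results already established, so the genuine content of the argument is this reduction rather than any new computation.
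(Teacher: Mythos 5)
Your proposal is correct and follows essentially the same route the paper takes (the paper's justification is the paragraph preceding Definition \ref{definition:max-p-s} together with Corollary \ref{corollary-markov-filtration-0}): maximality identifies $\iota_0(\cA)$ with $\cM_0$, Corollary \ref{corollary-markov-filtration-0} makes the canonical local filtration Markovian, and Lemma \ref{lemma:Markovfiltseq} (in the trivial case $\cM_\bullet=\cA_\bullet$) converts this into Markovianity of the sequence, with stationarity read off from $\iota_n=\rho(g_0)^n\iota_0$ and $\psi\circ\rho(g_0)=\psi$. Your observations that Corollary \ref{corollary-markov-filtration-0} does not need the generating property and that maximality is exactly what replaces the commuting-square hypothesis of Theorem \ref{theorem:markov-filtration-1} are both accurate.
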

%%%%%%%%%%%%%%%%%%%%%%%%%%%%%%%%%%%%%%%%%%%%%%%%%%%%%%%
Thus we have obtained a noncommutative de Finetti theorem for noncommutative 
stationary Markov sequences. One should not expect the converse to be true
in the full generality of our operator algebraic framework, for similar reasons 
as outlined for the noncommutative extended de Finetti theorem in \cite{Ko10}. 
%%%%%%%%%%%%%%%%%%%%%%%%%%%%%%%%%%%%%%%%%%%%%%%%%%%%%%%
%%%%%%%%%%%%%%%%%%%%%%%%%%%%%%%%%%%%%%%%%%%%%%%%%%%%%%%
%%%%%%%%%%%%%%%%%%%%%%%%%%%%%%%%%%%%%%%%%%%%%%%%%%%%%%%
%%%%%%%%%%%%%%%%%%%%%%%%%%%%%%%%%%%%%%%%%%%%%%%%%%%%%%%
\section{\texorpdfstring{Constructions of representations of the Thompson monoid $F^+$}{}} 
\label{section:Constructions of Reps of F+}
%%%%%%%%%%%%%%%%%%%%%%%%%%%%%%%%%%%%%%%%%%%%%%%%%%%%%%%
%%%%%%%%%%%%%%%%%%%%%%%%%%%%%%%%%%%%%%%%%%%%%%%%%%%%%%%
%%%%%%%%%%%%%%%%%%%%%%%%%%%%%%%%%%%%%%%%%%%%%%%%%%%%%%%
This section is about how to construct  representations of the Thompson monoid $F^+$ as they
naturally arise in noncommutative probability theory. It will be seen that such constructions 
are intimately related to the construction of stationary Markov processes. In particular, this
will establish that a large class of stationary Markov sequences is partially spreadable.  
%%%%%%%%%%%%%%%%%%%%%%%%%%%%%%%%%%%%%%%%%%%%%%%%%%%%%%%
\subsection{Tensor product constructions}
\label{subsection:tensor-product}
%%%%%%%%%%%%%%%%%%%%%%%%%%%%%%%%%%%%%%%%%%%%%%%%%%%%%%%
Let $(\cA,\varphi)$ and $(\cC,\chi)$ be probability spaces.  Taking the infinite von Neumann
algebraic tensor product with respect to an infinite tensor product state,    
\[
(\cM, \psi) 
:=  \big(\cA \otimes \cC^{\otimes_{\Nset_0}} , \varphi \otimes \chi^{\otimes_{\Nset_0}}\big)
\]
is a probability space which can be equipped with a representation of the monoid of partial
shifts $S^+$ and the Thompson monoid $F^+$. For $n \in \Nset_0$, let $\beta_n$ denote the partial
shift which acts on the weak*-total set of finite elementary tensors in $\cM$ as 
\[
\beta_n(a \otimes x_0\otimes\cdots\otimes x_{n-1}\otimes x_{n} \otimes x_{n+1} \otimes \cdots)
:= a \otimes x_0 \otimes\cdots\otimes x_{n-1} \otimes \1_\cC \otimes x_{n} \otimes x_{n+1}
    \otimes \cdots.
 \] 
%%%%%%%%%%%%%%%%%%%%%%%%%%%%%%%%%%%%%%%%%%%%%%%%%%%%%%%
\begin{Proposition}\label{proposition:rep-S+} 
%%%%%%%%%%%%%%%%%%%%%%%%%%%%%%%%%%%%%%%%%%%%%%%%%%%%%%% 
The maps $h_n \mapsto \beta_n =: \varrho(h_n) $, with $n \in \Nset_0$, extend multiplicatively 
to a representation $\varrho\colon S^+ \to \End(\cM,\psi)$ which has the generating property. 
%%%%%%%%%%%%%%%%%%%%%%%%%%%%%%%%%%%%%%%%%%%%%%%%%%%%%%%
\end{Proposition}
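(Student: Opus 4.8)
The plan is to realize each partial shift $\beta_n$ as the endomorphism of $(\cM,\psi)$ induced by a reindexing of the $\cC$-tensor factors, and then to transfer the defining relations of $S^+$ from the corresponding combinatorial relations among the index maps $\theta_n$ of Lemma \ref{definition:partial-shift}. To that end, for an injection $\sigma \colon \Nset_0 \to \Nset_0$ let $\Phi(\sigma)$ denote the map on $\cM = \cA \otimes \cC^{\otimes_{\Nset_0}}$ which fixes the $\cA$-factor and sends the elementary tensor carrying $x_m$ in the $m$-th copy of $\cC$ to the elementary tensor carrying $x_m$ in the $\sigma(m)$-th copy, with $\1_\cC$ placed in every copy outside the range of $\sigma$. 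First I would check that $\Phi(\sigma) \in \End(\cM,\psi)$. Since $\chi(\1_\cC)=1$, insertion of identity factors leaves the product state unchanged, so $\Phi(\sigma)$ preserves the GNS inner product and extends to a normal, unital $*$-homomorphism on the infinite tensor product with $\psi \circ \Phi(\sigma) = \psi$; and because $\sigma_t^\psi = \sigma_t^\varphi \otimes (\sigma_t^\chi)^{\otimes_{\Nset_0}}$ acts factorwise, $\Phi(\sigma)$ commutes with the modular group. By construction $\beta_n = \Phi(\theta_n)$.

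Next I would record that $\Phi$ is multiplicative: applying $\Phi(\tau)$ and then $\Phi(\sigma)$ carries the factor originally in copy $m$ to copy $\sigma(\tau(m))$, leaving identities elsewhere, so that $\Phi(\sigma)\Phi(\tau) = \Phi(\sigma \circ \tau)$. Combining this with the relations $\theta_k \theta_\ell = \theta_{\ell+1}\theta_k$ for $0 \le k \le \ell$ furnished by Lemma \ref{definition:partial-shift} yields
\[
\beta_k \beta_\ell = \Phi(\theta_k \theta_\ell) = \Phi(\theta_{\ell+1}\theta_k) = \beta_{\ell+1}\beta_k \qquad (0 \le k \le \ell < \infty).
\]
As these are precisely the defining relations of $S^+$, the assignment $h_n \mapsto \beta_n$ extends to a monoid homomorphism $\varrho \colon S^+ \to \End(\cM,\psi)$.

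For the generating property I would invoke the extra relation $\beta_n^2 = \beta_{n+1}\beta_n$ valid in $S^+$. As noted in the Remark following Proposition \ref{proposition:generating-property}, this forces $\cM^{\beta_n} \subset \cM^{\beta_{n+1}}$, whence $\cM_n = \bigcap_{k \ge n+1}\cM^{\beta_k} = \cM^{\beta_{n+1}}$. It then suffices to show $\bigvee_{n \ge 0}\cM^{\beta_{n+1}} = \cM$, which follows from the elementary inclusion of $\cA$ tensored with the first $n+1$ copies of $\cC$ into $\cM^{\beta_{n+1}}$: every such element is trivial in all copies of index $\ge n+1$ and is therefore fixed by $\beta_{n+1}$. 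Since the join of these subalgebras over $n$ is by definition all of $\cM$, we obtain $\cM_\infty = \cM$, i.e. $\varrho$ has the generating property.

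I expect the only genuine technical point to be the normal extendability of the insertion map $\Phi(\sigma)$ from finite elementary tensors to the von Neumann algebraic infinite tensor product; everything else is a bookkeeping transfer of the identities for $\theta_n$ together with the factorwise behaviour of the product state and its modular automorphism group.
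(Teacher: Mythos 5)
Your proof is correct and, in substance, coincides with the paper's: both verify that each $\beta_n$ defines an element of $\End(\cM,\psi)$ using the factorwise form of the product state and of its modular group $\sigma_t^\psi = \sigma_t^\varphi \otimes (\sigma_t^\chi)^{\otimes_{\Nset_0}}$, both reduce the $S^+$ relations to a computation on finite elementary tensors, and both obtain the generating property from the inclusion of the finite tensor subalgebras $\cA \otimes \cC^{\otimes_n} \otimes \1_{\cC}^{\otimes_{\Nset_0}}$ into the relevant fixed point algebras. Two organizational differences are worth recording. First, you derive the relations $\beta_k\beta_\ell = \beta_{\ell+1}\beta_k$ by writing $\beta_n = \Phi(\theta_n)$ for a multiplicative assignment $\Phi$ defined on injections of $\Nset_0$ and then importing the combinatorial identities $\theta_k\theta_\ell = \theta_{\ell+1}\theta_k$ of Lemma \ref{definition:partial-shift}; the paper instead checks the relations directly on elementary tensors. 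Your packaging turns the verification into a one-line transfer and would apply verbatim to any other family of index injections, at the cost of having to check the (easy) multiplicativity $\Phi(\sigma)\Phi(\tau) = \Phi(\sigma\circ\tau)$. Second, for the generating property you route through the extra $S^+$ relation $\beta_n^2 = \beta_{n+1}\beta_n$ to identify $\cM_n = \cM^{\beta_{n+1}}$ before taking the join, whereas the paper notes directly that $\cA \otimes \cC^{\otimes_n} \otimes \1_{\cC}^{\otimes_{\Nset_0}} \subset \cM^{\beta_k}$ for all $k \ge n$, hence lies in $\cM_{n-1}$; your version makes explicit a step the paper leaves implicit when it passes from weak*-density of $\bigcup_n \cM^{\beta_n}$ to $\cM_\infty = \cM$. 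The one issue both treatments leave at the same level of detail is the normal extension of the insertion maps from the algebraic tensor product to the von Neumann algebraic infinite tensor product, which you correctly isolate as the only genuine technical point; the paper likewise asserts this extension without proof.
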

%%%%%%%%%%%%%%%%%%%%%%%%%%%%%%%%%%%%%%%%%%%%%%%%%%%%%%% 
\begin{proof}
Each $\beta_n$ extends to a unital injective *-homomorphism on $\cM$, denoted by the same symbol,
such that $\psi \circ \beta_n = \psi$. As the modular automorphism group of $(\cM,\psi)$ equals
the von Neumann algebraic tensor product of the modular automorphism groups of its tensor
factors, i.e.~$\sigma^\psi_t =  \sigma^\varphi_t \otimes (\sigma^\chi_t)^{\otimes_{\Nset_0}}$, 
it is easily verified that $\beta_n  \sigma^\psi_t =  \sigma^\psi_t  \beta_n$ for all 
$t \in \Rset$.  Thus $\beta_n \in \End(\cM,\psi)$ for all $n \in \Nset_0$.  For $0 \le 
k \le \ell < \infty$, the relations $\beta_k \beta_\ell = \beta_{\ell +1} \beta_k$ are
directly checked on elementary tensors. Consequently the endomorphisms  
$\beta_0, \beta_1, \ldots $ satisfy the relations of the monoid generators 
$h_0, h_1, \ldots \in S^+$.  Finally, the generating property of the representation $\varrho$ 
is inferred from $\cA \otimes \cC^{\otimes_{n}} \otimes \1_{\cC}^{\otimes_{\Nset_0}} \subset
\cM^{\beta_n}$ which ensures that the unital *-algebra $\bigcup_{n \in \Nset_0} \cM^{\beta_n}$ 
is weak*-dense in $\cM$.    
\end{proof}
%%%%%%%%%%%%%%%%%%%%%%%%%%%%%%%%%%%%%%%%%%%%%%%%%%%%%%% 
Let $\epsilon \colon F^+ \to S^+$ be the monoid epimorphism with $\epsilon(g_n) = h_n$ 
for all $n \in \Nset$.  Then
\[
F^+ \ni g  \mapsto  \varrho \circ \epsilon (g) \in \End(\cM,\psi) 
\]
defines a representation of the Thompson monoid $F^+$ which also has the generating property. 
More general representations of $F^+$ can be constructed as follows.  

Given the two random variables $C\colon(\cA, \varphi) \to (\cA\otimes\cC, \varphi \otimes \chi)$ 
and $D \colon  (\cC, \chi) \to  (\cC \otimes \cC, \chi \otimes \chi)$, let $\alpha_n$ denote the
$\Cset$-linear extension of the map defined on a weak*-total subset of $\cM$ by
\begin{align} \label{eq:operator-D}
\alpha_n (a \otimes x_0 \otimes x_1 \otimes \cdots) &:= 
\begin{cases}
C(a) \otimes x_0 \otimes x_1 \otimes  \cdots  & \text{ if $n=0$},\\
a \otimes D(x_0) \otimes x_1 \otimes  \cdots  & \text{ if $n=1$},\\
a  \otimes x_0 \otimes \cdots \otimes D(x_{n-1}) \otimes \cdots  & \text{ if $n>1$.}
\end{cases}
\end{align}
%%%%%%%%%%%%%%%%%%%%%%%%%%%%%%%%%%%%%%%%%%%%%%%%%%%%%%%
\begin{Proposition}\label{proposition:rep-F+} 
%%%%%%%%%%%%%%%%%%%%%%%%%%%%%%%%%%%%%%%%%%%%%%%%%%%%%%% 
The maps $g_n \mapsto \alpha_n =: \rho(g_n) $, with $n \in \Nset_0$, extend multiplicatively 
to a representation $\rho\colon F^+ \to \End(\cM,\psi)$ which has the generating property. 
%%%%%%%%%%%%%%%%%%%%%%%%%%%%%%%%%%%%%%%%%%%%%%%%%%%%%%%
\end{Proposition}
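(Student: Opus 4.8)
The plan is to follow the template of the proof of Proposition~\ref{proposition:rep-S+}: first show that each $\alpha_n$ is a well-defined element of $\End(\cM,\psi)$, then verify the defining relations of $F^+$, and finally establish the generating property. For the first step I would observe that, up to the canonical re-indexing isomorphism of the infinite tensor product, $\alpha_0$ coincides with the amplification $C \otimes \Id_{\cC^{\otimes_{\Nset_0}}}$ of the random variable $C$, while for $n \ge 1$ the map $\alpha_n$ coincides with the amplification $\Id \otimes \cdots \otimes \Id \otimes D \otimes \Id \otimes \cdots$ of the random variable $D$ inserted into the $(n-1)$-th copy of $\cC$. Since $C$ and $D$ are unital, injective, normal $*$-homomorphisms, each such amplification extends from the weak$^*$-total set of finite elementary tensors to a unital injective normal $*$-endomorphism of $\cM$. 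State invariance $\psi \circ \alpha_n = \psi$ is then immediate from the identities $(\varphi \otimes \chi) \circ C = \varphi$ and $(\chi \otimes \chi) \circ D = \chi$, and the modular condition follows from the factorization $\sigma_t^\psi = \sigma_t^\varphi \otimes (\sigma_t^\chi)^{\otimes_{\Nset_0}}$ together with the modular intertwining relations $C \sigma_t^\varphi = \sigma_t^{\varphi \otimes \chi} C$ and $D \sigma_t^\chi = \sigma_t^{\chi \otimes \chi} D$ satisfied by the random variables $C$ and $D$.

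Next I would check the relations $\alpha_k \alpha_\ell = \alpha_{\ell+1} \alpha_k$ for $0 \le k < \ell < \infty$ by evaluating both sides on an elementary tensor $a \otimes x_0 \otimes x_1 \otimes \cdots$ and bookkeeping the tensor legs. The governing principle is that $\alpha_k$ shifts every $\cC$-leg of index $\ge k$ one step to the right --- creating a fresh leg out of $C(a)$ when $k = 0$, or splitting the leg $x_{k-1}$ via $D$ when $k \ge 1$ --- so that the action of $\alpha_\ell$, which modifies leg $\ell - 1 \ge k$, is transported to leg $\ell$ and hence reappears as $\alpha_{\ell+1}$; performing $\alpha_k$ first and $\alpha_{\ell+1}$ afterwards yields the same leg configuration. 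I would carry this out in the two cases $k = 0$ and $1 \le k < \ell$, writing $C(a) = \sum_i a_i \otimes c_i$ and $D(x_j) = \sum_p u_p \otimes v_p$ and comparing the resulting tensors leg by leg. It is worth emphasising that no relation is imposed when $k = \ell$, which is precisely the feature separating $F^+$ from its quotient $S^+$ and the reason a general $D$ (rather than the insertion of $\1_\cC$) is admissible here.

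For the generating property I would produce an explicit weak$^*$-dense family of fixed points. Because $D$ is unital one has $D(\1_\cC) = \1_\cC \otimes \1_\cC$, and therefore for each $n \in \Nset_0$ the von Neumann subalgebra $\cA \otimes \cC^{\otimes_{[0,n-1]}} \otimes \1_\cC^{\otimes_{[n,\infty)}}$ is left pointwise fixed by $\alpha_k$ for every $k \ge n+1$; that is, it is contained in $\cM_n = \bigcap_{k \ge n+1} \cM^{\alpha_k}$. Letting $n$ range over $\Nset_0$, the union of these subalgebras is the algebra of finitely supported elementary tensors, which is weak$^*$-dense in $\cM$. Consequently $\cM_\infty = \bigvee_{n \ge 0} \cM_n = \cM$, so $\rho$ has the generating property.

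I expect the principal difficulty to lie in the first step rather than the last: one must argue rigorously that the formal prescription \eqref{eq:operator-D}, defined only on finite elementary tensors, genuinely extends to a normal $*$-endomorphism of the \emph{infinite} von Neumann tensor product and commutes with the modular flow. Here the factorization of $\sigma_t^\psi$ and the modular intertwining properties of the random variables $C$ and $D$ do the essential work. By comparison, the relation check, although the most computational part, reduces to routine leg bookkeeping once the shift mechanism above is isolated, and the generating property is then a short density argument.
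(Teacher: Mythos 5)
Your proposal is correct and follows essentially the same route as the paper: the paper's own (one-line) proof verifies the relations $\alpha_k\alpha_\ell=\alpha_{\ell+1}\alpha_k$ by direct computation on finite elementary tensors, exactly as you do, and your density argument for the generating property via $\cA\otimes\cC^{\otimes_{[0,n-1]}}\otimes\1_\cC^{\otimes_{[n,\infty)}}\subset\cM_n$ is the same observation the paper invokes later in the proof of Theorem \ref{theorem:Markovtensor}. Your write-up is in fact more complete than the paper's, since it also spells out the endomorphism, state-invariance and modularity checks that the paper leaves implicit by analogy with Proposition \ref{proposition:rep-S+}.
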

%%%%%%%%%%%%%%%%%%%%%%%%%%%%%%%%%%%%%%%%%%%%%%%%%%%%%%% 
\begin{proof}
For $0 \le k < \ell < \infty$, the relations $\alpha_k \alpha_\ell =  \alpha_{\ell+1}  \alpha_k$ 
are verified in a straightforward computation on finite elementary tensors of the form 
$x = a \otimes x_0 \otimes  \cdots \otimes x_n \otimes \1_\cC^{\otimes_\Nset}$. 
We need to verify that each $\alpha_n$ commutes with the modular automorphism group $\sigma^{\psi}
= \sigma^\varphi \otimes (\sigma^\chi)^{\otimes_{\Nset_0}}$. As $\alpha_n$ is a *-homomorphism, 
this modular condition is ensured if
\begin{align}
\alpha_n \circ \sigma_t^\psi (a \otimes \1_{\cC}^{\otimes_{\Nset_0}}) 
&=  \sigma_t^\psi \circ \alpha_n  (a \otimes \1_{\cC}^{\otimes_{\Nset_0}}),
\label{eq-prop-mod-cond-i}
\\
\alpha_n \circ \sigma_t^\psi (\1_\cA \otimes y) 
&=  \sigma_t^\psi \circ \alpha_n  (\1_\cA \otimes y)
\label{eq-prop-mod-cond-ii}
\end{align}
for all $a \in \cA$ and $y \in \cC^{\otimes_{\Nset_0}}$. Let us first consider the case $n=0$ in 
\eqref{eq-prop-mod-cond-i}. Indeed, as the noncommutative random variable $C$ intertwines the 
modular automorphism groups $\sigma^\varphi$ and $\sigma^\varphi \otimes \sigma^\chi$, 
\begin{align*}
\alpha_0 \circ \sigma_t^\psi (a \otimes \1_{\cC}^{\otimes_{\Nset_0}}) 
&=  \alpha_0 (\sigma_t^\varphi( a) \otimes \1_{\cC}^{\otimes_{\Nset_0}}) 
= (C \circ \sigma_t^\varphi( a)) \otimes \1_{\cC}^{\otimes_{\Nset}} \\ 
&=  ((\sigma_t^\varphi \otimes  \sigma_t^\chi) \circ C)(a) \otimes \1_{\cC}^{\otimes_{\Nset}}
 =  \sigma_t^\psi (C(a) \otimes \1_{\cC}^{\otimes_{\Nset}})\\
&=\sigma_t^\psi \circ \alpha_0  (a \otimes \1_{\cC}^{\otimes_{\Nset_0}}).
\end{align*}
Furthermore, again in the case $n=0$, \eqref{eq-prop-mod-cond-ii} is immediate from 
\begin{align*}
\alpha_0 \circ \sigma_t^\psi (\1_\cA \otimes y) = 
\beta_0 \circ \sigma_t^\psi (\1_\cA \otimes y)
=  \sigma_t^\psi  \circ \beta_0  (\1_\cA \otimes y),
\end{align*}
since $\beta_0$ and $\sigma_t^\psi$ commute by Proposition \ref{proposition:rep-S+}. Similar arguments
ensure $\alpha_n \circ \sigma_t^\psi =  \sigma_t^\psi \circ \alpha_n $ for $n \ge 1$.  
Finally, the generating property of $\rho$ is inferred from  
$\cA\otimes \cC^{\otimes_{n-1}}\otimes \1_{\cC}^{\otimes_{\Nset_0}} \subset \cM^{\alpha_n}$ 
for all $n \geq 1$.
\end{proof}
%%%%%%%%%%%%%%%%%%%%%%%%%%%%%%%%%%%%%%%%%%%%%%%%%%%%%%%
The representation $\rho$ of $F^+$ may be considered as a perturbation of the representation
$\varrho$ of $S^+$ by locally acting operators $C$ and $D$ on the infinite tensor product factors
of $\cM$.  To be more precise, the choice $D(x) = \1_\cC \otimes x$ yields $\alpha_n =
(\beta_{n-1}\beta_n^*) \beta_n  = \beta_{n-1}$ for $n \ge 1$ and  
$\alpha_0 = (\alpha_0 \beta_0^*) \beta_0$.  
%%%%%%%%%%%%%%%%%%%%%%%%%%%%%%%%%%%%%%%%%%%%%%%%%%%%%%%
\begin{Theorem}
%%%%%%%%%%%%%%%%%%%%%%%%%%%%%%%%%%%%%%%%%%%%%%%%%%%%%%% 
Let $\varrho \colon S^+ \mapsto \End(\cM,\psi)$ be the representation as introduced in
Proposition \ref{proposition:rep-S+}. Then $(\cM, \psi, \beta_0, \cM^{\beta_1})$ is a
(noncommutative) full Bernoulli shift with generator 
$\cM^{\beta_1} = \cA \otimes \cC \otimes \1_{\cC}^{\otimes_{\Nset_0}}$.   
%%%%%%%%%%%%%%%%%%%%%%%%%%%%%%%%%%%%%%%%%%%%%%%%%%%%%%%
\end{Theorem}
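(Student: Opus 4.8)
The plan is to verify directly the three requirements in Definition \ref{definition:ncbs} for the quadruple $(\cM,\psi,\beta_0,\cM^{\beta_1})$: that it is a minimal stationary process, that $\cM^{\beta_0} \subset \cM^{\beta_1}$, and that the family $\{\beta_0^n(\cM^{\beta_1})\}_{n \in \Nset_0}$ is full CS independent over $\cM^{\beta_0}$. The first step is to pin down the fixed point algebras. Writing $\cM = \cA \otimes \bigotimes_{k \ge 0}\cC_k$ with $\cC_k$ the $k$-th tensor copy of $\cC$, one reads off from the definition of $\beta_n$ that it fixes $\cA$ and $\cC_0,\dots,\cC_{n-1}$ pointwise and shifts the factors at positions $\ge n$ one step to the right. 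I would first record the manifest inclusion $\cA \otimes \cC^{\otimes_n} \otimes \1_\cC^{\otimes_{\Nset_0}} \subseteq \cM^{\beta_n}$, and then obtain the reverse inclusion from the mean ergodic theorem: since $\beta_n \in \End(\cM,\psi)$, the $\psi$-preserving conditional expectation $Q_n := E_{\cM^{\beta_n}}$ is the pointwise strong-operator limit $\lim_{N} \frac1N \sum_{i=0}^{N-1}\beta_n^i$. Evaluating this Ces\`aro average on an elementary tensor exhibits $Q_n$ as the factorwise map which is the identity on $\cA$ and on $\cC_0,\dots,\cC_{n-1}$ and applies the state $\chi$ to every factor $\cC_k$ with $k \ge n$; hence $\cM^{\beta_n} = \cA \otimes \cC^{\otimes_n} \otimes \1_\cC^{\otimes_{\Nset_0}}$. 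In particular $\cM^{\beta_0} = \cA \otimes \1_\cC^{\otimes_{\Nset_0}} \cong \cA$ and $\cM^{\beta_1} = \cA \otimes \cC \otimes \1_\cC^{\otimes_{\Nset_0}}$, so the asserted description of the generator holds and the tower inclusion $\cM^{\beta_0} \subset \cM^{\beta_1}$ is immediate (it also follows from the $S^+$-relations, cf.\ the Remark after Proposition \ref{proposition:generating-property}).

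Next I would check that $(\cM,\psi,\beta_0,\cM^{\beta_1})$ is a minimal stationary process. That $\beta_0 \in \End(\cM,\psi)$ was established in Proposition \ref{proposition:rep-S+}, and $\cM^{\beta_1}$ is $\psi$-conditioned because, being the fixed point algebra of the modular-commuting endomorphism $\beta_1$, it is globally invariant under the modular automorphism group of $(\cM,\psi)$. For minimality one computes $\beta_0^n(\cM^{\beta_1}) = \cA \otimes \1_\cC^{\otimes_n} \otimes \cC_n \otimes \1_\cC^{\otimes_{\Nset_0}}$, i.e.\ the copy of $\cA$ together with the single factor $\cC_n$ sitting at position $n$; taking the join over $n \ge 0$ recovers all tensor factors, so $\bigvee_{n \ge 0}\beta_0^n(\cM^{\beta_1}) = \cM$.

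It remains to prove the full CS independence over $\cM^{\beta_0} = \cA$. For $I \subset \Nset_0$ put $\cA_I := \bigvee_{i \in I}\beta_0^i(\cM^{\beta_1})$; by the previous computation $\cA_I = \cA \otimes \bigotimes_{i \in I}\cC_i$ (and $\1_\cC$ at the remaining positions), and the $\psi$-preserving conditional expectation $E_{\cA_I}$ is the factorwise map applying $\chi$ to exactly those factors $\cC_k$ with $k \notin I$. Each $\cA_I$ contains $\cM^{\beta_0} = \cA$, so $\cM^{\beta_0} \subset \cA_I \cap \cA_J$ and the inclusion hypothesis of Proposition \ref{proposition:cs} is met. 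For disjoint $I, J$ the two factorwise conditional expectations commute, and since they apply $\chi$ to the positions in $I^c$ and $J^c$ respectively, their composite $E_{\cA_I}E_{\cA_J}$ applies $\chi$ to every factor at a position in $I^c \cup J^c = (I \cap J)^c = \Nset_0$, which is precisely $E_{\cA} = E_{\cM^{\beta_0}}$. This is condition (ii) of Proposition \ref{proposition:cs} for the relevant square, so $\cA_I$ and $\cA_J$ are CS independent over $\cM^{\beta_0}$; hence the family is full CS independent over $\cM^{\beta_0}$ and the three defining conditions of Definition \ref{definition:ncbs} are verified.

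I would expect the only genuinely technical point to be the reverse inclusion in the identification of the fixed point algebras, i.e.\ showing that the tensor shift $\beta_n$ has no fixed points beyond $\cA \otimes \cC^{\otimes_n} \otimes \1_\cC^{\otimes_{\Nset_0}}$; this is where the mean ergodic theorem and the factorwise convergence of the Ces\`aro averages against the product state do the real work, the remaining steps being routine manipulations with product states and factorwise conditional expectations.
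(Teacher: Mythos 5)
Your proposal is correct and follows essentially the same route as the paper: identify $\cM^{\beta_n}$ and the algebras $\cB_I=\bigvee_{i\in I}\beta_0^i(\cM^{\beta_1})$ explicitly as tensor-localized subalgebras, read off minimality and the inclusion $\cM^{\beta_0}\subset\cM^{\beta_1}$, and deduce full CS independence from the tensor-type form of the conditional expectations. The only (immaterial) differences are that you verify condition (ii) of Proposition \ref{proposition:cs} ($E_{\cB_I}E_{\cB_J}=E_{\cM^{\beta_0}}$) where the paper verifies the equivalent factorization condition (i), and that you spell out via the mean ergodic theorem the reverse inclusion in the identification of $\cM^{\beta_n}$, which the paper leaves as a straightforward check.
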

%%%%%%%%%%%%%%%%%%%%%%%%%%%%%%%%%%%%%%%%%%%%%%%%%%%%%%% 
\begin{proof}
Let $\cB_I := \bigvee_{i \in I}\beta_0^i(\cM^{\beta_1})$ for $I\in\cI(\Nset_0)$  and note that
$\cB_{[0,0]} = \cM^{\beta_1}$. It is straightforward to check that $\cM^{\beta_1} = 
\cA \otimes \cC \otimes \1_{\cC}^{\otimes_{\Nset_0}}$ and, more generally,
$\cB_{[m,n]}=\cA\otimes \1_\cC^{\otimes_{m}}\otimes \cC^{\otimes_{n-m+1}}\otimes
\1_\cC^{\otimes_{\Nset_0}}$ for $0 \le m \le n$.  Since $\cB_{\Nset_0}  = \cM$, the stationary
process $(\cM, \psi, \beta_0, \cM^{\beta_1})$ is minimal. We are left to show that this minimal
stationary process is actually a noncommutative Bernoulli shift (in the sense of Definition 
\ref{definition:ncbs}).
Clearly, $ \cM^{\beta_0} \subset \cM^{\beta_1}$ as $\cM^{\beta_0} 
=  \cA \otimes \1_{\cC}^{\otimes_{\Nset_0}}$. We are left to verify the factorization
\[Q_0(xy)=Q_0(x)Q_0(y)\] for any $x\in \cB_I, y\in \cB_J$ whenever $I \cap J = \emptyset$. 
Here $Q_0$ is the $\psi$-preserving normal conditional expectation from $\cM$ onto
$\cM^{\beta_0}$.  As the conditional expectation $Q_0$ is of tensor type, i.e.
\[
Q_0(a \otimes x_0 \otimes x_1 \otimes \cdots \otimes x_k \otimes \1_\cC^{\otimes_{\Nset_0}})   
= a \otimes \chi(x_0) \chi(x_1) \cdots \chi(x_k)  \1_\cC^{\otimes_{\Nset_0}},  
\]
the required factorization easily follows.
\end{proof}
%%%%%%%%%%%%%%%%%%%%%%%%%%%%%%%%%%%%%%%%%%%%%%%%%%%%%%% 
%%%%%%%%%%%%%%%%%%%%%%%%%%%%%%%%%%%%%%%%%%%%%%%%%%%%%%%
\begin{Theorem} \label{theorem:Markovtensor}
%%%%%%%%%%%%%%%%%%%%%%%%%%%%%%%%%%%%%%%%%%%%%%%%%%%%%%% 
Let $\rho \colon F^+ \mapsto \End(\cM,\psi)$ be a  representation as introduced in Proposition
\ref{proposition:rep-F+}. Then $(\cM, \psi, \alpha_0, \cM^{\alpha_1})$ is a stationary Markov
process with generator $\cM^{\alpha_1}$. Moreover $(\cM, \psi, \alpha_0, \cA_0)$ is a stationary
Markov process with generator $\cA_0 := \cA \otimes \1_{\cC}^{\otimes_{\Nset_0}} 
\subset \cM^{\alpha_1}$.  
%%%%%%%%%%%%%%%%%%%%%%%%%%%%%%%%%%%%%%%%%%%%%%%%%%%%%%%
\end{Theorem}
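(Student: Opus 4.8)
The plan is to prove the two assertions by quite different routes: the first is an immediate consequence of the machinery already developed, whereas the second requires a direct computation with the tensor structure. For the first assertion I would begin by recording that the representation $\rho$ produced in Proposition \ref{proposition:rep-F+} has the generating property, so that Proposition \ref{proposition:generating-property} identifies the intersected fixed point algebra $\cM_0 = \bigcap_{k\ge 1}\cM^{\alpha_k}$ with the single fixed point algebra $\cM^{\alpha_1}$. Consequently the quadruple $(\cM,\psi,\alpha_0,\cM^{\alpha_1})$ is nothing but $(\cM,\psi,\alpha_0,\cM_0)$, which is a stationary Markov process by Corollary \ref{corollary-markov-filtration-0}. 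This disposes of the first statement with essentially no further work.

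The second assertion does \emph{not} follow from the first, since $\cA_0 = \cA\otimes\1_\cC^{\otimes_{\Nset_0}}$ is in general a proper subalgebra of $\cM^{\alpha_1}$ and Markovianity need not pass to subprocesses (compare Remark \ref{remark:failure-mark-filt}); indeed the commuting square demanded by Theorem \ref{theorem:markov-filtration-1} typically fails for the small generator $\cA_0$. I would therefore argue directly, exploiting that $\alpha_0$ depends only on the coupling $C$ (and not on $D$) and acts as a weighted tensor shift. Writing $W_n := \alpha_0^n|_{\cA_0}$ for the $n$-fold iterated coupling, one checks on elementary tensors that $W_n(\cA)\subseteq \cA\otimes\cC^{\otimes_n}\otimes\1_\cC^{\otimes_{\Nset_0}}$, that $\cA_{[n,n]} = \alpha_0^n(\cA_0) = W_n(\cA)$, and, using the unitality of $C$, that $\alpha_0^n$ restricts to the Bernoulli shift on $\1_\cA\otimes\cC^{\otimes_{\Nset_0}}$. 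This yields the tensor splitting $\alpha_0^n(\cM) = W_n(\cA)\vee \cN_n$ with fresh noise $\cN_n := \1_\cA\otimes\1_\cC^{\otimes_n}\otimes\cC^{\otimes_{\Nset_0}}$, where the two factors occupy complementary tensor positions and hence commute.

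With this picture the Markov property of the canonical local filtration $\cA_\bullet$ becomes transparent. On the one hand $\cA_{[0,n]} = \bigvee_{i=0}^n W_i(\cA)\subseteq \cA\otimes\cC^{\otimes_n}\otimes\1_\cC^{\otimes_{\Nset_0}}$, so the past is supported on the first $n$ tensor slots and is tensor-independent from $\cN_n$; on the other hand $\cA_{[n,\infty)} = \alpha_0^n(\cA_{[0,\infty)})\subseteq \alpha_0^n(\cM) = W_n(\cA)\vee\cN_n$. Thus it suffices to verify the commuting square
\[
\begin{matrix}
\cA_{[0,n]} &\subset &\cM\\
\cup  &        & \cup \\
\cA_{[n,n]}   & \subset  & \cA_{[n,\infty)}
\end{matrix}
\]
for every $n \ge 0$, which by condition (iii) of Proposition \ref{proposition:cs} amounts to $E_{\cA_{[0,n]}}(\cA_{[n,\infty)}) = \cA_{[n,n]}$. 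For a product $wg$ with $w \in W_n(\cA) = \cA_{[n,n]}\subseteq \cA_{[0,n]}$ and $g \in \cN_n$, the module property of the conditional expectation together with the tensor independence of $\cN_n$ from $\cA_{[0,n]}$ gives $E_{\cA_{[0,n]}}(wg) = w\,E_{\cA_{[0,n]}}(g) = \psi(g)\,w \in \cA_{[n,n]}$. Since such products are weak*-total in $W_n(\cA)\vee\cN_n$ and $E_{\cA_{[0,n]}}$ fixes $\cA_{[n,n]}$, this proves the displayed commuting square. Hence $\cA_\bullet$ is a saturated local Markov filtration, and by Lemma \ref{lemma:markov-I-II} a local Markov filtration, so $(\cM,\psi,\alpha_0,\cA_0)$ is a stationary Markov process with generator $\cA_0$.

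I expect the main obstacle to lie entirely in the second part: one must recognize that the general $F^+$-commuting-square criteria (Theorems \ref{theorem:markov-filtration-1} and \ref{theorem:markov-filtration-2}) do not apply to the small generator $\cA_0$, and instead isolate the tensor splitting $\alpha_0^n(\cM) = W_n(\cA)\vee\cN_n$ together with the independence of the fresh noise $\cN_n$ from the past. The delicate structural points are confirming that $W_n(\cA)$ and $\cN_n$ sit in complementary tensor factors and that $\cA_{[0,n]}$ is genuinely supported on the first $n$ slots; once these are in place, the conditional-expectation computation is routine.
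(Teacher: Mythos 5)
Your proof is correct, and while the first half follows the paper, the second half takes a genuinely different route. For $(\cM,\psi,\alpha_0,\cM^{\alpha_1})$ both you and the paper reduce to the general fixed-point machinery (you via $\cM_0=\cM^{\alpha_1}$ and Corollary \ref{corollary-markov-filtration-0}, the paper via Corollary \ref{corollary:markov-filtration-MN}; these are the same argument). For the small generator $\cA_0=\cA\otimes\1_\cC^{\otimes_{\Nset_0}}$ you verify the commuting squares by hand, using the splitting $\alpha_0^n(\cM)=W_n(\cA)\vee\cN_n$ and the fact that the fresh noise $\cN_n$ sits in complementary tensor slots and is therefore independent, under the product state, of everything supported on the first $n$ slots; the computation $E_{\cA_{[0,n]}}(wg)=\psi(g)\,w$ then gives condition (iii) of Proposition \ref{proposition:cs} directly, and your appeal to Lemma \ref{lemma:markov-I-II} closes the argument. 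The paper instead exploits the observation — which you record but do not use — that $\alpha_0$ is independent of the choice of $D$ in \eqref{eq:operator-D}: since the canonical local filtration of $(\cM,\psi,\alpha_0,\cA_0)$ depends only on $\alpha_0$ and $\cA_0$, one may replace $D$ by $D(x)=\1_\cC\otimes x$, for which $\cA_0=\cM^{\alpha_1}$ holds for the \emph{modified} representation, and then quote the first half again. The paper's route is shorter and stays entirely within the $F^+$-machinery; yours is longer but self-contained, makes the probabilistic ``innovation'' mechanism explicit, and does not require re-choosing the representation. You are also right that Theorem \ref{theorem:markov-filtration-1} need not apply to $\cA_0$ for the originally given $D$ — the paper's point is precisely that it does apply after the harmless change of $D$.
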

%%%%%%%%%%%%%%%%%%%%%%%%%%%%%%%%%%%%%%%%%%%%%%%%%%%%%%% 
\begin{proof}
By Proposition \ref{proposition:rep-F+}, the representation $\rho$ has the generating property.
Now the  Markovianity of the stationary process  $(\cM, \psi, \alpha_0, \cM^{\alpha_1})$ follows from
Corollary \ref{corollary:markov-filtration-MN}. 
We are left to show that the canonical local filtration of the stationary process 
$(\cM, \psi, \alpha_0, \cA_0)$ is Markovian. We note that the definition of the endomorphism
$\alpha_0$ is independent of the choice of the operator $D$ in  \eqref{eq:operator-D}. 
Moreover, the inclusion $\cA_0 := \cA \otimes \1_{\cC^{\Nset_0}} \subset \cM^{\alpha_1}$ is valid
for any choice of the operator $D$.  Now Corollary \ref{corollary:markov-filtration-MN} can again
be applied to ensure Markovianity if there exists some $D \colon \Cset \to \Cset \otimes \Cset$
such that  $\cA_0 =  \cM^{\alpha_1}$.  It is immediately verified that this equality occurs
for the choice $D(x) = \1_\cC \otimes x$.
\end{proof}
%%%%%%%%%%%%%%%%%%%%%%%%%%%%%%%%%%%%%%%%%%%%%%%%%%%%%%% 
These Markov processes may not be minimal.  Note also that $\cA_0$ may be strictly included in 
$\cM^{\alpha_1}$, as the latter depends on the choice of the operator $D$. For example, strict
inclusion occurs for the choice $D(x) = x \otimes \1_\cC$, but equality occurs for the choice  
$D(x) = \1_\cC \otimes x $ in  \eqref{eq:operator-D}.
%%%%%%%%%%%%%%%%%%%%%%%%%%%%%%%%%%%%%%%%%%%%%%%%%%%%%%%   
\begin{Remark}\normalfont 
%%%%%%%%%%%%%%%%%%%%%%%%%%%%%%%%%%%%%%%%%%%%%%%%%%%%%%% 
We remind the reader that the generating property of $\rho$ and the relations of $F^{+}$
guarantee that the fixed point algebras $\cM^{\alpha_{n}}$ form a tower of inclusions, even
though we may not know explicitly what the fixed point algebras are. In particular, we get
$\cM^{\alpha_1}\subset \cM^{\alpha_2}$. It is not obvious to see this directly (without using 
the relations of $F^{+}$) for a general operator $D$, as used in  \eqref{eq:operator-D} for the
definition of the endomorphisms $\alpha_n$. However, the choice $D(x)=\1_\cC\otimes x$ yields
$\alpha_n=\beta_{n-1}$ for all $ n \ge 1$.  We infer from this that $\cM^{\alpha_{1}}
=  \cM^{\beta_{0}}=\cA \otimes \1_{\cC}^{\otimes_{\Nset}}
\subset \cA \otimes \cC \otimes \1_{\cC}^{\otimes_{\Nset}}
=\cM^{\beta_{1}}=\cM^{\alpha_2}$. 
Similarly, choosing $D(x) =x\otimes \1_\cC$, we get $\alpha_{n}=\beta_n$ for all $n \ge 0$;  
and the inclusion $\cM^{\alpha_{1}} \subset \cM^{\alpha_{2}}$ is clear from the inclusion of
fixed point algebras of the partial shifts $\beta_n$. Finally, we note that if $D$ is one of the
above special random variables, then Markovianity can be proved without appealing 
to the Thompson monoid $F^{+}$, see \cite[Section 2.1]{Go04}.
%%%%%%%%%%%%%%%%%%%%%%%%%%%%%%%%%%%%%%%%%%%%%%%%%%%%%%% 
\end{Remark}
%%%%%%%%%%%%%%%%%%%%%%%%%%%%%%%%%%%%%%%%%%%%%%%%%%%%%%%
Above, we directly constructed some representations of the Thompson monoid $F^+$ on infinite
tensor products of noncommutative probability spaces and invoked some of our general results
about such representations from Section \ref{section:markovianity-from-F+} to obtain 
noncommutative stationary Markov processes. We present next a converse result which starts with 
a certain class of noncommutative stationary Markov processes for which we will make use of
Proposition \ref{proposition:dilation}. Namely, we show that if a Markov map has a tensor dilation 
(in the terminology of K\"ummerer \cite{Ku85}, see Definition \ref{definition:tensordilation}) then this Markov map can be obtained as the
compression of a represented generator of the Thompson monoid $F^+$. 
%%%%%%%%%%%%%%%%%%%%%%%%%%%%%%%%%%%%%%%%%%%%%%%%%%%%%%%%
\begin{Theorem}\label{theorem:TensorMarkov}
%%%%%%%%%%%%%%%%%%%%%%%%%%%%%%%%%%%%%%%%%%%%%%%%%%%%%%%%
Suppose $\gamma \in \End(\cA \otimes \cC, \varphi \otimes \chi)$ and let $\iota_0$ be the canonical 
embedding of $(\cA, \varphi)$ into $(\cA \otimes \cC, \varphi \otimes \chi)$. Then there exists a 
noncommutative probability space $(\cM,\psi)$, a generating representation 
$\rho \colon F^+ \to \End(\cM,\psi)$ and an embedding 
$\kappa \colon (\cA \otimes \cC, \varphi \otimes \chi) \to (\cM,\psi)$ such that
\begin{enumerate}
	\item 
	$\kappa (\cA \otimes \1_\cC) = \cM^{\rho(g_1)}$,
	\item 
	$\iota_0^* \gamma^n \iota_0   = \iota_0^* \kappa^* \rho(g_0^n)\kappa \iota_0$ 
	for all $n \in \Nset_0$. 	
\end{enumerate}	
In particular, $(\cM,\psi,\rho(g_0), \cM^{\rho(g_1)})$ is a unilateral noncommutative stationary Markov process. 
%%%%%%%%%%%%%%%%%%%%%%%%%%%%%%%%%%%%%%%%%%%%%%%%%%%%%%%%
\end{Theorem}
%%%%%%%%%%%%%%%%%%%%%%%%%%%%%%%%%%%%%%%%%%%%%%%%%%%%%%%%
%%%%%%%%%%%%%%%%%%%%%%%%%%%%%%%%%%%%%%%%%%%%%%%%%%%%%%%%
\begin{proof} 
%%%%%%%%%%%%%%%%%%%%%%%%%%%%%%%%%%%%%%%%%%%%%%%%%%%%%%%%
We take 
\[
	(\cM, \psi) 
	:=  \big(\cA \otimes \cC^{\otimes_{\Nset_0}} , \varphi \otimes \chi^{\otimes_{\Nset_0}}\big)
\]
and construct a representation of the Thompson monoid $F^+$ as obtained in Proposition
\ref{proposition:rep-F+}. That is, we define the representation $\rho \colon F^+ \to
\End(\cM,\psi)$ as $\rho(g_n):=\alpha_n$ as in \eqref{eq:operator-D} with 
$C \colon \cA\to\cA \otimes \cC$ and $D \colon \cC\to \cC \otimes \cC$ given by 
$C(a):= \gamma(a\otimes \1_\cC)$ and $D(x):=\1_\cC \otimes x$. 
 Also recall that the partial shifts $\beta_n$ can be constructed and it can be seen that
 $\gamma_0\beta_0= \alpha_0$, where $\gamma_0$ is the natural extension of $\gamma$ to an endomorphism on $(\cM, \psi)$. Let $\kappa$ be the natural
 embedding of $(\cA \otimes \cC, \varphi \otimes \chi)$ into $(\cM,\psi)$.
 The endomorphism $\gamma_0$ satisfies

 \begin{equation}\label{equation:alpha0}
     \kappa^* \gamma_0^n \kappa \iota_0 = \gamma^n \iota_0 \qquad \text{ for all } n\in \Nset_0.
 \end{equation}
Note that for the case $n=1$, the left hand side of this equation can be written as
\begin{equation} \label{equation:alpha0first}
\kappa^*\gamma_0 \kappa \iota_0 
=
\kappa^*\gamma_0 \beta_0\kappa \iota_0 = 
\kappa^*\alpha_0 \kappa \iota_0.
\end{equation}

Now, by Theorem 
\ref{theorem:Markovtensor}, it follows that $(\cM,\psi,\alpha_0,\cM^{\alpha_1})$ is a
noncommutative stationary Markov process, and $\kappa(\cA\otimes \1_{\cC})=\cM^{\alpha_1}$.

We note that $\kappa \iota_0(\kappa \iota_0)^*$ is the $\psi$-preserving normal conditional expectation from $\cM$ onto $\cM^{\alpha_1} =\kappa \iota_0(\cA)$, and by definition, the stationary Markov process $(\cM,\psi,\alpha_0,\cM^{\alpha_1})$ has the transition operator 
\[
T := \kappa \iota_0(\kappa\iota_0)^*\alpha_0 \kappa\iota_0(\kappa \iota_0)^*.
\]

We observe that \eqref{equation:alpha0} and \eqref{equation:alpha0first} allow us to rewrite $T$ as follows:
\begin{align}\label{equation:Talpha}
T &= \kappa \iota_0 (\kappa \iota_0)^* \alpha_0 \kappa \iota_0 (\kappa \iota_0)^* \\ \nonumber
&= \kappa \iota_0\iota_0^*(\kappa^*\alpha_0 \kappa\iota_0)(\kappa\iota_0)^* \\ \nonumber
&= \kappa\iota_0\iota_0^*(\kappa^*\gamma_0\kappa\iota_0)\iota_0^* \kappa^* \\ \nonumber
&=\kappa\iota_0\iota_0^*\gamma\iota_0\iota_0^*\kappa^*
\end{align}

On the other hand, Proposition \ref{proposition:dilation} gives that
$T$ satisfies 
\begin{equation}\label{equation:dilation}
T^n = \kappa \iota_0 (\kappa \iota_0)^* \alpha_0^n \kappa \iota_0 (\kappa \iota_0)^*  \qquad \text{ for all } n \in \Nset_0. 
\end{equation}

Hence by \eqref{equation:Talpha} and \eqref{equation:dilation},
\begin{align*}
(\kappa \iota_0\iota_0^*) \gamma^n (\kappa \iota_0\iota_0^*)^*
&=[(\kappa \iota_0\iota_0^*) \gamma (\kappa \iota_0\iota_0^*)^*]^n \\
&=T^n
= \kappa \iota_0 (\kappa \iota_0)^* \alpha_0^n \kappa \iota_0 (\kappa \iota_0)^*. 
\end{align*}

Simplifying, we get
\[
\iota_0^* \gamma^n \iota_0 = \iota_0^* \kappa^* \alpha_0^n \kappa \iota_0 \qquad \text{ for all } n \in \Nset_0,   
\]
as claimed in (ii) of the theorem. 
%%%%%%%%%%%%%%%%%%%%%%%%%%%%%%%%%%%%%%%%%%%%%%%%%%%%%%%%
\end{proof}
%%%%%%%%%%%%%%%%%%%%%%%%%%%%%%%%%%%%%%%%%%%%%%%%%%%%%%%%
Suppose $(\cA \otimes \cC, \varphi \otimes \chi, \alpha, \cA \otimes \1_\cC)$ is a stationary
Markov process. Then we just showed that $\alpha$ restricted to the generator $\cA\otimes
\1_{\cC}$ can be obtained as the compression of a represented generator of $F^+$. However, it is
unknown in the generality of the present noncommutative setting if there exists a representation
$\rho \colon F^+ \to \End(\cA \otimes \cC, \varphi \otimes \chi)$ such that the Markov shift
$\alpha$ equals the represented generator $\rho(g_0)$ itself. Thus it is unknown if the
canonically associated stationary Markov sequence of random variables 
$\iota \equiv (\iota_n)_{n \in \Nset_0} \colon (\cA,\varphi)  \to 
(\cA \otimes \cC, \varphi \otimes \chi)$ is maximal partially spreadable. We will see in the
Subsection \ref{subsection:constr-class-prob} that this canonically associated sequence 
$\iota$ is maximal partially spreadable in our algebraic framework for classical probability, 
see Theorem \ref{theorem:classical-Markov-Thompson}. Furthermore we will investigate in Subsection
\ref{subsection:op-alg-construction} an operator algebraic setting which allows to deduce that
certain noncommutative stationary Markov sequences are partially spreadable.  
%%%%%%%%%%%%%%%%%%%%%%%%%%%%%%%%%%%%%%%%%%%%%%%%%%%%%%%
\subsection{Constructions in classical probability}
\label{subsection:constr-class-prob}
%%%%%%%%%%%%%%%%%%%%%%%%%%%%%%%%%%%%%%%%%%%%%%%%%%%%%%%
The tensor product constructions from Subsection \ref{subsection:tensor-product} apply of course
to commutative von Neumann algebras (with separable predual) as they are of relevance in
classical probability theory: a von Neumann algebra with separable predual is 
isomorphic to the essentially bounded functions on some standard probability space. Constructions
and results on an algebraic reformulation of classical Markov processes were discussed in Section
\ref{section:MarkovianityandF+Classical}. Here we will provide the proof of the classical de
Finetti theorem for stationary Markov sequences, Theorem \ref{theorem:definetti-intro}, in its 
algebraic reformulation, Theorem \ref{theorem:commdeFinetti}.

We recall that a stationary Markov process is completely determined by its transition operator in
classical probability, up to equivalence in distribution. This folklore result can be
reformulated in the present operator algebraic setting as done next.
%%%%%%%%%%%%%%%%%%%%%%%%%%%%%%%%%%%%%%%%%%%%%%%%%%%%%%%
\begin{Proposition}\normalfont \label{proposition:commutative-Markov-equivalence}
%%%%%%%%%%%%%%%%%%%%%%%%%%%%%%%%%%%%%%%%%%%%%%%%%%%%%%%
Consider the two stationary Markov sequences $\iota \equiv (\iota_n)_{n \in \Nset_0} \colon $ $(\cA,\varphi) \to (\cM,\psi)$ 
and $\tiota \equiv (\tiota_n)_{n \in \Nset_0}\colon (\cA,\varphi) \to (\tcM,\tpsi)$ with transition operators given by the $\varphi$-Markov maps on
$\cA$, $R$ and $\tR$  respectively. If $\cM$ and $\tcM$ are commutative von Neumann algebras,
then the following are equivalent:   
\begin{enumerate}
	\item[(a)] $\iota \stackrel{\distr}{=}\tiota$;
	\item[(b)] $R = \tR$. 
\end{enumerate}
%%%%%%%%%%%%%%%%%%%%%%%%%%%%%%%%%%%%%%%%%%%%%%%%%%%%%%%
\end{Proposition}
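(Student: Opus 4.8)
The plan is to prove both implications by exploiting the pyramidal correlation formula of Lemma \ref{lemma:qregression} together with commutativity of $\cM$ and $\tcM$. Since $\iota$ and $\tiota$ are stationary Markov sequences, I may write $\iota_n = \alpha^n \iota_0$ and $\tiota_n = \talpha^n \tiota_0$ for suitable endomorphisms (cf.\ Proposition \ref{proposition:dip}\eqref{proposition:dip-item-1}), with transition operators $R = \iota_0^*\alpha\iota_0$ and $\tR = \tiota_0^*\talpha\tiota_0$, so that Lemma \ref{lemma:qregression} is available for both sequences.

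For the implication \textbf{(a) $\Rightarrow$ (b)}, I would extract $R$ from the two-point moments of the sequence. Using the adjoint relation for the Markov map $\iota_0$ and commutativity of $\cM$, one computes, for all $a,b \in \cA$,
\[
\varphi\big(a^* R(b)\big)
= \varphi\big(a^*\, \iota_0^*\iota_1(b)\big)
= \psi\big(\iota_1(b)\,\iota_0(a^*)\big)
= \psi\big(\iota_0(a^*)\,\iota_1(b)\big),
\]
and similarly $\varphi\big(a^*\tR(b)\big) = \tpsi\big(\tiota_0(a^*)\,\tiota_1(b)\big)$. If $\iota \stackrel{\distr}{=}\tiota$, these moments coincide, whence $\varphi\big(a^*R(b)\big) = \varphi\big(a^*\tR(b)\big)$ for all $a,b\in\cA$. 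As $\varphi$ is faithful, this forces $R = \tR$.

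For the converse \textbf{(b) $\Rightarrow$ (a)}, the key step is to reduce an arbitrary mixed moment to the pyramidal form of Lemma \ref{lemma:qregression}. Given indices $k_1,\dots,k_m \in \Nset_0$ and elements $c_1,\dots,c_m \in \cA$, I would use commutativity of $\cM$ to reorder the factors of $\iota_{k_1}(c_1)\cdots\iota_{k_m}(c_m)$ into non-decreasing index order, and then use the homomorphism property $\iota_j(c)\iota_j(c')=\iota_j(cc')$ to amalgamate all factors sharing a common index. This rewrites the moment as $\psi\big(\iota_{j_1}(d_1)\cdots\iota_{j_p}(d_p)\big)$ with strictly increasing $j_1<\dots<j_p$ and $d_1,\dots,d_p\in\cA$. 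Choosing $a_i=\1$ and $b_i=d_i$ in Lemma \ref{lemma:qregression} (and again invoking commutativity to identify the sandwiched product with the ordered monomial) expresses this moment as
\[
\varphi\big(R^{j_2-j_1}(R^{j_3-j_2}(\cdots R^{j_p-j_{p-1}}(d_p)\cdots)d_2)d_1\big),
\]
a quantity depending only on $\varphi$, the $d_i$, and the transition operator $R$. The identical computation for $\tiota$ yields the same expression with $\tR$ in place of $R$. Hence $R = \tR$ implies that all mixed moments of $\iota$ and $\tiota$ agree, i.e.\ $\iota \stackrel{\distr}{=}\tiota$.

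The main obstacle is the reduction in the converse direction: one must be careful that \emph{every} $*$-monomial in the generators $\iota_{k}(a)$ can be brought into the specific nested shape required by Lemma \ref{lemma:qregression}. This is precisely where commutativity is indispensable, and it is the reason the statement is restricted to abelian $\cM$ and $\tcM$; in the genuinely noncommutative setting the reordering fails and the moments are no longer determined by the single transition operator alone.
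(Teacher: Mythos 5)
Your proposal is correct and follows essentially the same route as the paper: both directions rest on Lemma \ref{lemma:qregression}, with (a) $\Rightarrow$ (b) read off from the two-point moments and (b) $\Rightarrow$ (a) obtained by using commutativity to reorder and amalgamate an arbitrary mixed moment into the strictly increasing pyramidal form, whereupon the moment is a function of $\varphi$ and $R$ alone. Your write-up merely spells out the reordering/amalgamation step that the paper compresses into ``we can assume $0 \le k_1 < k_2 < \ldots < k_n$ without loss of generality.''
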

%%%%%%%%%%%%%%%%%%%%%%%%%%%%%%%%%%%%%%%%%%%%%%%%%%%%%%%
\begin{proof}
%%%%%%%%%%%%%%%%%%%%%%%%%%%%%%%%%%%%%%%%%%%%%%%%%%%%%%%
(a) $\Longrightarrow$ (b):  We conclude from Lemma \ref{lemma:qregression} and from 
$\iota \stackrel{\distr}{=}\tiota$ that 
\[
\varphi\big(a R(b)\big) = \psi\big( \iota_0(a)\iota_1(b)\big)  
      = \tpsi\big(\tiota_0(a) \tiota_1(b)\big) = \varphi\big(a \tR(b)\big). 
\]
for all $a,b \in \cA$. But this implies $R = \tR$ by routine arguments. \\
(b) $\Longrightarrow$ (a):  We need to show that $R = \tR$ implies 
\[
\psi\big(\iota_{k_1}(a_1) \cdots \iota_{k_n}(a_n)\big) 
       =  \tpsi\big(\tiota_{k_1}(a_1) \cdots \tiota_{k_n}(a_n)\big)
\]
for any $a_1, \ldots, a_n \in \cA$ and $k_1, \ldots k_n \in \Nset_0$ and $n \in \Nset$. 
Since $\cM$ and $\tcM$ are commutative von Neumann algebras, and
since random variables are (injective) *-homomorphisms, we can assume 
$0 \le k_1 < k_2 < \ldots < k_n$ without loss of generality.  We use Lemma
\ref{lemma:qregression} and $R = \tR$ to calculate
\begin{align*}
\psi\big(\iota_{k_1}(a_1) \cdots \iota_{k_n}(a_n)\big) 
&= \varphi\big (a_1 R^{k_2-k_1}(a_2  \cdots  R^{k_n-k_{n-1}}(a_n)\big)\\ 
& = \varphi\big (a_1 \tR^{k_2-k_1}(a_2  \cdots  \tR^{k_n-k_{n-1}}(a_n)\big)\\
&=\tpsi\big(\tiota_{k_1}(a_1) \cdots \tiota_{k_n}(a_n)\big). 
\end{align*}
%%%%%%%%%%%%%%%%%%%%%%%%%%%%%%%%%%%%%%%%%%%%%%%%%%%%%%%
\end{proof}
%%%%%%%%%%%%%%%%%%%%%%%%%%%%%%%%%%%%%%%%%%%%%%%%%%%%%%%
%%%%%%%%%%%%%%%%%%%%%%%%%%%%%%%%%%%%%%%%%%%%%%%%%%%%%%%
\begin{Notation}\normalfont
%%%%%%%%%%%%%%%%%%%%%%%%%%%%%%%%%%%%%%%%%%%%%%%%%%%%%%%
As in Subsection \ref{subsection:markov-dilations}, throughout this subsection $\lambda$ 
denotes the Lebesgue measure on the unit interval $[0,1] \subset \Rset$ and the 
(non-)commutative probability space $(\cL, \trace_\lambda)$ is given by 
$\cL := L^\infty([0,1],\lambda)$ and $\trace_\lambda := \int_{[0,1]} \cdot\,  d\lambda$. 
%%%%%%%%%%%%%%%%%%%%%%%%%%%%%%%%%%%%%%%%%%%%%%%%%%%%%%%
\end{Notation}
%%%%%%%%%%%%%%%%%%%%%%%%%%%%%%%%%%%%%%%%%%%%%%%%%%%%%%%
We recall from Theorem \ref{theorem:kuemmerer-onesided} that given a
probability space $(\cA,\varphi)$, where $\cA$ is commutative and a
$\varphi$-Markov map $R$ on $\cA$, there exists a Markov dilation
$(\cA\otimes \cL,\varphi\otimes \trace_{\lambda}, \alpha, \iota_0)$ of $R$.
Hence, as already seen before in the context of tensor product constructions,
each Markov map in the present algebraic framework of classical probability
can be obtained as the compression of a represented generator of the 
Thompson monoid $F^+$. 
%%%%%%%%%%%%%%%%%%%%%%%%%%%%%%%%%%%%%%%%%%%%%%%%%%%%%%%
\begin{Theorem}\label{theorem:F+-gen-compression}
%%%%%%%%%%%%%%%%%%%%%%%%%%%%%%%%%%%%%%%%%%%%%%%%%%%%%%%
Let $(\cA,\varphi)$  be a probability space where $\cA$ is commutative with separable predual,
and let $R$ be a $\varphi$-Markov map on $\cA$. There exists a probability space $(\cM,\psi)$,
a generating representation $\rho \colon F^+ \to \End(\cM,\psi)$ and an embedding 
$\iota \colon (\cA, \varphi) \to (\cM,\psi)$ such that
\begin{enumerate}
\item 
$\iota(\cA) = \cM^{\rho(g_1)}$,
\item
$R^n  = \iota^* \rho(g_0^n)\iota$ 
for all $n \in \Nset_0$. 	
\end{enumerate}	
%%%%%%%%%%%%%%%%%%%%%%%%%%%%%%%%%%%%%%%%%%%%%%%%%%%%%%%
\end{Theorem}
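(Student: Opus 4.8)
The plan is to derive the statement as an immediate combination of Theorem \ref{theorem:kuemmerer-onesided} and Theorem \ref{theorem:TensorMarkov}: the former supplies a concrete tensor-type Markov dilation of the abstract Markov map $R$, and the latter turns any such tensor-type stationary Markov process into a compression of a represented generator of $F^+$. Since the hypotheses of the present theorem---namely that $\cA$ is commutative with separable predual---are exactly those of Theorem \ref{theorem:kuemmerer-onesided}, and the output of that theorem has precisely the tensor form demanded as input by Theorem \ref{theorem:TensorMarkov}, no genuinely new construction is required here.

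First I would invoke Theorem \ref{theorem:kuemmerer-onesided} for the $\varphi$-Markov map $R$ on the commutative von Neumann algebra $\cA$. This produces an endomorphism $\alpha \in \End(\cA \otimes \cL, \varphi \otimes \trace_\lambda)$ such that $(\cA \otimes \cL, \varphi \otimes \trace_\lambda, \alpha, \cA \otimes \1_\cL)$ is a stationary Markov process realizing a Markov dilation of $R$, so that $R^n = \iota_0^* \alpha^n \iota_0$ for all $n \in \Nset_0$, where $\iota_0 \colon \cA \to \cA \otimes \cL$ is the canonical embedding $a \mapsto a \otimes \1_\cL$.

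Second I would feed this stationary Markov process into Theorem \ref{theorem:TensorMarkov}, specializing $\cC := \cL$ and $\chi := \trace_\lambda$. That theorem then delivers the probability space $(\tcM, \tpsi) = (\cA \otimes \cL^{\otimes_{\Nset_0}}, \varphi \otimes \trace_\lambda^{\otimes_{\Nset_0}})$, the generating representation $\trho \colon F^+ \to \End(\tcM, \tpsi)$ assembled from the local operators $C(a) = \alpha(a \otimes \1_\cL)$ and $D(x) = \1_\cL \otimes x$ as in \eqref{eq:operator-D}, and the natural embedding $\kappa \colon \cA \otimes \cL \to \tcM$, subject to $\kappa(\cA \otimes \1_\cL) = \tcM^{\trho(g_1)}$ and $\alpha^n|_{\cA \otimes \1_\cL} = \kappa^* \trho(g_0^n) \kappa|_{\cA \otimes \1_\cL}$ for all $n \in \Nset_0$. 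These are verbatim assertions (i) and (ii).

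I do not expect a substantive obstacle, as the real work has already been discharged in the two cited theorems; the only care needed is bookkeeping. Specifically, one must confirm that the stationary Markov process produced by Theorem \ref{theorem:kuemmerer-onesided}---with generator $\cA \otimes \1_\cL$ and carrying the endomorphism $\alpha$---matches the tensor-type input required by Theorem \ref{theorem:TensorMarkov} once $\cC$ and $\chi$ are set to $\cL$ and $\trace_\lambda$, and that the endomorphism $\alpha$ appearing in assertion (ii) is literally the one emitted by the dilation step. With this identification the generating property of $\trho$ is also inherited directly from Theorem \ref{theorem:TensorMarkov}, completing the argument.
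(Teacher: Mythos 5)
Your proposal is correct and matches the paper's own proof exactly: the paper likewise obtains the stationary Markov process $(\cA\otimes\cL,\varphi\otimes\trace_\lambda,\alpha,\cA\otimes\1_\cL)$ from Theorem \ref{theorem:kuemmerer-onesided} and then concludes by applying Theorem \ref{theorem:TensorMarkov} with $\cC=\cL$ and $\chi=\trace_\lambda$. Your extra bookkeeping remarks about matching the tensor-type input are sensible but not needed beyond what the two cited theorems already guarantee.
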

%%%%%%%%%%%%%%%%%%%%%%%%%%%%%%%%%%%%%%%%%%%%%%%%%%%%%%%
\begin{proof}
By Theorem \ref{theorem:kuemmerer-onesided}, there exists $\alpha\in \End(\cA\otimes
\cL,\varphi\otimes \trace_{\lambda})$ such that $(\cA\otimes \cL, \varphi\otimes
\trace_{\lambda}, \alpha, \cA\otimes \1_{\cL})$ is a stationary Markov process, and $R^n=\iota_0^*\alpha^n\iota_0$, for all $n\in \Nset_0$, where $\iota_0 \colon (\cA, \varphi) \to (\cA \otimes \cL, \varphi, \otimes \trace_{\lambda})$ denotes the canonical embedding $\iota_0(a)=a\otimes \1_{\cL}$. The proof then
follows from Theorem \ref{theorem:TensorMarkov} by taking $\iota := \kappa \circ \iota_0$, as we get
\[
R^n = \iota_0^* \alpha^n \iota_0 = \iota_0^* \kappa^* \rho(g_0)^n\kappa \iota_0 = \iota^* \rho(g_0^n)\iota \qquad \text{ for all } n \in \Nset_0.
\]
\end{proof}
%%%%%%%%%%%%%%%%%%%%%%%%%%%%%%%%%%%%%%%%%%%%%%%%%%%%%%%
Together with our general results from Section \ref{section:markovianity-from-F+}, our next
result paves the road for a de Finetti theorem, Theorem \ref{theorem:commdeFinetti}, for
(recurrent) stationary Markov sequences with values in a standard Borel space.  
%%%%%%%%%%%%%%%%%%%%%%%%%%%%%%%%%%%%%%%%%%%%%%%%%%%%%%%
\begin{Theorem}\label{theorem:classical-Markov-Thompson}
%%%%%%%%%%%%%%%%%%%%%%%%%%%%%%%%%%%%%%%%%%%%%%%%%%%%%%%
Let $(\cA,\varphi)$ and $(\cM,\psi)$ be probability spaces such that $\cA$ and $\cM$ are
commutative. A stationary Markov sequence $\iota \equiv (\iota_n)_{n \in \Nset_0} \colon 
(\cA, \varphi) \to (\cM,\psi)$ is (maximal) partially spreadable.
%%%%%%%%%%%%%%%%%%%%%%%%%%%%%%%%%%%%%%%%%%%%%%%%%%%%%%%
\end{Theorem}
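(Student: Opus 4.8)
The plan is to produce, on a suitable probability space, a generating representation of $F^+$ whose associated stationary Markov sequence has the same transition operator as $\iota$; since a stationary Markov sequence into a commutative algebra is determined up to equality in distribution by its transition operator (Proposition \ref{proposition:commutative-Markov-equivalence}), this forces the two sequences to be equal in distribution, and (maximal) partial spreadability then transfers to $\iota$ because it is a distributional notion. Replacing $\iota$ by its minimal restriction changes neither its distribution nor whether it is (maximal) partially spreadable, so I would assume $\iota$ minimal, with Markov shift $\alpha_\iota \in \End(\cM,\psi)$ and transition operator $R := \iota_0^* \alpha_\iota \iota_0 = \iota_0^* \iota_1$, a $\varphi$-Markov map on the commutative algebra $\cA$.

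Feeding $R$ into Theorem \ref{theorem:F+-gen-compression} yields a probability space $(\tcM,\tpsi)$ with $\tcM = \cA \otimes \cL^{\otimes_{\Nset_0}}$ commutative, a generating representation $\trho \colon F^+ \to \End(\tcM,\tpsi)$, and an embedding $\kappa \colon (\cA \otimes \cL, \varphi \otimes \trace_\lambda) \to (\tcM,\tpsi)$ with $\kappa(\cA \otimes \1_\cL) = \tcM^{\trho(g_1)}$ and $\alpha^n j_0 = \kappa^* \trho(g_0^n)\kappa\, j_0$, where $j_0(a) := a \otimes \1_\cL$ is the canonical embedding and $\alpha$ is the Markov dilation of $R$ from Theorem \ref{theorem:kuemmerer-onesided}, so that $R^n = j_0^* \alpha^n j_0$. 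I would then consider the stationary process $(\tcM,\tpsi,\trho(g_0),\tcM^{\trho(g_1)})$ and its associated sequence $\tiota_n := \trho(g_0)^n|_{\tcM^{\trho(g_1)}}$, identifying the generator $\tcM^{\trho(g_1)} = \kappa(\cA\otimes\1_\cL)$ with $(\cA,\varphi)$ via $\tiota_0 = \kappa\, j_0$. By Corollary \ref{corollary:markov-filtration-MN} (with $m=n=0$) this is a stationary Markov process, so $\tiota$ is a stationary Markov sequence; and since $\trho$ is generating, Proposition \ref{proposition:generating-property} gives $\tcM^{\trho(g_1)} = \bigcap_{k\ge 1}\tcM^{\trho(g_k)} = \tcM_0$, whence $\tiota_0(\cA) = \tcM_0$. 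Thus $\tiota$ satisfies the localization and stationarity relations of Definition \ref{definition:p-s} and the maximality condition of Definition \ref{definition:max-p-s}, i.e.\ $\tiota$ is maximal partially spreadable by construction.

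The crux is to check that $\tiota$ has transition operator $R$. Writing $\tiota_0 = \kappa\, j_0$, and hence $\tiota_0^* = j_0^* \kappa^*$, property (ii) of Theorem \ref{theorem:F+-gen-compression} and the dilation identity $R^n = j_0^* \alpha^n j_0$ combine to give
\[
\tiota_0^* \,\trho(g_0^n)\, \tiota_0 = j_0^* \kappa^* \trho(g_0^n)\kappa\, j_0 = j_0^* \alpha^n j_0 = R^n \qquad (n \in \Nset_0).
\]
Since Proposition \ref{proposition:dilation} identifies the left-hand side with the $n$-th power of the transition operator $\tR$ of $\tiota$, taking $n=1$ gives $\tR = R$. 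Now $\iota$ and $\tiota$ are both stationary Markov sequences from $(\cA,\varphi)$ into commutative algebras with the same transition operator, so Proposition \ref{proposition:commutative-Markov-equivalence} yields $\iota \stackrel{\distr}{=} \tiota$; as $\tiota$ is (maximal) partially spreadable and these are distributional notions, $\iota$ is (maximal) partially spreadable.

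I expect the only real obstacle to be the identification bookkeeping in the displayed computation: keeping straight the three roles of the base algebra ($\cA$, its image $\cA\otimes\1_\cL$ under $j_0$, and the generator $\tcM^{\trho(g_1)}$), and verifying that the adjoint genuinely factors as $\tiota_0^* = j_0^* \kappa^*$ so that compressing the represented generator $\trho(g_0)$ reproduces $R$ itself rather than a conjugate. Everything else is an assembly of the cited results, together with the observation that both $\cM$ and $\tcM$ are commutative.
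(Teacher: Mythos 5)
Your proposal is correct and follows essentially the same route as the paper's proof: construct the tensor-product model $(\tcM,\tpsi)=\bigl(\cA\otimes\cL^{\otimes_{\Nset_0}},\varphi\otimes\trace_\lambda^{\otimes_{\Nset_0}}\bigr)$ via Theorem \ref{theorem:F+-gen-compression}, observe that the associated sequence is maximal partially spreadable by construction since $\tiota_0(\cA)=\tcM^{\trho(g_1)}=\tcM_0$, match transition operators, and transfer the property through Proposition \ref{proposition:commutative-Markov-equivalence}. The only cosmetic difference is that you verify $\tR=R$ by chaining the dilation identities through $\kappa$ and $j_0$, whereas the paper reads it off directly from Theorem \ref{theorem:Markovtensor}; both are valid.
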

%%%%%%%%%%%%%%%%%%%%%%%%%%%%%%%%%%%%%%%%%%%%%%%%%%%%%%%
\begin{proof}
%%%%%%%%%%%%%%%%%%%%%%%%%%%%%%%%%%%%%%%%%%%%%%%%%%%%%%%
We will show that there exists a probability space $(\tcM,\tpsi)$ and a sequence $\tiota \equiv (\tiota_n)_{n \in \Nset_0} 
\colon (\cA, \varphi) \to (\tcM,\tpsi)$ which has the same distribution as the stationary 
Markov sequence $\iota$ and which satisfies, for all $n \in \Nset$, 
\[
\trho(g_n) \tiota_0 = \tiota_0 \quad \text{and} \quad \trho(g_0^n) \tiota_0 = \tiota_n
\]
for some representation $\trho\colon F^+ \to \End(\tcM,\tpsi)$.  

Since $\iota$ is stationary and Markovian there exist a $\varphi$-Markov map $R$ on $\cA$ 
such that
\[
\varphi(a R(b)) = \psi\big(\iota_0(a) \iota_1(b)\big).  
\] 

By Theorem \ref{theorem:F+-gen-compression}, there exists a generating representation
$\trho\colon F^+ \to \End(\tcM,\tpsi)$, where  $(\tcM,\tpsi) = \big(\cA \otimes
\cL^{\otimes_{\Nset_0}},\varphi \otimes \trace_\lambda^{\otimes_{\Nset_0}}\big)$, and an
embedding $\kappa \colon (\cA \otimes \cL, \varphi \otimes \trace_\lambda) \to (\tcM,\tpsi)$ 
such that $\kappa(\cA \otimes \1_\cL) = \tcM^{\trho(g_1)}$.

We infer from Theorem \ref{theorem:Markovtensor} that 
$\big(\tcM,\tpsi,\trho(g_0), \cA \otimes \1_{\cL}^{\otimes_{\Nset_0}}\big)$ is a stationary
Markov process such that 
\[
\varphi(a R(b)) 
= \tpsi\big((a \otimes \1_{\cL}^{\otimes_{\Nset_0}}) \trho(g_0)  
               (b \otimes \1_{\cL}^{\otimes_{\Nset_0}}) \big).
\] 
Consequently, the sequence of random variables $\tiota \equiv (\tiota_n)_{n \in \Nset_0} \colon
(\cA,\varphi) \to (\tcM,\tpsi)$, defined by 
\[
\tiota_0(a) :=  a \otimes \1_\cL^{\otimes_{\Nset_0}} 
\quad \text{and} \quad  
\tiota_n(a) := \trho(g_0)^n\iota_0(a) \quad \text{for $n > 0$}, 
\] 
is Markovian and partially spreadable, both by construction. Additionally, $\tiota$ is
\emph{maximal} partially spreadable as 
$\tiota_0(\cA)=\cA\otimes \1_{\cL}^{\otimes_{\Nset_0}}=\tcM^{\trho(g_1)}=\tcM_0$.
Furthermore, the sequences $\tiota$ and $\iota$ have the same distribution,
as they are stationary Markov sequences with the same Markov map $R$ (see Proposition \ref{proposition:commutative-Markov-equivalence}).      
%%%%%%%%%%%%%%%%%%%%%%%%%%%%%%%%%%%%%%%%%%%%%%%%%%%%%%%
\end{proof}
%%%%%%%%%%%%%%%%%%%%%%%%%%%%%%%%%%%%%%%%%%%%%%%%%%%%%%%
Theorem \ref{theorem:maxps} and Theorem \ref{theorem:classical-Markov-Thompson}
consolidate to the following result in the setting of commutative probability
spaces.
%%%%%%%%%%%%%%%%%%%%%%%%%%%%%%%%%%%%%%%%%%%%%%%%%%%%%%%
\begin{Theorem}\label{theorem:commdeFinetti}
%%%%%%%%%%%%%%%%%%%%%%%%%%%%%%%%%%%%%%%%%%%%%%%%%%%%%%%
Let $(\cA,\varphi)$ and $(\cM,\psi)$ be probability spaces such that $\cA$
and $\cM$ are commutative with separable predual. Let
$\iota\equiv(\iota_n)_{n\in \Nset_0} \colon (\cA,\varphi) \to (\cM,\psi)
$ be a sequence of random variables. Then the following are equivalent:
\begin{enumerate}
\item[(a)] $\iota$ is a maximal partially spreadable sequence;
\item[(b)] $\iota$ is a stationary Markov sequence.
\end{enumerate}
%%%%%%%%%%%%%%%%%%%%%%%%%%%%%%%%%%%%%%%%%%%%%%%%%%%%%%%
\end{Theorem}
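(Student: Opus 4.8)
The plan is to obtain the equivalence by directly combining the two main results established earlier in the paper, observing that both apply in the commutative setting (and one of them is specific to it). This is exactly the consolidation announced in the sentence preceding the statement.

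For the implication $(b)\Rightarrow(a)$, I would invoke Theorem \ref{theorem:classical-Markov-Thompson}, which asserts precisely that a stationary Markov sequence $\iota$ between commutative probability spaces is maximal partially spreadable. The content of this direction is the construction of a representation $\trho \colon F^+ \to \End(\tcM,\tpsi)$ together with a distributional copy $\tiota \stackrel{\distr}{=} \iota$ realized on the tensor-product space $(\tcM,\tpsi) = \big(\cA \otimes \cL^{\otimes_{\Nset_0}}, \varphi \otimes \trace_\lambda^{\otimes_{\Nset_0}}\big)$. This rests in turn on the Markov-dilation result (Theorem \ref{theorem:kuemmerer-onesided}) applied to the transition operator, followed by the tensor-product construction of $F^+$-representations (Theorem \ref{theorem:F+-gen-compression} together with Theorem \ref{theorem:Markovtensor}), from which the represented generators are read off and the maximality $\tiota_0(\cA) = \tcM^{\trho(g_1)} = \tcM_0$ is verified.

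For $(a)\Rightarrow(b)$, I would apply Theorem \ref{theorem:maxps}, which holds in full noncommutative generality---and hence in particular for commutative $\cM$---and states that a maximal partially spreadable sequence is automatically stationary and Markovian. No further argument is needed here, since the commutative hypothesis only strengthens the available structure.

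The genuinely nontrivial direction has therefore already been dispatched in the earlier sections: the obstacle for $(b)\Rightarrow(a)$ is that one cannot manufacture the $F^+$-representation directly from the Daniell--Kolmogorov model of the Markov sequence, so one must instead realize an isomorphic copy as an open dynamical system and there identify the represented generators. What legitimizes passing to the distributional copy $\tiota$ rather than working with $\iota$ itself is Proposition \ref{proposition:commutative-Markov-equivalence}: in the commutative case a stationary Markov sequence is determined up to distribution by its transition operator $R$, so $\iota \stackrel{\distr}{=} \tiota$ loses no information, and partial spreadability---being a property stable under equality in distribution by Definition \ref{definition:p-s}---transfers back from $\tiota$ to $\iota$.
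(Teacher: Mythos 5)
Your proposal matches the paper's own argument: the paper proves this theorem exactly by consolidating Theorem \ref{theorem:maxps} (for the implication from maximal partial spreadability to stationary Markovianity) with Theorem \ref{theorem:classical-Markov-Thompson} (for the converse), and your account of the supporting machinery --- the Markov dilation, the tensor-product construction of the $F^+$-representation, and the use of Proposition \ref{proposition:commutative-Markov-equivalence} to transfer the conclusion across equality in distribution --- is precisely what the proof of Theorem \ref{theorem:classical-Markov-Thompson} does. No gaps.
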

%%%%%%%%%%%%%%%%%%%%%%%%%%%%%%%%%%%%%%%%%%%%%%%%%%%%%%%
Thus we have arrived at the algebraic formulation of a de Finetti theorem
for stationary Markov sequences. Its traditional formulation in terms of
classical random variables is available in Theorem \ref{theorem:definetti-intro} and discussed in Subsection \ref{subsection:DIPinCP}.
 
%%%%%%%%%%%%%%%%%%%%%%%%%%%%%%%%%%%%%%%%%%%%%%%%%%%%%%%
\subsection{Constructions in the framework of  operator algebras}
\label{subsection:op-alg-construction}
%%%%%%%%%%%%%%%%%%%%%%%%%%%%%%%%%%%%%%%%%%%%%%%%%%%%%%%
K\"ummerer's approach to an operator algebraic theory of stationary Markov processes is based on
the concept of a coupling representation (see \cite{Ku85,Ku93} for example).  Here we adapt and
refine this approach such that it provides a rich operator algebraic framework for the 
construction of representations of the Thompson monoid $F^+$. 

Our investigations are motivated by the elementary observation that the relations of the Thompson
monoid $F^+$ are robust under certain algebraic `perturbations' which we introduce and 
formalize next. 
%%%%%%%%%%%%%%%%%%%%%%%%%%%%%%%%%%%%%%%%%%%%%%%%%%%%%%%
\begin{Definition}\normalfont \label{definition:EF^+}
%%%%%%%%%%%%%%%%%%%%%%%%%%%%%%%%%%%%%%%%%%%%%%%%%%%%%%%
The \emph{extended monoid $EF^+$} is presented by the set of generators 
$\{g_n, c_n \mid n \in \Nset_0\}$ subject to the relations
\begin{alignat*}{4}
g_{k}g_{\ell}&=g_{\ell+1}g_{k}, \qquad
c_k c_{\ell+1} &= c_{\ell+1}c_{k}, \qquad 
c_k g_{\ell+1} &=  g_{\ell+1} c_k, \qquad 
g_k c_{\ell} &=  c_{\ell+1} g_k \qquad 
\end{alignat*}
for every $0 \le k < \ell <\infty$.
%%%%%%%%%%%%%%%%%%%%%%%%%%%%%%%%%%%%%%%%%%%%%%%%%%%%%%%
\end{Definition}
%%%%%%%%%%%%%%%%%%%%%%%%%%%%%%%%%%%%%%%%%%%%%%%%%%%%%%%
Evidently the first set of generators $\{g_n\}_{n \in \Nset_0}$ satisfies the relations of the
Thompson monoid $F^+$.  
%%%%%%%%%%%%%%%%%%%%%%%%%%%%%%%%%%%%%%%%%%%%%%%%%%%%%%%
\begin{Proposition}
%%%%%%%%%%%%%%%%%%%%%%%%%%%%%%%%%%%%%%%%%%%%%%%%%%%%%%%
The submonoid $QF^+:= \langle c_ng_n \mid n \in \Nset_0 \rangle^+ \subset EF^+$ is a quotient of
the monoid $F^+$.
%%%%%%%%%%%%%%%%%%%%%%%%%%%%%%%%%%%%%%%%%%%%%%%%%%%%%%%
\end{Proposition}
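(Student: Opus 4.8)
The plan is to produce an explicit surjective monoid homomorphism from $F^+$ onto $QF^+$. The natural candidate is the map $\phi$ determined on generators by $\phi(g_k) := c_k g_k$ for all $k \in \Nset_0$. Because the image of such a $\phi$ is by definition the submonoid generated by the elements $c_k g_k$, which is exactly $QF^+$, surjectivity onto $QF^+$ is automatic as soon as $\phi$ is known to be well defined. By the universal property of the presentation \eqref{eq:F+} of $F^+$, the assignment $\phi$ extends to a monoid homomorphism precisely when the elements $d_k := c_k g_k$ satisfy the defining relations of $F^+$, namely
\[
d_k d_\ell = d_{\ell+1} d_k \qquad (0 \le k < \ell < \infty).
\]

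The key step is therefore to verify this identity inside $EF^+$ using the four families of relations in Definition \ref{definition:EF^+}. Starting from $d_k d_\ell = c_k g_k c_\ell g_\ell$ with $k < \ell$, I would first use $g_k c_\ell = c_{\ell+1} g_k$ to move $c_\ell$ past $g_k$, giving $c_k c_{\ell+1} g_k g_\ell$; then apply the Thompson relation $g_k g_\ell = g_{\ell+1} g_k$; then the commutation $c_k c_{\ell+1} = c_{\ell+1} c_k$; and finally $c_k g_{\ell+1} = g_{\ell+1} c_k$, arriving at $c_{\ell+1} g_{\ell+1} c_k g_k = d_{\ell+1} d_k$. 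Every relation invoked carries the restriction $0 \le k < \ell$, and since $k < \ell$ holds throughout and the relations are applied only at the index pairs $(k,\ell)$ and $(k,\ell+1)$, each substitution is legitimate.

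Granting this one computation, $\phi$ is a well-defined monoid homomorphism whose image is $QF^+$, so $QF^+$ is a quotient of $F^+$, as claimed. I do not anticipate a genuine obstacle: the content of the statement is exactly that the `perturbed generators' $c_k g_k$ reproduce the Thompson relations, and all of the work resides in the index bookkeeping of the single displayed chain of substitutions. The one point worth stating explicitly in the writeup is that the relations must be applied in an order consistent with the constraint $k<\ell$, which is precisely what makes $(k,\ell+1)$ an admissible index pair for the $c$-relations.
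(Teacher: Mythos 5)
Your proposal is correct and follows essentially the same route as the paper: one checks that the perturbed generators $c_k g_k$ satisfy the Thompson relations inside $EF^+$ by the very chain of substitutions you describe (the paper performs the same four rewrites, merely interchanging the order of the $c$-commutation and the $g$-relation, which act on disjoint parts of the word). Your additional remarks on the universal property of the presentation and the automatic surjectivity onto $QF^+$ are implicit in the paper's one-line framing and add nothing that would change the argument.
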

%%%%%%%%%%%%%%%%%%%%%%%%%%%%%%%%%%%%%%%%%%%%%%%%%%%%%%%
\begin{proof}
An elementary computation, based on all defining relations of the monoid $EF^+$, shows that the
elements of the set $\{\tg_n :=  c_ng_n \mid n \in \Nset_0\}$  satisfy the relations of the
Thompson monoid $F^+$ for $0 \le k <  \ell <\infty$:
\begin{align*}
\tg_{k} \tg_{\ell}     =   c_k g_k   c_\ell g_\ell    
                                 =   c_k   c_{\ell+1}  g_k g_\ell  
                                & =    c_{\ell+1} c_k  g_{k} g_\ell \\ 
                                & =    c_{\ell+1}  c_k   g_{\ell+1}  g_{k}
                                 =    c_{\ell+1} g_{\ell+1}   c_k    g_{k} 
                                  =  \tg_{\ell+1} \tg_{k} . 
\end{align*}
\end{proof}
%%%%%%%%%%%%%%%%%%%%%%%%%%%%%%%%%%%%%%%%%%%%%%%%%%%%%%%
\begin{Definition}\normalfont\label{definition:ES+} 
%%%%%%%%%%%%%%%%%%%%%%%%%%%%%%%%%%%%%%%%%%%%%%%%%%%%%%%
The \emph{extended monoid $ES^+$} is presented by the set of generators 
$\{h_n, c_n \mid n \in \Nset_0\}$ subject to the relations 
\begin{alignat*}{4}
h_{k}h_{\ell}&=h_{\ell+1}h_{k}, \qquad
c_k c_{\ell+1} &= c_{\ell+1}c_{k}, \qquad 
c_k h_{\ell+1} &=  h_{\ell+1} c_k,  \qquad
h_k c_l  &=c_{l+1} h_k,
\end{alignat*}
and $h_{k}h_{k} =h_{k+1}h_{k}$ for every $0 \le k < \ell < \infty$.
%%%%%%%%%%%%%%%%%%%%%%%%%%%%%%%%%%%%%%%%%%%%%%%%%%%%%%%
\end{Definition}
%%%%%%%%%%%%%%%%%%%%%%%%%%%%%%%%%%%%%%%%%%%%%%%%%%%%%%%
Clearly the monoid $ES^+$ is a quotient of the monoid $EF^+$, due to the additional set of
relations for the $h_k$'s. The extended monoid $ES^+$  algebraically encodes certain local
perturbations of the partial shifts monoid $S^+$ which, roughly phrasing, corresponds to the
perturbation of Bernoulli shifts such that one obtains  Markov shifts in classical probability. 
%%%%%%%%%%%%%%%%%%%%%%%%%%%%%%%%%%%%%%%%%%%%%%%%%%%%%%%
\begin{Proposition}
%%%%%%%%%%%%%%%%%%%%%%%%%%%%%%%%%%%%%%%%%%%%%%%%%%%%%%%
The submonoid $QS^+:= \langle c_nh_n \mid n \in \Nset_0 \rangle^+ \subset ES^+$ is a quotient 
of the monoid $F^+$.
%%%%%%%%%%%%%%%%%%%%%%%%%%%%%%%%%%%%%%%%%%%%%%%%%%%%%%%
\end{Proposition}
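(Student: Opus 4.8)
The plan is to exhibit a surjective monoid homomorphism $F^+ \to QS^+$, which is exactly what it means for $QS^+$ to be a quotient of $F^+$. By the universal property of the presentation \eqref{eq:F+}, such a homomorphism is determined by sending each generator $g_n$ to $\tilde{h}_n := c_n h_n$, \emph{provided} the family $\{\tilde{h}_n\}_{n \in \Nset_0}$ satisfies the Thompson relations $\tilde{h}_k \tilde{h}_\ell = \tilde{h}_{\ell+1} \tilde{h}_k$ for all $0 \le k < \ell < \infty$. This mirrors the preceding proposition for $QF^+ \subset EF^+$, and indeed the argument is the same, since all four commutation relations invoked below belong to both $EF^+$ and $ES^+$.

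First I would verify the relation directly. For $0 \le k < \ell < \infty$, starting from $c_k h_k c_\ell h_\ell$, one pushes $h_k$ past $c_\ell$ via $h_k c_\ell = c_{\ell+1} h_k$, commutes $c_k$ with $c_{\ell+1}$, applies the braid-type relation $h_k h_\ell = h_{\ell+1} h_k$, and finally moves $c_k$ past $h_{\ell+1}$ using $c_k h_{\ell+1} = h_{\ell+1} c_k$; regrouping then isolates the two factors $c_{\ell+1}h_{\ell+1}$ and $c_k h_k$. Explicitly:
\begin{align*}
\tilde{h}_k \tilde{h}_\ell
&= c_k h_k c_\ell h_\ell
= c_k c_{\ell+1} h_k h_\ell
= c_{\ell+1} c_k h_k h_\ell \\
&= c_{\ell+1} c_k h_{\ell+1} h_k
= c_{\ell+1} h_{\ell+1} c_k h_k
= \tilde{h}_{\ell+1} \tilde{h}_k.
\end{align*}

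Then I would conclude as follows: the displayed computation shows that the assignment $g_n \mapsto \tilde{h}_n$ respects the defining relations of $F^+$, hence extends to a monoid homomorphism $\phi \colon F^+ \to ES^+$ whose image is the submonoid $\langle \tilde{h}_n \mid n \in \Nset_0 \rangle^+ = QS^+$. Viewing $\phi$ as a map onto its image, it is surjective by construction, so $QS^+$ is a quotient of $F^+$.

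The computation is entirely routine, so there is no genuine obstacle; the only point requiring care is the bookkeeping of index ranges, that is, checking at each step that the relation invoked is applicable in the range $0 \le k < \ell$ (for instance $c_k c_{\ell+1} = c_{\ell+1} c_k$ and $c_k h_{\ell+1} = h_{\ell+1} c_k$ both require $k < \ell$, which holds throughout). It is worth emphasizing that the extra relation $h_k h_k = h_{k+1} h_k$ distinguishing $ES^+$ from $EF^+$ plays no role here, so the statement and its proof run in perfect parallel with the $QF^+$ case.
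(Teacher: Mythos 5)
Your proof is correct and follows exactly the paper's approach: the paper's own proof simply states that "an elementary computation shows" the elements $\tg_n := c_n h_n$ satisfy the Thompson relations, which is precisely the chain of identities you spell out (and which mirrors the displayed computation the paper gives for the $QF^+ \subset EF^+$ case). Your additional remarks on the index bookkeeping and on the irrelevance of the extra relation $h_k h_k = h_{k+1} h_k$ are accurate but not needed beyond what the paper does.
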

%%%%%%%%%%%%%%%%%%%%%%%%%%%%%%%%%%%%%%%%%%%%%%%%%%%%%%%
\begin{proof}
An elementary computation shows that the elements of the set 
$\{\tg_n :=  c_n h_n \mid n \in \Nset_0\}$  satisfy the relations 
of the Thompson monoid $F^+$.
\end{proof}
%%%%%%%%%%%%%%%%%%%%%%%%%%%%%%%%%%%%%%%%%%%%%%%%%%%%%%%
%%%%%%%%%%%%%%%%%%%%%%%%%%%%%%%%%%%%%%%%%%%%%%%%%%%%%%%
\begin{Remark}\normalfont
%%%%%%%%%%%%%%%%%%%%%%%%%%%%%%%%%%%%%%%%%%%%%%%%%%%%%%%
Actually the relations in Definition \ref{definition:EF^+} have been identified by some reverse
engineering: each $c_k$ should provide a suitable `local perturbation' of $g_k$ such that 
$(c_k g_k)(c_\ell g_\ell)  =(c_{\ell+1} g_{\ell+1})  (c_k g_k)$ for $0 \le k < \ell < \infty$.
An alternative `perturbation' is given by the  extended monoid $FF^+$ which is defined to be 
presented by generators $\{c_n,g_n\}_{n \in \Nset_0}$ subject to the relations
\begin{alignat*}{4}
g_{k}g_{\ell}&=g_{\ell+1}g_{k}, \qquad
c_k c_{\ell} &= c_{\ell+1}c_{k}, \qquad 
c_k g_{\ell+1} &=  g_{\ell+1} c_k, \qquad 
g_k c_{\ell} &=  c_{\ell} g_k \qquad 
\end{alignat*}
for every $(0 \le k < \ell < \infty)$.
Here both the $c_k$'s and the $g_k$'s satisfy the relations of the Thompson monoid $F^+$ and 
the last two sets of relations can be combined to a single set of relations on commutativity:
$c_k g_\ell = g_\ell c_k$ whenever $k \notin \{\ell-1, \ell\}$.   
%%%%%%%%%%%%%%%%%%%%%%%%%%%%%%%%%%%%%%%%%%%%%%%%%%%%%%%
\end{Remark}\normalfont
%%%%%%%%%%%%%%%%%%%%%%%%%%%%%%%%%%%%%%%%%%%%%%%%%%%%%%%
%%%%%%%%%%%%%%%%%%%%%%%%%%%%%%%%%%%%%%%%%%%%%%%%%%%%%%%
\begin{Remark}\normalfont
%%%%%%%%%%%%%%%%%%%%%%%%%%%%%%%%%%%%%%%%%%%%%%%%%%%%%%%
The results on semi-cosimplicial structures as obtained in \cite{EGK17} make it tempting to also
investigate a  more restrictive perturbed version for the partial shifts monoid $S^+$.  So let
the \emph{extended semi-cosimplicial monoid $ES^+_r$} be presented by the set of generators
$\{h_n, d_n \mid n \in \Nset_0\}$ subject to the relations
\begin{alignat*}{3}
h_{k}h_{\ell}&=h_{\ell+1}h_{k}, \qquad
d_k d_{\ell+1} &= d_{\ell+1}d_{k}, \qquad 
d_k h_{\ell+1} &=  h_{\ell+1} d_k, \qquad
h_k d_l &= d_{l+1}h_k   
\end{alignat*}
for every $0 \le k \le \ell < \infty$.
These relations ensure that the submonoid $QS^+_r:= \langle c_ng_n \mid n \in \Nset_0 \rangle
\subset ES^+_r$ is a quotient of the monoid $S^+$.  In comparison to the extended monoid $EF^+$, 
the additional relations  are more restrictive for possible extensions of  the monoid $S^+$.
Roughly phrasing, these additional relations encode algebraically the perturbative
difference between Markovianity and stochastic independence  in classical probability. We
conjecture that $QS^+_r$ and $S^+$ are isomorphic as monoids.
%%%%%%%%%%%%%%%%%%%%%%%%%%%%%%%%%%%%%%%%%%%%%%%%%%%%%%%
\end{Remark}
%%%%%%%%%%%%%%%%%%%%%%%%%%%%%%%%%%%%%%%%%%%%%%%%%%%%%%%
Similarly as it was discovered for the monoid $F^+$ in Section
\ref{section:markovianity-from-F+}, the representation theory of these extended monoids in the
endomorphisms of a noncommutative probability space goes along with very rich structures of 
commuting squares.  Here we restrict ourselves to present a single result, mainly in the
intention to illustrate how Bernoulli shifts and, as their perturbation, Markov shifts can be
simultaneously obtained from the representation theory of the extended monoid $ES^+$.  

Recall from Definition \ref{definition:property} that a noncommutative stationary process
$(\cM,\psi,\beta,\cA_0)$ is said to be spreadable if the canonically associated sequence of random 
variables $(\iota_n)_{n \ge 0} \colon (\cA_0,\psi_0) \to (\cM,\psi)$ is spreadable, where 
$\iota_n := \beta^n |_{\cA_0}$ and $\psi_0 = \psi|_{\cA_0}$.
%%%%%%%%%%%%%%%%%%%%%%%%%%%%%%%%%%%%%%%%%%%%%%%%%%%%%%%
\begin{Theorem}\label{theorem:ES+}
%%%%%%%%%%%%%%%%%%%%%%%%%%%%%%%%%%%%%%%%%%%%%%%%%%%%%%%
 Suppose $(\cM,\psi)$ is equipped with a representation $\rho\colon ES^+ \to \End(\cM,\psi)$.  
Let $\cB_0 := \cM^{\rho(h_1)}$ and $(\cB_\infty,\psi_\infty) 
:= \big(\bigvee_{n\in \Nset_0} \rho(h_0^n) (\cB_0), \psi_{|_{\cB_\infty}}\big)$. 
Further let $\cA_0 := \bigcap_{k \ge 1} \cM^{\rho(c_k h_k)}$.
\begin{enumerate}
\item
The restricted represented generator $\beta :=  \rho(h_0)_{|_{\cB_\infty}}$ defines the
spreadable Bernoulli shift $(\cB_\infty, \psi_\infty, \beta,\cB_0)$.
\item 
The represented generator  $\alpha :=  \rho(c_0 h_0)$ defines the (not necessarily minimal)
stationary Markov process $(\cM, \psi, \alpha, \cA_0)$.
\end{enumerate}
If $\cM = \cB_\infty$ and $\cA_0 = \cB_\infty^\beta$, then the stationary Markov process 
$(\cM, \psi, \alpha, \cA_0)$ has the coupling representation
$(\cM, \psi, \gamma \beta, \cA_0)$, with coupling $\gamma := \rho(c_0)$. 
%%%%%%%%%%%%%%%%%%%%%%%%%%%%%%%%%%%%%%%%%%%%%%%%%%%%%%%
\end{Theorem}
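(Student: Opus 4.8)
\textbf{Proof plan for Theorem \ref{theorem:ES+}.}

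The plan is to establish the three assertions in sequence, leaning throughout on the fact that $ES^+$ has three privileged sub-objects whose images under $\rho$ we can analyze separately: the copy of $S^+$ generated by the $h_n$, the quotient $QS^+ = \langle c_n h_n\rangle^+$ which is a quotient of $F^+$, and the coupling generators $c_n$ themselves. First I would verify that $\beta := \rho(h_0)|_{\cB_\infty}$ is well-defined as an endomorphism of $(\cB_\infty, \psi_\infty)$: since the relations of $S^+$ force $\rho(h_0)(\cM^{\rho(h_1)}) \subset \cM^{\rho(h_2)}$ and, using $h_k h_k = h_{k+1}h_k$, the fixed-point algebras $\cM^{\rho(h_n)}$ form an increasing tower (cf.~the Remark following Proposition \ref{proposition:generating-property}), the algebra $\cB_\infty = \bigvee_n \rho(h_0^n)(\cB_0)$ is globally $\rho(h_0)$-invariant and $\psi$-conditioned. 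The restriction of $\rho$ to the submonoid generated by the $h_n$ is then a generating representation of $S^+$ on $(\cB_\infty,\psi_\infty)$ in the sense of Proposition \ref{proposition:rep-S+}, which by the structure theory behind Definition \ref{definition:ncbs} exhibits $(\cB_\infty,\psi_\infty,\beta,\cB_0)$ as a full Bernoulli shift; spreadability then follows from the equivalent characterization of spreadability via $S^+$ in Proposition \ref{proposition:dip}(ii), since $\beta = \varrho(h_0)$ and $\cB_0 = \cM^{\varrho(h_1)}$ furnish exactly the localization and stationarity data.

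For assertion (ii), the key observation is that $\{\tg_n := c_n h_n\}$ satisfies the Thompson relations (this is the Proposition immediately preceding the theorem), so $g_n \mapsto \rho(c_n h_n)$ defines a representation $\tilde\rho\colon F^+ \to \End(\cM,\psi)$ with $\alpha = \rho(c_0 h_0) = \tilde\rho(g_0)$. I would then identify $\cA_0 = \bigcap_{k\ge 1}\cM^{\rho(c_k h_k)}$ as precisely the intersected fixed-point algebra $\cM_0 = \bigcap_{k\ge 1}\cM^{\tilde\rho(g_k)}$ attached to $\tilde\rho$. With this identification, Corollary \ref{corollary-markov-filtration-0} applied to $\tilde\rho$ states exactly that $(\cM,\psi,\tilde\rho(g_0),\cM_0) = (\cM,\psi,\alpha,\cA_0)$ is a stationary Markov process, with no minimality hypothesis required. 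This reduces (ii) to a bookkeeping check that $\alpha = \tilde\rho(g_0)$ is an endomorphism (inherited from $\rho(c_0)\in\End(\cM,\psi)$ and $\rho(h_0)\in\End(\cM,\psi)$ composing to an endomorphism, using that products of endomorphisms are endomorphisms).

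For the final coupling-representation claim, under the hypotheses $\cM = \cB_\infty$ and $\cA_0 = \cB_\infty^\beta = \cM^\beta$, I would set $\gamma := \rho(c_0)$ and simply observe that by definition $\alpha = \rho(c_0 h_0) = \rho(c_0)\rho(h_0) = \gamma\beta$, so that $(\cM,\psi,\gamma\beta,\cA_0)$ is the coupling representation of the Markov process, with the Bernoulli shift $\beta$ of part (i) as the underlying unperturbed dynamics and $\gamma$ as the coupling endomorphism in K\"ummerer's sense. The main obstacle I anticipate is neither (i) nor (iii), which are essentially definitional once the $S^+$- and $F^+$-structures are in place, but rather the clean identification in (ii) of $\cA_0$ with the fixed-point algebra $\cM_0$ of the \emph{perturbed} representation $\tilde\rho$ together with the verification that $\cA_0 \subset \cM^{\tilde\rho(g_1)}$ holds so that Corollary \ref{corollary-markov-filtration-0} genuinely applies; here one must use the commutation relations $c_k g_{\ell+1} = g_{\ell+1} c_k$ and $g_k c_\ell = c_{\ell+1} g_k$ of $ES^+$ carefully to see that $\bigcap_{k\ge 1}\cM^{\rho(c_k h_k)}$ lands inside the first perturbed fixed-point algebra, rather than merely inside $\cB_0 = \cM^{\rho(h_1)}$.
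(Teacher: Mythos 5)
Your proposal is correct and follows essentially the same route as the paper: part (i) is handled by restricting to the $S^+$-generators $h_n$ and invoking the spreadability/Bernoulli-shift machinery (the paper makes this precise by checking $\cB_\infty^\beta\subset\cB_\infty^{\rho(h_1)}\subset\cB_0$ and citing \cite[Theorem 8.2]{Ko10} for the CS-independence, where you are slightly vaguer), part (ii) by recognizing $g_n\mapsto\rho(c_nh_n)$ as a representation of $F^+$ with $\cA_0=\bigcap_{k\ge1}\cM^{\rho(c_kh_k)}$ equal to the intersected fixed-point algebra $\cM_0$ and applying Corollary \ref{corollary-markov-filtration-0}, with the coupling claim being definitional. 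The one obstacle you anticipate in (ii) is a non-issue: the inclusion $\cA_0\subset\cM^{\rho(c_1h_1)}$ holds trivially because the term $k=1$ occurs in the defining intersection, which is all that Corollary \ref{corollary-markov-filtration-0} requires, so no further use of the $ES^+$ commutation relations is needed there.
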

%%%%%%%%%%%%%%%%%%%%%%%%%%%%%%%%%%%%%%%%%%%%%%%%%%%%%%%
\begin{proof}
(i) Let $\rho_B(h_n):=\beta_n:=\rho(h_n), h_n\in S^+, n\in \Nset_0$. Then $\beta=\beta_0$ and
$\rho_B$ gives a representation of the monoid $S^+$ in $\End(M,\psi)$. Hence $(\cB_{\infty},
\psi_{\infty}, \beta, \cB_0)$ is spreadable (compare Definition \ref{definition:property}). Also observe
that $\cB_\infty^{\beta}=\cB_\infty^{\rho_B(h_0)}\subset \cB_\infty^{\rho_B(h_1)}\subset\cB_0$
due to the relations of $S^{+}$. Now, the fact that it is a Bernoulli shift follows from
\cite[Theorem 8.2]{Ko10}.\\
(ii) Let $\rho_M(g_n):=\alpha_n:=\rho(c_n h_n), g_n\in F^+, n\in \Nset_0$. Then $\alpha=\alpha_0$
and $\rho_M$ gives a representation of the monoid $F^+$ in $\End(\cM,\psi)$. Also observe that
$\cM^{\rho_M}_0:=\bigcap_{k \ge 1} \cM^{\rho_M(g_k)} =\bigcap_{k \ge 1} \cM^{\rho(c_k h_k)}
=\cA_0$, hence by Corollary \ref{corollary-markov-filtration-0}, $(\cM, \psi, \alpha, \cA_0)$ is
a stationary Markov process.
\end{proof}
%%%%%%%%%%%%%%%%%%%%%%%%%%%%%%%%%%%%%%%%%%%%%%%%%%%%%%%
The significance of this result is that it indicates a promising strategy of how to construct a
representation of the Thompson monoid $F^+$ from a large class of noncommutative stationary
Markov processes. The starting point is the construction of a spreadable noncommutative Bernoulli
shift which is known to be in a bijective correspondence to equivalence classes of spreadable
sequences of noncommutative random variables (see \cite{Ko10,EGK17}). In other words, the
construction of a spreadable Bernoulli shift amounts to the construction of a representation of
the partial shift monoid $S^+$. But as this monoid is a quotient of the Thompson monoid $F^+$,
spreadable Bernoulli shifts correspond to a particular class of representations of the Thompson
monoid $F^+$.  Suitable perturbations of this particular class will provide certain Markov shifts
and wider classes of representations of the Thompson monoid $F^+$.  
%%%%%%%%%%%%%%%%%%%%%%%%%%%%%%%%%%%%%%%%%%%%%%%%%%%%%%%
\begin{Proposition}\label{proposition:spread-BNS}
%%%%%%%%%%%%%%%%%%%%%%%%%%%%%%%%%%%%%%%%%%%%%%%%%%%%%%%
Suppose $(\cM,\psi,\beta,\cB_0)$ is a spreadable noncommutative Bernoulli shift. Then there exists a
generating representation $\rho_\beta\colon S^+ \to \End(\cM,\psi)$ such that 
$\beta = \rho_\beta (h_0)$ and $\cB_0 \subset \cM^{\rho_\beta(h_k)}$ for all $k \ge 1$.  
%%%%%%%%%%%%%%%%%%%%%%%%%%%%%%%%%%%%%%%%%%%%%%%%%%%%%%%
\end{Proposition}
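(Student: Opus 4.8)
The plan is to read off the desired representation directly from the spreadability of the sequence associated with the Bernoulli shift, and then to verify the three required properties (the value at $h_0$, the localization of $\cB_0$, and the generating property) in turn, with the only real work being the passage from the generator $\cB_0$ to all of $\cM$.

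First I would unwind the hypothesis. By Definition \ref{definition:property} a spreadable Bernoulli shift is one whose associated sequence $(\iota_n)_{n\ge 0}\colon(\cB_0,\psi_0)\to(\cM,\psi)$, with $\iota_0$ the inclusion of $\cB_0$ and $\iota_n=\beta^n\iota_0$, is spreadable; moreover a Bernoulli shift is minimal by Definition \ref{definition:ncbs}, so this sequence is minimal. Hence Proposition \ref{proposition:dip}(ii) applies and yields a representation $\varrho\colon S^+\to\End(\cM,\psi)$ satisfying $\iota_0=\varrho(h_k)\iota_0$ for all $k\ge 1$ and $\iota_n=\varrho(h_0^n)\iota_0$ for all $n\in\Nset_0$. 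I would simply set $\rho_\beta:=\varrho$. The localization is then immediate: evaluating $\iota_0=\varrho(h_k)\iota_0$ on $b\in\cB_0$ gives $\varrho(h_k)(b)=b$, so $\cB_0\subset\cM^{\varrho(h_k)}$ for every $k\ge 1$.

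The step needing care is promoting $\varrho(h_0)=\beta$ from the generator to all of $\cM$, since the stationarity relation only gives $\varrho(h_0^n)(b)=\iota_n(b)=\beta^n(b)$ for $b\in\cB_0$, i.e.\ agreement of $\varrho(h_0)$ and $\beta$ on $\cB_0$. I would extend this by induction along the generating family $\{\beta^n(\cB_0)\}_{n\ge 0}$: assuming the two maps agree on $\beta^n(\cB_0)$, one computes, for $b\in\cB_0$, that $\varrho(h_0)\big(\beta^n(b)\big)=\varrho(h_0)\varrho(h_0^n)(b)=\varrho(h_0^{n+1})(b)=\iota_{n+1}(b)=\beta^{n+1}(b)=\beta\big(\beta^n(b)\big)$. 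As both $\varrho(h_0)$ and $\beta$ are normal $*$-endomorphisms of $(\cM,\psi)$ and $\bigcup_{n}\beta^n(\cB_0)$ generates $\cM$ as a von Neumann algebra, a routine density-and-normality argument then forces $\varrho(h_0)=\beta$ on all of $\cM$.

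Finally, for the generating property I would verify $\cM_\infty=\cM$, where $\cM_n:=\bigcap_{k\ge n+1}\cM^{\varrho(h_k)}$ and $\cM_\infty:=\bigvee_{n}\cM_n$ as in Definition \ref{definition:generating}. The localization already places $\cB_0\subset\cM_0$, and the relations $h_0h_\ell=h_{\ell+1}h_0$ of $S^+$ give the adaptedness $\beta(\cM_n)\subset\cM_{n+1}$ by exactly the argument of Proposition \ref{proposition:generatingrestriction}; iterating yields $\beta^n(\cB_0)\subset\cM_n$ for all $n$. Taking the join over $n$ and using minimality $\cM=\bigvee_{n}\beta^n(\cB_0)$ gives $\cM\subset\cM_\infty\subset\cM$, hence $\rho_\beta$ is generating. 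The genuine obstacle throughout is the single point just described, namely upgrading the equality $\varrho(h_0)|_{\cB_0}=\beta|_{\cB_0}$ to $\varrho(h_0)=\beta$ on $\cM$; once that is secured, the remaining properties are direct bookkeeping with the $S^+$ relations and with minimality.
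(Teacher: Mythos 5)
Your proposal is correct and follows essentially the same route as the paper: both obtain the representation from the characterization of spreadability via $S^+$ (Proposition \ref{proposition:dip}(ii), resp.\ the cited Theorem 4.5 of [EGK17]) applied to the minimal associated sequence, and then read off the localization and generating properties from minimality. The one place you go beyond the paper's (very terse) proof is in explicitly upgrading the agreement $\varrho(h_0)|_{\cB_0}=\beta|_{\cB_0}$ to $\varrho(h_0)=\beta$ on all of $\cM$ via minimality and normality --- a detail the paper leaves implicit --- and your argument for it is sound.
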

%%%%%%%%%%%%%%%%%%%%%%%%%%%%%%%%%%%%%%%%%%%%%%%%%%%%%%%
\begin{proof}
If $(\cM,\psi,\beta,\cB_0)$ is spreadable, then as it is a minimal  noncommutative Bernoulli
shift, there exists a representation $\rho_\beta \colon S^+ \to \End(\cM,\psi)$ 
such that for $\lambda_n:=\beta^n|_{\cB_{0}}$ we get $\lambda_n =  \rho_\beta(h_0^n)\lambda_0$
for all $n \in \Nset_0$ and $\rho_\beta(h_k) \lambda_0 = \lambda_0$ for all $k \ge 1$ 
(see \cite[Theorem 4.5]{EGK17}).  The representation $\rho_\beta$ has the generating property 
by construction. 
\end{proof}
%%%%%%%%%%%%%%%%%%%%%%%%%%%%%%%%%%%%%%%%%%%%%%%%%%%%%%%
%%%%%%%%%%%%%%%%%%%%%%%%%%%%%%%%%%%%%%%%%%%%%%%%%%%%%%%
\begin{Definition}\normalfont
%%%%%%%%%%%%%%%%%%%%%%%%%%%%%%%%%%%%%%%%%%%%%%%%%%%%%%%
The representation $\rho_\beta \colon S^+ \to \End(\cM,\psi)$ (as introduced in Proposition
\ref{proposition:spread-BNS}) is said to be \emph{associated} to the spreadable Bernoulli shift 
$(\cM,\psi,\beta,\cB_0)$.
%%%%%%%%%%%%%%%%%%%%%%%%%%%%%%%%%%%%%%%%%%%%%%%%%%%%%%%
\end{Definition}
%%%%%%%%%%%%%%%%%%%%%%%%%%%%%%%%%%%%%%%%%%%%%%%%%%%%%%%
%%%%%%%%%%%%%%%%%%%%%%%%%%%%%%%%%%%%%%%%%%%%%%%%%%%%%%%
\begin{Corollary}\normalfont
%%%%%%%%%%%%%%%%%%%%%%%%%%%%%%%%%%%%%%%%%%%%%%%%%%%%%%%
A spreadable Bernoulli shift $(\cM,\psi,\beta,\cB_0)$ is partially spreadable. 
%%%%%%%%%%%%%%%%%%%%%%%%%%%%%%%%%%%%%%%%%%%%%%%%%%%%%%%
\end{Corollary}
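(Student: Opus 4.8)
The plan is to unwind the definitions and then pull the representation of the partial shift monoid $S^+$ back through the canonical quotient map onto the Thompson monoid $F^+$. By Definition \ref{definition:property}, saying that the Bernoulli shift $(\cM,\psi,\beta,\cB_0)$ is spreadable (resp. partially spreadable) means precisely that its associated sequence of random variables $(\iota_n)_{n\ge 0}\colon(\cB_0,\psi_0)\to(\cM,\psi)$, given by $\iota_n=\beta^n|_{\cB_0}=\beta^n\iota_0$ with $\iota_0$ the inclusion of $\cB_0$ in $\cM$ (see Definition \ref{definition:process-sequence}), is spreadable (resp. partially spreadable). So it suffices to produce a representation $\rho\colon F^+\to\End(\cM,\psi)$ satisfying the localization and stationarity conditions \eqref{eq:L-thompson} and \eqref{eq:S-thompson} of Definition \ref{definition:p-s}.

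First I would invoke Proposition \ref{proposition:spread-BNS} to obtain the associated generating representation $\rho_\beta\colon S^+\to\End(\cM,\psi)$ with $\beta=\rho_\beta(h_0)$ and $\cB_0\subset\cM^{\rho_\beta(h_k)}$ for all $k\ge 1$. Next, recalling the monoid epimorphism $\epsilon\colon F^+\to S^+$ with $\epsilon(g_n)=h_n$ introduced just after Proposition \ref{proposition:rep-S+}, I would set
\[
\rho:=\rho_\beta\circ\epsilon\colon F^+\to\End(\cM,\psi).
\]
As a composition of the monoid homomorphism $\epsilon$ with the representation $\rho_\beta$, this is again a representation of $F^+$ in $\End(\cM,\psi)$; in particular $\rho(g_0)=\rho_\beta(h_0)=\beta$ and $\rho(g_n)=\rho_\beta(h_n)$ for every $n\ge 1$.

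It then remains to check the two required properties against this $\rho$. For the stationarity property \eqref{eq:S-thompson}, one has $\rho(g_0^n)\iota_0=\beta^n\iota_0=\iota_n$ for all $n\ge 0$, which is exactly how the associated sequence is defined. For the localization property \eqref{eq:L-thompson}, fix $n\ge 1$ and $b\in\cB_0$; since $\cB_0\subset\cM^{\rho_\beta(h_n)}$, we get $\rho(g_n)\iota_0(b)=\rho_\beta(h_n)(b)=b=\iota_0(b)$, so that $\rho(g_n)\iota_0=\iota_0$. Hence $\rho$ witnesses the partial spreadability of $(\iota_n)_{n\ge 0}$, and therefore of the Bernoulli shift $(\cM,\psi,\beta,\cB_0)$. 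There is no genuine obstacle here: this is simply the general implication ``spreadability $\Rightarrow$ partial spreadability'' specialized to Bernoulli shifts, and the only point requiring a line of verification is that the relations of $F^+$ are respected by $\rho$, which is automatic because $\epsilon$ carries the $F^+$-relations into $S^+$-relations already satisfied by $\rho_\beta$.
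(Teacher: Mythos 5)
Your proof is correct and follows essentially the same route as the paper: both take the associated representation $\rho_\beta$ of $S^+$ from Proposition \ref{proposition:spread-BNS}, compose it with the canonical epimorphism $\epsilon\colon F^+\to S^+$, and observe that the resulting representation of $F^+$ witnesses partial spreadability of the associated sequence. Your write-up merely spells out the verification of the localization and stationarity conditions, which the paper leaves implicit.
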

%%%%%%%%%%%%%%%%%%%%%%%%%%%%%%%%%%%%%%%%%%%%%%%%%%%%%%%
\begin{proof}
As in Proposition  \ref{proposition:spread-BNS}, let $\rho_\beta$ be the representation associated to
the spreadable noncommutative Bernoulli shift. Denote by $\epsilon \colon S^+ \to F^+$ the
canonical epimorphism which maps the generator $g_k \in F^+$  to the generator $h_k \in S^+$ for
all $k \in \Nset_0$.  Then $\rho := \rho_\beta \circ \epsilon$ defines a representation of $F^+$
such that the canonically associated sequence of random variables $(\lambda_n)_{n \ge 0}$ (as
used in the proof of Proposition \ref{proposition:spread-BNS}) is partially spreadable.   
\end{proof}
%%%%%%%%%%%%%%%%%%%%%%%%%%%%%%%%%%%%%%%%%%%%%%%%%%%%%%%
A large class of noncommutative Markov shifts can be obtained as certain perturbations of
noncommutative Bernoulli shifts, as developed and investigated by K\"ummerer in \cite{Ku85,Ku86}.
We refine the notion of a coupling representation so that it applies to spreadable noncommutative
Bernoulli shifts. 
%%%%%%%%%%%%%%%%%%%%%%%%%%%%%%%%%%%%%%%%%%%%%%%%%%%%%%%
\begin{Definition}\normalfont
%%%%%%%%%%%%%%%%%%%%%%%%%%%%%%%%%%%%%%%%%%%%%%%%%%%%%%%
A sequence $(\gamma_n)_{n \ge0} \in \End(\cM,\psi)$ is called a \emph{coupling (sequence)} to a
spreadable Bernoulli shift  $(\cM,\psi,\beta,\cB_0)$ with associated representation 
$\rho\colon S^+ \to \End(\cM,\psi)$ if, for all $0 \le k < \ell < \infty$,  
\begin{alignat*}{2}
 \rho(h_k)\, \gamma_{\ell}& =  \gamma_{\ell+1}\,  \rho(h_k),\qquad
 \gamma_{k}\, \rho(h_{\ell+1}) &= \rho(h_{\ell+1}) \,  \gamma_{k},\qquad
 \gamma_k \gamma_{\ell+1} &=  \gamma_{\ell+1} \gamma_k.
\end{alignat*}
%%%%%%%%%%%%%%%%%%%%%%%%%%%%%%%%%%%%%%%%%%%%%%%%%%%%%%%
\end{Definition}
%%%%%%%%%%%%%%%%%%%%%%%%%%%%%%%%%%%%%%%%%%%%%%%%%%%%%%%
%%%%%%%%%%%%%%%%%%%%%%%%%%%%%%%%%%%%%%%%%%%%%%%%%%%%%%%
\begin{Proposition}
%%%%%%%%%%%%%%%%%%%%%%%%%%%%%%%%%%%%%%%%%%%%%%%%%%%%%%%
Let $(\gamma_n)_{n \ge 0}$ be a coupling sequence to the spreadable Bernoulli shift
$(\cM,\psi,\beta,\cB_0)$ with associated representation  $\rho_\beta \colon S^+ \to
\End(\cM,\psi)$. Then a representation $\rho \colon F^+ \to \End(\cM,\psi)$
is defined by the multiplicative extension of 
\[
F^+ \ni g_k  \mapsto  
\gamma_k \rho_\beta^{} \epsilon(g_k) \in \End(\cM,\psi) \qquad (0 \le k < \infty).   
\]
Here $\epsilon$ denotes the canonical epimorphism from $F^+$ onto $S^+$. 
%%%%%%%%%%%%%%%%%%%%%%%%%%%%%%%%%%%%%%%%%%%%%%%%%%%%%%%
\end{Proposition}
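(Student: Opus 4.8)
The plan is to show that the proposed map respects the defining relations of $F^+$, namely that for $0 \le k < \ell < \infty$ the images satisfy $\rho(g_k)\rho(g_\ell) = \rho(g_{\ell+1})\rho(g_k)$. Since each $\rho(g_n) = \gamma_n \rho_\beta\epsilon(g_n) = \gamma_n \rho_\beta(h_n)$ lies in $\End(\cM,\psi)$ (as a composition of endomorphisms), and since $\End(\cM,\psi)$ is closed under composition, the only thing requiring verification is the relation; multiplicativity of the extension then follows automatically from the universal property of the monoid presentation \eqref{eq:F+}. First I would abbreviate $\alpha_n := \gamma_n \rho_\beta(h_n)$ and simply compute $\alpha_k \alpha_\ell$ for $k < \ell$, pushing the $\gamma$'s and the $\rho_\beta(h)$'s past one another using exactly the three families of coupling relations together with the $S^+$-relation $\rho_\beta(h_k)\rho_\beta(h_\ell) = \rho_\beta(h_{\ell+1})\rho_\beta(h_k)$.

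The computation I anticipate is the direct analogue of the one already carried out in the Proposition establishing that $QF^+ \subset EF^+$ is a quotient of $F^+$, with the abstract generators $c_n, g_n$ replaced by the concrete endomorphisms $\gamma_n, \rho_\beta(h_n)$. Concretely, for $0 \le k < \ell < \infty$ I would write
\begin{align*}
\alpha_k \alpha_\ell
&= \gamma_k \rho_\beta(h_k)\, \gamma_\ell \rho_\beta(h_\ell) \\
&= \gamma_k \gamma_{\ell+1}\, \rho_\beta(h_k) \rho_\beta(h_\ell) \\
&= \gamma_{\ell+1} \gamma_k\, \rho_\beta(h_{\ell+1}) \rho_\beta(h_k) \\
&= \gamma_{\ell+1} \rho_\beta(h_{\ell+1})\, \gamma_k \rho_\beta(h_k)
= \alpha_{\ell+1} \alpha_k,
\end{align*}
where the second equality uses $\rho_\beta(h_k)\gamma_\ell = \gamma_{\ell+1}\rho_\beta(h_k)$ (the first coupling relation), the third uses $\gamma_k\gamma_{\ell+1} = \gamma_{\ell+1}\gamma_k$ together with the $S^+$-relation $\rho_\beta(h_k)\rho_\beta(h_\ell) = \rho_\beta(h_{\ell+1})\rho_\beta(h_k)$, and the fourth uses $\gamma_k \rho_\beta(h_{\ell+1}) = \rho_\beta(h_{\ell+1})\gamma_k$ (the second coupling relation). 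This mirrors precisely the algebraic identity proved for $QF^+$.

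The step requiring the most care is matching the index ranges: the coupling relations are stated for $0 \le k < \ell < \infty$, and I must check that each substitution in the chain above invokes them at admissible indices (in particular that the commutation $\gamma_k \rho_\beta(h_{\ell+1}) = \rho_\beta(h_{\ell+1})\gamma_k$ is applied with $k < \ell$, hence $k < \ell+1$, which is within range). I expect no genuine obstacle here, since the extended monoid $EF^+$ was constructed in Definition \ref{definition:EF^+} precisely so that its generators $c_n, g_n$ carry these relations, and the present hypotheses assert that $\gamma_n$ and $\rho_\beta(h_n)$ realize a representation of $EF^+$ in $\End(\cM,\psi)$; the submonoid generated by the products $\gamma_n\rho_\beta(h_n)$ is then the image of $QF^+$, which is a quotient of $F^+$. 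Thus the multiplicative extension is well defined, giving the desired representation $\rho\colon F^+ \to \End(\cM,\psi)$.
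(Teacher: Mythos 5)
Your computation is exactly the paper's own proof: the same four-step chain, invoking the same coupling relations and the $S^+$-relation in the same order, so the proposal is correct and takes essentially the same approach.
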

%%%%%%%%%%%%%%%%%%%%%%%%%%%%%%%%%%%%%%%%%%%%%%%%%%%%%%%
\begin{proof}
The relations of the Thompson monoid $F^+$ are satisfied by $\gamma_k
\rho_{\beta}\varepsilon(g_k)$ as for $0\leq k< \ell< \infty$, the definition of a coupling
sequence ensures that
\begin{align*}
(\gamma_k \rho_{\beta}\varepsilon(g_k))(\gamma_{\ell} \rho_{\beta}\varepsilon(g_{\ell}))
&=(\gamma_k \rho_{\beta}(h_k))(\gamma_{\ell} \rho_{\beta}(h_{\ell})) \\
&=\gamma_k \gamma_{\ell+1}\rho_{\beta}(h_k)\rho_{\beta}(h_{\ell}) \\
&=\gamma_{\ell+1}\gamma_k \rho_{\beta}(h_{\ell+1})\rho_{\beta}(h_k)\\
&=\gamma_{\ell+1}\rho_{\beta}(h_{\ell+1})\gamma_k\rho_{\beta}(h_k)\\
&=(\gamma_{\ell+1} \rho_{\beta}\varepsilon(g_{\ell+1}))(\gamma_k \rho_{\beta}\varepsilon(g_k)).
\end{align*}
\end{proof}
%%%%%%%%%%%%%%%%%%%%%%%%%%%%%%%%%%%%%%%%%%%%%%%%%%%%%%%
%%%%%%%%%%%%%%%%%%%%%%%%%%%%%%%%%%%%%%%%%%%%%%%%%%%%%%%
\begin{Remark}\normalfont
%%%%%%%%%%%%%%%%%%%%%%%%%%%%%%%%%%%%%%%%%%%%%%%%%%%%%%%
It is known that there exist non-spreadable noncommutative Bernoulli shifts (see \cite{Ko10}). 
We conjecture that there exist also noncommutative Bernoulli shifts without partial
spreadability. An affirmative answer to this conjecture implies that there exist noncommutative
Markov shifts beyond the representation theory of the Thompson monoid $F^+$. 
%%%%%%%%%%%%%%%%%%%%%%%%%%%%%%%%%%%%%%%%%%%%%%%%%%%%%%%
\end{Remark}
%%%%%%%%%%%%%%%%%%%%%%%%%%%%%%%%%%%%%%%%%%%%%%%%%%%%%%%
%%%%%%%%%%%%%%%%%%%%%%%%%%%%%%%%%%%%%%%%%%%%%%%%%%%%%%%
%%%%%%%%%%%%%%%%%%%%%%%%%%%%%%%%%%%%%%%%%%%%%%%%%%%%%%%
\section*{Acknowledgements}
\label{section:acknowledgements}
%%%%%%%%%%%%%%%%%%%%%%%%%%%%%%%%%%%%%%%%%%%%%%%%%%%%%%%
%%%%%%%%%%%%%%%%%%%%%%%%%%%%%%%%%%%%%%%%%%%%%%%%%%%%%%%
The first author would like to thank the Isaac Newton Institute for
Mathematical Sciences in Cambridge, England, for support and hospitality
during the programme \emph{Operator Algebras: Subfactors and their
Applications} in the spring of 2017 where some initial studies for this
paper were undertaken. The second author was supported by a Government of
Ireland Postdoctoral Fellowship (Project ID: GOIPD/2018/498). 
The first and second author acknowledge several helpful discussions
with B.~V.~Rajarama Bhat in an early stage of this project. The first and 
third author would like to thank Persi Diaconis for stimulating discussions 
on Markov chains during his visit at University College Cork in September 2018. 
The first author would like to thank Gwion Evans, Rolf Gohm, Burkhard K\"ummerer 
and Hans Maassen for fruitful discussions on noncommutative Markov processes. 
%%%%%%%%%%%%%%%%%%%%%%%%%%%%%%%%%%%%%%%%%%%%%%%%%%%%%%%
%%%%%%%%%%%%%%%%%%%%%%%%%%%%%%%%%%%%%%%%%%%%%%%%%%%%%%%
%%%%%%%%%%%%%%%%%%%%%%%%%%%%%%%%%%%%%%%%%%%%%%%%%%%%%%%
\label{section:bibliography}
%%%%%%%%%%%%%%%%%%%%%%%%%%%%%%%%%%%%%%%%%%%%%%%%%%%%%%%
%%%%%%%%%%%%%%%%%%%%%%%%%%%%%%%%%%%%%%%%%%%%%%%%%%%%%%%

%%%%%%%%%%%%%%%%%%%%%%%%%%%%%%%%%%%%%%%%%%%%%%%%%%%%%%%
%%%%%%%%%%%%%%%%%%%%%%%%%%%%%%%%%%%%%%%%%%%%%%%%%%%%%%%
%%%%%%%%%%%%%%%%%%%%%%%%%%%%%%%%%%%%%%%%%%%%%%%%%%%%%%%
\end{document}